\newcounter{minutes}\setcounter{minutes}{\time}
\newcounter{hours}\setcounter{hours}{\time}
\newcommand{\R}{\mathbb{R}}
\newcommand{\Rn}{\mathbb{R}^n}
\newcommand{\p}{\varphi}
\newcommand{\e}{\varepsilon}
\newcommand{\diam}{\textnormal{diam} \,}
\newcommand{\im}{\textnormal{Im}\,}
\font\fFt=eusm10 
\font\fFa=eusm7  
\font\fFp=eusm5  
\def\K{\mathchoice
{\hbox{\,\fFt K}}
{\hbox{\,\fFt K}}
{\hbox{\,\fFa K}}
{\hbox{\,\fFp K}}}
\date{}
\newfont{\cyrilic}{wncyr10 scaled 1000}
\title{MODULI OF CONTINUITY OF QUASIREGULAR MAPPINGS}
\author{Vesna Manojlovi\' c \\
\textrm{email:} {\tt vesnak@fon.bg.ac.yu}\\
{\tiny \tt File:~\jobname .tex,
          printed: \number\year-\number\month-\number\day,
          \thehours.\ifnum\theminutes<10{0}\fi\theminutes}
 }
\newcommand{\comment}[1]{}
\theoremstyle{plain}
\newtheorem{theorem}[equation]{Theorem}
\newtheorem{lemma}[equation]{Lemma}
\newtheorem{definition}[equation]{Definition}
\newtheorem{example}[equation]{Example}
\newtheorem{corollary}[equation]{Corollary}
\newtheorem{remark}[equation]{Remark}
\newtheorem{problem}[equation]{Problem}
\newtheorem{question}[equation]{Open question}
\newcommand{\pa}{\partial}
\numberwithin{equation}{section}
\newenvironment{subsec}
{
 \addtocounter{equation}{1}
 \bigskip
 \noindent
   {\bf \arabic{section}.\arabic{equation}.}
\begin{textbf}
} {\end{textbf}}
\begin{document}
\begin{titlepage}
\begin{center}
\bf FACULTY OF MATHEMATICS\\
UNIVERSITY OF BELGRADE
\vspace{5em}

\LARGE
DOCTORAL DISSERTATION
\vspace{5em}

{
\bf MODULI OF CONTINUITY OF QUASIREGULAR MAPPINGS
\rule{0ex}{1.3em}
}
\end{center}
\vspace{18em}

\noindent
\bf CANDIDATE\hfill MENTOR \hfill COMENTOR
\vspace{1em}

\noindent
\small
VESNA MANOJLOVI\' C \hfill MIODRAG MATELJEVI\' C \hfill MATTI VUORINEN
\vspace{3em}

\begin{center}
BELGRADE 2008
\end{center}
\end{titlepage}

\tableofcontents

\pagebreak

\thispagestyle{empty}
\begin{center}
\Large
Acknowledgements
\end{center}
\vspace{3em}

I would like to express my gratitude to Prof. Arsenovi\' c, Prof. Mateljevi\' c and Prof. Pavlovi\' c
whose lessons and lectures seminars at graduate studies at Faculty of Mathematics, University of Belgrade
gave me a solid knowledge base for my further research.

It is at one of these seminars in December of 2006 that I met Prof. Matti Vuorinen from the University of Turku,
Finland at the time when I was looking for a topic of Master Thesis.  Our cooperation led to my Master Thesis
in June of 2007, my three visits to Turku, in the period from December 2007 to June 2008, to do research in
beautiful area of Quasiconformal mappings, which resulted in our two joint papers and this Dissertation.
It is difficult to overstate my appreciation for his invaluable and crucial guidance throughout this process.
Most of the first chapter is an elaboration of his ideas on natural metrics.

I wish to thank Prof. Arsenovi\' c for his continuous support and advice given in relation to harmonic functions.
Also, many thanks to Prof. Mateljevi\' c for useful exchange of ideas related to harmonic maps.

Special thanks to Prof. Pavlovi\' c whose contribution is strongly felt in the second chapter of this thesis.
\vspace{5em}

Belgrade, July 2008
\vspace{1em}

{\it Vesna Manojlovi\' c}
\vfill

\pagebreak

\thispagestyle{empty}
\begin{center}
\Large
Summary
\end{center}
\vspace{3em}

This thesis consists of Chapters 1 and 2. The main results are
contained in the two preprints and two published papers, listed below.

Chapter 1 deals with conformal invariants in the euclidean space
$\mathbb{R}^n, n\ge2,$ and their interrelation. In particular,
conformally invariant metrics and balls of the respective metric
spaces are studied. Another theme in Chapter 1 is the study of
quasiconformal maps with identity boundary values in two different
cases, the unit ball and the whole space minus two points. These
results are based on the two preprints:

{\sc R. Kl\'en, V. Manojlovi\'c and M. Vuorinen:}  \emph{
Distortion of two point normalized quasiconformal mappings,}
arXiv:0808.1219[math.CV], 13 pp.,

{\sc V. Manojlovi\'c and M. Vuorinen:}  \emph{
On quasiconformal maps with identity boundary values,}
arXiv:0807.4418[math.CV], 16 pp.

Chapter 2 deals with harmonic quasiregular maps.
Topics studied are: Preservation of modulus of continuity, in particular
Lipschitz continuity, from the boundary to the interior of domain in case of
harmonic quasiregular maps and quasiisometry property of harmonic quasiconformal maps.
Chapter 2 is based mainly on the two published papers:

{\sc M. Arsenovi\'c, V. Koji\'c and M. Mateljevi\'c:}
{\em On Lipschitz continuity of harmonic quasiregular maps on the
unit ball in $\bold R\sp n$.},
Ann. Acad. Sci. Fenn. Math. 33 (2008), no. 1, 315--318.

{\sc V. Koji\' c and M. Pavlovi\' c:}
{\it Subharmonicity of $|f|^p$ for quasiregular harmonic functions, with applications},
J. Math. Anal. Appl. 342 (2008) 742-746

\vfill

\pagebreak

\chapter{Quasiconformal Mappings}
{
\section{Introduction}
}

Conformal invariance has played a predominant role in the study of
geometric function theory during the past century. Some of the
landmarks are the pioneering contributions of Gr\"otzsch and
Teichm\"uller prior to the Second World War, and the paper of
Ahlfors and Beurling \cite{ahb} in 1950. These results lead to
farreaching applications and have stimulated many later studies
\cite{k}. For instance, Gehring and V\"ais\"al\"a \cite{g3},
\cite{v1} have built the theory of quasiconformal mappings in
$\mathbb{R}^n$ based on the notion of the modulus of a curve family
introduced in \cite{ahb}.

In the first chapter of this dissertation our goal is to study two kinds
of conformally invariant
extremal problems, which in special cases reduce to problems due to
Gr\"otzsch and Teichm\"uller, resp. These two classical extremal
problems are extremal problems for moduli of ring domains. The
Gr\"otzsch and Teichm\" uller rings are the extremal rings for
extremal problems of the following type, which were first posed for
the case of the plane. Among all ring domains which separate two
given closed sets $E_1$ and $E_2$, $E_1\cap E_2=\emptyset$, find one
whose module has the greatest value.

In the general case these extremal problems lead to conformal
invariants $\lambda_G(x,y)$ and $\mu_G(x,y)$ defined for a domain $G
\subset \mathbb{R}^n$ and $x,y \in G \,.$ A basic fact is that
$\lambda_G(x,y)^{1/(1-n)}$ and $\mu_G(x,y)$ are metrics. Following
closely the ideas developed in \cite{vu1} and \cite{vu2} we study
three topics: (a) the geometry of the metric spaces $(G,d)$ when $d$
is $\lambda_G(x,y)^{1/(1-n)}$ or $\mu_G(x,y)$, (b) the relations of
these two metrics to several other metrics and (c) the behavior of
quasiconformal mappings with respect to several of these metrics.
One of our main results is to present a revised version of the Chart
on p. 86 of \cite{vu1}, taking into account some later developments,
such as \cite{h}, \cite{hv}, \cite{vu2}.

Then we present an application to the geometry of balls in these
metrics. As a special case we investigate $\lambda$ metric
in $B^2\setminus\{0\}$, continuing work of \cite{h}.

Another question we address is: if
$f:(G_i,m_{G_i})\longrightarrow(G_i',m_{G_i'})$ is uniformly
continuous ($i=1,2$), is $(G,m_{G})\longrightarrow(G',m_{G'})$
uniformly  continuous ($G=G_1\cup G_2$, $G'=G_1'\cup G_2'$)?

Chapter 1 concludes with displacement estimates for $K$-qc mappings
which are identity on the boundary of $G$.

In the second chapter we explore what additional information on a
$K$-qc mappings we get if we assume it is also harmonic. We call such mappings
hqc-mappings.

In case $n=2$ we show that hqc map has the same type of moduli of continuity
on $\overline D$ as on $\partial D$.

A similar, for the Lipschitz case, result is proved on $B^n$.
Finally, we show, for $n=2$, that any hqc map is bilipschitz in
quasihyperbolic metric.

\bigskip

\section{The extremal problems of Gr\"otzsch and Teichm\"uller}

In what follows, we adopt the standard definitions notions related
of quasiconformal mappings from \cite{v1}.

We use notation $B^n(x,r) = \{ y \in \mathbb R^n \colon |x-y| < r \}$,
 $S^{n-1}(x,r) = \{ y \in \mathbb R^n \colon |x-y| = r \}$,
  $H^n = \{ (x_1, \dots ,x_n) \in \mathbb R^n \colon x_n > 0 \}$
  and abbreviations $B^n(r) = B^n(0,r)$, $B^n = B^n(1)$,
  $S^{n-1}(r) = S^{n-1}(0,r)$ and $S^{n-1} = S^{n-1}(1)$.

For the modulus $M(\Gamma)$ of a curve family $\Gamma$ and its basic properties we refer the reader to \cite{v1}. Its basic property is
conformal invariance.

For $E,F,G\subset\overline{\mathbb R}^n$ let $\Delta(E,F,G)$ be the
family of all closed curves joining $E$ to $F$ within $G$. More
precisely, a path $\gamma:[a,b]\rightarrow\overline{\mathbb R}^n$
belongs to $\Delta(E,F,G)$ iff $\gamma(a)\in E$, $\gamma(b)\in F$
and $\gamma(t)\in G$ for $a<t<b$.

If $G$ is a proper subdomain of $\overline{\mathbb R^n}$, then for
$x,y\in G$ with $x\neq y$ we define
\begin{equation}
\lambda_G(x,y)=\inf_{C_x,C_y}M(\Delta(C_x,C_y;G))
\end{equation}
where $C_z=\gamma_z[0,1)$ and $\gamma_z:[0,1)\longrightarrow G$
is a curve such that $\gamma_z(0)=z$ and $\gamma_z(t)\rightarrow\partial G$
when $t\rightarrow 1$, $z=x,y$. This conformal invariant was introduced
by J. Ferrand (see \cite{vu2}).

For $x\in\mathbb{R}^n\setminus\{0,e_1\}$, $n\geqslant 2$, define
\begin{equation} \label{pxdef}
p(x)=\inf_{E,F}M(\Delta(E,F)),
\end{equation}
where the infimum is taken over all pairs of continua $E$ and $F$ in
$\overline{\mathbb R^n}$ with $0,e_1\in E$, $x,\infty\in F$.
This extremal quantity was introduced
by O. Teichm\"uller (see \cite{vu2}, \cite{hv}).

For a proper subdomain $G$ of $\overline{\mathbb{R}^n}$ and for all
$x,y\in G$ define
\begin{equation}
\mu_G(x,y)=\inf_{C_{xy}}M(\Delta(C_{xy},\partial G;G))
\end{equation}
where the infimum is taken over all continua $C_{xy}$ such that
$C_{xy}=\gamma[0,1]$ and $\gamma$ is a curve with $\gamma(0)=x$
and $\gamma(1)=y$. For the case $G=B^n$ the function $\mu_{B^n}(x,y)$
is the extremal quantity of H. Gr\"otzsch (see \cite{vu2}).

Let $(X,d_1)$ and $(Y,d_2)$ be metric spaces and let $f:X \to Y$ be
a continuous mapping. Then we say that $f$ is uniformly continuous
if there exists an increasing continuous function $\omega:[0,\infty)
\to [0,\infty)$ with $\omega(0) =0$ and $d_2(f(x), f(y)) \le
\omega(d_1(x,y))$ for all $x,y \in X\,.$ We call the function
$\omega$ the modulus of continuity of $f\,.$ If there exist $C,
\alpha >0$ such that $\omega(t) \le C t^{\alpha}$ for all $t>0\,,$
we say that $f$ is H\"older-continuous with H\"older exponent
$\alpha \,.$ If $\alpha=1\,,$ we say that $f$ is Lipschitz with the
Lipschitz constant $C$ or simply $C$-Lipschitz. If  $f$ is a
homeomorphism and both $f$ and $f^{-1}$ are $C$-Lipschitz, then $f$
is $C$-bilipschitz or $C$-quasiisometry and if $C=1$ we say that $f$
is an isometry. These conditions are said to hold locally, if they
hold for each compact subset of $X\,.$

A very special case of these are isometries.

Let $(X_1,d_1)$ and $(X_2,d_2)$ be metric spaces and let
$f:X_1\rightarrow X_2$ be a homeomorphism. We call $f$ an {\it
isometry} if $d_2(f(x),f(y))=d_1(x,y)$ for all $x,y\in X_1$.

In this section we introduce five types of metrics:
\begin{enumerate}
\item Spherical (chordal) metric $q$.
\item
Quasihyperbolic metric $k_G$ of a domain $G \subset \mathbb R^n$.
\item
A metric $j_G$ closely related to $k_G$.
\item
Seittenranta's metric $\delta_G$.
\item
Apollonian metric $\alpha_G$.
\end{enumerate}
The first one is defined on $\overline{\mathbb R^n}=\mathbb{R}^n\cup\{\infty\}$.
The second and the third ones are
defined in any proper subdomain $G\subset\mathbb R^n$, both of them generalize
hyperbolic metric (on $B^n$ or $H^n$) to arbitrary proper subdomain
$G\subset\mathbb R^n$. Seittenranta's metric is natural, M\" obius invariant
analogue of the $j_G$-metric. Apollonian metric is defined in any proper subdomain
$G\subseteq\mathbb R^n$ which boundary is not a subset of a circle or a line.

\begin{subsec}{The spherical metric.}
{\rm The metric $q$ is defined by
\begin{equation}
q(x,y)=
\left\{
\begin{array}{ll}
\displaystyle
\frac{|x-y|}{\sqrt{1+|x|^2}\sqrt{1+|y|^2}}, & x\neq\infty\neq y,\vspace{0.5em}\\
\displaystyle
\frac{1}{\sqrt{1+|x|^2}}, & y=\infty.
\end{array}
\right.
\end{equation}
}
\end{subsec}

Absolute (cross) ratio of an ordered quadruple $a,b,c,d$ of distinct points
in $\overline{\mathbb R^n}$ is defined
\begin{eqnarray}
\label{quadruple}
|a,b,c,d|=\frac{q(a,c)\,q(b,d)}{q(a,b)\,q(c,d)}.
\end{eqnarray}

Now we introduce distance ratio metric or {\it $j_G$-metric}. For an open set
$G\subset\mathbb{R}^n$, $G\neq\mathbb{R}^n$ we define
$d(z)=d(z,\partial G)$ for $z\in G$ and
\begin{equation}
j_G(x,y)=\log
\left(
1+\frac{|x-y|}{\min\{d(x),d(y)\}}
\right)
\end{equation}
for $x,y\in G$.

For a nonempty $A\subset G$ we define the $j_G$-diameter of $A$ by
$$
j_G(A)=\sup\{j_G(x,y)\,|\,x,y\in A\}.
$$

For an open set $G\subset\mathbb{R}^n$, $G\neq\mathbb{R}^n$, and a nonempty
$A\subset G$ such that $d(A,\partial G)>0$ we define
$$
r_G(A)=\frac{d(A)}{d(A,\partial G)}.
$$

If $\rho(x)>0$ for $x\in G$, $\rho$ is continuous and if $\gamma$ is a rectifiable curve
in $G$, then we define
$$
l_{\rho}(\gamma)=\int_\gamma\rho\,ds.
$$
The Euclidean length of a curve $\gamma$ is denoted by $l(\gamma)$.

Also, for $x_1,x_2\in G$ we define
\begin{eqnarray}
d_\rho(x,y)=\inf l_\rho(\gamma),
\end{eqnarray}
where the infimum is taken over all rectifiable curves from $x_1$ to $x_2$.

It is easy to show that $d_\rho$ is a metric in $G$.

Now we take any proper domain $G\subset\mathbb R^n$ and set
$\rho(x)=\frac 1{d(x,\partial G)}$.

The corresponding metric, denoted by $k_G$, is called the
{\it quasihyperbolic metric} in
$G$. Since,
$$
\rho(\varphi(x))=\frac{1}{d(\varphi(x),\partial\, (\varphi G))}=
\frac{1}{d(x,\partial G)}=\rho(x),
$$
for Euclidean isometry $\varphi$,
$$
k_{G'}(x',y')=k_G(x,y),\qquad\mbox{where }G'=\varphi(G),\,\,\,x'=\varphi(x),
\,\,\,y'=\varphi(y).
$$

Now we introduce {\it Seittenranta's metric $\delta_G$} \cite{se}.
For more details on M\" obius transformations in $\mathbb R^n$ see \cite{be}.
For an open set $G\subset\mathbb{R}^{n}$
with ${\rm card}\partial G\geqslant 2$ we set
$$
m_{G}(x,y)=\sup_{a,b\in\partial G}|a,x,b,y|
$$
and
$$
\delta_{G}(x,y)=\log(1+m_{G}(x,y))
$$
for all $x,y\in G$.

Consider now the case of an unbounded domain $G \subset {\mathbb R}^n, \infty \in \partial G \,.$
Note that if $a$ or $b$ in the supremum equals infinity, then we get exactly $j_G$
metric. This implies that we always have $j_G\leqslant\delta_G$.

We will also use {\it Apollonian metric} studied by Beardon \cite{be2},
(also see \cite[7.28 (2)]{avv}) defined in open proper subsets $G\subset\mathbb{R}^n$ by
$$
\alpha_{G}(x,y)=\sup_{a,b\in\partial G}\log|a,x,y,b|\quad\mbox{for all }x,y\in G.
$$
This formula defines a metric iff $\mathbb{R}^n\setminus G$ is not contained in
an $(n-1)$-dimensional sphere in $\mathbb{R}^n$.

In general, the hyperbolic-type metrics can be divided into length-metrics,
defined by means of integrating a weight function and point-distance metric.

Another group may again be classified by the number of boundary
points used in there's definition. So for instance, the $j$ metric is
one-point metric, while the Apollonian metric is two-point metric.
\begin{definition}
\rm
A domain $A\subset\overline{\mathbb R^n}$ is a ring if $C(A)$ has exactly two
components, where $C(A)$ denotes the complement of $A\subset\mathbb{R}^n$.
\end{definition}

If the components of $C(A)$ are $C_0$ and $C_1$, we denote
$A=R(C_0,C_1)$, $B_0=C_0\cap\overline A$ and $B_1=C_1\cap\overline
A$. To each ring $A=R(C_0,C_1)$, we associate the curve family
$\Gamma_A=\Delta(B_0,B_1,A)$ and the modulus of $A$ is defined by
$\mod(A)=M(\Gamma_A)$. Next, the capacity of $A$ is by definition
${\rm cap} A =\omega_{n-1} ( \mod A)^{1-n}$.

The complementary components of the Gr\" otzsch ring $R_{G,n}(s)$ in $\mathbb
R^n$ are $\overline{B}^n$ and $[s\cdot e_1,\infty]$, $s>1$, while those of the
Teichm\" uller ring $R_{T,n}(t)$ are $[-e_1,0]$ and $[t\,e_1,\infty]$, $t>0$.
We shall need two special functions $\gamma_n(s)$, $s>1$, and $\tau_n(t)$,
$t>0$, to designate the moduli of the families of all those curves which connect
the complementary components of the Gr\"otzsch and Teichm\" uller rings in
$\mathbb R^n$, respectively.
\begin{eqnarray*}
\gamma_n(s)=M(\Gamma_s)=\gamma(s),\quad\Gamma_s=\Gamma_{R_{G,n}}(s),\\
\tau_n(t)=M(\Delta_t)=\tau(t),\quad\Delta_t=\Gamma_{R_{T,n}}(t).
\end{eqnarray*}

These functions are related by a functional identity \cite[Lemma 6]{ge}
\begin{equation}
\label{gt}
\gamma_n(s)=2^{n-1}\tau_n(s^2-1).
\end{equation}

\begin{definition}
\rm
Given $r>0$, we let $R\Psi_n(r)$ be the set of all rings $A=R(C_0,C_1)$ in
$\overline{\mathbb R^n}$ with the following properties:
\begin{enumerate}
\item
$C_0$ contains the origin and a point $a$ such that $|a|=1.$
\item
$C_1$ contains $\infty$ and a point $b$ such that $|b|=r$.
\end{enumerate}
Teichm\" uller first considered the following quantity in the planar
case ($n=2$):
$$
\tau_n(r)=\inf M(\Gamma_A)=\inf\{p(x)\,|\,|x|=r\},
$$
where the infimum is taken over all rings $A\in R\Psi_n(r)$ and $p(x)$ is
as in (\ref{pxdef}). For
$n\geqslant 3$ it was studied in \cite{ge} and in \cite{hv}.
\end{definition}
\begin{theorem}
\cite[Theorem 11.7]{v1} \label{H_n_properties}  The function
$\tau_n:(0,\infty)\rightarrow\mathbb (0,\infty)$ has the following
properties:
\begin{enumerate}
\item
$\tau_n$ is decreasing,
\item
$\lim_{r\rightarrow\infty}\tau_n(r)=0\, ,$
\item
$\lim_{r\rightarrow 0}\tau_n(r)=\infty\, ,$
\item
$\tau_n(r)>0$ for every $r>0$.
\end{enumerate}
Moreover, $\tau_n:(0,\infty)\rightarrow\mathbb (0,\infty)$ and
$\gamma_n:(1,\infty)\rightarrow\mathbb (0,\infty)$ are
homeomorphisms.
\end{theorem}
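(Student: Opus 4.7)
The plan is to reduce everything to established modulus calculations and then assemble the homeomorphism claim at the end. Positivity (4) is immediate: each set $[-e_1,0]$ and $[re_1,\infty]$ is a non-degenerate continuum in $\overline{\mathbb R^n}$, and the modulus of the curve family joining two disjoint non-degenerate continua is strictly positive (a standard fact from \cite{v1}, e.g. via capacity of a ring). For monotonicity (1), I would fix $0 < r_1 < r_2$ and observe $R_{T,n}(r_1) \subset R_{T,n}(r_2)$, because the latter's complement $[-e_1,0]\cup[r_2 e_1,\infty]$ omits the segment $(r_1 e_1, r_2 e_1]$. Given $\gamma \in \Gamma_{R_{T,n}(r_2)}$ parametrized on $[a,b]$, consider its initial subarc up to the first time $t^* \in (a,b]$ at which $\gamma(t^*) \in [r_1 e_1,\infty]$; this subarc lies in $R_{T,n}(r_1)$ and belongs to $\Gamma_{R_{T,n}(r_1)}$. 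Hence any admissible $\rho$ for $\Gamma_{R_{T,n}(r_1)}$, extended by zero, is admissible for $\Gamma_{R_{T,n}(r_2)}$, giving $\tau_n(r_2) \leqslant \tau_n(r_1)$. Strict inequality is obtained by exhibiting a test density supported also on the extra slab $[r_1 e_1, r_2 e_1]\times B^{n-1}(0,\delta)$ that strictly beats the modulus of $\Gamma_{R_{T,n}(r_1)}$.

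For the limits, the estimate $r\to\infty$ is the easy one: the spherical shell $A_r=\{x\in\mathbb R^n:1<|x|<r\}$ satisfies $A_r \subset R_{T,n}(r)$, and every curve in $\Gamma_{R_{T,n}(r)}$ contains a subcurve joining $S^{n-1}(1)$ to $S^{n-1}(r)$ inside $A_r$; the modulus of the latter family is the classical $\omega_{n-1}(\log r)^{1-n}$, so
\[
\tau_n(r) \leqslant \omega_{n-1}(\log r)^{1-n}\;\longrightarrow\;0.
\]
For $r\to 0^+$ the two continua both accumulate at the origin, and I would invoke the functional identity (\ref{gt}), $\gamma_n(s)=2^{n-1}\tau_n(s^2-1)$, to transfer the question to the Grötzsch case: as $r\to 0^+$ we have $s=\sqrt{r+1}\to 1^+$, and $\gamma_n(s)\to\infty$ because the complementary components $\overline{B}^n$ and $[se_1,\infty]$ collide at $e_1$, forcing arbitrarily short connecting curves and hence divergence of the modulus; a clean lower bound comes from Möbius-mapping the Grötzsch ring with $s$ near $1$ onto a thin spherical shell and applying the monotonicity principle.

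Continuity of $\tau_n$ on $(0,\infty)$ follows from the standard continuous dependence of modulus of a ring on its boundary continua (in Hausdorff distance), which can be proved by pinching between $\tau_n(r-\e)$ and $\tau_n(r+\e)$ using the same truncation trick as in the monotonicity step. Once continuity, strict monotonicity, and the two limits are in hand, $\tau_n:(0,\infty)\to(0,\infty)$ is a strictly decreasing continuous bijection and hence a homeomorphism; the corresponding statement for $\gamma_n:(1,\infty)\to(0,\infty)$ is immediate from (\ref{gt}) and the fact that $s\mapsto s^2-1$ is a homeomorphism $(1,\infty)\to(0,\infty)$. The main obstacle I anticipate is making the \emph{strict} inequality in the monotonicity step (and, relatedly, the quantitative lower bound as $r\to 0^+$) fully rigorous without appealing to the identity (\ref{gt}); the cleanest workaround is precisely to lean on (\ref{gt}) and the known strict monotonicity of $\gamma_n$, which is established in \cite{ge} and \cite{v1} via symmetrization.
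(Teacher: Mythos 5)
The paper itself offers no proof here; it cites V\"ais\"al\"a \cite[Theorem 11.7]{v1} verbatim, so there is no internal argument to compare against. Judged on its own, your proposal has genuine gaps, one of which you did not flag.

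The most serious is the continuity step. You claim continuity ``can be proved by pinching between $\tau_n(r-\e)$ and $\tau_n(r+\e)$ using the same truncation trick as in the monotonicity step.'' But the truncation/minorization argument yields only $\tau_n(r+\e)\le\tau_n(r)\le\tau_n(r-\e)$, which is exactly the monotonicity you already had; it gives no control on $\tau_n(r\pm\e)-\tau_n(r)$ as $\e\to 0$, and a monotone function may jump. The actual route (as in \cite{v1}) is to construct a bi-Lipschitz self-map of $\overline{\mathbb R^n}$ with dilatation tending to $1$ carrying $R_{T,n}(r)$ onto $R_{T,n}(r')$ and invoke quasi-invariance of modulus, or else to establish continuity of $\gamma_n$ and transfer through (\ref{gt}). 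The $r\to 0^+$ argument is also soft: ``arbitrarily short connecting curves'' is not by itself a reason for modulus to blow up, and ``M\"obius-mapping the Gr\"otzsch ring with $s$ near $1$ onto a thin spherical shell'' cannot be taken literally, since M\"obius maps preserve modulus and a Gr\"otzsch ring is not M\"obius-equivalent to an annulus. What is wanted is a Loewner-type lower bound for $M(\Delta(E,F))$ when $\mathrm{diam}\,E,\,\mathrm{diam}\,F$ are bounded below and $d(E,F)\to 0$ (the estimate the paper records as Lemma 2.44 from \cite{vu1} is exactly of this form), or the known divergence of $\gamma_n$ at $1^+$ from \cite{ge} combined with (\ref{gt}). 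Finally, you correctly flag strict monotonicity as a soft spot and propose to lean on Gehring's symmetrization result through (\ref{gt}); but once you commit to importing $\gamma_n$'s properties from \cite{ge}/\cite{v1}, it is cleaner to import the full list there and obtain every claim for $\tau_n$ from (\ref{gt}) alone, rather than proving some items directly and citing others. The parts you do prove directly --- positivity, non-strict monotonicity by minorization via the initial-subarc trick, and the decay $\tau_n(r)\le\omega_{n-1}(\log r)^{1-n}$ from the inscribed annulus $\{1<|x|<r\}$ --- are correct.
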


From the definition of $\tau_n$ and from the conformal invariance of
the modulus, we obtain the following estimate:
\begin{theorem}
\label{function_chi_n}
Suppose that $A=R(C_0,C_1)$ is a ring and
that $a,b\in C_0$ and $c,\infty\in C_1$. Then
$$
M(\Gamma_A)\geqslant\tau_n\left(\frac{|c-a|}{|b-a|}\right).
$$
Here equality holds for the Teichm\"uller ring, when $a=0, b= -e_1,
c= te_1, t>0$ and $C_0=[-e_1,0], C_1=[te_1,\infty).$
\end{theorem}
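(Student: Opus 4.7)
The plan is to reduce the desired inequality directly to the definition of $\tau_n$ as an infimum over the class $R\Psi_n(r)$, by exploiting the M\"obius invariance of the modulus of a curve family.

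Set $r = |c-a|/|b-a|$ and consider the similarity
$$\varphi(x) = \frac{x-a}{|b-a|},$$
which is the composition of a translation and a dilation, hence a M\"obius transformation of $\overline{\mathbb R^n}$; consequently $M(\Gamma_{\varphi(A)}) = M(\Gamma_A)$. Writing $A' = \varphi(A) = R(\varphi(C_0), \varphi(C_1))$, one checks the membership conditions of $R\Psi_n(r)$: the component $\varphi(C_0)$ contains $\varphi(a) = 0$ together with the point $\varphi(b)$ of norm $1$, while $\varphi(C_1)$ contains $\varphi(\infty) = \infty$ together with $\varphi(c)$, whose norm is $|c-a|/|b-a| = r$. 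Hence $A' \in R\Psi_n(r)$.

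By the definition of $\tau_n$ recalled just above,
$$M(\Gamma_A) \,=\, M(\Gamma_{A'}) \,\geq\, \tau_n(r) \,=\, \tau_n\!\left(\frac{|c-a|}{|b-a|}\right),$$
which is the desired estimate. For the equality statement, in the configuration $a = 0$, $b = -e_1$, $c = te_1$ we have $r = t$, and the ring $R([-e_1,0],[te_1,\infty])$ is by definition the Teichm\"uller ring $R_{T,n}(t)$, whose modulus equals $\tau_n(t)$ by the definition of $\tau_n$ recalled earlier in the text; so equality holds.

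The argument has no analytic obstacle; it is essentially a normalization. The only underlying subtlety is that the two descriptions of $\tau_n$ appearing in the paper — as $M(\Gamma_{R_{T,n}(r)})$ on the one hand and as the infimum over $R\Psi_n(r)$ on the other — must coincide, which amounts to the classical extremality of the Teichm\"uller ring; this is used implicitly when identifying the equality case but is not needed for the inequality itself.
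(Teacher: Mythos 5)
Your proof is correct and follows the same route the paper intends: the paper introduces the theorem with the remark that it follows ``from the definition of $\tau_n$ and from the conformal invariance of the modulus,'' and that is precisely your normalization $\varphi(x)=(x-a)/|b-a|$ followed by an appeal to the infimum defining $\tau_n$ over $R\Psi_n(r)$. Your closing observation — that the equality case implicitly uses the coincidence of the two descriptions of $\tau_n$, i.e.\ the extremality of the Teichm\"uller ring via symmetrization — is exactly the point the paper makes in its own remark following Theorem~\ref{extremality}.
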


\begin{theorem}
\label{extremality} Let $C\subset B^n$ be a connected compact set
containing $0$ and $x$, where $|x|<1$. Then the capacity of a ring
domain with components $C_0=C$, $C_1=\{x:|x|\geqslant 1\}$ is at
least $\gamma_n(\frac 1{|x|})$. Here equality holds for the ring
with the complementary components $[0,|x|e_1]$ and ${\mathbb R}^n
\setminus B^n .$
\end{theorem}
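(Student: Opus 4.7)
The plan is to reduce the problem to the classical Gr\"otzsch extremal configuration via an inversion in the unit sphere, and then invoke the extremality of the Gr\"otzsch ring $R_{G,n}(s)$. First, by applying a rotation of $\mathbb{R}^n$, which is a Euclidean isometry and hence preserves the modulus of every curve family, I may assume $x = |x|e_1$. Setting $s = 1/|x| > 1$, the goal becomes to establish that $M(\Gamma_A) \geqslant \gamma_n(s)$, with equality iff $C = [0,|x|e_1]$.

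Next I apply the M\"obius inversion $\varphi(y) = y/|y|^2$, with the conventions $\varphi(0)=\infty$ and $\varphi(\infty)=0$. Being conformal on $\overline{\mathbb{R}^n}$, $\varphi$ preserves the modulus of every curve family. A direct computation shows that $\varphi$ sends $\{y:|y|\geqslant 1\}\cup\{\infty\}$ onto $\overline{B^n}$, and maps $C$ onto a continuum $C^* := \varphi(C\setminus\{0\})\cup\{\infty\}$ lying in $(\mathbb{R}^n\setminus B^n)\cup\{\infty\}$ and containing both $\varphi(x) = s e_1$ and $\infty$. So the inverted ring is $R(\overline{B^n}, C^*)$, and the task reduces to showing
$$M(R(\overline{B^n},C^*))\geqslant \gamma_n(s),$$
with equality iff $C^*=[s e_1,\infty]$.

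This is precisely the extremal property of the Gr\"otzsch ring: among all rings whose complementary components are $\overline{B^n}$ and a continuum $D$ containing $\infty$ together with at least one point of norm $s$, the Gr\"otzsch ring $R_{G,n}(s)$ (for which $D = [s e_1,\infty]$) attains the minimum modulus $\gamma_n(s)$. I would cite this from the standard reference \cite{v1}; the underlying argument is spherical symmetrization of $D$ about the positive $x_1$-axis, which does not increase the modulus of the associated ring, and the only symmetric connected set joining $S^{n-1}(0,s)$ to $\infty$ within $(\mathbb{R}^n \setminus B^n(0,s)) \cup \{\infty\}$ is the ray itself. Pulling the equality case back through $\varphi^{-1}$ then identifies the extremal $C$ as $[0, |x|e_1]$, as claimed.

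The main obstacle is really just invoking the Gr\"otzsch extremality in the last step; this is a classical but non-trivial symmetrization result, and I would not reprove it. A natural-looking alternative would be a direct application of Theorem \ref{function_chi_n} with $a, b \in C_0 = C$ and $c \in C_1 \cap S^{n-1}$, but the resulting Teichm\"uller-type lower bound $\tau_n(|c-a|/|b-a|)$ turns out to be strictly weaker than $\gamma_n(1/|x|)$ once one converts via the identity $\gamma_n(s)=2^{n-1}\tau_n(s^2-1)$ in (\ref{gt}). Hence the inversion-plus-symmetrization route is essential for obtaining the sharp Gr\"otzsch bound.
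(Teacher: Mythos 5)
Your proposal is correct and rests on the same foundation as the paper, which for this theorem gives no argument of its own but simply notes that it "is based on the symmetrization theorem in \cite[Theorem 1]{ge}"; your reduction via the inversion $\varphi(y)=y/|y|^2$ and rotation to the normalized Gr\"otzsch configuration $R(\overline{B^n},D)$ with $se_1,\infty\in D$, followed by Gehring's spherical symmetrization, is exactly the standard way this citation is filled in. The inversion is a convenience rather than a necessity (one could equally well spherically symmetrize $C$ directly about the $e_1$-axis, observe $C^*\supseteq[0,|x|e_1]$, and use monotonicity of $M$, since the other complementary component $\overline{\mathbb{R}^n}\setminus B^n$ is already symmetric), but either route amounts to the same appeal to the Gr\"otzsch extremal property.
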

These theorems state the extremal properties of the Teichm\"uller
and Gr\" otzsch rings and their proofs are based on the
symmetrization theorem in \cite[Theorem 1]{ge}.

\section{Moduli of continuity}

In this section we investigate the moduli of continuity of the identity
mappings $id_G:(G,\rho)\longrightarrow(G,d)$ where $\rho$ and $d$ are chosen from the
set of interesting metrics defined on $G$ (like quasihyperbolic metric $k$, modulus metric
$\mu$ etc.).

Hence, we are interested in results of type
\begin{equation}
\label{interesting}
d(x,y)\leqslant\zeta(\rho(x,y))=\zeta_\rho^d(\rho(x,y)),\quad x,y\in G.
\end{equation}
We give several estimates of this type, and then we collect these
results in a charts at the end of this section.

Note that in our charts we have $\lambda_G^{-1}$, as well as in the
inequalities of type (\ref{interesting}); however reader should be
aware that in general $\lambda_G^{-1}$ is not a metric. In fact
$\lambda_G^{1/{1-n}}$ is always a metric. For more details on
this matter see \cite{vu4}.

It is well known that $j_G(x,y)\leqslant k_G(x,y)$, so $\zeta_k^j(t)=t$.

\begin{lemma}
For $x,y\in G$
$$
k_G(x,y)\geqslant\log\left(1+\frac{m(x,y)}{\min\{d(x),d(y)\}}\right)\geqslant j_G(x,y).
$$
where $m(x,y)=\inf\{l(\gamma)\,|\,\gamma\mbox{ is a curve joining }x\mbox{ and }y\mbox{ in }G\}$.
\end{lemma}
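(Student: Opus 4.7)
\medskip

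The plan is to handle the two inequalities separately, with the left inequality being the substantive one and the right inequality essentially immediate.

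For the right inequality, I would observe that for any rectifiable curve $\gamma$ joining $x$ to $y$ in $G$, one has $|x-y| \leq l(\gamma)$, and taking infimum over $\gamma$ yields $|x-y| \leq m(x,y)$. Since $t \mapsto \log(1+t)$ is increasing, dividing by $\min\{d(x),d(y)\}$ and taking logarithms gives
$$j_G(x,y) = \log\!\left(1+\frac{|x-y|}{\min\{d(x),d(y)\}}\right) \leq \log\!\left(1+\frac{m(x,y)}{\min\{d(x),d(y)\}}\right).$$

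For the left inequality, the key observation is the Lipschitz property of the distance function: for any $z \in G$,
$$d(z,\partial G) \leq d(x,\partial G) + |z-x|.$$
Applying this along a rectifiable curve $\gamma$ from $x$ to $y$, parametrized by arclength with $\gamma(0)=x$, we have $d(\gamma(s),\partial G) \leq d(x) + s$. Therefore
$$\int_\gamma \frac{ds}{d(z,\partial G)} \geq \int_0^{l(\gamma)} \frac{ds}{d(x)+s} = \log\!\left(1+\frac{l(\gamma)}{d(x)}\right).$$
Taking infimum over all such $\gamma$ yields $k_G(x,y) \geq \log(1+m(x,y)/d(x))$. Repeating the argument starting from $y$ instead gives the analogous bound with $d(y)$, and taking the better of the two produces
$$k_G(x,y) \geq \log\!\left(1+\frac{m(x,y)}{\min\{d(x),d(y)\}}\right).$$

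The only real obstacle is recognizing that one must use the $1$-Lipschitz property of $z \mapsto d(z,\partial G)$ to bound the quasihyperbolic integrand from below along a curve; everything else is direct monotonicity of $\log(1+t)$ and the trivial inequality $|x-y|\leq l(\gamma)$. Symmetrizing in $x$ and $y$ at the end is what converts the natural one-sided bound with $d(x)$ into the sharper bound involving $\min\{d(x),d(y)\}$.
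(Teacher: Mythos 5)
Your proof is correct and follows essentially the same route as the paper: the heart of the argument in both cases is the $1$-Lipschitz property of $z\mapsto d(z,\partial G)$, which gives $d(\gamma(t))\le d(x)+t$ along an arclength-parametrized curve, followed by integration and taking the infimum. The only cosmetic difference is that the paper assumes $d(x)\le d(y)$ at the outset instead of symmetrizing at the end, and it presents both inequalities in a single chain rather than separately.
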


\begin{proof}
We may assume $0<d(x)\leqslant d(y)$. Choose a rectifiable arc
$\gamma:[0,s]\rightarrow G$ from $x$ to $y$, parametrized by arc
length:
$$
\gamma(0)=x,\qquad
\gamma(s)=y;
$$
obviously $s\geqslant|x-y|$. For any $0\leqslant t\leqslant s$ we have
$$
d(\gamma(t))\leqslant d(x)+t,\qquad\mbox{(a key observation)},
$$
so,
$$
l_\rho(\gamma)\geqslant\int_0^s\frac{dt}{d(x)+t}=\log\frac{d(x)+s}{d(x)}\geqslant
\log\frac{d(x)+|x-y|}{d(x)}=j_G(x,y).
$$
\end{proof}

The reverse inequality is not true in general; domain $G$ such that there is a constant
$c>0$ such that $k_G\leqslant c\,j_G$ is called uniform domain, so in that case
$\zeta_j^k(t)=ct$.
\begin{lemma}
\cite[Lemma 2.21]{vu1}
Let $G$ be a proper subdomain of $\mathbb R^n$. If $x\in G$, $d(x)=d(x,\partial G)$
and $y\in B^n(x,d(x))=B_x$, $x\neq y$, then
\begin{equation}
\lambda_G(x,y)\geqslant\lambda_{B_x}(x,y)\geqslant
c_n\log\left(\frac{d(x)}{|x-y|}\right)
\end{equation}
where $c_n$ is the positive number in \cite[(10.11)]{v1}. There
exists a strictly increasing function
$h_1:(0,+\infty)\longrightarrow(0,+\infty)$ with $\lim_{t\rightarrow
0_+}h_1(t)=0$ and $\lim_{t\rightarrow+\infty}h_1(t)=+\infty$,
depending only on $n$, such that
\begin{equation}
\lambda_G(x,y)\leqslant h_1\left(\frac{\min\{d(x),d(y)\}}{|x-y|}\right)
\end{equation}
for $x,y\in G$, $x\neq y$. If $x\in G$ and $y\in B^n(x,d(x))=B_x$, $x\neq y$, then
\begin{equation}
\label{mu-G}
\mu_G(x,y)\leqslant\mu_{B_x}(x,y)=capR_G\left(\frac{d(x)}{|x-y|}\right)\leqslant
\omega_{n-1}\left(\log\left(\frac{d(x)}{|x-y|}\right)\right)^{1-n}.
\end{equation}
\end{lemma}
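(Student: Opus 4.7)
The plan is to handle the three inequality chains separately. In each case the key tool is the similarity $\psi: B_x \to B^n$, $\psi(z) = (z-x)/d(x)$, which normalizes $x$ to $0$ and makes $|\psi(y)| = |x-y|/d(x)$, so that by conformal invariance of $\lambda$ and $\mu$ one may work in $B^n$ with a single parameter.

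For $\lambda_G(x,y) \geq \lambda_{B_x}(x,y)$ I would use domain-monotonicity: given admissible end-curves $C_x, C_y$ for $\lambda_G$, truncate them at their first exit from $B_x$ to obtain end-curves $C_x^*, C_y^*$ going to $\partial B_x$ within $B_x$. Any connecting curve in $\Delta(C_x^*, C_y^*; B_x)$ automatically lies in $\Delta(C_x, C_y; G)$, so the family inclusion gives the modulus inequality and taking infima yields the claim. The lower bound $\lambda_{B_x}(x,y) \geq c_n \log(d(x)/|x-y|)$ then follows by conformal invariance, reducing to $\lambda_{B^n}(0, \psi(y))$, together with the spherical-cap / Teichm\"uller-capacity estimate recorded as \cite[(10.11)]{v1}.

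For the upper bound $\lambda_G(x,y) \leq h_1(\min\{d(x),d(y)\}/|x-y|)$ I would exhibit explicit end-curves. Picking $p_x, p_y \in \partial G$ that realize $d(x), d(y)$ and taking $C_x = [x,p_x]$, $C_y = [y,p_y]$, one has $\lambda_G(x,y) \leq M(\Delta(C_x,C_y;G)) \leq M(\Delta(C_x,C_y;\overline{\mathbb R^n}))$ by family monotonicity. The last quantity is a modulus of two Euclidean segments of lengths $d(x)$ and $d(y)$ separated by at most $|x-y|$; after a M\"obius map placing one segment in the standard Teichm\"uller position, Theorem \ref{function_chi_n} combined with the homeomorphism property of $\tau_n$ from Theorem \ref{H_n_properties} bounds it purely in terms of $\min\{d(x),d(y)\}/|x-y|$. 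Packaging this estimate as a strictly increasing function with the required limits at $0$ and $+\infty$ defines $h_1$. Extracting the correct monotonicity and boundary asymptotics of $h_1$, rather than mere existence, is the technically delicate step.

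Finally, for the $\mu$ chain: $\mu_G \leq \mu_{B_x}$ follows from the density-extension trick. Given any curve $C_{xy} \subset B_x$ and admissible density $\rho$ for $\Delta(C_{xy}, \partial B_x; B_x)$, extend $\rho$ by zero outside $B_x$; any curve in $\Delta(C_{xy}, \partial G; G)$ must cross $\partial B_x$ on its way from $C_{xy}$ to $\partial G$, and its initial sub-arc up to the first crossing lies in $\Delta(C_{xy}, \partial B_x; B_x)$, so the extended $\rho$ remains admissible with the same $n$-energy. The identification $\mu_{B_x}(x,y) = \mathrm{cap}\,R_G(d(x)/|x-y|)$ comes from applying $\psi$, then inverting $w \mapsto w/|w|^2$ to match the standard Gr\"otzsch configuration, and invoking Theorem \ref{extremality} to identify the radial segment as the $\mu$-extremal curve. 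The closing inequality is standard: the Gr\"otzsch curve family is contained in the curve family of the spherical annulus $\{1 < |w| < d(x)/|x-y|\}$, whose modulus equals $\omega_{n-1}(\log(d(x)/|x-y|))^{1-n}$.
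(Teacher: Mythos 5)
The paper does not actually prove this lemma; it is cited verbatim from \cite[Lemma 2.21]{vu1}, so your proposal can only be measured against the standard argument.

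Your treatment of the first and third chains is essentially right. The domain-monotonicity for $\lambda$, via truncation of the end-continua at $\partial B_x$ and family inclusion, is the standard argument, and the reduction to $\lambda_{B^n}(0,\psi(y))$ by similarity is how the lower bound in terms of $c_n\log(d(x)/|x-y|)$ is obtained. For the $\mu$-chain, your density-extension step is just the overflowing (minorization) principle for the families $\Delta(C_{xy},\partial G;G)<\Delta(C_{xy},\partial B_x;B_x)$, and the identification with the Gr\"otzsch capacity by similarity plus inversion, followed by majorization by the spherical annulus $\{1<|w|<d(x)/|x-y|\}$, is exactly correct.

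The middle step, the upper bound $\lambda_G(x,y)\leqslant h_1(\min\{d(x),d(y)\}/|x-y|)$, is where your argument breaks. Theorem \ref{function_chi_n} (and the symmetrization theorem behind it) asserts a \emph{lower} bound, $M(\Gamma_A)\geqslant\tau_n(|c-a|/|b-a|)$: the Teichm\"uller ring is the extremal configuration \emph{minimizing} the modulus of the joining family subject to the endpoint constraints. It therefore cannot be used to bound $M(\Delta(C_x,C_y;\overline{\mathbb R^n}))$ from above, and in fact no purely endpoint-based upper bound exists, since two segments with prescribed endpoint distances can be brought arbitrarily close along their interiors, driving the modulus of the joining family to infinity. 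A second, concrete problem is that the nearest-point segments $[x,p_x]$ and $[y,p_y]$ need not be disjoint at all (take $G$ a sector or a slit plane and $x,y$ on opposite sides of the reentrant boundary), in which case the modulus is already infinite before any estimate is attempted. The correct device here is the opposite: choose, say with $d(x)\leqslant d(y)$, the end-continuum at $x$ towards the nearest boundary point so that $C_x\subset\overline{B}^n(x,d(x))$, and the end-continuum at $y$ along the ray from $x$ through $y$ \emph{away} from $x$, extended to $\partial G$ (or to $\infty$), so that $C_y\cap B^n(x,|x-y|)=\emptyset$. When $d(x)<|x-y|$ the annulus $B^n(x,|x-y|)\setminus\overline{B}^n(x,d(x))$ separates the two, and the minorization principle gives $\lambda_G(x,y)\leqslant\omega_{n-1}\bigl(\log(|x-y|/d(x))\bigr)^{1-n}$; the remaining case $d(x)\geqslant|x-y|$ is absorbed into the definition of $h_1$ by a separate elementary estimate. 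That is the source of the function $h_1$; the Teichm\"uller lower bound is the wrong tool in this direction.
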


From (\ref{mu-G}) we get $\mu_G(x,y)\leqslant\gamma\left(\frac{d(x)}{|x-y|}\right)$ for
$x\in G$ and $y\in B_x$. It is equivalent with
$\mu_G(x,y)\leqslant\gamma\left(\frac 1r\right)$ where $r=\frac{|x-y|}{d(x)}$.

We can express $j_G(x,y)$ in terms of $r$: $r=e^j-1$ and obtain
$$
\mu_G(x,y)\leqslant\gamma\left(\frac{1}{e^j-1}\right).
$$

This gives $\zeta_j^\mu(t)=\gamma\left(\frac 1{e^t-1}\right)$ locally.

\begin{lemma} \cite[Lemma 2.39]{vu1}
\label{vu1l2-39}
For $n\geqslant 2$ there exists strictly
increasing function $h_2:[0,+\infty)\longrightarrow[0,+\infty)$ with
$h_2(0)=0$ and $\lim_{t\rightarrow+\infty}h_2(t)=+\infty$ with the
following properties.

If $E$ is closed and $F$ is compact in $\mathbb R^n$ then
\begin{equation}
M(\Delta(E,F))\leqslant h_2(T);\quad T=\min\{j_{\mathbb R^n\setminus E}(F),j_{\mathbb R^n\setminus F}(E)\}.
\end{equation}
In particular, if $G$ is a proper subdomain of $\mathbb R^n$, then
\begin{equation}
\label{h2}
\mu_G(x,y)\leqslant h_2(3k_G(x,y))
\end{equation}
for all $x,y\in G$. Moreover, there are positive numbers $b_1,b_2$ depending only on $n$ such that
\begin{equation}
\mu_G(x,y)\leqslant b_1k_G(x,y)+b_2
\end{equation}
for all $x,y\in G$.
\end{lemma}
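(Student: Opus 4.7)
My plan is to prove the three claims in sequence, using the general upper bound of Part~1 as the engine for the $\mu_G$ estimates.

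For Part~1, since $M(\Delta(E,F))$ is symmetric in $(E,F)$ I would assume without loss of generality that $T=j_{\mathbb R^n\setminus E}(F)$. Unwinding the definition of $j$-diameter gives the geometric consequence $\diam F\le(e^T-1)\,d(F,E)$. When $T<\log 2$, fixing any $z\in F$ places $F$ inside $\overline B(z,\diam F)$, while $E$ lies outside the strictly larger ball $B(z,d(z,E))$; every curve in $\Delta(E,F)$ crosses the spherical annulus in between, so monotonicity of the modulus together with the capacity formula for round rings yields
\[
M(\Delta(E,F))\le\omega_{n-1}\bigl(\log\tfrac{1}{e^T-1}\bigr)^{1-n},
\]
a quantity that tends to $0$ as $T\to 0^+$. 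For $T\ge\log 2$ I would instead choose $a\in E$ and $b\in F$ realising $d(E,F)$ together with $c\in F$ realising $\diam F$, M\"obius-normalise the triple $(a,b,c)$ to a Teichm\"uller-standard position (so that $|c-a|/|b-a|$ is controlled by $e^T$), and invoke the symmetrisation theorem \cite[Theorem~1]{ge} underlying Theorem~\ref{function_chi_n} to obtain an upper bound depending only on $T$. Taking a continuous, strictly increasing envelope of the two regimes produces the required function $h_2$.

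For Part~2, I would choose $C_{xy}$ to be a (near-)quasihyperbolic geodesic from $x$ to $y$ and apply Part~1 with $E=C_{xy}$ and $F=\partial G$, so that $\mu_G(x,y)\le M(\Delta(C_{xy},\partial G))\le h_2(T)$. The $j_G$-diameter of $C_{xy}$ coincides with $j_{\mathbb R^n\setminus\partial G}(C_{xy})$ and is bounded by the $k_G$-diameter of $C_{xy}$, namely $k_G(x,y)$, because $j_G\le k_G$. A triangle-type splitting of $j_{\mathbb R^n\setminus C_{xy}}(\partial G)$ through three $k_G$-controlled increments (from a boundary point to its nearest point on $C_{xy}$, along $C_{xy}$, and back out to a second boundary point) produces the factor~$3$, giving $T\le 3\,k_G(x,y)$.

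For Part~3, I would partition a $k_G$-geodesic between $x$ and $y$ into $N=\lceil k_G(x,y)/K_0\rceil$ subsegments $\gamma_i$ of $k_G$-length at most $K_0$, where $K_0<(\log 2)/3$ is chosen so that the spherical-annulus estimate of Part~1 applies uniformly on each piece; subadditivity of modulus then gives
\[
\mu_G(x,y)\le M(\Delta(C_{xy},\partial G;G))\le\sum_{i=1}^N M(\Delta(\gamma_i,\partial G;G))\le N\,h_2(3K_0),
\]
whence $\mu_G(x,y)\le b_1\,k_G(x,y)+b_2$ with $b_1=h_2(3K_0)/K_0$ and $b_2=h_2(3K_0)$ depending only on $n$. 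The principal obstacle will be the regime $T\ge\log 2$ of Part~1: the monotonicity principle for modulus runs the wrong way for an upper bound, so the proof must rely on Gehring's symmetrisation tool together with a careful M\"obius normalisation isolating $T$ as the sole free parameter, rather than on the elementary spherical-ring comparison that handled the small-$T$ case.
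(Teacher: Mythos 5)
The paper does not supply a proof of this lemma at all --- it is imported verbatim as a citation to Vuorinen's 1985 paper \cite{vu1}, so there is no in-text argument to compare your attempt against. I will therefore evaluate your sketch on its own.

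Your Parts~2 and~3 are essentially sound once Part~1 is available. For Part~2, with $E=\partial G$ (closed) and $F=C_{xy}$ (compact) the inequality $\partial(\mathbb R^n\setminus\partial G)=\partial G$ gives $j_{\mathbb R^n\setminus\partial G}\equiv j_G$ on $G$, so taking $C_{xy}$ to be a quasihyperbolic geodesic immediately yields $T\le j_G(C_{xy})\le k_G(x,y)$; the three-step ``triangle splitting'' you invoke to manufacture the factor $3$ is not needed, since the minimum in the definition of $T$ already lets you discard the $j_{\mathbb R^n\setminus C_{xy}}(\partial G)$ term entirely. Part~3's chaining of $N=\lceil k_G(x,y)/K_0\rceil$ geodesic pieces, together with countable subadditivity of modulus, is the standard way to upgrade a local bound to a linear global one and is correct. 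One small point worth flagging: your WLOG at the start of Part~1 silently uses that if $j_{\mathbb R^n\setminus F}(E)<\infty$ then $E$ is in fact bounded (hence compact, as it is closed); this follows by the same computation you do for $F$, but it should be said.

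The genuine gap is the large-$T$ half of Part~1. For $T\ge\log 2$ you propose to Möbius-normalise and then ``invoke the symmetrisation theorem [Theorem~1]{ge} underlying Theorem~\ref{function_chi_n}''. But Gehring's spherical symmetrisation and the resulting Theorem~\ref{function_chi_n} give \emph{lower} bounds of the form $M(\Gamma_A)\ge\tau_n(\cdot)$ --- they say the Teichmüller ring is extremal in the sense of \emph{minimising} capacity (equivalently, maximising the ring modulus) among separating configurations. They cannot be used to bound $M(\Delta(E,F))$ \emph{from above} in terms of $\diam F/d(E,F)$. You acknowledge in your closing sentence that monotonicity runs the wrong way, but then reach again for symmetrisation, which runs the wrong way for exactly the same reason. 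The correct way to close the large-$T$ regime is quantitative covering: using compactness, cover $F$ by a bounded number $N(n,T)$ of balls $B(z_i, d(z_i,E)/2)$ with $z_i\in F$ (the count is controlled by $T$ because $F\subset\overline B(z,(e^T-1)d(z,E))$ for any $z\in F$), apply the small-$T$ spherical ring estimate to each piece $\Delta(E,F\cap B_i)$, and sum. That is how one actually obtains a finite, $T$-dependent upper bound when the ring comparison no longer nests; symmetrisation contributes nothing here.

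As presented, your sketch therefore establishes only the $T<\log 2$ case of Part~1, and Parts~2--3 inherit that restriction (with your choice $K_0<(\log 2)/3$, the chained bound in Part~3 does go through, but the full statement for all $T$ does not).
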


From (\ref{h2}) we have $\zeta_k^\mu(t)=h_2(3t)$.

\begin{lemma}
\cite[Lemma 2.44]{vu1} If $E,F\subseteq\mathbb R^n$ are disjoint
continua, then
$$
M(\Delta(E,F))\geqslant\bar{c}_n\min\{j_{\mathbb R^n\setminus E}(F),j_{\mathbb R^n\setminus F}(E)\}
$$
where $\bar c_n$ is a positive number depending only on $n$.
\end{lemma}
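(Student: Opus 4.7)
The plan is to reduce the estimate to the Teichm\"uller extremality (Theorem \ref{function_chi_n}) combined with the known logarithmic lower bound for $\tau_n$ near zero.

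By the symmetry in the minimum I may assume
$$T := \min\{j_{\mathbb{R}^n\setminus E}(F),\ j_{\mathbb{R}^n\setminus F}(E)\} = j_{\mathbb{R}^n\setminus E}(F).$$
Fix $\varepsilon>0$. By definition of the supremum defining $j_{\mathbb{R}^n\setminus E}(F)$, pick $x,y\in F$ with $j_{\mathbb{R}^n\setminus E}(x,y)>T-\varepsilon$; after relabelling assume $d(x,E)\le d(y,E)$, so that this $j$-distance equals $\log(1+r)$ with $r:=|x-y|/d(x,E)$. Closedness of $E$ provides an $a\in E$ realizing $|a-x|=d(x,E)$.

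Next I would invoke Theorem \ref{function_chi_n} after a M\"obius normalization. Choose a M\"obius $\phi$ that sends $a$ to $\infty$; then $\phi(E)$ is an unbounded continuum containing $\infty$ while $\phi(F)$ remains a bounded continuum containing $\phi(x)$ and $\phi(y)$. By conformal invariance, $M(\Delta(E,F))=M(\Delta(\phi(E),\phi(F)))$, and by monotonicity of the modulus I may pass to a subring $A=R(C_0,C_1)$ with $C_0\subseteq \phi(F)$ joining $\phi(x)$ to $\phi(y)$ and $C_1\subseteq\phi(E)$ containing $\infty$ together with another point of $\phi(E)$. Theorem \ref{function_chi_n} (applied with the roles of the two complementary components exchanged, which is legitimate because $\Delta(E,F)=\Delta(F,E)$) then yields
$$M(\Delta(E,F))\geq \tau_n(C_n/r),$$
where $C_n$ depends only on $n$. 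The crux is to verify, using $|a-x|=d(x,E)$ and the inversion $\phi(z)=(z-a)/|z-a|^2$, that the Euclidean ratio in the Teichm\"uller bound is comparable to $1/r$ up to dimensional constants; this is the main technical obstacle, since in two dimensions $\overline{\mathbb{R}^n}\setminus(E\cup F)$ need not be a ring and the subring must be chosen with care.

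Finally, I would invoke the classical logarithmic asymptotic of the Teichm\"uller function: there exists $c_n'>0$, depending only on $n$, such that $\tau_n(t)\geq c_n'\log(1+1/t)$ for every $t>0$ (this follows from Theorem \ref{H_n_properties}(3) and the standard explicit growth rate of $\tau_n$ near zero, cf.\ \cite{v1}). Substituting $t=C_n/r$ and absorbing the constant, one obtains
$$M(\Delta(E,F))\geq c_n'\log(1+r/C_n)\geq \bar c_n\log(1+r)>\bar c_n(T-\varepsilon)$$
with $\bar c_n=\bar c_n(n)>0$, and letting $\varepsilon\to 0$ concludes the proof. Apart from the M\"obius reduction in the second paragraph, the remainder is a routine combination of Teichm\"uller's ring extremality with the logarithmic behaviour of $\tau_n$.
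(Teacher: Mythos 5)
The lemma you are trying to prove is cited in the paper from \cite[Lemma 2.44]{vu1} without proof, so there is no ``paper's own proof'' to compare against; I therefore assess your argument on its own merits, and it has a genuine gap at precisely the step you yourself flag as the crux.

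After the reduction $T=j_{\mathbb{R}^n\setminus E}(F)$, your Möbius inversion at the nearest point $a\in E$ gives $\phi(E)\ni\infty$ and $\phi(F)$ bounded, and then you want Theorem \ref{function_chi_n} with $C_0=\phi(F)\ni\phi(x),\phi(y)$ and $C_1=\phi(E)\ni\infty,c$. The resulting bound is $\tau_n\bigl(|c-\phi(x)|/|\phi(y)-\phi(x)|\bigr)$, and for this to be $\geq\bar c_n\log(1+r)$ you need the ratio to be $\lesssim 1/r$, i.e.\ you need a finite point $c\in\phi(E)$ with $|c-\phi(x)|$ comparable to $1/d(x,E)$. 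By the inversion formula, $|\phi(e)-\phi(x)|=|e-x|/(|e-a|\,d(x,E))\geq 1/|e-a|$ for every $e\in E$ (since $|e-x|\geq d(x,E)$), so the best you can do is $|c-\phi(x)|\gtrsim 1/\mathrm{diam}(E)$. Thus the ratio is $\lesssim 1/r$ \emph{only if} $\mathrm{diam}(E)\gtrsim d(x,E)$; if $E$ is much smaller than $d(x,E)$, every finite point of $\phi(E)$ is far from $\phi(x)$ and the Teichm\"uller bound degenerates. The hypothesis that rules out this degeneracy is exactly the other half of the minimum, $T\leq j_{\mathbb{R}^n\setminus F}(E)$, which forces $E$ to have $j$-diameter at least $T$ relative to $F$ and hence forces $\mathrm{diam}(E)$ to be commensurate with $d(E,F)\cdot(e^T-1)$. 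Your sketch discards this inequality immediately after the ``WLOG'' and never reintroduces it, so the needed point $c$ is never produced. Without this the argument fails even in the extremal Teichm\"uller configuration: with $E=[te_1,\infty]$, $F=[-e_1,0]$, $x=0$, $a=te_1$ and $c=0=\phi(\infty)$ one gets the ratio $1+t$ rather than $t$ unless one pivots at $\phi(y)$ and chooses $c$ using a second point of $E$, and in general such a $c$ is only available because of the symmetrized hypothesis. You also need a second point $v\in E$ realizing (nearly) $j_{\mathbb{R}^n\setminus F}(E)$ and the resulting cross-ratio estimate; none of that is in the sketch.

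Two smaller points. The monotonicity of the modulus is used correctly (shrinking $C_0,C_1$ shrinks the joining family, so $M(\Gamma_A)\leq M(\Delta(E,F))$), but the phrase ``roles of the two complementary components exchanged'' is a red herring: Theorem \ref{function_chi_n} already has $C_0$ bounded and $C_1\ni\infty$, which is exactly your configuration with $C_0=\phi(F)$. Finally, the logarithmic lower bound $\tau_n(t)\geq c_n'\log(1+1/t)$ is correct and standard, but it does not ``follow from Theorem \ref{H_n_properties}(3)''; that item only says $\tau_n(r)\to\infty$ as $r\to 0$ with no rate, and the explicit bounds quoted in the paper, $\omega_{n-1}\bigl(\log\lambda_n^2(t+1)\bigr)^{1-n}\leq\tau_n(t)$, give only a constant lower bound as $t\to0$. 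You need a genuinely different estimate (e.g.\ \cite[Theorem 11.21]{avv} or the explicit behaviour of $\gamma_n$ near $1$) to justify the logarithmic rate, so that claim should be referenced properly rather than attributed to a statement that does not imply it.
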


\begin{corollary}
\cite[Corollary 2.46]{vu1} If $E$ and $F$ are disjoint continua in
$\overline{\mathbb R^n}$ and $\infty\in F$, then
$$
M(\Delta(E,F))\geqslant c_nj_{\mathbb R^n\setminus F}(E).
$$
\end{corollary}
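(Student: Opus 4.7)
The plan is to derive the corollary as an immediate consequence of Lemma 2.44 (the preceding lemma), exploiting the fact that $\infty\in F$ forces one of the two $j$-diameters in the minimum to be infinite, leaving only the other term.

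First I would pass from $F$ to $F'=F\cap\mathbb R^n$, observing that this does not affect $M(\Delta(E,F))$ (curves in $\Delta(E,F)$ end at finite points) and does not affect $j_{\mathbb R^n\setminus F}(E)$ (since $\mathbb R^n\setminus F=\mathbb R^n\setminus F'$). Since $E$ is a continuum in $\overline{\mathbb R^n}$ not containing $\infty$, $E$ is compact in $\mathbb R^n$; since $\infty\in F$ and $F$ is connected with more than one point, $F'$ is a closed connected unbounded subset of $\mathbb R^n$. Applying Lemma 2.44 to $E$ and $F'$ then gives
$$M(\Delta(E,F))\geqslant \bar c_n \min\{j_{\mathbb R^n\setminus E}(F'),\,j_{\mathbb R^n\setminus F}(E)\}.$$

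The substantive step is to show that $j_{\mathbb R^n\setminus E}(F')=\infty$, so the minimum is realized by the second term. Fix any $p\in F'$; since $E$ is compact and disjoint from $F'$, the quantity $d(p,E)>0$ is finite and fixed. Because $F'$ is unbounded, there exists a sequence $q_k\in F'$ with $|q_k|\to\infty$. Then $|p-q_k|\to\infty$, while $\min\{d(p,E),d(q_k,E)\}\leqslant d(p,E)$ remains bounded, so
$$j_{\mathbb R^n\setminus E}(p,q_k)\geqslant \log\left(1+\frac{|p-q_k|}{d(p,E)}\right)\longrightarrow\infty,$$
which shows $j_{\mathbb R^n\setminus E}(F')=+\infty$. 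Consequently the minimum collapses to $j_{\mathbb R^n\setminus F}(E)$, yielding the claimed inequality with $c_n=\bar c_n$.

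The only real obstacle is the bookkeeping of the hypothesis ``continuum'' in Lemma 2.44, since $F'$ is unbounded and so is not a continuum in the compact sense — one must either verify that the lemma's proof covers closed connected unbounded sets or apply it to suitable compact connected pieces of $F'$ and pass to the limit. Once that point is settled, everything else is a one-line estimate, and the geometric content is simply that $\infty\in F$ automatically forces one of the two $j$-diameters to diverge.
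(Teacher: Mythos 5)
The paper simply cites \cite[Corollary 2.46]{vu1} and gives no proof, so the comparison must be on the merits of your argument alone. Your central observation — that $\infty \in F$ forces $j_{\mathbb R^n\setminus E}(F')$ to diverge, so that the minimum in Lemma 2.44 collapses to $j_{\mathbb R^n\setminus F}(E)$ — is correct, and the divergence estimate you give for $j_{\mathbb R^n\setminus E}(p,q_k)$ is fine. But there are two genuine obstructions to applying Lemma 2.44 to $(E,F')$, and one of them you assert away incorrectly.

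First, the claim that $F'=F\cap\mathbb R^n$ is connected ``since $\infty\in F$ and $F$ is connected with more than one point'' is false: nothing stops $\infty$ from being a cut point of $F$. For example, in $\overline{\mathbb R^n}$ with $n\ge 2$, take $F=\{te_1:t\ge 0\}\cup\{-te_1:t\ge 1\}\cup\{\infty\}$. This is compact (closed in $\overline{\mathbb R^n}$) and connected (both rays approach the same compactifying point $\infty$), hence a continuum; yet $F'$ is the disjoint union of two rays. In this case Lemma 2.44, which is stated for disjoint \emph{continua}, does not apply to the pair $(E,F')$.

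Second — the issue you do flag — even when $F'$ is connected it is unbounded and so is not a continuum in $\mathbb R^n$. Your proposed remedy (exhaust $F'$ by compact connected pieces and pass to the limit) is not the bookkeeping step you suggest. A closed connected unbounded set need not be an increasing union of subcontinua, and, more to the point, replacing $F'$ by a compact $C\subset F'$ reverses the relevant $j$-inequality: since $C\subset F$, one has $d(x,C)\ge d(x,F)$ for $x\in E$, hence $j_{\mathbb R^n\setminus C}(E)\le j_{\mathbb R^n\setminus F}(E)$. So a lower bound $M(\Delta(E,C))\ge \bar c_n j_{\mathbb R^n\setminus C}(E)$ obtained from Lemma 2.44 on the piece $C$ does not yield the desired bound with $j_{\mathbb R^n\setminus F}(E)$ on the right. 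To close the argument one must actually go back into the proof of Lemma 2.44 and check that it works when $E$ is a continuum in $\mathbb R^n$ and $F$ is an arbitrary closed subset of $\mathbb R^n$ (possibly unbounded and disconnected) — which is plausible, but is a substantive verification rather than a formality.
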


\begin{corollary}
\cite[Lemma 6.23]{vu4} Let $G\subseteq\mathbb R^n$ be a domain
$G\neq\mathbb R^n$ and connected boundary $\partial G$. Then
\begin{equation}
\label{mu-j}
\mu_G(a,b)\geqslant c_n j_G(a,b)
\end{equation}
holds for $a,b\in G$. If, in addition, $G$ is uniform, then
\begin{equation}
\label{mu-k}
\mu_G(a,b)\geqslant B\,k_G(a,b)
\end{equation}
for all $a,b\in G$.
\end{corollary}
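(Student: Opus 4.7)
The plan for the first inequality is to fix any continuum $C=C_{ab}\subseteq G$ joining $a$ to $b$, bound $M(\Delta(C,\partial G;G))$ from below uniformly in $C$, and then take the infimum. I would start by reducing to the unrestricted curve family in $\overline{\mathbb R^n}$ by an extension-by-zero trick: given $\rho$ admissible for $\Delta(C,\partial G;G)$, its extension by zero outside $G$ is admissible for $\Delta(C,\partial G)$, because any curve of the latter starts in $C\subseteq G$ and must first hit $\partial G$ at some time $t_0$, at which instant its initial subcurve lies in $\Delta(C,\partial G;G)$ and already carries $\rho$-mass at least $1$. This yields $M(\Delta(C,\partial G;G))\geq M(\Delta(C,\partial G))$.

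Since $\partial G$ is a continuum (by the connectedness hypothesis) disjoint from the continuum $C$, the preceding lemma (Lemma 2.44 of \cite{vu1}) gives
\[
M(\Delta(C,\partial G))\geq\bar c_n\min\{j_{\mathbb R^n\setminus C}(\partial G),\,j_{\mathbb R^n\setminus\partial G}(C)\}.
\]
The second term is at least $j_G(a,b)$: indeed $\partial(\mathbb R^n\setminus\partial G)=\partial G$, so the functions $d(\cdot,\partial(\mathbb R^n\setminus\partial G))$ and $d(\cdot,\partial G)$ agree on $G$, whence $j_{\mathbb R^n\setminus\partial G}(x,y)=j_G(x,y)$ for $x,y\in G$, and in particular $j_{\mathbb R^n\setminus\partial G}(C)\geq j_G(a,b)$ since $a,b\in C$.

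The main obstacle is to bound the first term $j_{\mathbb R^n\setminus C}(\partial G)$ from below uniformly in $C$. Here I would exploit the connectedness of $\partial G$ by selecting test points $x,y\in\partial G$ nearest to $a$ and $b$, so that $d(x,C)\leq |x-a|=d(a)$, $d(y,C)\leq d(b)$, and $|x-y|\geq |a-b|-d(a)-d(b)$ by the triangle inequality. A short case split (large separation $|a-b|\gtrsim d(a)+d(b)$, where the estimate is direct, versus small separation, where $j_G(a,b)$ is already bounded by a universal constant) shows that $j_{\mathbb R^n\setminus C}(\partial G)$ dominates a fixed fraction of $j_G(a,b)$. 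After absorbing the constant and taking the infimum over $C$, one obtains $\mu_G(a,b)\geq c_n j_G(a,b)$.

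The second inequality follows at once from the first and the uniformity assumption $k_G\leq c\,j_G$: we have $\mu_G(a,b)\geq c_n j_G(a,b)\geq (c_n/c)\,k_G(a,b)$, so the constant $B=c_n/c$ works.
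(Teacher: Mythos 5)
Your opening moves are sound: the extension-by-zero (overflow) argument correctly gives $M(\Delta(C,\partial G;G))\geq M(\Delta(C,\partial G))$, and the identification $j_{\mathbb{R}^n\setminus\partial G}\big|_G=j_G$ is right, so the second term of the min is $\geq j_G(a,b)$. The deduction of \eqref{mu-k} from \eqref{mu-j} via uniformity is also fine.

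The gap is in your treatment of the first term $j_{\mathbb{R}^n\setminus C}(\partial G)$. With $x,y\in\partial G$ chosen nearest to $a,b$, you control $d(x,C)\leq d(a)$, $d(y,C)\leq d(b)$ and $|x-y|\geq |a-b|-d(a)-d(b)$, but the last bound can vanish, and then your test pair gives nothing. Take $G=H^n$, $a=\varepsilon e_n$, $b=e_n$: the nearest boundary point to both $a$ and $b$ is the origin, so $x=y$ and $|x-y|=0$, while $j_G(a,b)=\log(1/\varepsilon)\to\infty$. This is also in your \emph{small separation} regime ($|a-b|\approx d(b)\leq 2(d(a)+d(b))$), so the claim that $j_G(a,b)$ is then bounded by a universal constant is false. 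The case split therefore does not close the argument, and there is no reason a priori that the min in Lemma 2.44 is attained by, or comparable to, the second term.

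The route the notation signals (the constant $c_n$, not $\bar c_n$) is Corollary 2.46 quoted in the paper immediately before this statement, and it removes the min entirely. Since $\partial G$ is connected (as a subset of $\overline{\mathbb{R}^n}$), the closed set $F=\overline{\mathbb{R}^n}\setminus G$ is a continuum and it contains $\infty$; the continuum $C$ lies in $G$, so $E=C$ and $F$ are disjoint. Corollary 2.46 then gives
\[
M(\Delta(C,F))\geq c_n\,j_{\mathbb{R}^n\setminus F}(C)=c_n\,j_G(C)\geq c_n\,j_G(a,b),
\]
and your overflow argument, applied with $F$ in place of $\partial G$, gives $M(\Delta(C,\partial G;G))\geq M(\Delta(C,F))$. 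Taking the infimum over $C$ yields \eqref{mu-j} directly, with no test-point choice or case analysis. The asymmetry of Corollary 2.46 (it keeps only the $j_{\mathbb{R}^n\setminus F}(E)$ term, exactly the one you want) is precisely what the hypothesis $\infty\in F$ buys, and that hypothesis is what the connectedness of $\partial G$ delivers via $F=\overline{\mathbb{R}^n}\setminus G$; Lemma 2.44 alone cannot discard the other term of the min.
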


The first part of this corollary gives $\zeta_\mu^j(t)=\frac 1{c_n}\,t$ if
$\partial G$ is connected. (\ref{mu-k}) gives $\zeta_\mu^k(t)=c\,t$
if $\partial G$ is connected and $G$ is uniform.

\begin{lemma}
\cite[Corollary 15.13]{avv}Let $G$ be a proper
subdomain of $\mathbb R^n$, $x$ and $y$ distinct points in $G$ and
$m(x,y)=\min\{d(x),d(y)\}$. Then
\begin{equation}
\label{lambda-tau}
\lambda_G(x,y)\leqslant
\sqrt{2}\tau\left(\frac{|x-y|}{m(x,y)}\right).
\end{equation}
\end{lemma}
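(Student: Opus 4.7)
The plan is to construct admissible curves $C_x,C_y$ in the infimum defining $\lambda_G(x,y)$ and bound the modulus of the resulting family by comparison with the Teichm\"uller ring.

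First, normalize so that $m(x,y)=d(x)=:d$ (swapping the r\^oles of $x$ and $y$ if necessary) and set $s=|x-y|/d$. Pick $a\in\partial G$ with $|a-x|=d$; such $a$ exists since $\partial G$ is nonempty and closed. Take $C_x$ to be the half-open segment $\{x+t(a-x):0\le t<1\}$, which lies in $B^n(x,d)\subset G$ and tends to $a\in\partial G$. Take $C_y$ to be the $G$-portion of a half-line $\ell$ from $y$, with a direction chosen to align $(C_x,C_y)$ as closely as possible with a Teichm\"uller ring configuration; either $\ell$ first meets $\partial G$ at a finite point or else escapes to $\infty\in\partial G$. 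Both are admissible in the infimum.

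By the definition of $\lambda_G$ and the monotonicity of the modulus under set inclusion in both the endpoint continua and the ambient domain, one has
\[
\lambda_G(x,y)\le M(\Delta(C_x,C_y,G))\le M(\Delta([x,a],\overline\ell,\overline{\mathbb R^n})).
\]
A rigid motion now allows us to take $x=0$ and $a=-de_1$, so that $[x,a]=[-de_1,0]$ and $|y|=sd$. The lemma thus reduces to the estimate $M(\Delta([-de_1,0],\overline\ell,\overline{\mathbb R^n}))\le\sqrt 2\,\tau_n(s)$ for a suitable half-line $\overline\ell$ from $y$.

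This last inequality is the heart of the proof. In the axially aligned special case $y=sd\,e_1$ and $\overline\ell=[y,\infty]$, the pair $([-de_1,0],[sde_1,\infty])$ is, after scaling by $1/d$, exactly the Teichm\"uller ring $R_{T,n}(s)$ of modulus $\tau_n(s)$, so no $\sqrt 2$ is needed. For a general $y$ the argument first reduces to the two-dimensional plane spanned by $0,a,y$ and then applies a symmetrization (as in \cite[Theorem 1]{ge}, which underlies the extremal properties of the Teichm\"uller and Gr\"otzsch rings discussed after Theorem~\ref{extremality}) to transport the configuration to the axial one; the $\sqrt 2$ factor absorbs the modulus loss from this reduction. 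This is precisely the content of \cite[Corollary 15.13]{avv}.

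I expect the main obstacle to be producing the sharp constant $\sqrt 2$: the choice of curves and the monotonicity reduction are straightforward, but quantifying the angular mismatch between $a$ and $y$ in the general (non-axial) configuration with precisely a factor of $\sqrt 2$ requires the symmetrization machinery behind the Teichm\"uller and Gr\"otzsch extremal properties.
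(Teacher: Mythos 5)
The paper does not prove this lemma; it is imported verbatim from \cite[Corollary 15.13]{avv}. The route that reference actually takes is visible from the related display (\ref{lambda-tau-2}) later in the paper, which inserts the intermediate quantity $\inf_{z\in\partial G}\lambda_{\mathbb R^n\setminus\{z\}}(x,y)$: one first uses monotonicity of $\lambda$ with respect to the domain to replace $G$ by a once-punctured space, and then estimates $\lambda_{\mathbb R^n\setminus\{z\}}$, which after a M\"obius normalization is governed by the Teichm\"uller function $p$ of (\ref{pxdef}) rather than by an explicit choice of admissible curves in $G$.

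Your proposal takes a different, direct route (pick $C_x=[x,a)$, pick a ray $C_y$, dominate by a ring in $\overline{\mathbb R^n}$). The admissibility of your curves and the inclusion-monotonicity chain $\lambda_G(x,y)\le M(\Delta(C_x,C_y;G))\le M(\Delta([x,a],\overline\ell;\overline{\mathbb R^n}))$ are fine, but the step that carries all the weight, namely $M(\Delta([x,a],\overline\ell;\overline{\mathbb R^n}))\le\sqrt2\,\tau_n(|x-y|/m(x,y))$ for some admissible ray $\ell$, is never established, and the tool you invoke for it points in the wrong direction. Spherical symmetrization (the Gehring theorem underlying Theorems \ref{function_chi_n} and \ref{extremality}) asserts that the axial Teichm\"uller configuration \emph{minimizes} $M(\Gamma)$ among rings with prescribed endpoints; applied to your segment-plus-ray in general position it only yields $M\ge\tau_n(\cdot)$, a lower bound, and it cannot ``transport'' your configuration to the axial one at the cost of a bounded multiplicative factor. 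Similarly, in $n\ge3$ the modulus of a curve family in $\mathbb R^n$ does not reduce to a modulus computed in the two-plane through $0$, $a$ and $y$. So the key inequality is an unsupported assertion, not a consequence of anything you set up. To close the gap you would either need a genuine \emph{upper} estimate for the modulus of such a segment/ray ring in general position, or (as in \cite{avv}) you would need to pass to $\lambda_{\mathbb R^n\setminus\{z\}}$ and use the known two-sided bounds for the Teichm\"uller function $p$, which is where the constant $\sqrt2$ in fact comes from.
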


From (\ref{lambda-tau}) using again $r=e^j-1$, $r=\frac{|x-y|}{m(x,y)}$, we have
$$
\sqrt{2}\tau(e^j-1)\geqslant\lambda_G,
$$
and then, since $\tau$ is decreasing, $e^j
 \le \tau^{-1}\left(\frac{\lambda_G}{\sqrt{2}}\right)$ and from here
$$
j\leqslant\log\left(1+\tau^{-1}\left(\frac 1{\sqrt{2}\,\lambda_G^{-1}}\right)\right).
$$
Finally we obtain
$\zeta_{\lambda^{-1}}^j(t)=\log\left(1+\tau^{-1}\left(\frac 1{\sqrt{2}\,t}\right)\right)$.

\begin{definition}
A closed set $E$ in $\mathbb R^n$ is called a $c$-quasiextremal distance set or $c$-QED
exceptional or $c$-QED set, $c\in(0,1]$, if for each pair of disjoint continua
$F_1,F_2\subseteq\overline{\mathbb R^n}\setminus E$
\begin{equation}
M(\Delta(F_1,F_2;\overline{\mathbb R^n}\setminus E))\geqslant cM(\Delta(F_1,F_2)).
\end{equation}
If $G$ is a domain in $\overline{\mathbb R^n}$ such that $\overline{\mathbb R^n}\setminus G$
is a $c$-QED set, then we call $G$ a $c$-QED domain.
\end{definition}

\begin{theorem}
\cite[Theorem 6.21]{vu3} Let $G$ be a $c$-QED domain in $\mathbb
R^n$. Then
\begin{equation}
\label{vu3t6-21}
\lambda_G(x,y)\geqslant c\tau(s^2+2s)\geqslant 2^{1-n}c\tau(s)
\end{equation}
where $s=\frac{|x-y|}{min(d(x),d(y))}$.
\end{theorem}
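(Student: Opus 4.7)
The plan is to exploit the $c$-QED hypothesis to pass from the modulus inside $G$ to a free-space modulus in $\overline{\mathbb R^n}$, and then to bound the latter below by a Teichm\"uller modulus of an absolute cross-ratio via Theorem~\ref{function_chi_n}. Without loss of generality assume $d(x)\le d(y)$. For any curves $C_x,C_y$ from $x,y$ to $\partial G$ competing in the definition of $\lambda_G(x,y)$, applying the $c$-QED inequality — first to the compact exhausting subarcs $\gamma_x[0,1-\varepsilon]$, $\gamma_y[0,1-\varepsilon]$, and then letting $\varepsilon\to 0$, since the half-open curves themselves accumulate on $\partial G$ — yields
$$M(\Delta(C_x,C_y;G))\;\ge\;c\,M(\Delta(\overline{C_x},\overline{C_y};\overline{\mathbb R^n})).$$
The task is thereby reduced to a uniform free-space lower bound on the right-hand modulus, governed by $d(x)$, $d(y)$ and $|x-y|$.

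For that bound I would pick points $p_x\in\overline{C_x}$ with $|p_x-x|=d(x)$ and $p_y\in\overline{C_y}$ with $|p_y-y|=d(y)$ — such points exist because each curve must cross the sphere of first touch of $\partial G$ on its way out. A M\"obius map sending $p_y$ to $\infty$ converts $\overline{C_y}$ into a continuum containing $\infty$, so Theorem~\ref{function_chi_n} may be invoked on the images with $a=x$, $b=p_x$, $c=y$. Combined with the M\"obius invariance of the modulus and the fact that the absolute cross-ratio~(\ref{quadruple}) of four finite points reduces to the Euclidean cross-ratio $|a-c||b-d|/(|a-b||c-d|)$ (the chordal normalizations cancel), this produces
$$M(\Delta(\overline{C_x},\overline{C_y};\overline{\mathbb R^n}))\;\ge\;\tau_n\!\left(\frac{|x-y|\,|p_x-p_y|}{|x-p_x|\,|y-p_y|}\right).$$

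Substituting $|x-y|=s\,d(x)$, $|x-p_x|=d(x)$, $|y-p_y|=d(y)$ and, by the triangle inequality, $|p_x-p_y|\le d(x)+s\,d(x)+d(y)$, the argument of $\tau_n$ is at most
$$\frac{s(1+s)\,d(x)}{d(y)}+s\;\le\;s(1+s)+s\;=\;s^2+2s,$$
where the last step uses $d(x)\le d(y)$. Since $\tau_n$ is decreasing, taking the infimum over $C_x,C_y$ delivers $\lambda_G(x,y)\ge c\,\tau_n(s^2+2s)$, the first inequality. The second inequality is then immediate from the functional identity~(\ref{gt}) rewritten as $\tau_n(s^2+2s)=2^{1-n}\gamma_n(s+1)$ together with the standard Gr\"otzsch--Teichm\"uller comparison $\gamma_n(s+1)\ge\tau_n(s)$.

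The step I expect to demand the most care is the QED reduction itself, precisely because $\overline{C_x}$ and $\overline{C_y}$ meet the exceptional set $\overline{\mathbb R^n}\setminus G$ and the $c$-QED property does not apply verbatim — which is why one must work with the compact exhausting subarcs and only then pass to the limit. A secondary technical point is that Theorem~\ref{function_chi_n} is formally stated for genuine rings, so one also needs its mild extension to disjoint continua in $\overline{\mathbb R^n}$ that do not necessarily bound a connected open complement; this extension is implicit in the symmetrization-based proof underlying that theorem.
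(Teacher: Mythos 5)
The paper does not contain a proof of this statement — it is merely cited from \cite[Theorem 6.21]{vu3} — so there is no in-text argument to compare against. On its own merits, your strategy (exhaust the half-open competitor curves $C_x, C_y$ by compact subarcs, apply the $c$-QED inequality to those, then produce a Teichm\"uller lower bound by sending one endpoint to $\infty$ and reading off the cross ratio) is correct and yields $s^2+2s$ exactly as stated, and your use of the functional identity~(\ref{gt}) to reduce the second inequality to $\gamma_n(s+1)\ge\tau_n(s)$ is also correct.

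Two points need sharpening, neither fatal. First, the sentence ``such points exist because each curve must cross the sphere of first touch'' is not literally true: the curve $\gamma_x$ can remain inside the open ball $B^n(x,d(x))$ for all $t<1$ and merely accumulate on $S^{n-1}(x,d(x))\cap\partial G$, never crossing. The correct move, consistent with the exhaustion framework you have already set up, is to pick $p_x^\varepsilon\in\gamma_x[0,1-\varepsilon]$ with $|p_x^\varepsilon-x|\ge d(x)-\delta(\varepsilon)$ (available because $|\gamma_x(t)-x|\ge d(x)-d(\gamma_x(t),\partial G)$ and the latter tends to $0$), run the cross-ratio estimate with lower bounds $|x-p_x^\varepsilon|\ge d(x)-\delta$, $|y-p_y^\varepsilon|\ge d(y)-\delta$ in place of equalities, obtain the upper bound $s^2+2s+o(1)$ as $\varepsilon\to0$, and invoke the continuity of $\tau_n$. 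Second, the ``standard Gr\"otzsch--Teichm\"uller comparison'' $\gamma_n(s+1)\ge\tau_n(s)$ is indeed true but is worth a line: translate the Gr\"otzsch ring by $-e_1$, so its bounded component becomes $\overline{B^n(-e_1,1)}\supset[-e_1,0]$ and its unbounded component becomes $[se_1,\infty]$, identical to the Teichm\"uller ring's; monotonicity of the modulus in the bounded component gives the inequality, and via~(\ref{gt}) this is precisely $\tau_n(s^2+2s)\ge 2^{1-n}\tau_n(s)$. With these repairs your argument is a valid reconstruction of the cited result.
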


From the first inequality in (\ref{vu3t6-21}), taking into account $s=e^j-1$, we obtain
$$
\lambda^{-1}=\frac 1\lambda\leqslant\frac 1c\,\frac 1{\tau((s+1)^2-1)}=\frac 1c\,\frac 1{\tau(e^{2j}-1)}.
$$
This gives $\displaystyle\zeta_j^{\lambda^{-1}}(t)=\frac 1c\,\frac 1{\tau(e^{2t}-1)}$ for $c$-QED domain $G$.

Combining $\zeta_k^j$ and $\zeta_j^{\lambda^{-1}}$ we estimate $\lambda_G^{-1}$ in terms of $k_G$, so
$\zeta_k^{\lambda^{-1}}=\zeta_j^{\lambda^{-1}}\circ\zeta_k^j=\zeta_j^{\lambda^{-1}}$. In fact, we have
$$
\lambda_G^{-1}\leqslant\frac 1c\frac 1{\tau(e^{2j}-1)}\leqslant\frac 1{c\tau(e^{2k}-1)}.
$$

The fields $\zeta_\mu^{\lambda^{-1}}$, $\zeta_{\lambda^{-1}}^k$, $\zeta_{\lambda^{-1}}^\mu$
are obtained in the same fashion as
$\zeta_k^{\lambda^{-1}}$, namely as compositions of appropriate
functions $\zeta_\rho^d$. We use following inequalities.

For $\zeta_\mu^{\lambda^{-1}}$ we have
$$
\lambda_G^{-1}\leqslant\frac 1{c\tau(e^{2j}-1)}\leqslant\frac 1{c\tau(e^{2\mu/c_n}-1)}=
\frac 1{c\tau(e^{b\mu}-1)},
$$
where the second inequality follows from (\ref{mu-j}) and where $b=\frac{2}{c_n}$.

For $\zeta_{\lambda^{-1}}^k$ we have
$$
k_G\leqslant c\,j_G\leqslant c\log\left(1+\tau^{-1}\left(\frac 1{\sqrt 2\,\lambda_G^{-1}}\right)\right)
$$
and for $\zeta_{\lambda^{-1}}^\mu$ we have
$$
\mu_G\leqslant\gamma\left(\frac 1{e^j-1}\right)\leqslant
\gamma\left(\frac 1{e^{\log\left(1+\tau^{-1}\left(\frac 1{\sqrt 2\,\lambda_G^{-1}}\right)\right)}-1}\right)
=\gamma\left(\frac 1{\tau^{-1}\left(\frac 1{\sqrt 2\,\lambda_G^{-1}}\right)}\right).
$$
\begin{theorem}
\cite[Theorem 3.4]{se}
The inequalities $j_{G}\leqslant\delta_G\leqslant 2j_G$ hold for every open set $G\subset\mathbb{R}^n$.
\end{theorem}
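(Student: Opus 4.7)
My plan is to prove the two inequalities separately, exploiting the identity that for finite $a,b,x,y$ the absolute cross ratio simplifies as
\[
|a,x,b,y| = \frac{q(a,b)\,q(x,y)}{q(a,x)\,q(b,y)} = \frac{|a-b|\,|x-y|}{|a-x|\,|b-y|},
\]
since the factors $\sqrt{1+|\cdot|^2}$ from the definition of $q$ cancel in pairs.

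For the lower bound $j_G\le\delta_G$, I would use the remark already in the text: take $b=\infty$ in the supremum defining $m_G$. A direct computation with the spherical metric (the factors $\sqrt{1+|b|^2}$ cancel with those from $q(b,y)$) gives $|a,x,\infty,y|=|x-y|/|a-x|$. Taking $\sup_{a\in\partial G}$ yields $|x-y|/d(x)$; symmetrically, $a=\infty$ yields $|x-y|/d(y)$. Hence
\[
m_G(x,y)\;\ge\;\frac{|x-y|}{\min\{d(x),d(y)\}},
\]
and applying $\log(1+\cdot)$ gives $\delta_G\ge j_G$.

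For the upper bound $\delta_G\le 2j_G$, the key is the algebraic inequality
\[
1+|a,x,b,y|\;\le\;\Bigl(1+\tfrac{|x-y|}{|a-x|}\Bigr)\Bigl(1+\tfrac{|x-y|}{|b-y|}\Bigr),
\]
valid for any finite $a,b\in\partial G$. To see this, I would expand the right-hand side, multiply through by $|a-x||b-y|$, and reduce to showing
\[
|a-x||b-y|+|a-b||x-y|\;\le\;|a-x||b-y|+|x-y|\,|b-y|+|x-y|\,|a-x|+|x-y|^2,
\]
which after cancellation is precisely the triangle inequality $|a-b|\le|a-x|+|x-y|+|y-b|$. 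The cases $a=\infty$ or $b=\infty$ are handled in the limit and correspond to dropping the corresponding factor on the right-hand side, so the bound persists.

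Having this, since $|a-x|\ge d(x)$ and $|b-y|\ge d(y)$ for $a,b\in\partial G$, each factor is bounded by $1+|x-y|/\min\{d(x),d(y)\}$. Taking the supremum over $a,b\in\partial G$ and using $\sup(1+f)=1+\sup f$,
\[
1+m_G(x,y)\;\le\;\Bigl(1+\tfrac{|x-y|}{\min\{d(x),d(y)\}}\Bigr)^2,
\]
and taking logarithms yields $\delta_G(x,y)\le 2j_G(x,y)$. The only delicate point is verifying the product inequality with the triangle inequality; everything else is bookkeeping with the definitions and taking suprema.
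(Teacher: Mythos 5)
The paper does not give its own argument here (it cites Seittenranta \cite[Theorem 3.4]{se}), so I can only judge your proof on its merits. Your upper bound is correct and cleanly organized: the factorization
\[
1+|a,x,b,y|\le\Bigl(1+\tfrac{|x-y|}{|a-x|}\Bigr)\Bigl(1+\tfrac{|x-y|}{|b-y|}\Bigr)
\]
reduces, after clearing denominators, exactly to the triangle inequality $|a-b|\le|a-x|+|x-y|+|y-b|$, the cases $a=\infty$ or $b=\infty$ pass in the limit, and using $|a-x|\ge d(x)$, $|b-y|\ge d(y)$, taking suprema and then $\log$, gives $\delta_G\le 2j_G$.

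The lower bound as written has a real gap. Putting $b=\infty$ in the supremum is only legitimate when $\infty\in\partial G$, i.e.\ when $G$ is unbounded; for a bounded open set $G\subset\mathbb{R}^n$ there is no point at infinity on $\partial G$, so the computation $|a,x,\infty,y|=|x-y|/|a-x|$ is unavailable and your argument does not apply. Since the theorem is asserted for \emph{every} open $G\subset\mathbb{R}^n$, you need a finite substitute for $b=\infty$. One clean fix (assume $d(x)\le d(y)$): by compactness of $\partial G$ choose $a\in\partial G$ with $|a-x|=d(x)$, and follow the ray from $a$ through $y$; since $y\in G$ is interior and $G$ is bounded, this ray exits $G$ at some first boundary point $b$ with $y$ strictly between $a$ and $b$. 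Collinearity gives $|a-b|=|a-y|+|y-b|>|y-b|$ (as $a\neq y$), hence
\[
|a,x,b,y|=\frac{|a-b|}{|b-y|}\cdot\frac{|x-y|}{|a-x|}>\frac{|x-y|}{d(x)}=\frac{|x-y|}{\min\{d(x),d(y)\}},
\]
so $m_G(x,y)\ge |x-y|/\min\{d(x),d(y)\}$ and $\delta_G\ge j_G$ in the bounded case as well. With that case added, your proof is complete.
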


So, we deduce that $\zeta_\delta^j(t)=t$ and $\zeta_j^\delta(t)=2t$.

\begin{theorem}
\cite[Theorem 4.2]{se}
Let $G\subset\mathbb{R}^n$ be a convex domain, then $j_G\leqslant\alpha_G$.
\end{theorem}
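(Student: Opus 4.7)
Without loss of generality assume $d(x)\le d(y)$ and write $d=d(x)$, so $j_G(x,y)=\log(1+|x-y|/d)$. The goal is to exhibit a pair of boundary points $a,b\in\partial G$ for which
\[
\log\frac{|a-y|\,|x-b|}{|a-x|\,|y-b|}\ \ge\ \log\!\left(1+\tfrac{|x-y|}{d}\right),
\]
since $\alpha_G(x,y)$ is the supremum of the left-hand side over all such pairs.

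The first boundary point is forced on us: pick $a\in\partial G$ realising the distance, so $|a-x|=d$. The convexity of $G$ is now used via the supporting-hyperplane theorem. At the nearest point $a$ there is a supporting hyperplane $T$ to $G$, which must be perpendicular to $x-a$; hence $G$ lies in the open half-space $H=T^+$ whose interior contains $x$. Set up coordinates with $a$ at the origin and $(x-a)/d=e_1$, so $x=d\,e_1$ and $y=(y_1,y')$ with $y_1\ge d$ (since $y_1$ equals the distance of $y$ from $T$, which is $\ge d(y,\partial G)\ge d$). Note the useful equality $j_G(x,y)=j_H(x,y)$: indeed $d_H(x)=d_G(x)=d$ and $d_H(y)\ge d_G(y)\ge d$, so both $j$-metrics reduce to $\log(1+|x-y|/d)$.

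With this reduction in hand, the plan is to prove $\alpha_G(x,y)\ge\alpha_H(x,y)$ and then to invoke the classical inequality $j_H\le\alpha_H$ for half-spaces (where $\alpha_H$ coincides up to a constant with the hyperbolic metric, and the inequality is known and explicit). For $\alpha_G\ge\alpha_H$, given a pair $(\tilde a,\tilde b)\in T\times T=\partial H\times\partial H$ that approximates the supremum in the definition of $\alpha_H(x,y)$, I would produce a replacement pair $(a',b')\in\partial G\times\partial G$ by intersecting the line from $x$ through $\tilde a$ (respectively the line from $y$ through $\tilde b$) with $\partial G$. Because $G\subset H$ and $\tilde a,\tilde b\in T\subset\mathbb R^n\setminus G$, each such line must exit $G$ on the segment from the interior point to the point on $T$, so $a'\in[x,\tilde a]\cap\partial G$ and $b'\in[y,\tilde b]\cap\partial G$ exist.

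The main obstacle is verifying that this replacement does not decrease the cross-ratio. The factor $|a-y|/|a-x|$ is not monotonic along a generic ray emanating from $x$, so one cannot conclude this by an elementary inequality. The clean way is to appeal to the Apollonian-sphere structure: for fixed $x,y$, the level sets of $a\mapsto|a-y|/|a-x|$ are spheres through $x$ and $y$ (the classical Apollonian family), and $\tilde a$ lies on one such sphere $S_{\tilde a}$. One checks, using the convexity of $G$ and the fact that $G\subset H$, that the segment $[x,\tilde a]$ exits $G$ on the side of $S_{\tilde a}$ farther from $y$, so the replacement $a'$ satisfies $|a'-y|/|a'-x|\ge|\tilde a-y|/|\tilde a-x|$; a symmetric Apollonian-sphere argument, now centred on the point $y$, handles $\tilde b\rightsquigarrow b'$. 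Multiplying the two bounds gives $\alpha_G(x,y)\ge\alpha_H(x,y)\ge j_H(x,y)=j_G(x,y)$, as required.
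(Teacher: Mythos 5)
The dissertation contains no proof of this theorem: the statement is imported verbatim from Seittenranta's paper with a bare citation, so there is no in-text argument to compare against and I can only assess your proposal on its own merits.

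Your strategy---reduce to the supporting half-space $H$ at the nearest boundary point to $x$, observe $j_G(x,y)=j_H(x,y)$, prove $\alpha_G(x,y)\ge\alpha_H(x,y)$ by replacing boundary points of $H$ with boundary points of $G$, and close with the half-space case---is sound, and the reduction together with the identity $j_G(x,y)=j_H(x,y)$ is correctly argued. Three soft spots remain. First, you call the level sets of $a\mapsto|a-y|/|a-x|$ ``spheres through $x$ and $y$''; they are not, that is the orthogonal coaxal family. What the argument actually needs is that when $c:=|\tilde a-y|/|\tilde a-x|\ge 1$ the sphere $S_c$ has $x$ in its interior and $y$ in its exterior, so that the whole segment $[x,\tilde a]$ lies in the closed ball bounded by $S_c$, where the ratio is $\ge c$; hence the exit point $a'\in\partial G\cap[x,\tilde a]$ satisfies $|a'-y|/|a'-x|\ge c$. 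Note this step uses only $G\subset H$ (to guarantee $\tilde a\notin G$); the convexity of $G$ plays no role here, contrary to what you wrote---it enters only through the supporting-hyperplane construction. Second, and more seriously, nothing rules out $c<1$: in that case $x$ lies outside $S_c$, the segment $[x,\tilde a]$ can enter the ball, and the exit point can have a strictly smaller ratio, so the replacement may decrease the cross-ratio. You must first argue that the near-optimal pair $(\tilde a,\tilde b)$ can be taken with both ratios $\ge 1$; this is possible because the supremum defining $\alpha_H$ separates as $\sup_{a\in\partial H}\log\bigl(|a-y|/|a-x|\bigr)+\sup_{b\in\partial H}\log\bigl(|b-x|/|b-y|\bigr)$, and since $\infty\in\partial H$ gives value $1$ for each factor, each summand is $\ge 0$. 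Third, the final step $j_H\le\alpha_H$ for a half-space is only asserted; it should be justified, e.g.\ via Beardon's identity $\alpha_H=\rho_H$ (an exact equality, not merely ``up to a constant'') together with the elementary bound $j_H\le\rho_H$, or by exhibiting explicit boundary points $a,b\in\partial H$ attaining $1+|x-y|/\min\{d_H(x),d_H(y)\}$.
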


This means that $\zeta_\alpha^j(t)=t$ for convex domains.

\begin{theorem}
\cite[Theorem 6.2]{se} Let $G$ be a domain in $\mathbb{R}^n$, for
which ${\rm card}\,\partial G\geqslant 2$ and $\partial G$ is
connected. Then, for distinct points $x,y\in G$,
$$
\mu_G(x,y)\geqslant\tau_n\left(\frac{1}{e^{\delta_G(x,y)}-1}\right).
$$
\end{theorem}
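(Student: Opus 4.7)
The plan is to extract from the infimum defining $\mu_G$ a Teichm\"uller-type lower bound, using M\"obius invariance to reduce to the standard configuration in which one of the relevant boundary points lies at infinity. Fix distinct $x,y\in G$ and $\varepsilon>0$. By the definition of $m_G$, choose $a,b\in\partial G$ with $|a,x,b,y|>m_G(x,y)-\varepsilon$, and apply a M\"obius transformation $h$ with $h(b)=\infty$ (taking $h=\mathrm{id}$ if $b=\infty$ already). Set $G'=h(G)$ and $\partial G'=h(\partial G)$. Then $\partial G'$ is still a continuum (M\"obius maps preserve connectedness) containing both $h(a)$ and $\infty$, and a direct computation from the formulas for $q$ gives
$$|h(a),h(x),\infty,h(y)| \;=\; \frac{|h(x)-h(y)|}{|h(a)-h(x)|} \;=\; |a,x,b,y|,$$
by M\"obius invariance of the absolute cross-ratio.

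Next, let $C_{xy}$ be any continuum in $G$ joining $x$ and $y$, and write $C'=h(C_{xy})$. Then $C'$ and $\partial G'$ are disjoint continua in $\overline{\mathbb{R}^n}$ with $h(x),h(y)\in C'$ and $h(a),\infty\in \partial G'$. The Teichm\"uller extremal bound (Theorem~\ref{function_chi_n}), applied with $h(x),h(y)$ playing the role of $a,b$ and $h(a),\infty$ that of $c,\infty$, yields
$$M(\Delta(C',\partial G')) \;\geq\; \tau_n\!\left(\frac{|h(a)-h(x)|}{|h(y)-h(x)|}\right) \;=\; \tau_n\!\left(\frac{1}{|a,x,b,y|}\right).$$
By conformal invariance of the modulus, the left-hand side equals $M(\Delta(C_{xy},\partial G;G))$ (identifying the $G'$-modulus and the full modulus by truncating any admissible curve at its first hit on $\partial G'$). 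Taking the infimum over all such continua $C_{xy}$ and using that $\tau_n$ is decreasing and continuous (Theorem~\ref{H_n_properties}), we obtain
$$\mu_G(x,y) \;\geq\; \tau_n\!\left(\frac{1}{|a,x,b,y|}\right) \;\geq\; \tau_n\!\left(\frac{1}{m_G(x,y)-\varepsilon}\right).$$
Letting $\varepsilon\to 0$ and recalling that $m_G(x,y)=e^{\delta_G(x,y)}-1$ gives the asserted inequality.

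The main obstacle is the application of Theorem~\ref{function_chi_n}: as stated it presupposes that $\overline{\mathbb{R}^n}\setminus(C'\cup\partial G')$ forms a single ring, whereas for typical $G'$ (e.g.\ $G'=B^n$) this complement splits into an interior and an exterior component. What is really needed is the extended Teichm\"uller lower bound
$$M(\Delta(E,F)) \;\geq\; \tau_n(|a,b,c,d|)$$
for any pair of disjoint continua $E\supseteq\{a,b\}$ and $F\supseteq\{c,d\}$ in $\overline{\mathbb{R}^n}$; this follows from additivity of the modulus across the separate ring components together with the symmetrization theorem of \cite[Thm.\ 1]{ge}. The hypothesis that $\partial G$ be connected is used precisely to guarantee that $\partial G'$ is itself a continuum, so that this generalized bound can be invoked with $F=\partial G'$. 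Once this is granted, minor bookkeeping handles the degenerate cases where one of $a,b$ is already $\infty$ or where the supremum defining $m_G$ is not attained.
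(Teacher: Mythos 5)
Your proof is correct in structure and reaches the right conclusion; the paper itself does not prove this theorem (it cites \cite[Theorem 6.2]{se}), but your argument — M\"obius normalization sending a boundary point to $\infty$, then the Teichm\"uller extremal bound applied to the continuum $h(C_{xy})$ and the continuum $h(\partial G)\ni\infty$ — is the standard one and almost certainly matches Seittenranta's own. Two small remarks on precision.

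First, the phrase ``the left-hand side \emph{equals} $M(\Delta(C_{xy},\partial G;G))$'' overstates: conformal invariance gives $M(\Delta(C_{xy},\partial G;G))=M(\Delta(C',\partial G';G'))$, and then the truncation argument you sketch shows $\Delta(C',\partial G')$ is minorized by $\Delta(C',\partial G';G')$, hence $M(\Delta(C',\partial G'))\le M(\Delta(C',\partial G';G'))$. Equality can fail, but the inequality is in the direction you need, so the conclusion stands — you should just replace ``equals'' by ``$\le$''.

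Second, the extended Teichm\"uller bound you identify as the crux does not need to be justified via ``additivity of the modulus across the separate ring components''; that additivity actually runs in the wrong direction for a lower bound (minorization gives $M(\Delta(E,F))\le\sum_i M(\Delta(E,F;D_i))$). The clean justification is immediate from the paper's own definitions: after a similarity carrying $h(x)\mapsto 0$ and $h(y)\mapsto e_1$, you have continua $\tilde E\supseteq\{0,e_1\}$ and $\tilde F\supseteq\{\tilde c,\infty\}$, and by \eqref{pxdef} together with $\tau_n(r)=\inf\{p(x):|x|=r\}$ one gets $M(\Delta(\tilde E,\tilde F))\ge p(\tilde c)\ge\tau_n(|\tilde c|)$, with $|\tilde c|=|h(a)-h(x)|/|h(y)-h(x)|=1/|a,x,b,y|$, exactly the bound you want. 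Your identification of where the hypothesis ``$\partial G$ connected'' enters — to ensure $h(\partial G)$ is a continuum containing both $h(a)$ and $\infty$ — is precisely right.
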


Solving for $\mu$ and using a fact that $\tau_n$ is decreasing we get:
$$
\tau_n^{-1}(\mu_G(x,y))\leqslant\frac 1{e^{\delta_G(x,y)-1}}
$$
and from here
$$
\delta_G(x,y)\leqslant\log\left(1+\frac{1}{\tau_n^{-1}(\mu_G(x,y))}\right).
$$
Hence, $\zeta_\mu^\delta(t)=\log\left(1+\frac{1}{\tau_n^{-1}(t)}\right)$ if $\partial G$ is
connected and has at least two points.
\begin{theorem}
\cite[Theorem 6.5]{se}
Let $G\subset\overline{\mathbb{R}^n}$ be a domain with card $\partial G\geqslant 2$. Then
$$
\lambda_G(x,y)\leqslant\tau_n\left(\frac{m_G(x,y)}{2}\right).
$$
\end{theorem}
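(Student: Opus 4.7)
My plan is to exploit the Möbius invariance of both $\lambda_G$ and the absolute cross-ratio $|a,x,b,y|$, and to compare the relevant modulus with that of a Teichm\"uller-type ring via Theorem \ref{function_chi_n}. First I would reduce the desired inequality to showing the pointwise bound $\lambda_G(x,y)\leqslant\tau_n(|a,x,b,y|/2)$ for each fixed pair $a,b\in\partial G$ with $a\neq b$: since $\tau_n$ is strictly decreasing, taking the infimum of the right-hand side over all such pairs produces $\tau_n(m_G(x,y)/2)$, which is the bound we want.

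For the pointwise bound I would apply a M\"obius transformation $\varphi$ of $\overline{\mathbb{R}^n}$ with $\varphi(a)=0$ and $\varphi(b)=\infty$, and set $x'=\varphi(x)$, $y'=\varphi(y)$, $G'=\varphi(G)$. A direct computation from the definitions of $q$ and of the absolute cross-ratio yields $|a,x,b,y|=|x'-y'|/|x'|$, and M\"obius invariance of $\lambda_G$ gives $\lambda_G(x,y)=\lambda_{G'}(x',y')$. Since $\{0,\infty\}\subset\partial G'$, I would choose admissible curves $C_{x'},C_{y'}\subset G'$ whose closures in $\overline{\mathbb{R}^n}$ contain $0$ and $\infty$ respectively, and then use monotonicity of the modulus,
\begin{equation*}
\lambda_{G'}(x',y')\leqslant M\bigl(\Delta(C_{x'},C_{y'};G')\bigr)\leqslant M\bigl(\Delta(\overline{C_{x'}},\overline{C_{y'}};\overline{\mathbb{R}^n})\bigr),
\end{equation*}
to pass to the modulus of a ring in $\overline{\mathbb{R}^n}$ whose complementary continua contain $\{0,x'\}$ and $\{y',\infty\}$.

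The main obstacle is bounding this last modulus by $\tau_n(|x'-y'|/(2|x'|))$. My approach would be to use a spherical symmetrization about the line through $0$ and $\infty$, combined with the extremal property of the Teichm\"uller ring and the functional identity (\ref{gt}) between $\tau_n$ and $\gamma_n$; the factor $1/2$ should emerge from the M\"obius symmetry swapping $0$ and $\infty$, which allows the ring to be split into two symmetric halves each controlled by a Teichm\"uller modulus at the half-ratio. A secondary technical point, to be resolved by an approximation argument, is to ensure the existence of admissible curves in $G'$ whose closures reach the prescribed boundary points $0$ and $\infty$: for a general domain these points need not be accessible by a single curve, so one works with sequences of admissible pairs whose closures cluster at the required boundary points and passes to the limit in the modulus estimate.
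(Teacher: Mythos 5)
Your opening reduction is fine: fixing a pair $a,b\in\partial G$, applying a M\"obius map with $a\mapsto 0$, $b\mapsto\infty$, and computing $|a,x,b,y|=|x'-y'|/|x'|$ are all correct, and since $\tau_n$ is a decreasing homeomorphism, the pointwise estimate $\lambda_G(x,y)\le\tau_n(|a,x,b,y|/2)$ for every pair $a,b$ would indeed give $\lambda_G(x,y)\le\tau_n(m_G(x,y)/2)$ after taking the infimum over $a,b$.

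The gap is exactly at the step you flag as the main obstacle. You propose to bound $M(\Delta(\overline{C_{x'}},\overline{C_{y'}};\overline{\mathbb R^n}))$ \emph{from above} by $\tau_n(|x'-y'|/(2|x'|))$ via spherical symmetrization and the extremal property of the Teichm\"uller ring, but those tools (Theorem \ref{function_chi_n}, Gehring's symmetrization) run in the opposite direction: for a ring whose complementary continua contain $\{0,x'\}$ and $\{y',\infty\}$ they give the lower bound $M(\Gamma_A)\ge\tau_n(\cdot)$, the Teichm\"uller ring being the \emph{minimizer}. In fact, for continua that are constrained only to lie in an arbitrary $G'$ and to contain the prescribed points, $M(\Delta(\overline{C_{x'}},\overline{C_{y'}};\overline{\mathbb R^n}))$ can be made arbitrarily large by letting the two continua pass close to one another, so no symmetrization argument can cap it by $\tau_n(u/2)$ without first controlling \emph{which} curves are used. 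The missing ingredient is the domain monotonicity of $\lambda$: since $\{0,\infty\}\subset\partial G'$ one has $G'\subset\overline{\mathbb R^n}\setminus\{0,\infty\}$ and hence $\lambda_{G'}(x',y')\le\lambda_{\overline{\mathbb R^n}\setminus\{0,\infty\}}(x',y')$. Only in the punctured space is one free to choose $C_{x'},C_{y'}$ explicitly (say, a segment from $x'$ to $0$ and a suitably directed ray from $y'$ to $\infty$) and then \emph{compute} the modulus of that one specific ring; this is also where the factor $1/2$ must come from, and your appeal to a M\"obius symmetry ``splitting the ring into two halves'' is too vague to carry that calculation. As written, the constraint that your curves lie in an arbitrary $G'$ gives you no control over the resulting ring's geometry, and the accessibility issue you raise is secondary to this.
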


Expressing $\mu_G$ in terms of $\delta_G$ we get:
$$
\lambda_G(x,y)\leqslant\tau_n\left(\frac{e^{\delta_G}-1}2\right)
$$
and from here we obtain
$$
\delta_G(x,y)\leqslant
\log\left(1+2\tau_n^{-1}\left(\frac 1{\lambda_G^{-1}(x,y)}\right)\right).
$$
This means that $\zeta_{\lambda^{-1}}^\delta(t)=\log\left(1+2\tau_n^{-1}\left(\frac 1{t}\right)\right)$
for domains with $card(\partial G)\ge 2$.
\vspace{2em}

At first, we give a $4\times 4$ chart.
\vspace{1em}

\begin{tabular}{|c|p{3.7cm}|p{2.5cm}|p{3.3cm}|p{3.4cm}|}
    \hline
    & \hspace{7mm} $j_G$ & $k_G$ & $\mu_G$ & $\lambda^{-1}_G$ \\
    \hline
    \multirow{3}{1cm}{$j_G$} & 1 & 2 & 3 & 4 \\
    & $\zeta_j^j(t)=t$ & \begin{minipage}{2.5cm}$\zeta_j^k(t)=ct$\\ $G$ -- uniform\\ $\zeta_j^k(t)=\varphi(t)$\\ $G$ -- $\varphi$ domain\end{minipage} &
    \begin{minipage}{3.3cm}$\displaystyle\zeta_j^\mu(t)=\gamma\left(\frac 1{e^t-1}\right)$\\locally\end{minipage} &
    \begin{minipage}{3.5cm} $\displaystyle\zeta_j^{\lambda^{-1}}(t)=\frac 1{c\tau(e^{2t}-1)}$\\ $G$ -- $c$-QED domain \end{minipage} \\
    \hline
    \multirow{3}{1cm}{$k_G$} & 5 & 6 & 7 & 8 \\
    & \hspace{7mm} $\zeta_k^j(t)=t$  & $\zeta_k^k(t)=t$ & $\zeta_k^\mu(t)=h_2(3t)$ & $\zeta_k^{\lambda^{-1}}=\zeta_j^{\lambda^{-1}}$ \\
    \hline
    \multirow{3}{1cm}{$\mu_G$} & 9 & 10 & 11 & 12 \\
    & \begin{minipage}{2.5cm}$\displaystyle\zeta_\mu^j(t)=\frac 1{c_n}\cdot t$ \\ $\partial G$ connected\\[-0.5em]\rule{0mm}{0mm}\end{minipage} & \begin{minipage}{2.5cm}$\zeta_\mu^k(t)=c\cdot t$ \\ $G$ uniform\\[-0.5em]\rule{0mm}{0mm}$\partial G$ connected\end{minipage}
    & $\zeta_\mu^\mu(t)=t$ & \begin{minipage}{3.2cm}$\zeta_\mu^{\lambda^{-1}}=\zeta_\mu^j\circ\zeta_j^{\lambda^{-1}}$\\ $G$ -- $c$-QED domain\\ $\partial G$ connected\end{minipage} \\
    \hline
    \multirow{3}{1cm}{$\lambda^{-1}_G$} & 13 & 14 & 15 & 16 \\
    & $\zeta_{\lambda^{-1}}^j(t)=\log\left(1+\tau^{-1}\left(\frac 1{\sqrt{2}\,t}\right)\right)$ & \begin{minipage}{3cm}$\zeta_{\lambda^{-1}}^k=\zeta_{\lambda^{-1}}^j\circ\zeta_j^k$\\ $G$ uniform\\[-0.5em] \rule{0mm}{0mm}\end{minipage} & \begin{minipage}{3.4cm}$\zeta_{\lambda^{-1}}^\mu=\zeta_{\lambda^{-1}}^j\circ\zeta_j^\mu$\\locally\end{minipage}
    & $\zeta_{\lambda^{-1}}^{\lambda^{-1}}(t)=t$ \\
    \hline
  \end{tabular}
\vspace{2em}

Function $\zeta_j^\mu$ can be written in a different form using the
estimate of $\gamma$ function. We define functions $\Phi$ and $\Psi$
as in \cite[7.19]{vu2} by
\begin{equation}
\label{gamma-Phi}
\gamma_n(s)=\omega_{n-1}(\log(\Phi(s)))^{n-1},\quad s>1
\end{equation}
\begin{equation}
\label{Psi}
\tau_n(t)=\omega_{n-1}(\log(\Psi(t)))^{n-1},\quad t>0.
\end{equation}
\begin{lemma}
\cite[Lemma 7.22]{vu2} For each $n\geqslant 2$ there exists a number
$\lambda_n\in[4,2\,e^{n-1})$, $\lambda_2=4$, such that
\begin{equation}
t\leqslant\Phi(t)\leqslant\lambda_nt,\quad t>1
\end{equation}
\begin{equation}
t+1\leqslant\Psi(t)\leqslant\lambda_n^2(t+1),\quad t>0.
\end{equation}
\end{lemma}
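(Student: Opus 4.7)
The plan is to reduce both pairs of inequalities to a single task via the Gr\"otzsch--Teichm\"uller functional identity \eqref{gt}. Substituting the defining relations \eqref{gamma-Phi} and \eqref{Psi} into \eqref{gt} and taking suitable roots produces an explicit algebraic link between $\Phi$ and $\Psi$ of the form
\[
\Psi(t) = \Phi\bigl(\sqrt{t+1}\,\bigr)^{2}, \qquad t > 0,
\]
so a pair of linear bounds $s \le \Phi(s) \le \lambda_n s$ for $s>1$ transfers at once to $t+1 \le \Psi(t) \le \lambda_n^{2}(t+1)$ for $t>0$. This reduces the whole problem to estimating $\Phi$.

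The lower bound $s \le \Phi(s)$ is the easy direction: it is equivalent to a capacity upper bound on the Gr\"otzsch modulus $\gamma_n(s)$. I would obtain it by exhibiting an explicit admissible density on a spherical shell that is a natural competitor for the Gr\"otzsch ring; the resulting modulus integral evaluates in closed form and matches the target, using the spherical-coordinates computation of the modulus of $\{x:1<|x|<s\}$.

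The upper bound $\Phi(s) \le \lambda_n s$ is the main obstacle. It amounts to a dimension-dependent lower bound on $\gamma_n(s)$, which is much more delicate because it must hold uniformly over all admissible densities. The standard route is to invoke the symmetrization principle of \cite{ge} (also the basis of Theorems \ref{function_chi_n} and \ref{extremality}) to reduce to spherically symmetric competitors, and then estimate the resulting one-dimensional integral; the constant $\lambda_n$ emerges as a normalization of this estimate. The bound $\lambda_n \ge 4$ comes out of the explicit Teichm\"uller configuration, while the sharper upper bound $\lambda_n < 2e^{n-1}$ requires delicate asymptotic control of the capacity as $s\to 1^+$ -- this is the hardest step.

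Finally, the exceptional value $\lambda_2 = 4$ in the plane can be established directly: in dimension two, $\Phi$ admits an explicit representation via the complete elliptic integrals $\mathrm{K},\mathrm{K}'$, and the classical Landen identity $\mu(r)\mu(\sqrt{1-r^{2}}\,) = \pi^{2}/4$ pinches the constant to its optimal value, bypassing the symmetrization step entirely.
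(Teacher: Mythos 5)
Since the paper itself only cites \cite[Lemma 7.22]{vu2} without proof, I am assessing your reconstruction on its own. Your reduction is the correct and cleanest part: with the intended exponent $1-n$ in \eqref{gamma-Phi}--\eqref{Psi} (the printed $n-1$ is a typo, as the inequalities quoted immediately after the lemma confirm), the identity \eqref{gt} yields $\Phi(s)^2=\Psi(s^2-1)$, i.e.\ $\Psi(t)=\Phi\bigl(\sqrt{t+1}\,\bigr)^2$, so the $\Psi$-bounds do follow from the $\Phi$-bounds. The lower bound $s\le\Phi(s)$, equivalently $\gamma_n(s)\le\omega_{n-1}(\log s)^{1-n}$, is correctly obtained by comparison with the annulus $\{1<|x|<s\}$.

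Your treatment of the upper bound has genuine gaps. First, the constant $\lambda_n$ is $\lim_{s\to\infty}\Phi(s)/s$; since $\gamma_n(1^+)=\infty$ forces $\Phi(1^+)=1$, one has $\Phi(s)/s\to 1$ as $s\to1^+$, so the ``delicate asymptotic control as $s\to 1^+$'' you propose is looking at the wrong endpoint. More importantly, to pass from an asymptotic at infinity to the uniform bound $\Phi(s)\le\lambda_n s$ on all of $(1,\infty)$ one needs the structural fact that $\log\Phi(s)-\log s$ is increasing, a consequence of a submultiplicative composition property of Gr\"otzsch moduli that your sketch omits entirely; symmetrization cannot supply it, having already done its job in producing the Gr\"otzsch ring as the extremal configuration. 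Second, $\lambda_2=4$ is not a consequence of the functional equation $\mu(r)\mu(r')=\pi^2/4$ (which, incidentally, is not Landen's identity; Landen's is the duplication $\mu\bigl(2\sqrt r/(1+r)\bigr)=\tfrac12\mu(r)$). The constant $4$ is the additive constant in the classical expansion $\mu(r)=\log(4/r)+O(r^2)$ as $r\to0^+$, which requires the series or theta-function representation of $\mu$; the functional equation alone only pins down the single value $\mu(1/\sqrt2)=\pi/2$ and says nothing about the logarithmic singularity.
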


From (\ref{Psi}) we have that $\omega_{n-1}(\log(\lambda_n^2(t+1)))^{1-n}\leqslant\tau_n(t)
\leqslant\omega_{n-1}(\log(t+1))^{1-n}$.

From (\ref{gamma-Phi}) we have
$$
\omega_{n-1}\left(\log\lambda_nt\right)^{1-n}\leqslant\gamma_n(t)
\leqslant\omega_{n-1}\left(\log t\right)^{1-n},\quad t>1.
$$
Using the right side of this inequality we have
$$
\gamma\left(\frac 1{e^t-1}\right)\leqslant\omega_{n-1}\left(\log\left(\frac 1{e^t-1}\right)\right)^{1-n}
\leqslant\omega_{n-1}\left(\log\left(\frac 1t\right)\right)^{1-n}.
$$
This gives $\zeta_j^\mu(t)\leqslant\omega_{n-1}\left(\log\left(\frac 1t\right)\right)^{1-n}$ locally.

\section{Inclusion relations for balls}

Each statement on modulus of continuity has its counterpart stated in terms of inclusions
of balls. Namely, if for some metrics $d_1$ and $d_2$ holds
$$
d_1(x,y)<t\Rightarrow d_2(x,y)<\zeta(t),
$$
then
$$
D_{d_1}(x,t)\subset D_{d_2}(x,\zeta(t)).
$$
A related question is to find, for a given $x\in G$ and $t>0$, minimal $\zeta(x,t)$
such that
$$
D_{d_1}(x,t)\subset D_{d_2}(x,\zeta(x,t)),
$$

This is circumscribed ball problem for a fixed $x\in G\,.$
\vspace{1em}

The quasihyperbolic ball $D_k(x,r)$ is the set $\{z\in
G\,|\,k_G(x,z)<r\}$, when $x\in G$ and $r>0$. By \cite[(3.9)]{vu2},
we have the inclusions
\begin{equation}
\label{quasiball}
B^n(x,r\,d(x))\subset D_k(x,M)\subset B^n(x,R\,d(x)),
\end{equation}
where $r=1-e^{-M}$ and $R=e^M-1$.

It was proved in \cite[15.13]{avv} that if $G$ is a proper subdomain of $\mathbb\mathbb R^n$
and if $x,y\in G$ with $x\neq y$, then
\begin{equation}
\label{lambda-tau-2}
\lambda_G(x,y)\leqslant\inf_{z\in\partial G}(\lambda_{\mathbb R^n\setminus\{z\}}(x,y))\leqslant\sqrt 2
\tau_n\left(\frac{|x-y|}{\min\{d(x),d(y)\}}\right)
\end{equation}
\begin{theorem}
\cite[Theorem 6.11]{h} \label{ht6-11} Let $G$ be a proper subdomain
of $\mathbb R^n$ and let $t>0$. We denote $c_1=\frac
1{(1+\tau_n^{-1}(t/\sqrt{2}))}$,
$c_2=\sqrt{\frac{\tau_n^{-1}(2t)}{(1+\tau_n^{-1}(2t))}}$ and
$c_3=\tau_n^{-1}(t/\sqrt 2)$, then the inclusions
\begin{equation}
D_{\lambda^{-1}}(a,t)\subset\{z\in G\,|\,d(z)>c_1 d(a)\},
\end{equation}
\begin{equation}
D_{\lambda^{-1}}(a,t)\supset B^n(a,c_2 d(a))\supset D_k(a,\log(c_2+1))
\end{equation}
and
\begin{equation}
\label{ht6-11-3}
D_{\lambda^{-1}}(a,t)\subset B^n(a,c_3 d(a))\cap G
\end{equation}
are valid for all $a\in G$. If, in addition, $t>\sqrt 2\tau_n(1)$, we have
that
\begin{equation}
\label{ht6-11-4}
B^n(a,c_3 d(a))\subset D_k(a,\log(1/(1-c_3))).
\end{equation}
\end{theorem}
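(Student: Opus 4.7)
The plan is to combine the Teichm\"uller-type distortion estimate (\ref{lambda-tau-2}) with the Euclidean--quasihyperbolic correspondence (\ref{quasiball}) and the monotonicity $\lambda_G(a,z)\geq\lambda_{B_a}(a,z)$ from Lemma 2.21, treating the four inclusions separately.

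The two inclusions that do not involve $\lambda_G$ at all fall out directly from (\ref{quasiball}). For (\ref{ht6-11-4}), setting $M=\log(1/(1-c_3))$ gives $1-e^{-M}=c_3$, so $B^n(a,c_3 d(a))\subset D_k(a,M)$; the hypothesis $t>\sqrt 2\,\tau_n(1)$ is invoked only to guarantee $c_3=\tau_n^{-1}(t/\sqrt 2)<1$, so that the logarithm is defined. Symmetrically, the inner inclusion $D_k(a,\log(c_2+1))\subset B^n(a,c_2 d(a))$ in part (2) is (\ref{quasiball}) with $M=\log(1+c_2)$, so that $e^M-1=c_2$.

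Parts (\ref{ht6-11-3}) and (1) --- more generally, any ``$D_{\lambda^{-1}}(a,t)\subset$ Euclidean'' inclusion --- are consequences of (\ref{lambda-tau-2}): $\lambda_G(a,z)\le\sqrt 2\,\tau_n(s)$ with $s=|a-z|/\min\{d(a),d(z)\}$. Reading the membership condition $z\in D_{\lambda^{-1}}(a,t)$ as a lower bound $\lambda_G(a,z)>t$ on $\lambda_G$ and inverting the strictly decreasing $\tau_n$ (Theorem \ref{H_n_properties}), one obtains the single key bound
\[
s \;<\; \tau_n^{-1}(t/\sqrt 2) \;=\; c_3.
\]
For (\ref{ht6-11-3}), $\min\{d(a),d(z)\}\le d(a)$ immediately turns this into $|a-z|<c_3 d(a)$. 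For part (1), split into cases: if $d(z)\ge d(a)$ then $d(z)\ge c_1 d(a)$ is trivial (since $c_1<1$); otherwise $\min=d(z)$ and $|a-z|<c_3 d(z)$, and the pointwise triangle inequality $d(a)\le d(z)+|a-z|$ yields $d(z)>d(a)/(1+c_3)=c_1 d(a)$.

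The principal obstacle is the first inclusion in part (2), $B^n(a,c_2 d(a))\subset D_{\lambda^{-1}}(a,t)$, which requires a matching \emph{lower} bound on $\lambda_G(a,z)$ in terms of the Euclidean ratio --- the opposite direction to (\ref{lambda-tau-2}). The natural route is Lemma 2.21: since $c_2<1$ places $z$ in $B_a:=B^n(a,d(a))$, we have $\lambda_G(a,z)\ge\lambda_{B_a}(a,z)$, reducing the problem to an explicit modulus computation in a Euclidean ball. The peculiar algebraic form $c_2=\sqrt{\tau_n^{-1}(2t)/(1+\tau_n^{-1}(2t))}$ is exactly what emerges by inverting a sharp estimate of the shape
\[
\lambda_{B_a}(a,z) \;\ge\; \tfrac{1}{2}\,\tau_n\!\left(\frac{r^2}{1-r^2}\right), \qquad r:=|a-z|/d(a),
\]
and solving $\tfrac{1}{2}\tau_n(r^2/(1-r^2))=t$ for $r$. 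Supplying and proving this sharp modulus formula for the ball --- presumably via a reflection across $\partial B_a$ that converts $\lambda_{B_a}$ into a Teichm\"uller-ring modulus, combined with the functional identity (\ref{gt}) --- is the key technical step that would complete the plan.
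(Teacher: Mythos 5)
The paper proves only inclusions (\ref{ht6-11-3}) and (\ref{ht6-11-4}), deferring the rest to \cite[Theorem 6.11]{h}; for those two your argument essentially coincides with the paper's, namely (\ref{lambda-tau-2}) together with $\min\{d(a),d(z)\}\le d(a)$ and monotonicity of $\tau_n$ for the former, and (\ref{quasiball}) together with the observation that $t>\sqrt 2\,\tau_n(1)$ forces $c_3<1$ for the latter. Your additional proof of the first inclusion, via the case split on $\min\{d(a),d(z)\}$ and the triangle inequality $d(a)\le d(z)+|a-z|$, is correct and complete.

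For the remaining inclusion $B^n(a,c_2 d(a))\subset D_{\lambda^{-1}}(a,t)$ your plan is right, and the formula you reverse-engineer,
$\lambda_{B^n}(0,re_1)=\tfrac12\tau_n\!\left(\tfrac{r^2}{1-r^2}\right)$,
is indeed consistent with how $c_2$ is parametrized. The gap you flag is genuine but narrow in the sense that the modulus of the specific family $\Delta([-e_1,0],[re_1,e_1];B^n)$ does equal $\tfrac12\tau_n(r^2/(1-r^2))$: reflection in $S^{n-1}$ produces the symmetric family $\Delta((-\infty,0],[re_1,e_1/r];\overline{\mathbb R}^n)$, whose modulus is $\tau_n(r^2/(1-r^2))$ by the equality case of Theorem \ref{function_chi_n} after a M\"obius normalization, and the reflection principle supplies the factor $\tfrac12$; the identity (\ref{gt}) is not actually needed. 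However, be careful about direction: $\lambda_{B_a}$ is an infimum over admissible continua, so computing the modulus for one chosen pair of segments yields an \emph{upper} bound for $\lambda_{B_a}$, whereas your proof requires a \emph{lower} bound. Passing from the computation to the inequality $\lambda_{B_a}(a,z)\ge\tfrac12\tau_n(r^2/(1-r^2))$ needs the extremality of the radial-segment configuration, i.e.\ a spherical-symmetrization argument, which is the substantive content of the lemma you still owe.
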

To prove the inclusion (\ref{ht6-11-3}), we apply (\ref{lambda-tau-2}) to obtain
$$
\lambda_G(a,z)\leqslant \sqrt{2}\tau_n\left(\frac{|z-a|}{d(a)}\right).
$$
From here with the assumption $t\leqslant\lambda_G(a,z)$ we have $|z-a|<\tau_n^{-1}(t/\sqrt{2})d(a)$.

Since $D_{\lambda^{-1}}\subset G$, the inclusion (\ref{ht6-11-3}) holds.

Inclusion (\ref{ht6-11-4}) follows directly from (\ref{quasiball}) after we notice that the
condition $t>\sqrt{2}\tau_n(1)$ implies that $c_3<1$ and hence that the ball $B^n(a,c_3d(a))$
is included in $G$.
\begin{theorem}
\cite[Theorem 6.18]{h}
\label{ht6-18}
Let $G$ be a proper subdomain od $\mathbb R^n$ and assume that $G$ has a
connected, nondegenerate boundary. Let $t>0$ and denote
$d_1=\tau_n^{-1}(t)/(1+\tau^{-1}_n(t))$, $d_2=1/\gamma^{-1}_n(t)$
and $d_3=1/\tau^{-1}_n(t)$. Then, for all $a\in G$, the following inclusions hold
\begin{equation}
D_\mu(a,t)\subset\{z\in G\,|\,d(z)>d_1 d(a)\},
\end{equation}
\begin{equation}
\label{mu_e}
D_\mu(a,t)\supset B^n(a,d_2 d(a))\supset D_k(a,\log(d_2+1))
\end{equation}
\begin{equation}
D_\mu(a,t)\subset B^n(a,d_3 d(a))\cap G.
\end{equation}
If in addition $t<\tau_n(1)$, then
\begin{equation}
B^n(a,d_3 d(a))\subset D_k(a,\log(1/(1-d_3))).
\end{equation}
The numbers $d_1$, $d_2$ and $d_3$ are best possible for these inclusions.
\end{theorem}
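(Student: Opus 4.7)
The plan is to deduce all four inclusions from the two-sided estimate
\[
\tau_n\left(\frac{d(a)}{|a-z|}\right)\leqslant\mu_G(a,z)\leqslant\gamma_n\left(\frac{d(a)}{|a-z|}\right),
\]
whose right half is exactly (\ref{mu-G}) (valid for $z\in B^n(a,d(a))$), and whose left half I would prove under the connected boundary hypothesis as follows. Fix an arbitrary continuum $C_{az}\subset G$ joining $a$ to $z$ and consider the ring whose complementary components are $C_0=C_{az}$ and $C_1=\overline{\mathbb R^n}\setminus G$; since $\partial G$ is connected and nondegenerate and $\infty\in C_1$, the set $C_1$ is a nondegenerate continuum containing $\infty$. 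Picking a boundary point $b^{*}\in\partial G$ realizing $|b^{*}-a|=d(a)$ and applying Theorem~\ref{function_chi_n} with $b^{*}$ in the role of ``$c$'' gives $M(\Gamma_A)\geqslant\tau_n(d(a)/|a-z|)$; taking the infimum over $C_{az}$ yields the claimed lower bound on $\mu_G(a,z)$.

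From this two-sided bound the three main inclusions for $D_\mu(a,t)$ follow directly. If $\mu_G(a,z)<t$ then, by the lower bound and the fact that $\tau_n$ is decreasing (Theorem~\ref{H_n_properties}), one has $|a-z|<d(a)/\tau_n^{-1}(t)=d_3\,d(a)$; combined with $D_\mu(a,t)\subset G$ this is the third inclusion. The symmetry $\mu_G(a,z)=\mu_G(z,a)$ allows the same lower bound with the roles of $a$ and $z$ swapped, producing $d(z)>\tau_n^{-1}(t)\,|a-z|$, and the triangle inequality $d(a)\leqslant|a-z|+d(z)$ then rearranges to $d(z)>d_1\,d(a)$, which is the first inclusion. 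For the second inclusion, if $|a-z|<d_2\,d(a)=d(a)/\gamma_n^{-1}(t)$ then $d(a)/|a-z|>\gamma_n^{-1}(t)$, and since $\gamma_n$ is likewise decreasing the upper bound yields $\mu_G(a,z)<t$.

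The two auxiliary comparisons with the quasihyperbolic balls are immediate from (\ref{quasiball}): setting $R=d_2$ there gives $D_k(a,\log(d_2+1))\subset B^n(a,d_2\,d(a))$, finishing the second inclusion. When the extra hypothesis $t<\tau_n(1)$ is in force, monotonicity of $\tau_n^{-1}$ forces $\tau_n^{-1}(t)>1$ and hence $d_3<1$, so $B^n(a,d_3\,d(a))\subset B^n(a,d(a))\subset G$; the left half of (\ref{quasiball}) with $r=d_3$ then delivers the final inclusion $B^n(a,d_3\,d(a))\subset D_k(a,\log(1/(1-d_3)))$.

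Sharpness of $d_1,d_2,d_3$ is witnessed by the same extremal configurations that drive the proof: the Teichm\"uller extremal of Theorem~\ref{function_chi_n} makes the lower bound sharp and hence pins down $d_1$ and $d_3$, while the Gr\"otzsch extremal of Theorem~\ref{extremality} makes the upper bound sharp and hence pins down $d_2$. The main technical obstacle lies in the very first step: one must verify carefully that $A=\overline{\mathbb R^n}\setminus(C_{az}\cup C_1)$ genuinely meets the ring hypothesis of Theorem~\ref{function_chi_n}. This uses the connectedness of $\partial G$ in an essential way, and may require thickening $C_{az}$ slightly to a narrow tube so that $A$ becomes a bona fide ring, followed by a limiting argument as the tube shrinks back to $C_{az}$.
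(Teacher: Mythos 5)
The paper actually proves only the middle chain (\ref{mu_e}), stating explicitly ``We prove (\ref{mu_e}) only, because that part is used later on,'' and cites Heikkala for the rest. For (\ref{mu_e}) your argument and the paper's coincide in substance: both rest on the Gr\"otzsch ring upper bound $\mu_G(a,z)\leqslant\gamma_n(d(a)/|z-a|)$ valid for $z\in B^n(a,d(a))$, which is exactly (\ref{mu-G}); the paper merely re-derives this bound from scratch by passing through the auxiliary families $\Gamma_J$, $\Gamma$, $\tilde\Gamma$ and a M\"obius map, whereas you simply invoke the lemma. The comparison $D_k(a,\log(d_2+1))\subset B^n(a,d_2\,d(a))$ via (\ref{quasiball}) is also identical.

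The remaining parts (inclusions one, three and four, and the sharpness claim) are not proved in the paper, so there your proposal goes beyond it. The plan is essentially sound: a lower bound of the form $\mu_G(a,z)\geqslant\tau_n\bigl(d(a)/|a-z|\bigr)$, established via Teichm\"uller-ring extremality with the role of connectedness being to make $\overline{\mathbb R^n}\setminus G$ a single continuum containing $\infty$ and a nearest boundary point $b^*$, and then the algebra with $\tau_n^{-1}$ and the triangle inequality $d(a)\leqslant|a-z|+d(z)$ delivers all three inclusions exactly as you wrote. However, your closing worry about whether $A=G\setminus C_{az}$ is a bona fide ring, and the suggested fix of ``thickening'' $C_{az}$, is the wrong way to handle the technicality. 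Thickening does not obviously make $G\setminus C_{az}$ connected, and the limiting argument is delicate. The clean route is to observe that you never need the ring structure: every curve of $\Delta(C_{az},\overline{\mathbb R^n}\setminus G)$ has a subcurve in $\Delta(C_{az},\partial G;G)$ (take the initial piece up to the first exit from $G$), so by the minorization principle $M(\Delta(C_{az},\partial G;G))\geqslant M(\Delta(C_{az},\overline{\mathbb R^n}\setminus G))$, and Gehring's symmetrization theorem, of which Theorem~\ref{function_chi_n} is the ring-phrased corollary, bounds the right-hand side below by $\tau_n(|b^*-a|/|z-a|)$ for any two disjoint continua $C_{az}\ni a,z$ and $\overline{\mathbb R^n}\setminus G\ni b^*,\infty$, with no connectivity assumption on the complementary open set. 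Finally, the sharpness of $d_1,d_2,d_3$ is only asserted in your write-up, not proved; that is acceptable here since the paper also leaves it to the cited source, but it should be flagged as such rather than presented as following ``directly'' from the extremal configurations.
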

We prove (\ref{mu_e}) only, because that part is used later on.

We assume that $a,z\in G$ and that $|z-a|\leqslant d_2\,d(a)$. Then, since
$\gamma_n^{-1}(t)>1$, we have $d(z,a)<d(a)$. We consider the following curve
families.
$$
\Gamma_J=\Delta(J_{az},\partial G;G),
$$
$$
\Gamma=\Delta(J_{az},S^{n-1}(a,d(a));\overline{B^n(a,d(a))}),
$$
and
\begin{equation}
\tilde\Gamma=\Delta([z',+\infty),S^{n-1};\mathbb{R}^n\setminus B^n),
\end{equation}
where $z'=\frac{d(a)}{|z-a|}\,e_1$. Since $J_{az}$ is a continuum
which joins $a$ and $z$, we have
\begin{equation}
\mu_G(a,z)\leqslant M(\Gamma_J)
\end{equation}
and since $\Gamma<\Gamma_J$, we have that $M(\Gamma_J)<M(\Gamma)$.

Using M\" obius transformations, we get
\begin{equation}
M(\Gamma)=M(\tilde\Gamma)=\gamma_n\left(\frac{d(a)}{|z-a|}\right),
\end{equation}
and since $|z-a|<d_2\,d(a)$ and $\gamma_n$ is a strictly decreasing
homeomorphism, it follows that
\begin{equation}
\gamma_n\left(\frac{d(a)}{|z-a|}\right)<\gamma_n\left(\frac 1{d_2}\right)=t.
\end{equation}

Combining all these inequalities, we get
$$
\mu_G(a,z)<t,
$$
which proves the left side of (\ref{mu_e}). The right side inclusion
follows from (\ref{quasiball}).

Theorem \ref{ht6-11} ((\ref{ht6-11-3}) and (\ref{ht6-11-4})) gives
\begin{theorem}
$$
\lambda^{-1}(a,b)<\frac 1t\Rightarrow k(a,b)<\log\frac 1{1-\tau_2^{-1}\big(\frac t{\sqrt 2}\big)},
\quad\mbox{for }t>\sqrt 2\tau_2(1)
$$
$$
\lambda^{-1}(a,b)<s\Rightarrow k(a,b)<\log\frac 1{1-\tau_2^{-1}\big(\frac 1{\sqrt 2s}\big)},
$$
$$
\zeta_{\lambda^{-1}}^k(s)=\log\frac 1{1-\tau_2^{-1}\big(\frac 1{\sqrt{2}s}\big)},\quad s<\frac 1{\sqrt 2\tau_2(1)}.
$$
Also we obtain $\lambda^{-1}(x,a)<\frac 1t\Rightarrow|x-a|<c_3d(a)<{\rm diam}(G)\,c_3(1/t)$ and from here
$\zeta_{\lambda^{-1}}^{|\cdot|}(t)=\tau_n^{-1}(1/(\sqrt 2t))\,{\rm diam}(G)\,.$
\end{theorem}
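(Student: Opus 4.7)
The plan is to chain together the two Euclidean inclusions recorded in Theorem \ref{ht6-11}: first, (\ref{ht6-11-3}) places a sublevel set of $\lambda^{-1}$ inside a Euclidean ball, and then (\ref{ht6-11-4}), together with (\ref{quasiball}), places that Euclidean ball inside a quasihyperbolic ball. The hypothesis $t>\sqrt2\,\tau_2(1)$ is precisely what is needed to force $c_3:=\tau_2^{-1}(t/\sqrt2)<1$, so that the intermediate Euclidean ball lies inside $G$ and the second inclusion is applicable.

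Concretely, I would first unpack $\lambda^{-1}(a,b)<1/t$ as $\lambda_G(a,b)>t$ and feed this into (\ref{lambda-tau-2}), which reads $\lambda_G(a,b)\le\sqrt2\,\tau_n(|a-b|/\min\{d(a),d(b)\})$. Since $\tau_n$ is strictly decreasing, inversion yields $|a-b|<\tau_2^{-1}(t/\sqrt2)\min\{d(a),d(b)\}\le c_3\,d(a)$. I would then apply (\ref{quasiball}) with $M=\log(1/(1-c_3))$ (so that $1-e^{-M}=c_3$) to conclude $B^n(a,c_3d(a))\subset D_k(a,\log(1/(1-c_3)))$, and hence $k(a,b)<\log(1/(1-\tau_2^{-1}(t/\sqrt2)))$. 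This is the first displayed implication; the second form, and the formula for $\zeta_{\lambda^{-1}}^k$, follow from the substitution $s=1/t$, which turns the admissibility range $t>\sqrt2\,\tau_2(1)$ into $s<1/(\sqrt2\,\tau_2(1))$.

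For the last assertion, the Euclidean step above already delivers $|x-a|<c_3\,d(a)\le c_3\diam(G)$; restating the hypothesis as $\lambda^{-1}(x,a)<t$ and relabeling then gives $\zeta_{\lambda^{-1}}^{|\cdot|}(t)=\tau_n^{-1}(1/(\sqrt2\,t))\diam(G)$, with no additional ingredient required.

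The main obstacle is bookkeeping rather than substance. Because $\lambda^{-1}=1/\lambda$ reverses inequalities, one must consistently track the substitution $t\leftrightarrow 1/t$ between the two equivalent forms of the estimate and their respective admissibility ranges; in particular, the threshold $t>\sqrt2\,\tau_2(1)$ must be applied in the form that makes $c_3<1$, so that $B^n(a,c_3d(a))\subset G$ and (\ref{quasiball}) is available.
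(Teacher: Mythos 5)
Your proof is correct and takes essentially the same route as the paper: the theorem is stated there as an immediate consequence of (\ref{ht6-11-3}) and (\ref{ht6-11-4}), and your derivation simply re-expands those two inclusions from their sources (\ref{lambda-tau-2}) and (\ref{quasiball}), tracks the substitution $s=1/t$ for the second form, and reads off the Euclidean bound $|x-a|<c_3\,d(a)\le c_3\,\diam(G)$ for the final assertion. The one point worth sharpening in your write-up is the role of $t>\sqrt2\,\tau_2(1)$: what it actually buys is that $c_3<1$, which is exactly the condition under which $c_3$ can be written as $1-e^{-M}$ with $M>0$ so that (\ref{quasiball}) applies (the fact that $B^n(a,c_3 d(a))\subset G$ is then automatic, since $c_3<1$); your phrasing slightly inverts which of these is the operative constraint.
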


From Theorem \ref{ht6-18} we deduce
\begin{theorem}
In a domain $G$ with connected nondegenerate
boundary:
\begin{equation}
D_\mu(a,t)\supset D_k(a,\log(d_2+1)),\quad d_2=\frac 1{\gamma^{-1}(t)},
\end{equation}
and $\mu(a,b)<t$ if $k(a,b)<\log(d_2+1)$.

Also, $\zeta_k^\mu(s)=\gamma(1/(e^s-1))$.
If we put
$$
s=\log\left(\frac 1{\gamma^{-1}(t)}+1\right),\quad\mbox{ we have }
e^s-1=\frac 1{\gamma^{-1}(t)},\quad
t=\gamma\left(\frac 1{e^s-1}\right).
$$
\end{theorem}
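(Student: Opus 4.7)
The plan is to derive this theorem as a direct corollary of Theorem \ref{ht6-18}, essentially by re-reading the inclusion (\ref{mu_e}) as a modulus-of-continuity statement for the identity map from $(G,k_G)$ to $(G,\mu_G)$, and then inverting the relationship between the radii $t$ and $\log(d_2+1)$ to produce the explicit function $\zeta_k^\mu$.

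First I would note that the connectedness/nondegeneracy of $\partial G$ is precisely the hypothesis under which Theorem \ref{ht6-18} applies, and that its conclusion (\ref{mu_e}) reads
$$
D_\mu(a,t)\supset B^n(a,d_2\,d(a))\supset D_k(a,\log(d_2+1)),\qquad d_2=1/\gamma_n^{-1}(t),
$$
for every $a\in G$. The first claimed inclusion of our theorem is thus immediate from the outer pair of inclusions in (\ref{mu_e}); no further work is needed.

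Next I would translate the inclusion into the pointwise implication. If $b\in G$ satisfies $k_G(a,b)<\log(d_2+1)$, then $b\in D_k(a,\log(d_2+1))\subset D_\mu(a,t)$, whence $\mu_G(a,b)<t$. This is exactly the second assertion.

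Finally, for the formula $\zeta_k^\mu(s)=\gamma(1/(e^s-1))$, I would invert the relation $s=\log(d_2+1)$ expressing the $k$-radius in terms of the $\mu$-radius. Writing $d_2=1/\gamma^{-1}(t)$, the equation $s=\log(1+1/\gamma^{-1}(t))$ yields $e^s-1=1/\gamma^{-1}(t)$, i.e.\ $\gamma^{-1}(t)=1/(e^s-1)$, and applying the strictly decreasing homeomorphism $\gamma$ (Theorem \ref{H_n_properties}) gives
$$
t=\gamma\!\left(\frac{1}{e^s-1}\right).
$$
Therefore the smallest $t$ guaranteed by the previous implication once $k_G(a,b)<s$ is $\gamma(1/(e^s-1))$, and we read off $\zeta_k^\mu(s)=\gamma(1/(e^s-1))$. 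There is no genuine obstacle here: the only point to be mindful of is that the inversion $\gamma^{-1}$ is well defined on the range specified in Theorem \ref{H_n_properties}, and that $d_2>1$ (coming from $\gamma_n^{-1}(t)<1$ when the inclusion is nontrivial) keeps the logarithm positive; both are inherited directly from the hypotheses of Theorem \ref{ht6-18}.
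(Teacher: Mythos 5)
Your approach is correct and matches the paper exactly: the paper states the theorem as a direct deduction from Theorem \ref{ht6-18}, reading off the outer pair of inclusions in (\ref{mu_e}) and then inverting $s = \log(1 + 1/\gamma^{-1}(t))$ to obtain $\zeta_k^\mu(s) = \gamma(1/(e^s-1))$, which is precisely what you do. One small slip in your final parenthetical: since $\gamma_n\colon(1,\infty)\to(0,\infty)$ is a decreasing homeomorphism (Theorem \ref{H_n_properties}), we have $\gamma_n^{-1}(t)>1$, hence $d_2 = 1/\gamma_n^{-1}(t)<1$, not $d_2>1$ as you wrote; this does not affect anything, since $\log(d_2+1)>0$ only requires $d_2>0$, but the stated justification is reversed.
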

\begin{theorem}
\cite[Theorem 3.8]{se}
If $G\subset\mathbb{R}^n$ is open, $x\in G$ and $t>0$ then
$$
D_{j}(x,t)\subset B^{n}(x,R)
$$
where $R=(e^t-1)\,d(x)$. This formula for $R$ is the best possible expressed in terms of $t$ and $d(x)$ only.
\end{theorem}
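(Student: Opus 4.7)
The plan is essentially to unwind definitions, with the only nontrivial content being the sharpness statement. First I would translate $y \in D_j(x,t)$ directly into an estimate on $|x-y|$: by definition,
$$j_G(x,y) = \log\!\left(1 + \frac{|x-y|}{\min\{d(x),d(y)\}}\right) < t,$$
which rearranges to
$$|x-y| < (e^t - 1)\min\{d(x),d(y)\} \leq (e^t - 1)\,d(x),$$
yielding exactly $y \in B^n(x, R)$ with $R=(e^t-1)d(x)$. This is the inclusion half, and it is almost immediate from the definition of $j_G$.

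For the sharpness claim I would produce, for every $R' < (e^t-1)d(x)$, a domain $G$ and a point $y \in D_j(x,t)$ with $|x-y| \geq R'$, so that no smaller $R$ depending only on $t$ and $d(x)$ suffices. The natural choice is the half-space $G = H^n$, with $x = d\cdot e_n$ (so $d(x) = d$) and $y_s = x + s\cdot e_1$ for $s > 0$. The key observation is that translating parallel to $\partial H^n$ leaves the distance to the boundary unchanged, so $d(y_s) = d$ and therefore $\min\{d(x),d(y_s)\} = d$. Consequently $j_G(x,y_s) = \log(1 + s/d)$, which is strictly less than $t$ precisely when $s < (e^t-1)d = R$. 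Choosing any $s$ with $R' \leq s < R$ produces the desired witness $y_s$.

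The only potential obstacle is a subtle one: the sharpness statement asserts best possibility among formulas that depend solely on $t$ and $d(x)$. So I should make explicit in the write-up that the witness $y_s$ is constructed so that the minimum in the definition of $j_G$ is in fact attained at $d(x)$, ensuring that no slack is lost in the estimate $\min\{d(x),d(y)\} \leq d(x)$. Beyond that, the proof is a two-line definitional computation plus the one-line example above.
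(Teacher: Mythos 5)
The paper does not supply its own proof of this statement: it is quoted directly from Seittenranta \cite[Theorem 3.8]{se} and then used to derive $\zeta_j^{|\cdot|}$. So there is no in-paper argument to compare against, and your proof has to stand on its own. It does. The inclusion half is exactly the definitional unwinding you describe, and the inequality $\min\{d(x),d(y)\}\le d(x)$ is the only step that discards information, which is precisely what the sharpness clause targets. Your half-space witness is the standard and correct one: translating tangentially to $\partial H^n$ keeps $d(y_s)=d(x)$, so the discarded inequality becomes an equality, $j_{H^n}(x,y_s)=\log(1+s/d(x))$, and letting $s\uparrow(e^t-1)d(x)$ shows no $R'<R$ expressible in $t$ and $d(x)$ alone can contain $D_j(x,t)$ for every $G$. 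The argument is complete and, as far as one can tell without consulting Seittenranta's original, in the same elementary spirit that a proof of this statement would naturally take.
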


Therefore, using $d(x)\leqslant{\rm diam}(G)$, we get $\zeta_j^{|\cdot|}(t)=(e^t-1)\,{\rm diam}(G)$.

\begin{theorem}
\cite[Theorem 3.10]{se}
If $G\subset\mathbb{R}^n$ is an open set, $x\in G$ and $t>0$ then $D_{\delta}(x,t)\subset B^n(x,R)$
where $R=(e^t-1)\,d(x)$.
\end{theorem}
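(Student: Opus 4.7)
The plan is to deduce this inclusion as an immediate corollary of the preceding theorem (Seittenranta's Theorem 3.8 on the $j_G$-metric) combined with the inequality $j_G \leqslant \delta_G$ already recorded as Theorem 3.4 in this section. Both ingredients are in hand, so essentially no new work is required.

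First I would fix an arbitrary $z \in D_\delta(x,t)$, so that $\delta_G(x,z) < t$ by definition. The inequality $j_G(x,z) \leqslant \delta_G(x,z)$ then yields $j_G(x,z) < t$, that is, $z \in D_j(x,t)$. The previous theorem now gives $|x - z| < (e^t - 1) d(x)$, which is the desired inclusion $z \in B^n(x, R)$.

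If one prefers to avoid citing the $j_G$-statement explicitly, one can simply unfold the definition of $j_G$: the inequality $j_G(x,z) < t$ reads
$$\log\left(1+\frac{|x-z|}{\min\{d(x),d(z)\}}\right) < t,$$
which rearranges to $|x-z| < (e^t-1)\min\{d(x),d(z)\} \leqslant (e^t-1)d(x)$, giving $R = (e^t-1) d(x)$ directly. No obstacle is anticipated; the real content lies in Theorem 3.4, and the present theorem is little more than a translation of the $j_G$-version into the $\delta_G$-language, with the same radius $R$ because the constant factor $2$ in $\delta_G \leqslant 2j_G$ is not needed in this direction.
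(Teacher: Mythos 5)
Your proof is correct and is the natural argument: the containment $D_\delta(x,t)\subset D_j(x,t)$ follows immediately from $j_G\leqslant\delta_G$ (Theorem 3.4), and then the $j$-ball inclusion (Theorem 3.8) gives the euclidean ball with the same radius $R=(e^t-1)\,d(x)$. The paper cites this result from Seittenranta without reproducing a proof, but your reduction — and equally the direct unfolding of the definition of $j_G$ — is exactly the right way to obtain it, and you are correct that the factor $2$ in $\delta_G\leqslant 2j_G$ plays no role in this direction.
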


As above, we get $\zeta_\delta^{|\cdot|}(t)=(e^t-1)\,{\rm diam}(G)$.

From Lemma \ref{vu1l2-39}, we have that
$$
\zeta_k^\mu(t)=h(3t).
$$
Now, from \cite[Lemma 2.30]{h} we may choose (the case $n=2$)
$$
h(t)=\frac{2\pi\alpha}{\log\frac 1{2t}},\quad\mbox{for }t\leqslant \frac 14.
$$
From here we have that
$$
\zeta_k^\mu(t)=\frac{2\pi\alpha}{\log\left(\frac 1{6t}\right)},
\mbox{for }\quad t\leqslant\frac 1{12}\quad
\mbox{(more important case)}
$$
$$
\alpha=\max\{1,\gamma\}, \quad\gamma=\frac 98\log 2>1
\,,\quad\alpha=\gamma \,.
$$
In the second case, where
$$
h(t)=36\beta\pi t^2,\quad\mbox{for }t>\frac 14.
$$
we have $h(3t)=324\beta\pi t^2$, $t>\frac 1{12}$.
$$
\beta=\max\Big(1,\frac 1\gamma\Big)=1
$$
$$
\zeta_k^\mu(t)=324\pi t^2,\quad\mbox{for }t>\frac 1{12}.
$$
\vspace{0.3em}

\begin{tabular}{|c|p{3.2cm}|p{3cm}|p{4.1cm}|p{3.4cm}|}
    \hline
    & \hspace{7mm} $j_G$ & $k_G$ & $\mu_G$ & $\lambda^{-1}_G$ \\
    \hline
    \multirow{3}{1cm}{$j_G$} & 1 & 2 & 3 & 4 \\
    & $\zeta_j^j(t)=t$ & \begin{minipage}{2.5cm}$\zeta_j^k(t)=ct$\\ $G$ -- uniform\\ $\zeta_j^k(t)=\varphi(t)$\\ $G$ -- $\varphi$ domain\end{minipage} &
    \begin{minipage}{4.1cm}$\displaystyle\zeta_j^\mu(t)=\omega_{n-1}\left(\log\left(\frac 1t\right)\right)^{1-n}$\\ locally\end{minipage}
    & \begin{minipage}{3.5cm} $\displaystyle\zeta_j^{\lambda^{-1}}(t)=\frac 1{c\tau(e^{2t}-1)}$\\ $G$ -- $c$-QED domain \end{minipage} \\
    \hline
    \multirow{3}{1cm}{$k_G$} & 5 & 6 & 7 & 8 \\
    & \hspace{7mm} $\zeta_k^j(t)=t$  & $\zeta_k^k(t)=t$ & \begin{minipage}{2.8cm}$\zeta_k^\mu(t)=\gamma\left(\frac 1{e^t-1}\right)$\\
    $\partial G$ connected, nondegenerate\end{minipage} & $\zeta_k^{\lambda^{-1}}=\zeta_j^{\lambda^{-1}}$ \\
    \hline
    \multirow{3}{1cm}{$\mu_G$} & 9 & 10 & 11 & 12 \\
    & \begin{minipage}{2.5cm}$\zeta_\mu^j(t)=\frac{t}{c_n}$ \\ $\partial G$ connected\\[-0.5em]\rule{0mm}{0mm}\end{minipage} & \begin{minipage}{2.5cm}$\zeta_\mu^k(t)=c\cdot t$ \\ $G$ uniform\\[-0.5em]\rule{0mm}{0mm}$\partial G$ connected\end{minipage}
    & $\zeta_\mu^\mu(t)=t$ & \begin{minipage}{3.2cm}$\zeta_\mu^{\lambda^{-1}}=\zeta_\mu^j\circ\zeta_j^{\lambda^{-1}}$\\
    $G$ -- $c$-QED domain\\ $\partial G$ connected\end{minipage} \\
    \hline
    \multirow{3}{1cm}{$\lambda^{-1}_G$} & 13 & 14 & 15 & 16 \\
    & $\zeta_{\lambda^{-1}}^j(t)=\log\left(1+\tau^{-1}\left(\frac 1{\sqrt{2}\,t}\right)\right)$ &
    \begin{minipage}{3cm}$\zeta_{\lambda^{-1}}^k(t)=\log{\frac{1}{1-\tau_2^{-1}(1/(\sqrt{2}t))}}$\\ $t<\frac 1{\sqrt{2}\tau_2(1)}$
    \\[-0.5em] \rule{0mm}{0mm}\end{minipage} & \begin{minipage}{4.1cm}$\zeta_{\lambda^{-1}}^\mu=\zeta_{\lambda^{-1}}^j\circ\zeta_j^\mu$\\
    locally\end{minipage}
    & $\zeta_{\lambda^{-1}}^{\lambda^{-1}}(t)=t$ \\
    \hline
  \end{tabular}
\vspace{0.3em}

This is improved $4\times 4$ chart.

\begin{example}
For $G\subset\mathbb{R}^n$ we choose $z_0\in\partial G$, sequence $x_k\in G$ such $x_k\rightarrow z_0$
and sequence $y_k\in G$ such that
\begin{equation}
|y_k-z_0|<\frac{|x_k-z_0|}k.
\end{equation}
Clearly $|x_k-y_k|\rightarrow 0$ and
\begin{equation}
|x_k-y_k|>|x_k-z_0|-|y_k-z_0|>|x_k-z_0|\left(1-\frac 1k\right).
\end{equation}
But
$$
j_G(x_k,y_k)\geqslant\log\left(1+\frac{|x_k-y_k|}{|y_k-z_0|}\right)\geqslant
\log\left(1+\frac{1-\frac 1k}{\frac 1k}\right)=\log(k)\rightarrow+\infty.
$$
\end{example}
Hence $id:(G,|\cdot|)\longrightarrow(G,j_G)$ is not uniformly continuous.
By this reason, adequate fields in the chart are empty.

Also, for a fixed small $d>0$ we can find $x,y\in G$ such that $|x-y|=d$ and
$d(x,\partial G)$ as small as we like.

So we get $k_G(x,y)$ as large as we like and there is no estimate of
$k_G(x,y)$ in terms of $|x-y|$.

In other hand function $\zeta_k^{|\cdot|}$ is obtained from:
$$
k_G(x,y)\geqslant\int_0^{|x-y|}\frac{ds}{{\rm diam}(G)}=\frac{|x-y|}{{\rm diam}(G)}.
$$
From here we get that modulus of continuity of
$id:(G,k_G)\longrightarrow(G,|\cdot|)$ is $\zeta_k^{|\cdot|}(t)=t\,{\rm
diam}(G)$ (where $G$ is bounded).

All the remaining items are obtained by composition of
the above moduli of continuity.

And finally we have following charts:

\vfill

\pagebreak

\hspace{-3cm}\includegraphics[width=20cm]{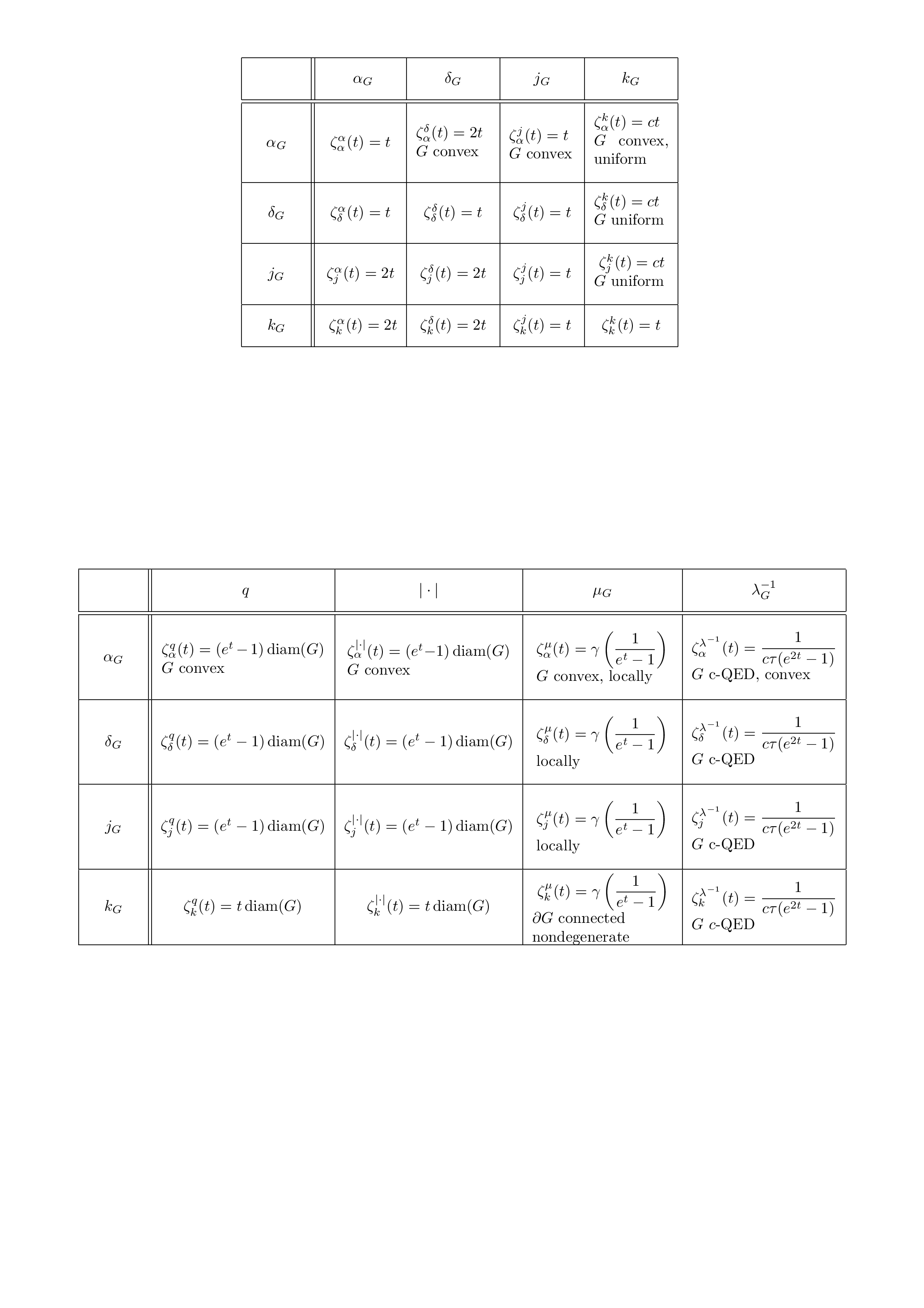}

\pagebreak

\hspace{-3cm}\includegraphics[width=20cm]{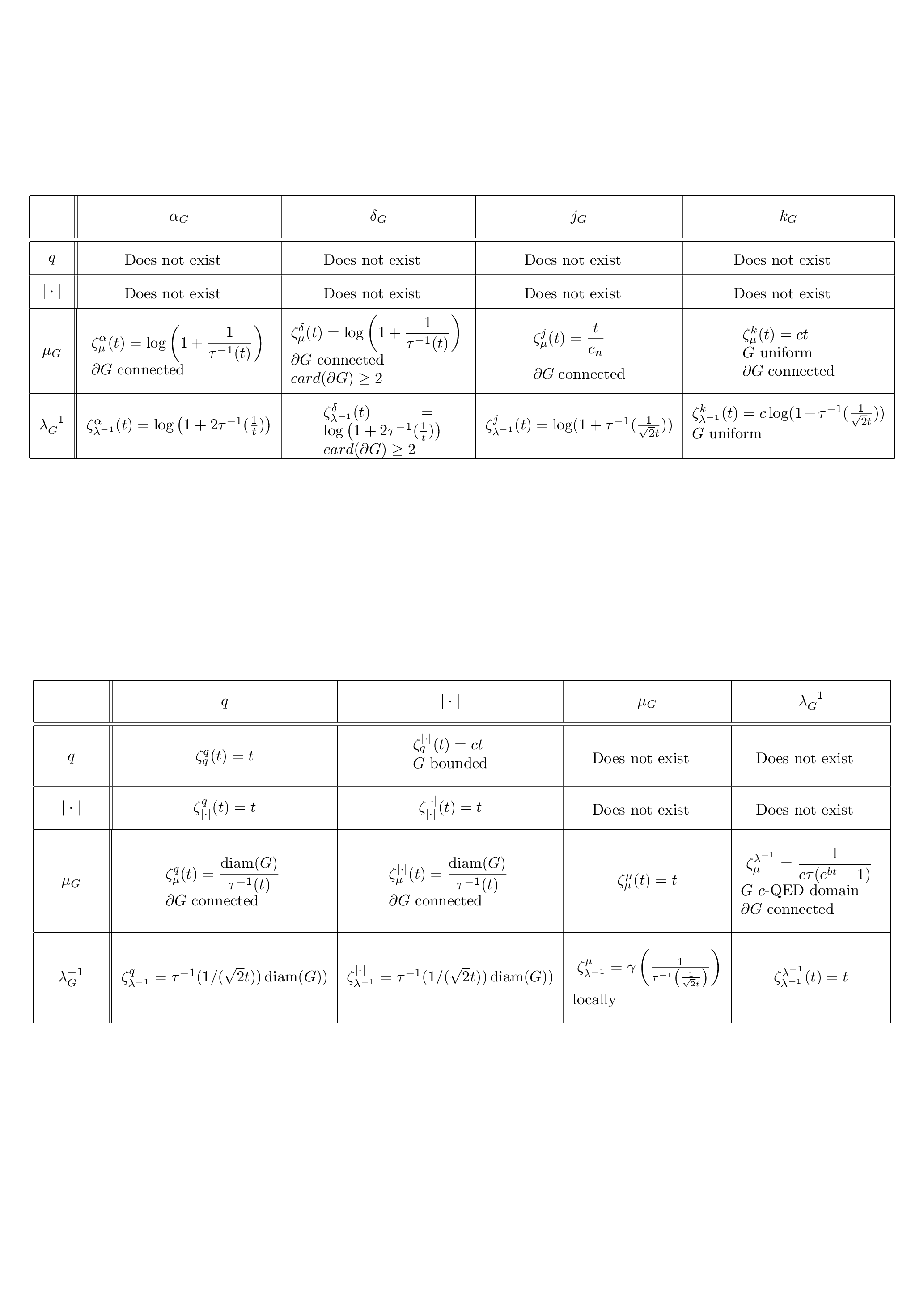}

Sharper results can be obtained for special domains, for example $G=\mathbb R^n\setminus\{0\}$ was studied
by R. Klen \cite{kl} in relation to $j_G$ metrics.

We return to the question of moduli of continuity, from a different viewpoint, in chapter 2, sections 2 and 3.

\section{Removing a point}

Let $\mathcal M$ be a collection of metrics on a domain $G\subset\mathbb R^n$ and
$B_m(x,T)=\{z\in G\,:\,m(x,z)<T\}$, $m\in\mathcal M$. Let
$$
\begin{array}{l}
r_T=\sup\{r>0\,:\,S^{n-1}(x,r)\subset B_m(x,T)\},
\vspace{0.5em}\\
R_T=\inf\{r>0\,:\,S^{n-1}(x,r)\cap B_m(x,T)=\emptyset\}.
\end{array}
$$
The question is can we find lower bound for $r_T$ and upper bound for $R_T$.

\begin{problem}\rm
(Radius of circumscribed ball)

It is evident from the definition of $\lambda_G$ that adding new
points, even isolated ones, to the boundary of $G$ will affect the
value of $\lambda_G(x,y)$ for fixed points $x,y\in G$. We study this
phenomenon in the case when $G=\mathbb R^2\setminus\{0\}$.

We find an upper bound for radius of circumscribed ball, where $m=\lambda_{G}^{-1}$.

We use notation
$$
B_\lambda(1,T)=\{z\in\mathbb C\,:\,\lambda_G(z,1)\geqslant T^{-1}\}.
$$
Let $h(z)=\frac{z}{|z|^2}$ be an inversion. Since $h:B_\lambda\longrightarrow B_\lambda$
($h$ is an isometry for $\lambda$ metric) we have
$$
\lambda_G(1,z)=\lambda_G(1,h(z)).
$$

\begin{figure}[!ht]
  \begin{center}
    \includegraphics[width=80mm]{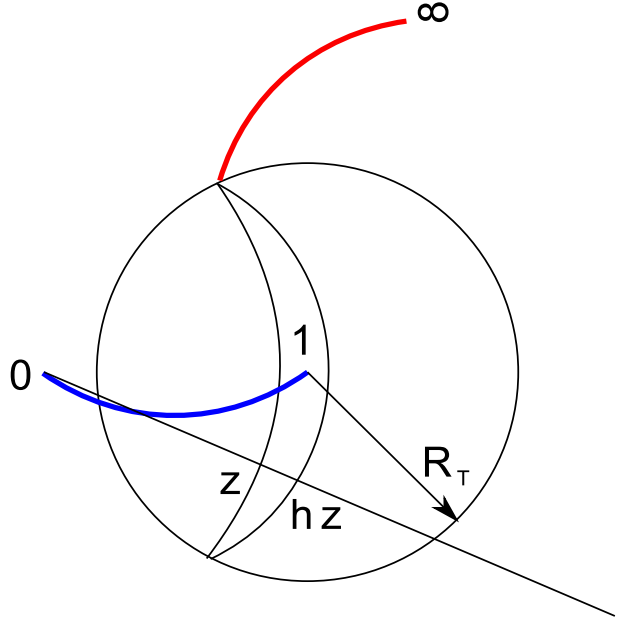}
    \caption{Radius of circumscribed ball}
  \end{center}
\end{figure}

From \cite[(3.3), (3.22)]{solv}
we have
\begin{equation}
p(z)=\frac{2\pi}{\log M(2z-1)},\quad z\in\mathbb{C}\setminus\{0,1\}\quad\mbox{and}
\end{equation}
\begin{equation}
\log M(2e^{i\theta}-1)=\frac{2\pi{\K}(\sin\frac\theta 4){\K}(\cos\frac\theta 4)}%
{{\K}^2(\sin\frac\theta 4)+{\K}^2(\cos\frac\theta 4)}.
\end{equation}
If we put $z=e^{i\theta}$ we have
$$
p(e^{i\theta})=\frac{{\K}^2(\sin\frac\theta 4)+{\K}^2(\cos\frac\theta 4)}%
{{\K}(\sin\frac\theta 4){\K}(\cos\frac\theta 4)}.
$$
For $|z|=1$ we obtain $\lambda_G(1,z)=p(z)$.

Choose $\theta$ such that $\sin\frac\theta 2=\frac{R_T}2$. From here $\theta=2\arcsin\frac{R_T}2$.
Now if we put
\begin{equation}y=\frac{{\K}(\sin\frac\theta 4)}{{\K}(\cos\frac\theta 4)}=
\label{mu}
\frac{\displaystyle 2}{\displaystyle \pi}\mu(\cos\frac\theta 4)
\end{equation}
we have
$$
p(e^{i\theta})=y+\frac 1y=\frac 1T.
$$
We are interested for solutions $y<1$ because we want $\theta<\pi$.
From here $y=\frac{2T}{1+\sqrt{1-4T^2}}$. Since from (\ref{mu})
$$
\theta=4\arccos(\mu^{-1}(\frac{\pi y}2))
$$
now we have
\begin{equation}
\label{theta}
\theta=4\arccos(\mu^{-1}\big(\frac\pi 2\frac{2T}{1+\sqrt{1-4T^2}}\big))=
4\arccos(\mu^{-1}\big(\frac{\pi T}{1+\sqrt{1-4T^2}}\big)).
\end{equation}

Hence, the radius of the circumscribed sphere is
$$
R_T=2\sin\frac\theta 2,\quad T\in(0,\frac 12),\quad \theta \mbox{ from (\ref{theta})}.
$$
\end{problem}
\begin{question}\rm

\begin{enumerate}
\item
Can we find $r_T$ in the case above?
\item
Can we estimate $R_T$, where $G$ is now bounded subset of $\mathbb C$ (instead of
$\mathbb R^2\setminus\{0\}$)?
\item
Consider $\mu_G$-balls where $\partial G$ is connected, say
$\partial G=[0,e_1]$. Can we find a lower bound for $r_T$ (upper
bound for $R_T$) in this case?
\end{enumerate}
\end{question}
\begin{problem}\rm
(Estimate for $\lambda_{B^2\setminus\{0\}}(x,y)$)
Next we investigate the following situation: $G\subseteq\mathbb R^n$ is domain,
$a\in G$, $G'=G\setminus\{a\}$. Is $\lambda_G(x,y)=\lambda_{G'}(x,y)$ true under some
additional assumptions, like $x,y$ close to $\partial G$?

We consider a special case where $G=B^2$ and $a=0$.
\begin{figure}[!ht]
\begin{center}
\includegraphics[width=40mm]{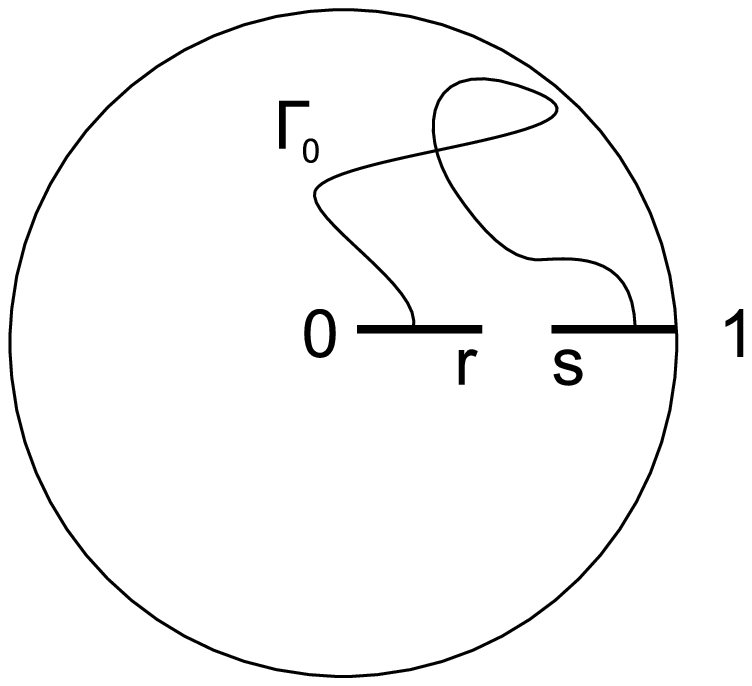}
\caption{}
\end{center}
\end{figure}

In \cite[Lemma 2.8]{levu} is proven that if $\Gamma_0=\Delta([0,x],[\tilde{y},x/|x|];B)$, where $\tilde{y}=\frac{|y|}{|x|}\,x$ and if we put
$|x|=r$, $|\tilde{y}|=s$, then we have
\begin{equation}
\label{m_gamma_0}
M(\Gamma_0)=\tau\left(\frac{(s-r)(1-rs)}{r(1-s)^2}\right).
\end{equation}

Further, from \cite[(2.6)]{vu1} we have that if
$\Delta_0=\Delta([-\frac{x}{|x|},-x],[x,\frac{x}{|x|}];B)$ and if $|x|=r$
as before, then
\begin{figure}[!ht]
\begin{center}
\includegraphics[width=40mm]{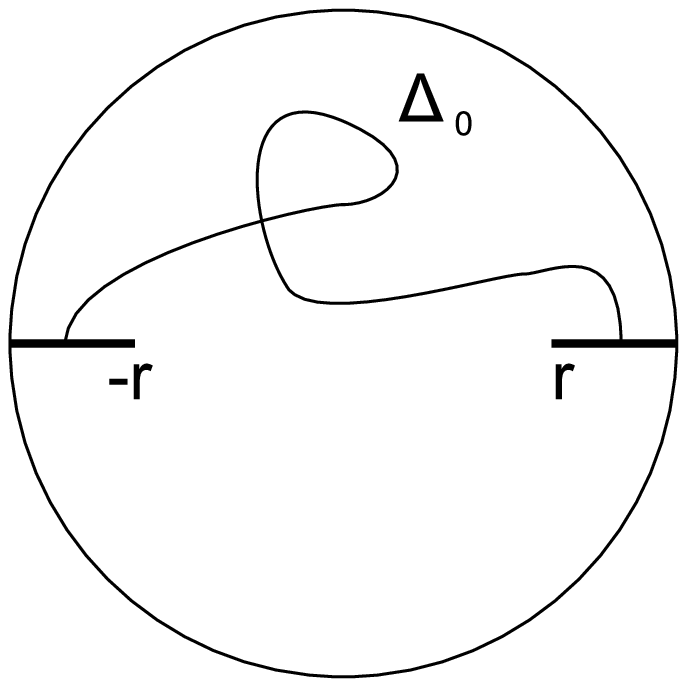}
\caption{}
\end{center}
\end{figure}
$$
M(\Delta_0)=\frac{1}{2}\,\tau\left(\frac{4r^2}{(1-r^2)^2}\right).
$$
Also, using M\" obius transformation $T_r:B^2\longrightarrow B^2$, $T(r)=0$ we can map family of curves
$\Delta_1$ to family of curves $\Delta_1'$, where $\Delta_1=\Delta([-\frac{x}{|x|},-\tilde{y}],[0,x];B)$ and
$\Delta_1'=\Delta([-\frac{x}{|x|},-\tilde{y}'],[-x,0];B)$.

We know that
$$
\rho(-s,0)=\rho(-r,-t),
$$
where $r$ and $s$ are as before and $-t=T_r(-s)$. Further, this is equivalent to
\begin{equation}
\label{log}
\log\frac{1+s}{1-s}=\log\frac{1+t}{1-t}\frac{1-r}{1+r}.
\end{equation}
Solving (\ref{log}) in $t$ we obtain $t=\frac{s+r}{1+sr}$.

Now we have
$$
M(\Delta_1)=M(\Delta_1')=\tau\left(\frac{(t-r)(1-tr)}{r(1-t)^2}\right)
=\tau\left(\frac{s(1+r)^2}{r(1-s)^2}\right).
$$

\begin{figure}[!ht]
\begin{center}
\includegraphics[width=100mm]{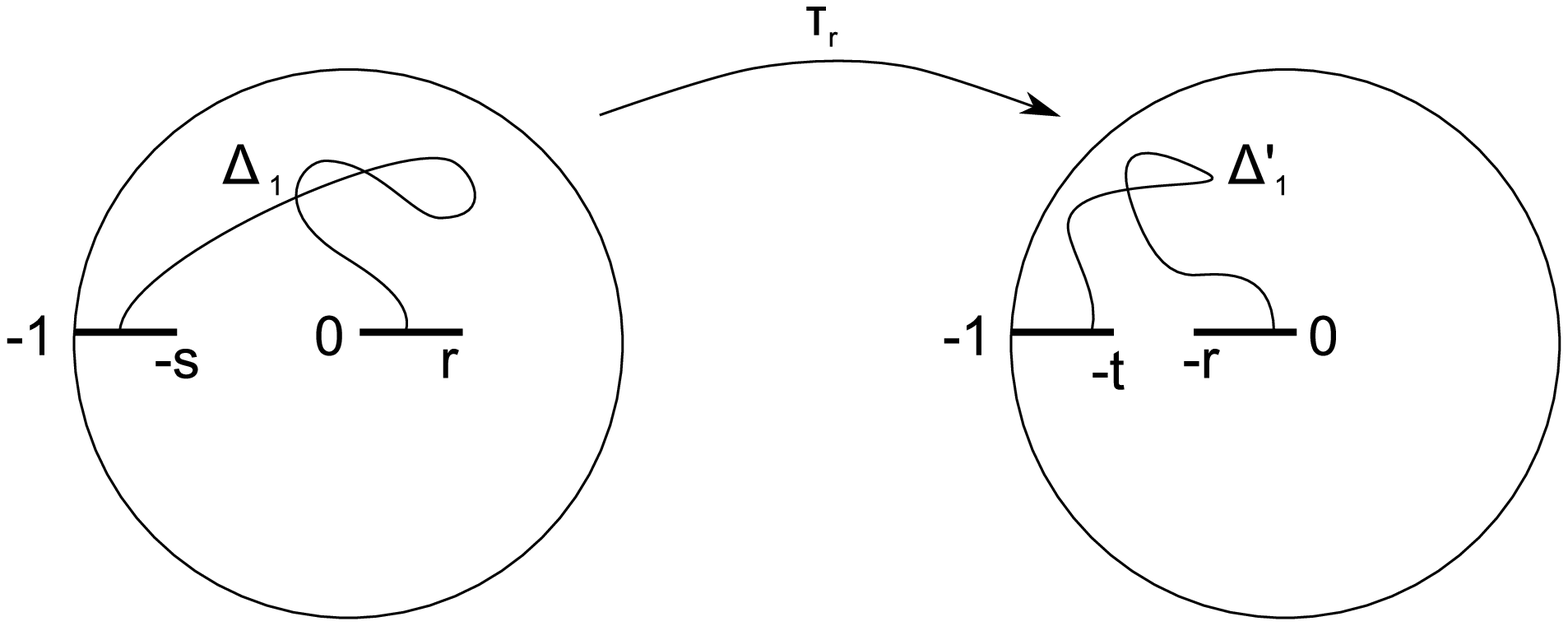}
\caption{}
\end{center}
\end{figure}

The first equality holds because $T_r$ is conformal map, the second
one follows from (\ref{m_gamma_0}) and the third one from the expression for
$t$.

Now, if we put in last term that $r=s$, we obtain
$$
M(\Delta_1)=\tau\left(\left(\frac{1+r}{1-r}\right)^2\right).
$$

The question is when is $M(\Delta_1)\geqslant M(\Delta_0)$. In other words, when is
\begin{equation}
\label{frac_12_tau}
\tau\left(\left(\frac{1+r}{1-r}\right)^2\right)\geqslant\frac 12\tau\left(\frac{4r^2}{(1-r^2)^2}\right)?
\end{equation}

Applying formula \cite[5,19 (5)]{avv}:
$$
\frac 12\tau(t)\geqslant\tau((\sqrt{t}+\sqrt{t+1})^4-1)
$$
for $t=4r^2/(1-r^2)^2$ we have
$$
\frac 12\tau\left(\frac{4r^2}{(1-r^2)^2}\right)=\tau\left(\frac{8r(r^2+1)}{(1-r)^4}\right).
$$
Then (\ref{frac_12_tau}) is equivalent to
$$
\left(\frac{1+r}{1-r}\right)^2\leqslant
\frac{8r(r^2+1)}{(1-r)^4},
$$
since $\tau$ is decreasing. The last inequality is equivalent to
$$
r^4-8r^3-2r^2-8r+1\leqslant 0.
$$

This inequality holds for $r\in[0.12,1)$.

This gives the answer to the question: For which values of $|x|$ we have
$$
\lambda_A(x,-x)=M(\Delta(E,-E;B^2)),
$$
where $A=B^2\setminus\{0\}$, $E=[x,\frac{x}{|x|}]$?

A related result can be found in Heikkala's dissertation,
\cite[Theorem 7.3]{h}. In fact, this theorem deals with the more
general situation: If $x$ and $y$ are close to the boundary and far
apart then $\lambda_{B^n\setminus\{0\}}(x,y)=\lambda_{B^n}(x,y)$.
His theorem is:
\begin{theorem}
Let $G=B^n\setminus\{0\}$ and let $x,y\in G$ with $|x-y|\ge\delta>0$. Then,
if $\min\{|x|,|y|\}\in(r_1,1)$ with $r_1=\frac{\sqrt{\delta^4+64}-\delta^2}{8}$, we have that
$$
\lambda_G(x,y)=\lambda_{B^n}(x,y).
$$
\end{theorem}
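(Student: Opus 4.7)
The plan is to establish the two inequalities separately. The direction $\lambda_G(x,y)\le\lambda_{B^n}(x,y)$ is elementary: let $(C_x,C_y)$ be a near-extremal admissible pair for $\lambda_{B^n}(x,y)$. Since $x,y\ne 0$ and the singleton $\{0\}$ has $n$-capacity zero, one may perturb $C_x,C_y$ slightly, without affecting optimality, so that neither meets the origin. Such a pair is then admissible for $\lambda_G(x,y)$ as well, and deleting a point of capacity zero from the domain does not change the modulus of the connecting curve family, so $M(\Delta(C_x,C_y;G))=M(\Delta(C_x,C_y;B^n))$. Taking the infimum yields the upper bound.

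For the reverse inequality, fix an admissible pair $(C_x,C_y)$ in $G$; each curve ends on $\partial G=S^{n-1}\cup\{0\}$. If both curves terminate on $S^{n-1}$, they remain admissible in $B^n$ and the bound follows at once. The substantive case is when at least one curve, say $C_x$, ends at $0$. The idea is to bound $M(\Delta(C_x,C_y;G))$ from below by a Teichm\"uller-type quantity: apply Theorem \ref{function_chi_n} to the ring whose inner complementary component is the continuum $\overline{C_x}$ (joining $x$ to $0$) and whose outer component is built from $\overline{C_y}$ extended radially to $S^{n-1}$ and then to $\infty$ via the inversion sending $B^n$ to $\mathbb R^n\setminus\overline{B^n}$. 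This produces a lower bound
$$M(\Delta(C_x,C_y;G))\ \ge\ \tau_n\bigl(\alpha(|x|,|y|,|x-y|)\bigr)$$
with an explicit $\alpha$. In parallel, the Gr\"otzsch extremality (Theorem \ref{extremality}), applied after a standard M\"obius normalization of $B^n$, represents $\lambda_{B^n}(x,y)=\tau_n(\beta(|x|,|y|,|x-y|))$ with a matching $\beta$.

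Since $\tau_n$ is strictly decreasing, the desired inequality $\lambda_G(x,y)\ge\lambda_{B^n}(x,y)$ reduces to the algebraic comparison $\alpha\le\beta$. Setting $r=\min\{|x|,|y|\}$ and $\delta=|x-y|$ and simplifying, this condition takes the form $4r^2+\delta^2 r-4\ge 0$, whose positive root is
$$r_1=\frac{\sqrt{\delta^4+64}-\delta^2}{8},$$
so the hypothesis $r>r_1$ is precisely what the comparison demands. The main obstacle lies in the second step: a general $C_x$ ending at $0$ need not be a radial segment, so one must justify, via a symmetrization or monotonicity-of-modulus argument under radial straightening, that the lower bound produced by Theorem \ref{function_chi_n} is saturated by the radial model $C_x=[0,x]$, $C_y=[y,y/|y|]$. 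Once this worst-case reduction is in place, the threshold $r_1$ emerges from a routine algebraic manipulation using the functional identity (\ref{gt}) between $\gamma_n$ and $\tau_n$ together with their monotonicity.
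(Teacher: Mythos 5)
The paper does not prove this theorem at all; it is quoted verbatim from Heikkala's dissertation \cite[Theorem 7.3]{h} as a ``related result,'' appearing after the paper's own elementary analysis of the special case $n=2$, $y=-x$. There the comparison is carried out only for the concrete radial families $\Delta_0=\Delta([-x/|x|,-x],[x,x/|x|];B^2)$ and $\Delta_1=\Delta([-x/|x|,-x],[0,x];B^2)$, whose moduli are computed exactly as $\tfrac12\tau(4r^2/(1-r^2)^2)$ and $\tau((1+r)^2/(1-r)^2)$, giving the threshold $r\gtrsim 0.12$; the general theorem, with the different threshold $r_1=(\sqrt{\delta^4+64}-\delta^2)/8$, is merely cited. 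So there is no paper proof to compare against, only a related special-case calculation.

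On its own merits your proposal is an outline, not a proof, and you have identified the hole yourself. The easy direction $\lambda_G\le\lambda_{B^n}$ is fine: the family of admissible pairs is strictly larger in $G$ (since curves may now approach $0$ as well as $S^{n-1}$), and $\{0\}$ has zero conformal capacity, so deleting it does not change any connecting modulus. For the hard direction your plan rests on two unverified steps. First, Theorem \ref{function_chi_n} bounds from below the modulus $M(\Gamma_A)$ of a ring $A=R(C_0,C_1)$ in the whole space; it does not directly bound $M(\Delta(C_x,C_y;G))$, and the naive inclusion $\Delta(C_x,C_y;G)\subset\Gamma_A$ gives $M(\Delta(C_x,C_y;G))\le M(\Gamma_A)$, the \emph{wrong} direction. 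Turning the ring estimate into a lower bound for the family in the punctured ball requires a reflection-across-$S^{n-1}$ or spherical-symmetrization argument reducing arbitrary $C_x$ ending at $0$ to the radial segment $[0,x]$ (and similarly for $C_y$); you flag this as ``the main obstacle'' but do not supply it. Second, the passage from the two $\tau_n$-expressions to the quadratic $4r^2+\delta^2 r-4\ge 0$ is asserted without derivation. The quadratic \emph{is} consistent with the stated $r_1$ (and, substituting $\delta=2r_1$, reproduces the cubic $r_1^3+r_1^2-1=0$ mentioned in the paper), so the endpoint is right, but as written the argument begins from the answer rather than arriving at it. Until the symmetrization reduction and the explicit $\alpha\le\beta$ computation are supplied, the reverse inequality is not established.
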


However, we have in the special case $x=-y$, better constant (letting $\delta=2|x|$ and
$r_1=|x|$ in Theorem 7.3 gives equation $r_1^3+r_1^{2}-1=0$, and its real root is
larger than $0.75$, and consequently larger than $0.12$).
\end{problem}

\section{Uniform continuity on union of two domains}

\begin{definition}
Let $\{m_D\,:\,D\subseteq\overline{\mathbb R^n}\}$ be a family of metrics.
We say that this family is monotone if $D_1\subseteq D_2$ implies
$m_{D_1}(x,y)\ge m_{D_2}(x,y)$ for all $x,y\in D_1$.
\end{definition}

\begin{lemma}\cite[2.27]{vu5}
\label{lemma}
Let $G_1,G_2$ be domains in $\mathbb R^n$ with $G_1\cap G_2\neq\emptyset$,
$G_1\neq\mathbb R^n\neq G_2$ and assume that there exists $c\in(0,1)$ such that
\begin{equation}
\label{condition}
d(x,\partial G_1)+d(x,\partial G_2)\ge c\,d(x,\partial(G_1\cup G_2)),
\end{equation}
for all $x\in G=G_1\cup G_2$.

Suppose that $f:G\longrightarrow fG$ is continuous, $fG\subseteq\mathbb R^n$;
that $\{m_D\,:\,D\subseteq\mathbb R^n\}$ is a monotone family of metrics;
and that
\begin{equation}
\label{monotonicity_1}
m_{fG_j}(f(x),f(y))\le\omega_j(k_{G_j}(x,y))
\end{equation}
for $x,y\in G_j$ and $j=1,2$. Then there exists
$\omega:[0,+\infty)\longrightarrow[0,+\infty)$ such that
\begin{equation}
\label{monotonicity_k}
m_{fG}(f(x),f(y))\le\omega(k_G(x,y))
\end{equation}
and $\lim_{t\rightarrow 0+}\omega(t)=0$ provided $\lim_{t\rightarrow 0+}\omega_j(t)=0$,
$j=1,2$.
\end{lemma}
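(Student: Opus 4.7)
The plan is to take a near-minimizing $k_G$-curve $\gamma$ from $x$ to $y$ and break it into short subarcs on each of which one of the given estimates~(\ref{monotonicity_1}) applies; the pieces are then recombined via the triangle inequality for $m_{fG}$ and the monotonicity of the family $\{m_D\}$. The condition~(\ref{condition}) is the crucial ingredient, providing at each $z\in G$ a choice $j(z)\in\{1,2\}$ with $z\in G_{j(z)}$ and $d(z,\partial G_{j(z)})\ge c_0\,d(z,\partial G)$ for a constant $c_0=c_0(c)>0$, which makes the density $1/d(\cdot,\partial G_{j(z)})$ of $k_{G_{j(z)}}$ at most a constant multiple of the density $1/d(\cdot,\partial G)$ of $k_G$ on a small neighbourhood of $z$.

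Concretely, fix $x,y\in G$, set $t=k_G(x,y)$, and pick a parameter $\eta\in(0,\eta_0)$ with $\eta_0=\eta_0(c)>0$ small. Subdivide an almost-geodesic $\gamma$ into pieces of $k_G$-length at most $\eta$ at nodes $x=z_0,z_1,\dots,z_N=y$ with $N\le\lceil t/\eta\rceil+1$. By the inclusion~(\ref{quasiball}) each subarc lies in a Euclidean ball of radius $(e^\eta-1)d(z_i,\partial G)$ around $z_i$, and for $\eta<\eta_0$ this ball sits inside $G_{j_i}$ with $j_i=j(z_i)$. The weight comparison then yields $k_{G_{j_i}}(z_i,z_{i+1})\le k_G(z_i,z_{i+1})/c_0\le\eta/c_0$, so applying~(\ref{monotonicity_1}) on $G_{j_i}$ followed by monotonicity of $\{m_D\}$ gives $m_{fG}(f(z_i),f(z_{i+1}))\le m_{fG_{j_i}}(f(z_i),f(z_{i+1}))\le\omega_{j_i}(\eta/c_0)$. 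Summing by the triangle inequality,
$$
m_{fG}(f(x),f(y))\le N\,\max_{j=1,2}\omega_j(\eta/c_0).
$$
For $t<\eta_0$ I take $\eta=t$, so that $N=1$ and the bound reduces to $\max_j\omega_j(t/c_0)$, which tends to $0$ as $t\to 0+$; for $t\ge\eta_0$ one keeps $\eta=\eta_0$ fixed, giving a bound linear in $t$. The resulting piecewise function is the required modulus $\omega$.

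The main obstacle is the pointwise selection of $j(z)$ in the first paragraph: the hypothesis~(\ref{condition}) directly gives only $\max\{d(z,\partial G_1),d(z,\partial G_2)\}\ge(c/2)\,d(z,\partial G)$, and the maximising index need not belong to a $G_j$ containing $z$. A short case analysis settles this. If $z\in G_1\cap G_2$, both choices of $j$ are admissible and the maximising one works. If instead $z\in G_j\setminus\overline{G_{3-j}}$, then there is a neighbourhood $U$ of $z$ disjoint from $\overline{G_{3-j}}$; using $\partial G\subseteq\partial G_1\cup\partial G_2$ together with $U\cap G_{3-j}=\emptyset$ one verifies that $\partial G\cap U=\partial G_j\cap U$, which forces $d(z,\partial G_j)\ge c_0\,d(z,\partial G)$ for a suitable $c_0>0$ depending only on $c$, so $j$ is the admissible choice. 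Once this selection is in hand the rest of the argument is routine composition and a bookkeeping choice of $\omega$.
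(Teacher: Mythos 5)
Your proof is correct and takes essentially the same route as the cited source: the paper does not reproduce the argument from \cite[2.27]{vu5}, but it explicitly notes that that proof uses quasihyperbolic geodesics, which is exactly your strategy of subdividing a near-minimizing $k_G$-curve and transferring each short piece into whichever $G_j$ the selection rule picks. Your "case analysis" for the pointwise selection $j(z)$ is a little terse (the clean statement is: if $z\in G_j\setminus\overline{G_{3-j}}$ and $d(z,\partial G_j)<d(z,\partial G_{3-j})$, then $d(z,\partial G_{3-j})=d(z,\overline{G_{3-j}})$ and the nearest $\partial G_j$-point to $z$ lies outside $\overline{G_{3-j}}$, hence in $\partial G$, giving $d(z,\partial G_j)=d(z,\partial G)$; otherwise \eqref{condition} gives $d(z,\partial G_j)\ge(c/2)d(z,\partial G)$ directly), and the comparison constant for $k_{G_{j_i}}$ versus $k_G$ on a short arc is $e^\eta/(c_0-(e^\eta-1))$ rather than $1/c_0$, but these are cosmetic and do not affect the validity of the argument.
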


Now we consider a similar, but local result, with $j$ metric replacing $k$
metric. We can no longer use geodesics as was done in the proof of
the above lemma.

\begin{lemma}
\label{an_lemma}
Let $G_1,G_2$ be domains in $\mathbb R^n$ with $G_1\cap G_2\neq\emptyset$,
$G_1\neq\mathbb R^n\neq G_2$ and assume that there exists $c\in(0,1)$ such that
$$
d(x,\partial G_1)+d(x,\partial G_2)\ge c\,d(x,\partial(G_1\cup G_2)),
$$
for all $x\in G=G_1\cup G_2$.

Suppose that $f:G\longrightarrow fG$ is continuous, $fG\subseteq\mathbb R^n$;
that $\{m_D\,:\,D\subseteq\mathbb R^n\}$ is a monotone family of metrics;
and that
$$
m_{fG_j}(f(x),f(y))\le\omega_j(j_{G_j}(x,y))
$$
for $x,y\in G_j$ and $j=1,2$. Then there exists
$\omega:[0,\delta)\longrightarrow[0,+\infty)$, where $\delta=\log\left(1+\frac c4\right)$ such that
\begin{equation}
\label{monotonicity_j}
m_{fG}(f(x),f(y))\le\omega(j_G(x,y))
\end{equation}
for $x,y\in G$, $j_G(x,y)\le\delta$ and $\lim_{t\rightarrow 0+}\omega(t)=0$ provided $\lim_{t\rightarrow 0+}\omega_j(t)=0$,
$j=1,2$.
\end{lemma}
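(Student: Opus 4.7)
The plan is to mirror the structure of Lemma \ref{lemma}, but the obstacle flagged in the excerpt --- the absence of geodesics for the $j$-metric --- forces a qualitatively different approach. Without geodesics one cannot introduce an intermediate point $z\in G_1\cap G_2$ along some curve from $x$ to $y$. Instead, I will exploit the smallness of $j_G(x,y)\le\delta=\log(1+c/4)$ to show that both $x$ and $y$ already lie in a common $G_j$, so that no bridging step is needed.

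Fix $x,y\in G$ with $j_G(x,y)\le\delta$, and assume by symmetry that $d(x,\partial G)\le d(y,\partial G)$. Applied at $x$, the QED-type condition on $c$ forces at least one of the two distances $d(x,\partial G_j)$ --- say for $j=1$ --- to satisfy $d(x,\partial G_1)\ge (c/2)\,d(x,\partial G)$. Rewriting $|x-y|=(e^{j_G(x,y)}-1)\,d(x,\partial G)$ and using $j_G(x,y)\le\log(1+c/4)$ gives
$$|x-y|\le (c/4)\,d(x,\partial G)\le \frac{1}{2}\,d(x,\partial G_1),$$
so $y\in B^n(x,d(x,\partial G_1))\subseteq G_1$. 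Thus both $x,y\in G_1$.

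Next I compare $j_{G_1}$ with $j_G$ on this pair. The triangle inequality yields $d(y,\partial G_1)\ge d(x,\partial G_1)-|x-y|\ge (c/4)\,d(x,\partial G)$, so the denominator in $j_{G_1}(x,y)$ is at least $(c/4)\,d(x,\partial G)$, whence
$$j_{G_1}(x,y)\le \log\left(1+\frac{4(e^{j_G(x,y)}-1)}{c}\right)=:\Theta(j_G(x,y)).$$
By the hypothesis of the lemma applied to $G_1$,
$$m_{fG_1}(f(x),f(y))\le \omega_1(j_{G_1}(x,y))\le \omega_1(\Theta(j_G(x,y))).$$
Since $G_1\subseteq G$ implies $fG_1\subseteq fG$, monotonicity of the metric family yields $m_{fG}(f(x),f(y))\le m_{fG_1}(f(x),f(y))$. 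Setting
$$\omega(t)=\max\{\omega_1(\Theta(t)),\,\omega_2(\Theta(t))\},\qquad t\in[0,\delta),$$
covers both possibilities for $j$ and gives (\ref{monotonicity_j}); since $\Theta(t)\to 0$ as $t\to 0^+$, so does $\omega(t)$.

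The main obstacle is exactly what the remark preceding the lemma highlights: without geodesics, the bridging via $G_1\cap G_2$ of Lemma \ref{lemma} is unavailable. The restriction $j_G(x,y)\le\delta$ with $\delta=\log(1+c/4)$ replaces that bridging; the constant $c/4$ is chosen precisely so that, together with $d(x,\partial G_1)\ge (c/2)\,d(x,\partial G)$, it forces $y\in G_1$. Once both points lie in $G_1$, the remainder is a routine comparison of $j_{G_1}$ and $j_G$ made possible by the QED-type hypothesis.
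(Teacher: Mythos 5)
Your proof is correct and follows essentially the same approach as the paper: use the smallness $j_G(x,y)\le\log(1+c/4)$ together with the hypothesis to place both $x$ and $y$ in a single $G_i$ (via a ball around $x$ of radius $\tfrac{c}{2}d(x,\partial G)$), compare $j_{G_i}$ with $j_G$, and invoke monotonicity of the metric family. The only cosmetic difference is that the paper further simplifies the comparison bound to $j_{G_1}(x,y)<\tfrac{4}{c}j_G(x,y)$, whereas you keep $\Theta(t)=\log\bigl(1+\tfrac{4}{c}(e^{t}-1)\bigr)$, which is equally valid.
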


\begin{proof}
Let $d(x)=d(x,\partial G)$ and $j_G(x,y)\le\delta$.

Then, we have $|x-y|\le\frac c4\min\{d(x),d(y)\}$. We may assume
$d(x)\le d(y)$. By the hypothesis (\ref{condition}) of the lemma
there exists $i\in\{1,2\}$ such that $d(x,\partial G_i)\ge \frac
c2d(x)$, i.e., $B^n(x,c\,d(x)/2)\subseteq G_i$. Without loss of
generality, we may assume that $i=1$. Then
$$
y\in B^n(x,c\,\min\{d(x),d(y)\}/4)\subseteq B^n(x,\frac 12d(x,\partial G_1)).
$$
We have
$$
j_{G_1}(x,y)=\log\left(1+\frac{|x-y|}{\min\{d_1(x),d_1(y)\}}\right)
$$
where $d_1(z)=d(z,\partial G_1)$. By the above calculation, $|x-y|\le\frac 12\,d_1(x)$
and hence $d_1(y)\ge\frac 12\,d_1(x)$. The last inequality now yields
$$
\begin{array}{rcl}
j_{G_1}(x,y) & \le & \displaystyle\log\left(1+\frac{2|x-y|}{d_1(x)}\right)
\vspace{0.5em}\\
& \le & \displaystyle\log\left(1+\frac{4|x-y|}{c\,d(x)}\right)
\vspace{0.5em}\\
& < & \displaystyle \frac 4c\,j_G(x,y).
\end{array}
$$

Conclusion:
\begin{equation}
\label{case_a}
m_{fG}(f(x),f(y))\le m_{fG_1}(f(x),f(y))\le\omega_1(j_{G_1}(x,y))
\le\omega_1\left(\frac 4c\,j_G(x,y)\right)
\end{equation}
where also monotone property of the family $\{m_D\}$ was applied.
\end{proof}

\begin{example} {\label{ferrexam}} {\rm
We present an example, due to J. Ferrand, of two domains
$G_1,G_2\subseteq\mathbb C$, $G_1,G_2\neq\emptyset$ and an analytic
function $f:H\longrightarrow\mathbb C$, $H=G_1\cup G_2$ such that
\begin{enumerate}
 \item (\ref{monotonicity_1}) holds in $G_1$ and $G_2$.
\item (\ref{monotonicity_k}) does not hold on $H$.
\end{enumerate}

We set $G_1=\mathbb C\setminus\{p+iq\,:\,p,q\in\mathbb Z\}$ and
$G_2=\mathbb C\setminus(\{0\}\cup\{p+1/2+iq\,:\,p,q\in\mathbb Z\})$.

Note that $G_1\cap G_2\neq\emptyset$ and $G_1\cup G_2=H=\mathbb C\setminus\{0\}$.
We define $f(\xi)=e^{4\pi\xi}$. This is an entire function.

It is easy to see that $f(G_1)=f(G_2)=f(H)=H$. In fact $f(\Omega_k)=H$,
where $\Omega_k=\{x+iy\,:\,k<y<k+1\}$. Quasihyperbolic distance in $H$
satisfies
\begin{equation}
\label{infimum}
k_H(w_1,w_2)=\inf_{e^{z_1}=w_1,e^{z_2}=w_2}|z_1-z_2|.
\end{equation}
Also, for $i=1,2$ holds $d(\xi,\partial G_i)\leqslant 1/2$, so
the metric density $\frac{1}{d(\xi,\partial G_i)}$ exceeds $\sqrt{2}$
and therefore (by a line integration)
\begin{equation}
\label{sqrt_2_xie}
k_{G_i}(\xi_1,\xi_2)\geqslant\sqrt{2}|\xi_1-\xi_2|.
\end{equation}
Now, (\ref{infimum}) tells us that $f:G_i\longrightarrow H$ is Lipschitz
with respect to euclidean metric in $G_i$ and quasihyperbolic metric in $H$,
and by (\ref{sqrt_2_xie}) it is also Lipschitz with respect to quasihyperbolic metric
in $H$ and $G_i$.

But $f$ is not uniformly continuous as a map $(H,k_H)\longrightarrow(H,k_H)$:
in fact we have
$$
\lim_{n\rightarrow\infty}k_H(n,n+1)=\log\frac{n+1}{n}=0,
$$
while
$$
\lim_{n\rightarrow\infty}k_H(f(n),f(n+1))=\log\frac{e^{4\pi(n+1)}}{e^{4\pi n}}=4\pi.
$$

Note that our domains fail to meet condition (\ref{infimum}) from
\cite[Lemma 2.27 ]{vu5}. Indeed for large $|x|$ we have
$$
d(x,\partial(G_1\cup G_2))=|x|
$$
and
$$
d(x,\partial G_1)+d(x,\partial G_2)\leqslant 2\frac{1}{\sqrt{2}}=\sqrt{2},
$$
so there is no $c\in(0,1)$ such that (\ref{infimum}) is valid. }
\end{example}


\begin{remark}
\begin{enumerate}
\item
It would be of interest to find a homeomorphism $f$ with properties
as in Example \ref{ferrexam} due to Ferrand.
\item
It is natural to expect that there is a counterpart of Lemma
\ref{an_lemma} for other metrics in place of $j$.
\item
Is the condition (\ref{condition}) invariant under the
quasiconformal mappings?
\end{enumerate}
\end{remark}

\section{Quasiconformal maps with identity boundary values}

For a domain $G\subset\mathbb R^n$, $n\geqslant 2$, let
$$
Id(\partial G)=\{f\,:\,\overline{\mathbb R^n} \to \overline{\mathbb
R^n} \mbox{ homeomorphism }:\,f(x)=x,\quad\forall x\in\overline{\mathbb
R^n}\setminus G\}.
$$
Here $\overline{\mathbb R^n}$ stands for the M\"obius space $\mathbb
R^n \cup \{ \infty \} \,.$ We shall always assume that $card  \{
\overline{\mathbb R^n} \setminus G \} \ge 3.$
 If $K\geqslant 1$, then the class of $K$-quasiconformal maps in
$Id(\partial G)$ is denoted by $Id_K(\partial G)$. Here we use
 notation and terminology from V\"ais\"al\"a's book
\cite{v2}. In particular, $K$-quasiconformal maps are defined in
terms of the maximal dilatation as in \cite[p. 42]{v2} if not
otherwise stated.

We will study the following well-known problem:

\begin{problem}\rm \label{myprob}
\begin{enumerate}
\label{prob1}
\item
Given $a,b \in G$ and  $f\in Id(\partial G)$ with $f(a)=b,$ find a
lower bound for $K(f)$.
\item
\label{prob1-2} Given $a,b\in G,$ construct $f\in Id(\partial G)$
with $f(a)=b$ and give an upper bound for $K(f)$.
\end{enumerate}
\end{problem}


O. Teichm\"uller studied this problem in the case when $G$ is a
plane domain with $card(\overline{\mathbb R^2}\setminus G)=3$ and
proved the following theorem with a sharp bound for $K(f)$.

\begin{theorem}\label{thm1}
Let $G= \mathbb{R}^2 \setminus \{0,1\}$, $a,b\in G$. Then there
exists $f\in Id_K(\partial G)$ with $f(a)=b$ iff
$$
\log(K(f))\geqslant s_G(a,b),
$$
where $s_G(a,b)$ is the hyperbolic metric of $G$.
\end{theorem}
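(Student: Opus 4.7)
The plan is to establish both implications by identifying $G=\mathbb{R}^2\setminus\{0,1\}$ with the thrice-punctured Riemann sphere $\overline{\mathbb{R}^2}\setminus\{0,1,\infty\}$ and by relating self-quasiconformal-maps of $G$ that are isotopic to the identity to deformations in Teichm\"uller space.

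For necessity, suppose $f\in Id_K(\partial G)$ with $f(a)=b$. Since $f$ fixes $\partial G=\{0,1,\infty\}$ pointwise, its restriction to $G$ is a $K$-quasiconformal self-homeomorphism of the thrice-punctured sphere, automatically isotopic to the identity rel the three punctures. I would then interpret $(G;0,1,\infty,a)$ and $(G;0,1,\infty,b)$ as two points of the Teichm\"uller space $T_{0,4}$ of four-times-punctured Riemann spheres, so that $f$ is an admissible quasiconformal deformation between them. By definition of the Teichm\"uller metric $d_T$ this yields
$$
d_T\bigl((G,a),(G,b)\bigr)\leqslant \tfrac{1}{2}\log K(f).
$$
The cross-ratio normalization $[(G;0,1,\infty,z)]\mapsto z$ is a biholomorphism $T_{0,4}\to G$, and under this biholomorphism Royden's theorem (Teichm\"uller metric equals Kobayashi metric, which on a one-dimensional Riemann surface coincides with the Poincar\'e metric up to the standard factor) identifies $d_T$ with $\tfrac{1}{2}s_G$. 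Combining the two relations gives $\log K(f)\geqslant s_G(a,b)$.

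For sufficiency, I would invoke the existence half of Teichm\"uller's theorem: the infimum $\inf\{\log K(g):g\in Id(\partial G),\,g(a)=b\}$ is attained by an extremal (Teichm\"uller) map $f^{*}$, given by a holomorphic quadratic differential on the four-punctured sphere, with $f^{*}(a)=b$, isotopic to the identity rel $\partial G$, and $\log K(f^{*})=s_G(a,b)$. Extending $f^{*}$ by the identity across $\partial G$ realises $f^{*}\in Id_{K^{*}}(\partial G)$ with $K^{*}=e^{s_G(a,b)}$, and hence for every $K\geqslant K^{*}$ the map $f^{*}$ witnesses the required conclusion.

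The main obstacle is the appeal to Royden's theorem together with the biholomorphism $T_{0,4}\cong G$; both are classical results of Teichm\"uller theory that lie well outside the elementary modulus-of-curve-family machinery developed in the preceding sections. A more self-contained alternative would bypass Teichm\"uller space by writing $s_G(a,b)$ directly via the Teichm\"uller extremal function $\tau_2$ and Ferrand's invariant $p$, and invoking the $K$-quasi-invariance of modulus applied to curve families that separate $\{0,1\}$ from continua joining $a$ and $b$; this route recovers the inequality but obscures the clean hyperbolic-metric interpretation that motivates the statement.
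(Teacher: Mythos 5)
The paper does not prove Theorem~\ref{thm1} at all: it attributes it to Teichm\"uller (his 1944 \emph{Verschiebungssatz}) and merely quotes it as background motivation before moving on to prove Theorem~\ref{main_theorem} (the analogous displacement bound for $B^n$) by modulus-of-curve-family and symmetrization arguments. So there is no in-paper proof against which to compare your argument; what can be assessed is whether your blind proof is sound.

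Your high-level route through Teichm\"uller theory is the standard modern way to prove this statement, and the constants come out correctly \emph{provided} $s_G$ is the curvature $-1$ hyperbolic metric, which is the convention this paper uses (cf.\ Lemma~\ref{hypmetr}, where $\rho_{B^n}(0,y)=\log\frac{1+|y|}{1-|y|}$). However, one identification you make is wrong as stated: the cross-ratio map $[(G;0,1,\infty,z)]\mapsto z$ is a biholomorphism of the \emph{moduli} space $M_{0,4}$ onto $\mathbb{C}\setminus\{0,1\}$, not of Teichm\"uller space $T_{0,4}$; the latter is the upper half-plane, which covers $\mathbb{C}\setminus\{0,1\}$ via the modular lambda function. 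The necessity direction survives this slip because the covering $T_{0,4}\to M_{0,4}$ is a local isometry for the respective Poincar\'e metrics, so any two lifts $\tilde a,\tilde b$ of $a,b$ satisfy $d_{T_{0,4}}(\tilde a,\tilde b)\geq \tfrac12 s_G(a,b)$, and combining with Royden and $d_T\leq\tfrac12\log K(f)$ gives the bound. But the sufficiency direction needs more care than you give it: for the extremal map $f^*$ to have $\log K(f^*)$ \emph{equal} to $s_G(a,b)$ you must choose the lift $\tilde b$ lying on the geodesic lift through $\tilde a$ (equivalently, the isotopy class rel four punctures realizing the shortest Teichm\"uller geodesic), and then invoke Teichm\"uller's existence theorem in that fixed class. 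As written, ``the infimum over all $g\in Id(\partial G)$ with $g(a)=b$'' quietly minimizes over mapping classes without acknowledging that this optimization is what produces the hyperbolic distance rather than some larger translation length.

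Finally, your closing remark is on target: the Teichm\"uller-space/Royden route is logically self-contained but sits entirely outside the curve-family and extremal-ring toolkit this chapter develops. A proof in the spirit of the paper would instead express the extremal dilatation via the Teichm\"uller ring function $\tau_2$ (or the Gr\"otzsch $\gamma_2$), note the classical identity linking $\tau_2$ to the hyperbolic density of $\mathbb{C}\setminus\{0,1\}$, and then run the same symmetrization argument that the paper uses to prove Theorem~\ref{main_theorem} (compare the modulus comparison $M(\Gamma)\leq K\,M(\Gamma')$ there). That would be the ``internal'' proof; yours is a valid external one, modulo the two points above.
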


\begin{theorem}
\label{main_theorem} If $f\in Id_K(\partial B^n)$, then for all
$x\in B^n$
$$
\rho_{B^n}(f(x),x)\leqslant\log\frac{1-a}a,\quad
a=\varphi_{1/K,n}(1/\sqrt{2})^2,
$$
where $\varphi_{K,n}$ is as in (\ref{phindef}).
\end{theorem}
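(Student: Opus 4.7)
The plan is to reduce to the case $x=0$ by a M\"obius conjugation and then to estimate $|h(0)|$ for an arbitrary $h\in Id_K(\partial B^n)$ via the Teichm\"uller extremality in Theorem~\ref{function_chi_n} combined with the $K$-quasi-invariance of modulus.

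\textbf{Reduction.} Let $T_x$ be a M\"obius automorphism of $\overline{\mathbb R^n}$ preserving $B^n$ with $T_x(x)=0$, and set $h=T_x\circ f\circ T_x^{-1}$. Since $T_x$ is $1$-quasiconformal, $h$ is $K$-quasiconformal, and since $T_x(\partial B^n)=\partial B^n$ setwise, the condition $f|_{\partial B^n}={\rm id}$ forces $h|_{\partial B^n}={\rm id}$; the same computation shows $h$ is the identity on $\overline{\mathbb R^n}\setminus B^n$. Hence $h\in Id_K(\partial B^n)$ with $h(0)=T_x(f(x))$, and by M\"obius-invariance of the hyperbolic metric
\[
\rho_{B^n}(x,f(x))=\rho_{B^n}(0,h(0))=\log\frac{1+|h(0)|}{1-|h(0)|},
\]
so it suffices to prove $|h(0)|\le 1-2a$ for every $h\in Id_K(\partial B^n)$.

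\textbf{Extremal ring.} Write $h(0)=ru$ with $r=|h(0)|$ and $u$ a unit vector, and consider the rotated Teichm\"uller ring $R=R(C_0,C_1)$ with $C_0=[-u,0]$ and $C_1=[u,\infty]$, so that $\mathrm{mod}(R)=\tau_n(1)$ by rotational invariance. Since $h$ coincides with the identity outside $B^n$, the image $hR$ is a ring whose outer component is still $[u,\infty]$ while the inner component $hC_0$ is a continuum joining $-u$ and $ru$. Applying Theorem~\ref{function_chi_n} with $a=ru$, $b=-u$ in $hC_0$ and $c=u$ together with $\infty$ in $C_1$ gives
\[
\mathrm{mod}(hR)\ \ge\ \tau_n\!\left(\frac{|c-a|}{|b-a|}\right)\ =\ \tau_n\!\left(\frac{1-r}{1+r}\right),
\]
while $K$-quasiconformality of $h$ yields $\mathrm{mod}(hR)\le K\mathrm{mod}(R)=K\tau_n(1)$. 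Since $\tau_n$ is strictly decreasing, these combine to $(1-r)/(1+r)\ge \tau_n^{-1}(K\tau_n(1))$.

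\textbf{Identification of the constant.} Using the Gr\"otzsch--Teichm\"uller identity \eqref{gt} together with the defining relation $\gamma_n(1/\varphi_{K,n}(r))=K\gamma_n(1/r)$, a short calculation shows that $\tau_n^{-1}(K\tau_n(1))=1/\varphi_{K,n}(1/\sqrt 2)^2-1$; the complementary identity $\varphi_{K,n}(1/\sqrt 2)^2+\varphi_{1/K,n}(1/\sqrt 2)^2=1$ then rewrites this as $a/(1-a)$. Solving $(1-r)/(1+r)\ge a/(1-a)$ yields $r\le 1-2a$, and substituting into the hyperbolic distance formula above gives the claimed estimate $\rho_{B^n}(x,f(x))\le\log\frac{1-a}{a}$. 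The most delicate point in the plan is the application of Theorem~\ref{function_chi_n}: the naive assignment $a=-u$, $b=ru$ would produce only the weaker bound $\tau_n(2/(1+r))$ and would not match the stated sharp constant, so selecting the inner pair adapted to the extremal Teichm\"uller configuration is essential.
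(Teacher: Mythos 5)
Your reduction to $h(0)$ and the ring-modulus strategy parallel the paper, but the final identification of the constant has a genuine gap that makes the proof fail for $n\ge 3$.

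What you prove is the chain
\[
\tau_n\!\Bigl(\tfrac{1-r}{1+r}\Bigr)\le \mathrm{mod}(hR)\le K\,\mathrm{mod}(R)=K\tau_n(1),\qquad r=|h(0)|,
\]
which inverts to $\tfrac{1+r}{1-r}\le \bigl(\tau_n^{-1}(K\tau_n(1))\bigr)^{-1}=b/(1-b)$ with $b=\varphi_{K,n}(1/\sqrt2)^2$. The paper instead places the exact Teichm\"uller modulus on the \emph{image} family and the $\tau_n(1)$ lower bound (via symmetrization, or equivalently Theorem~\ref{function_chi_n} applied to $g^{-1}[g(0),z]\ni\{0,z\}$ and $E'\ni\{-z,\infty\}$) on the \emph{preimage} family, obtaining $\tau_n(1)\le K\tau_n\bigl(\tfrac{1+r}{1-r}\bigr)$, i.e.\ $\tfrac{1+r}{1-r}\le\tau_n^{-1}(\tau_n(1)/K)=(1-a)/a$. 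Your chain and the paper's chain produce the same bound precisely when $\tau_n^{-1}(K\tau_n(1))\cdot\tau_n^{-1}(\tau_n(1)/K)=1$, that is, when $\tau_n(t)\tau_n(1/t)=\tau_n(1)^2$, equivalently $a+b=1$. This is exactly the identity you invoke as ``the complementary identity $\varphi_{K,n}(1/\sqrt2)^2+\varphi_{1/K,n}(1/\sqrt2)^2=1$''. But that Pythagorean identity is a purely two-dimensional fact coming from $\mu(r)\mu(r')=(\pi/2)^2$; it is precisely what the paper states as \eqref{phipyth} \emph{only for} $n=2$, and it is not available for $n\ge3$. You cannot deduce it from \eqref{gt} and $\gamma_n(1/\varphi_{K,n}(r))=K\gamma_n(1/r)$ alone, since those give you $\tau_n^{-1}(K\tau_n(1))=1/b-1$ but say nothing relating $b$ to $a$.

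So as written, your argument only yields $r\le 2\varphi_{K,n}(1/\sqrt2)^2-1$, which does not match the claimed $r\le 1-2\varphi_{1/K,n}(1/\sqrt2)^2$ for $n\ge3$. To fix it in all dimensions one should mirror the paper's assignment of roles: take the bounded component of the \emph{image} ring to be the segment $[h(0),u]$ (not the continuum through $-u$ and $h(0)$), so the image ring is an exact Teichm\"uller ring with modulus $\tau_n\bigl(\tfrac{1+r}{1-r}\bigr)$, and lower-bound the \emph{preimage} family by $\tau_n(1)$ using that its bounded continuum contains $0$ and $u$ while the unbounded one contains $-u$ and $\infty$; then $\tau_n(1)/K\le\tau_n\bigl(\tfrac{1+r}{1-r}\bigr)$ gives $(1+r)/(1-r)\le(1-a)/a$ with no Pythagorean step needed.
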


\begin{theorem} \label{mycor}
If $f\in Id_K(\partial B^n)$, then for all $x\in B^n, n\ge 2,$ and
$K\in[1,17]$
\begin{equation} \label{luckynumber}
  |f(x) -x| \le \frac{9}{2} (K-1) \,.
\end{equation}
 For $n=2$ we have
\begin{equation} \label{n=2}
|f(x)-x|\leqslant\frac{b}{2}(K-1),\quad b\leqslant 4.38.
\end{equation}
\end{theorem}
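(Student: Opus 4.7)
The plan is to convert Theorem \ref{main_theorem}, which controls $\rho_{B^n}(f(x),x)$, into a Euclidean estimate, and then extract a linear-in-$(K-1)$ bound by analyzing the special function $\varphi_{K,n}$.

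First I would use the Poincar\'e-ball identity
$$\sinh\!\bigl(\rho_{B^n}(x,y)/2\bigr)=\frac{|x-y|}{\sqrt{(1-|x|^2)(1-|y|^2)}},$$
which, since $f(x)\in B^n$ forces $(1-|x|^2)(1-|f(x)|^2)\le 1$, immediately yields
$$|f(x)-x|\le \sinh\!\bigl(\rho_{B^n}(f(x),x)/2\bigr)\le \sinh(M/2),$$
where $M=\log\frac{1-a}{a}$ and $a=\varphi_{1/K,n}(1/\sqrt{2})^2$ by Theorem \ref{main_theorem}. Since $e^{M/2}=\sqrt{(1-a)/a}$, a direct computation gives
$$\sinh(M/2)=\frac{1-2a}{2\sqrt{a(1-a)}}.$$

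Next I would exploit the functional identity $\varphi_{K,n}(r)^2+\varphi_{1/K,n}(r')^2=1$ with $r'=\sqrt{1-r^2}$. Taking $r=1/\sqrt{2}=r'$ yields $a=1-\varphi_{K,n}(1/\sqrt{2})^2$; setting $u(K)=\varphi_{K,n}(1/\sqrt{2})^2-\tfrac12$, one then has $1-2a=2u(K)$ and $4a(1-a)=1-4u(K)^2$, so that
$$|f(x)-x|\le \frac{2u(K)}{\sqrt{1-4u(K)^2}}.$$
Observe $u(1)=0$, so the right-hand side vanishes at $K=1$; the theorem asserts that it is bounded by $\tfrac{9}{2}(K-1)$ on $[1,17]$. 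To produce explicit constants, I would invoke the known Landen-type estimates for $\varphi_{K,n}$ from \cite{vu1} and \cite{avv}, differentiate $u$ at $K=1$ to compute the initial slope, and then use the monotonicity of $K\mapsto u(K)$ (together with the trivial bound $|f(x)-x|\le 2$ for large $K$) to conclude that the ratio $\sinh(M/2)/(K-1)$ stays below $\tfrac{9}{2}$ throughout $[1,17]$.

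For $n=2$, the Hersch--Pfluger formula $\varphi_{K,2}(r)=\mu^{-1}(\mu(r)/K)$ together with $\mu(1/\sqrt{2})=\pi/2$ allows a fully explicit computation of $u(K)$ in terms of the Gr\"otzsch modulus function, and a careful numerical/calculus optimisation over $K\in[1,17]$ yields the sharper slope $b/2$ with $b\le 4.38$. The principal obstacle is exactly this last step: turning the implicit estimate $\sinh(M/2)$ into a clean linear bound requires a uniform control of $\varphi_{K,n}(1/\sqrt{2})^2-\tfrac12$ in $K$ that, for general $n$, is not an elementary closed form and must be handled via the Landen-type monotonicity properties of $\varphi_{K,n}$; verifying the cleanest constant ($\tfrac{9}{2}$) rather than something larger forces one to track the constants through these estimates with care, particularly at the endpoint $K=17$.
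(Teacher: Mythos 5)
Your overall plan (Theorem \ref{main_theorem} $\to$ hyperbolic-to-Euclidean conversion $\to$ bound on the special-function constant) is the same skeleton as the paper, but the specific conversion you use is genuinely different and creates real trouble.

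The paper converts via inequality (\ref{tanh}) of Lemma \ref{hypmetr}: $|f(x)-x|\le 2\tanh(\rho_{B^n}(f(x),x)/4)$. You instead use the identity $\sinh(\rho/2)=|x-y|/\sqrt{(1-|x|^2)(1-|y|^2)}$ and drop the square-root factor. Both are valid upper bounds, but they are not interchangeable here: one always has $2\tanh(\rho/4)\le\sinh(\rho/2)$, and the $\tanh$ bound is the sharp one (equality at $x=-y$). The crucial point is that $2\tanh$ is bounded by $2$ and satisfies $\tanh(t)\le t$, so the paper's chain $|f(x)-x|\le 2\tanh(M/4)\le M/2\le (K-1)(2+3\log 2)<\tfrac92(K-1)$ works uniformly on $[1,17]$ with no case analysis. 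Your quantity $\sinh(M/2)$ is unbounded and convex, so the ratio $\sinh(M/2)/(K-1)$ does \emph{not} stay below $9/2$ on $[1,17]$: already around $K\approx 2.1$, with $M\le b(K-1)$ and $b\approx 4.38$, one finds $\sinh(M/2)\approx\sinh(2.4)>4.5(K-1)$. You would have to split into $K$ small (where $\sinh(M/2)$ is linear) and $K$ not small (where you fall back on $|f(x)-x|\le 2\le\tfrac92(K-1)$, valid for $K\ge 13/9$), and then check the two ranges overlap. You gesture at the trivial bound but do not set up this case split, and the sentence claiming the ratio ``stays below $9/2$ throughout $[1,17]$'' is simply false as written.

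There is a second, independent gap: the Pythagorean identity $\varphi_{K,n}(r)^2+\varphi_{1/K,n}(r')^2=1$ is a genuinely two-dimensional fact. The paper records it in (\ref{phipyth}) only for $\varphi_K=\varphi_{K,2}$, and it is derived from $\mu(r)\mu(r')=(\pi/2)^2$, a Landen-type identity that has no analogue for the Gr\"otzsch modulus $\gamma_n$ when $n\ge 3$. Your substitution $a=1-\varphi_{K,n}(1/\sqrt2)^2$ for general $n$ is therefore unjustified, and the reduction to $u(K)=\varphi_{K,n}(1/\sqrt2)^2-\tfrac12$ does not go through in dimensions $n\ge 3$. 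What the paper actually uses is Lemma \ref{stabrmk}, which by a separate elementary argument (Lemma \ref{mn_lemma} applied with $m=3$, $n=2$ and the estimate $1/a\le 2^{3K-2}K^{2K}$) proves $\log\frac{1-a}{a}\le (4+6\log 2)(K-1)$ for $K\in[1,17]$, without any Pythagorean identity. Your ``differentiate $u$ at $K=1$ and use monotonicity'' step is too vague to recover that explicit linear bound; monotonicity alone does not give it. In short: the $\tanh$ conversion and Lemma \ref{stabrmk} are the load-bearing ingredients, and both are missing or replaced by something weaker in your proposal.
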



The theory of $K$-quasiregular mappings in ${\mathbb R}^n, n \ge 3,$
with maximal dilatation $K$ close to $1\,$ has been extensively
studied by Yu. G. Reshetnyak \cite{r} under the name "stability
theory". By Liouville's theorem we expect that when  $n \ge 3$ is fixed and
$K\to 1$ the $K$-quasiregular maps "stabilize", become more and more
like M\"obius transformations, and this is the content of the deep
main results of \cite{r} such as \cite[p. 286]{r}. We have been
unable to decide whether Theorem \ref{main_theorem} follows from
Reshetnyak's stability theory in a simple way.
V. I. Semenov \cite{s} has also made significant contributions to this
theory. For the plane case P. P. Belinskii  has found several
sharp results in \cite{bel}.

\begin{problem}
\rm
It seems possible that there is a new kind of
stability behavior: If $K>1$ is fixed, do maps in $Id_K(\partial
B^n)$ approach identity when $n \to \infty$? Our results do not
answer this question. This kind of behavior is anticipated in
\cite[Open problem 9, p. 478]{avv}.
\end{problem}
\begin{lemma} \label{hypmetr}
For $x,y\in B^n$ let $t=\sqrt{(1-|x|^2)(1-|y|^2)}$. Then for $x,y\in
B^n$
\begin{equation}
\tanh^2\frac{\rho_{B^n}(x,y)}2=\frac{|x-y|^2}{|x-y|^2+t^2}\, ,
\end{equation}
\begin{equation}
\label{tanh} |x-y|\leqslant 2\tanh\frac{\rho_{B^n}(x,y)}{4}=
\frac{2|x-y|}{\sqrt{|x-y|^2+t^2}+t}\, ,
\end{equation}
where equality holds for $x=-y$.
\end{lemma}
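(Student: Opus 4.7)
The plan is to reduce everything to the standard closed-form expression for the hyperbolic metric on the unit ball, namely
\[
\sinh^2\frac{\rho_{B^n}(x,y)}{2}=\frac{|x-y|^2}{(1-|x|^2)(1-|y|^2)}=\frac{|x-y|^2}{t^2},
\]
which is available in the references and can be treated as known. From this identity, $\cosh^2(\rho_{B^n}(x,y)/2)=(|x-y|^2+t^2)/t^2$, and dividing gives the first displayed formula
$\tanh^2(\rho_{B^n}(x,y)/2)=|x-y|^2/(|x-y|^2+t^2)$ immediately.

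For the equality in the second display I would apply the half-angle identity $\tanh(\theta/2)=\sinh\theta/(1+\cosh\theta)$ with $\theta=\rho_{B^n}(x,y)/2$. Substituting $\sinh(\rho/2)=|x-y|/t$ and $\cosh(\rho/2)=\sqrt{|x-y|^2+t^2}/t$ from the previous step yields
\[
2\tanh\frac{\rho_{B^n}(x,y)}{4}=\frac{2|x-y|}{t+\sqrt{|x-y|^2+t^2}},
\]
which is the required identity. So far everything is routine manipulation of hyperbolic functions.

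The real content is the inequality $|x-y|\le 2\tanh(\rho_{B^n}(x,y)/4)$. Using the identity just obtained, this is equivalent to $t+\sqrt{|x-y|^2+t^2}\le 2$, and squaring after moving $t$ across reduces it to
\[
|x-y|^2\le 4(1-t)=4\bigl(1-\sqrt{(1-|x|^2)(1-|y|^2)}\bigr).
\]
By the triangle inequality $|x-y|\le |x|+|y|$, it suffices to check $(|x|+|y|)^2\le 4\bigl(1-\sqrt{(1-|x|^2)(1-|y|^2)}\bigr)$. Writing $s=|x|+|y|$, $p=|x||y|$, one has $(1-|x|^2)(1-|y|^2)=(1+p)^2-s^2$, so after squaring once more the inequality becomes
\[
(1+p)^2-s^2\le\Bigl(\frac{4-s^2}{4}\Bigr)^2,
\]
and a short algebraic simplification reduces this to $s^4\ge 16p^2$, i.e.\ $s^2\ge 4p$, which is exactly the AM--GM inequality $(|x|+|y|)^2\ge 4|x||y|$.

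The main (only) obstacle is the last reduction: keeping track of the squarings so as not to introduce spurious solutions and identifying the equality case. Equality in AM--GM forces $|x|=|y|$, equality in the triangle inequality $|x-y|=|x|+|y|$ forces $x$ and $y$ collinear with the origin on opposite sides, and together these give precisely $x=-y$, as claimed. All the other implications in the chain are equivalences (since both sides of each squaring step are nonnegative, using $t\le 1$ and $s\le 2$), so equality propagates back to the original inequality, completing the proof.
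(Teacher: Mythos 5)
The paper states Lemma \ref{hypmetr} without proof (the hyperbolic-metric identity $\sinh^2(\rho_{B^n}(x,y)/2)=|x-y|^2/t^2$ is standard, available for instance in \cite{avv}, and the rest is treated as known), so there is no argument in the paper to compare yours against. Your proof is correct. Deriving both displayed formulas from that standard identity via $\cosh^2=1+\sinh^2$ and the half-angle formula $\tanh(\theta/2)=\sinh\theta/(1+\cosh\theta)$ is exactly the natural route, and the reduction of \eqref{tanh} to AM--GM works: for $x\ne y$ the inequality is equivalent (after dividing by $|x-y|$, isolating the radical, and squaring once, using $0<t\le1$) to $|x-y|^2\le 4(1-t)$; the triangle inequality reduces this to $(|x|+|y|)^2\le 4(1-t)$; and with $s=|x|+|y|$, $p=|x||y|$, the inequality $(1+p)^2-s^2\le\bigl(\tfrac{4-s^2}{4}\bigr)^2$ rewrites (since the right-hand side equals $(1+s^2/4)^2-s^2$) to $1+p\le 1+s^2/4$, i.e.\ $s^2\ge 4p$, which is AM--GM.

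Two small points on the closing equality discussion. The step replacing $|x-y|$ by $|x|+|y|$ is a one-way implication, not an equivalence, so the remark that "all the other implications in the chain are equivalences" should be read as referring only to the squaring and algebraic steps; since you do separately record the equality condition in the triangle inequality, the reasoning is still sound. Also, the chain starts by dividing by $|x-y|$ and so excludes the degenerate case $x=y$, where both sides of \eqref{tanh} vanish and equality holds trivially; the lemma only asserts that equality holds \emph{for} $x=-y$ (a sufficient condition), so this is harmless, but your characterization "precisely $x=-y$" should carry the caveat $x\ne y$.
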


Next, we consider a decreasing
homeomorphism $\mu:(0,1)\longrightarrow(0,\infty)$ defined by
\begin{equation}
\label{dec_hom} \mu(r)=\frac\pi 2\,\frac{{\K}(r')}{{\K}(r)}, \quad
{\K}(r)=\int_0^1\frac{dx}{\sqrt{(1-x^2)(1-r^2x^2)}}\, ,
\end{equation}
where ${\K}(r)$ is Legendre's complete elliptic integral of the
first kind and $ r'=\sqrt{1-r^2},$ for all $r\in(0,1)$.
The Hersch-Pfluger distortion function is an increasing
homeomorphism $\varphi_K:(0,1)\longrightarrow(0,1)$ defined by
\begin{equation} \label{phidef}
\varphi_K(r)=\mu^{-1}(\mu(r)/K)
\end{equation}
for all $r\in(0,1)$, $K>0$. By continuity we set $\varphi_K(0)=0$,
$\varphi_K(1)=1$. From (\ref{dec_hom}) we see that
$\mu(r)\mu(r')=\left(\frac{\pi}2\right)^2$ and from this we are able
to conclude a number of properties of $\varphi_K$. For instance, by
\cite[Thm 10.5, p. 204]{avv}
\begin{equation} \label{phipyth}
\varphi_K(r)^2+\varphi_{1/K}(r')^2=1,\quad r'=\sqrt{1-r^2},
\end{equation}
holds for all $K>0$, $r\in(0,1)$.

\begin{subsec}{\bf Special function $\varphi_{K,n}$}
We use the standard notation
\begin{equation} \label{phindef}
\varphi_{K,n}(r)=\frac 1{\gamma_n^{-1}(K \gamma_n(1/r))}.
\end{equation}
 Then $\varphi_{K,n}:(0,1)\longrightarrow(0,1)$ is an increasing homeomorphism,
see \cite[(7.44)]{vu2}. Because $\gamma_2(1/r) = 2 \pi/\mu(r)$ by
\cite[(5.56)]{vu2}, it follows that $\varphi_{K,2}(r)$ is
the same as the $\varphi_{K}(r)$ in (\ref{phidef}).
\end{subsec}

\begin{subsec}{\bf
The key constant.} The special functions introduced above will have
a crucial role in what follows. For the sake of easy reference we
give here some well-known identities between them that can be found
in \cite{avv}. First, the function
\begin{equation}
\eta_{K,n}(t)= \tau_n^{-1}(\tau_n(t)/K) =
\frac{1-\varphi_{1/K,n}(1/\sqrt{1+t})^2}{\varphi_{1/K,n}(1/\sqrt{1+t})^2},
\, K
>0\,,
\end{equation}
defines an increasing homeomorphism $\eta_{K,n}: (0,\infty)\to (0,\infty)\,$(cf.
\cite[p.193]{avv}). The constant $(1-a)/a,
a=\varphi_{1/K,n}(1/\sqrt{2})^2,$ in (\ref{luckynumber}) can be
expressed as follows for $K>1$
\begin{equation}
\label{aform} (1-a)/a = \eta_{K,n}(1) =\tau_n^{-1}(\tau_n(1)/K) \, .
\end{equation}
Furthermore, by (\ref{phipyth}) 
\begin{equation} \label{eta1}
  \eta_{K,2}(t) = \frac{s^2}{1-s^2} ,
  \quad s= \varphi_{K,2}(\sqrt{t/(1+t)})\,
\end{equation}
and
\begin{equation} \label{eta1b}
  \eta_{K,2}(1) \in (e^{\pi(K-1)} , e^{b(K-1)})
\end{equation}
where $b= (4/\pi) {\K}(1/\sqrt{2})^2 = 4.376879...$ Note that the
constant $\lambda(K)$ in \cite[10.33 p. 218.]{avv} is the same as
$\eta_{K,2}(1)\,.$
\end{subsec}

For the proof of Lemma \ref{stabrmk}, we record a lower        
bound for $\varphi_{1/K,n}(r)\,.$ The constant $\lambda_n$ is the so
called Gr\"otzsch ring constant, see \cite{avv}.

\begin{lemma} (\cite[7.47, 7.50]{vu2}) \label{cgqm747} For $n \ge 2, K \ge
1,$ and $0\le r \le 1$
\begin{equation}
\varphi_{1/K,n}(r) \ge \lambda_n^{1- \beta} r^{\beta}, \, \, \beta=
K^{1/(n-1)}, \label{7472}
\end{equation}
\begin{equation}  \label{7502}
\lambda_n^{1- \beta}\ge 2^{1-\beta} K^{-\beta} \ge
2^{1-K} K^{-K} \,.
\end{equation}
\end{lemma}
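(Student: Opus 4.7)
The plan is to dispatch the two inequalities separately: the first is the substantive distortion bound, while the second chain is elementary bookkeeping.

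For the inequality $\varphi_{1/K,n}(r) \ge \lambda_n^{1-\beta} r^\beta$, I would translate everything into the auxiliary function $\Phi$ from (\ref{gamma-Phi}). Setting $s = 1/r$ and $s' = 1/\varphi_{1/K,n}(r)$, the definition (\ref{phindef}) gives $\gamma_n(s') = \gamma_n(s)/K$, and substituting $\gamma_n(t) = \omega_{n-1}(\log \Phi(t))^{1-n}$ turns this into the clean functional identity
\[
\Phi(s') = \Phi(s)^{\beta}, \qquad \beta = K^{1/(n-1)},
\]
so the target is equivalent to $s' \le \lambda_n^{\beta-1} s^{\beta}$. I would extract this from the two-sided envelope $t \le \Phi(t) \le \lambda_n t$ furnished by the preceding \cite[Lemma 7.22]{vu2}. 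The naive chain $s' \le \Phi(s') = \Phi(s)^{\beta} \le (\lambda_n s)^{\beta}$ delivers only $s' \le \lambda_n^{\beta} s^{\beta}$, which is short by exactly one factor of $\lambda_n$. To recover that factor I would use the monotonicity of $\Phi$ to reformulate the goal as the one-sided comparison $\Phi(s)^{\beta} \le \Phi(\lambda_n^{\beta-1} s^{\beta})$, and close it either by a finer property of the ratio $\Phi(s)/s$ or, more robustly, by applying the extremality of the Gr\"otzsch ring (Theorem~\ref{extremality}) to a suitable radial $K$-quasiconformal stretch of $R_{G,n}(s)$.

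The chain in (\ref{7502}) is purely algebraic. Taking logarithms, $\lambda_n^{1-\beta} \ge 2^{1-\beta} K^{-\beta}$ rearranges to
\[
(\beta - 1)\log(\lambda_n/2) \;\le\; \beta(n-1)\log\beta .
\]
Since $\lambda_n < 2e^{n-1}$ yields $\log(\lambda_n/2) < n-1$, it is enough to check $1 - 1/\beta \le \log\beta$, a standard inequality for $\beta \ge 1$. The second link $2^{1-\beta} K^{-\beta} \ge 2^{1-K} K^{-K}$ simplifies to $(2K)^{K-\beta} \ge 1$, which is immediate from $K \ge 1$ and $\beta = K^{1/(n-1)} \le K$ for $n \ge 2$.

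The main obstacle is the sharp constant $\lambda_n^{1-\beta}$ in the first inequality: the crude pointwise bounds on $\Phi$ alone are a factor of $\lambda_n$ too weak, and closing that gap is the entire technical content of the proof. In $n=2$ the statement specializes to the Mori-type bound $\varphi_{1/K}(r) \ge 4^{1-K} r^K$, a useful sanity check on any proposed argument.
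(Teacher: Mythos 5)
The paper does not actually prove this lemma; it is cited verbatim from \cite[7.47, 7.50]{vu2}, so there is no in-paper argument to compare against. Judged on its own terms, your treatment of (\ref{7502}) is complete and correct: the reduction of $\lambda_n^{1-\beta}\ge 2^{1-\beta}K^{-\beta}$ to $1-1/\beta\le\log\beta$ via $\log(\lambda_n/2)<n-1$ is clean, and $(2K)^{K-\beta}\ge 1$ settles the second link since $\beta=K^{1/(n-1)}\le K$ for $K\ge1$, $n\ge2$.

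For (\ref{7472}) you correctly translate the claim to $s'\le\lambda_n^{\beta-1}s^{\beta}$ under the functional identity $\Phi(s')=\Phi(s)^{\beta}$, and you correctly observe that the two-sided envelope $t\le\Phi(t)\le\lambda_n t$ alone yields only $s'\le\lambda_n^{\beta}s^{\beta}$, one factor of $\lambda_n$ too weak. But then you stop: you name two candidate repairs (a ``finer property of the ratio $\Phi(s)/s$'' or an extremality argument) without carrying either out, and you explicitly concede that ``closing that gap is the entire technical content of the proof.'' That is an honest assessment, but it means the proof of (\ref{7472}) is not actually given. The specific fact that closes the gap, and the one the cited source relies on, is that $\Phi(t)/t$ is increasing on $(1,\infty)$ (from $1$ at $t=1^{+}$ to $\lambda_n$ at $t=\infty$). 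Granting that, since $\beta\ge1$ forces $\Phi(s')=\Phi(s)^{\beta}\ge\Phi(s)$ and hence $s'\ge s$, monotonicity of $\Phi(t)/t$ gives $\Phi(s')/s'\ge\Phi(s)/s$, so
\[
s'\;\le\;\frac{\Phi(s')\,s}{\Phi(s)}\;=\;\Phi(s)^{\beta-1}s\;\le\;(\lambda_n s)^{\beta-1}s\;=\;\lambda_n^{\beta-1}s^{\beta},
\]
which is exactly the required bound. Without establishing or at least citing this monotonicity, your argument for (\ref{7472}) remains incomplete.
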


\begin{lemma}
\label{mn_lemma}
\begin{enumerate}
\item
For all $m,n\geqslant 1$ there is $M>1$ such that the inequality
\begin{equation}
\label{lemas_inequality}
\log(2^{mx-m+1}x^{nx}-1)\leqslant(2m\log 2+2n)(x-1)
\end{equation}
holds for $x\in[1,M]$ with equality only for $x=1$. Moreover, with
$t= (m\log 2-n)/(2n)\,,$ $M$ can be chosen as
$$
M=\sqrt{\frac{(m-1)\log 2+\log\left(1+\frac{(n+m\log
2)^2}{n}\right)}{n}+ t^2}-t.
$$
\item
Let $p(x)=\log(2^{mx-m+1}x^{nx}-1)$, $q(x)=(2m\log 2+2n)(x-1)$ and
let us use the above notation.
Let $a_0=M$ and $a_{n+1}=p^{-1}(q(a_n))$ for $n\geqslant 1$.
Then the sequence $a_n$ is increasing and bounded.
If $a=\lim_{n\rightarrow}a_n$ then the inequality (\ref{lemas_inequality})
holds for $x\in[1,a]$ with equality iff $x\in\{1,a\}$.
For $m=3$ and $n=2$ we have $a>17$.
\end{enumerate}
\end{lemma}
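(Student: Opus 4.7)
I would substitute $y = x - 1 \ge 0$, reducing the inequality to
$$ 2 \cdot 2^{my}(1+y)^{n(1+y)} - 1 \le e^{2cy}, \qquad c := m\log 2 + n. $$
The elementary estimate $\log(1+y) \le y$ gives $(1+y)^{n(1+y)} = e^{n(1+y)\log(1+y)} \le e^{ny+ny^2}$, so it suffices to prove
$$ 2 e^{cy + ny^2} - 1 \le e^{2cy}, $$
which, after dividing by $2e^{cy}$ and rearranging, is equivalent to the clean form $\cosh(cy) \ge e^{ny^2}$. At $y = 0$ both sides equal $1$, and second-order Taylor comparison requires $c^2 \ge 2n$, which follows from $(m\log 2 + n)^2 \ge n^2 \ge 2n$ when $n \ge 2$ together with a direct check at $n = 1$. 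To extract the explicit value of $M$, I would extract from $\cosh(cy) \ge e^{ny^2}$ the quadratic sufficient condition
$$ n y^2 + (m\log 2 - n)\, y \le (m-1)\log 2 + \log\left(1 + \tfrac{c^2}{n}\right), $$
whose positive root, found by completing the square with $t = (m\log 2 - n)/(2n)$, is precisely the stated $M$. Strict inequality for $y > 0$ is preserved throughout because $\log(1+y) < y$ is strict.

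\textbf{Part 2.} Set $g(x) = q(x) - p(x)$. Direct differentiation gives $g(1) = 0$, $g'(1) = 0$, and
$$ g''(1) = 2(c^2 - n) > 0, $$
while $p(x) \sim nx\log x$ grows superlinearly as $x \to \infty$ and $q$ is linear, so $g(x) \to -\infty$. Let $a^*$ be the smallest root of $g$ in $(1, \infty)$; then $g > 0$ on $(1, a^*)$. For the iteration, assume inductively $a_n \in [M, a^*]$. Then $g(a_n) \ge 0$ gives $q(a_n) \ge p(a_n)$, so $a_{n+1} = p^{-1}(q(a_n)) \ge a_n$ (since $p$ is increasing); and $a_n \le a^*$ gives $q(a_n) \le q(a^*) = p(a^*)$, so $a_{n+1} \le a^*$. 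Hence $\{a_n\}$ is increasing and bounded, its limit $a$ satisfies $p(a) = q(a)$ by continuity of $p, q$, and since $a^*$ is the first such root in $(1, \infty)$ and $a \ge M > 1$, we conclude $a = a^*$. Consequently $p \le q$ holds throughout $[1, a]$ with equality precisely at $\{1, a\}$. For the specific case $(m, n) = (3, 2)$, checking $a > 17$ reduces to verifying $g(17) > 0$: one has $p(17) < \log(2^{49} \cdot 17^{34}) = 49\log 2 + 34\log 17 \approx 130.29$ while $q(17) = 16(6\log 2 + 4) \approx 130.54$, and the gap $q(17) - p(17) > 0.24$ gives the desired strict inequality.

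\textbf{Main obstacle.} The delicate step is the derivation of the closed-form $M$ in Part 1. A naive bound such as $\cosh(cy) \ge e^{cy}/2$ produces a quadratic $ny^2 - cy + \log 2 \le 0$ that fails already at $y = 0$ (the constant term is $\log 2 > 0$), so one must use a finer lower bound for $\log\cosh(cy)$ that both respects the equality at $y = 0$ and is tight enough to yield precisely the stated $M$. The second sensitive point is the numerical check $a > 17$ in the $(m,n) = (3,2)$ case: the margin $q(17) - p(17) \approx 0.25$ is narrow, so logarithmic estimates must be carried out with care to guarantee strict positivity.
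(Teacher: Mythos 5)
Your Part 1 has a genuine gap that you yourself flag as the ``main obstacle'': after reducing to the sufficient inequality $\cosh(cy)\ge e^{ny^2}$, you assert a quadratic sufficient condition and identify its positive root with the stated $M$, but you never derive that quadratic from the $\cosh$ inequality, and it is not clear it can be derived from it. (Note also a sign of reverse engineering: your quadratic is in $y=x-1$ but its positive root equals the stated $M$, not $M-1$; if valid it would prove the inequality on $[1,M+1]$, a stronger and unsubstantiated claim.) The paper's route is structurally different and closes this gap: it sets $u(x)=(mx-m+1)\log 2+nx\log x$, $v(x)=\log(e^{u(x)}-1)$, computes $v''$, observes that $v''\le 0$ is equivalent to $2^{mx-m+1}x^{nx}-1\le \tfrac{x}{n}(n+m\log 2+n\log x)^2$, and then guarantees concavity on $[1,M]$ by the crude but effective sufficient condition $f(M)\le g(1)$, where $f$ and $g$ are the two sides and both increase. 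Concavity plus $v(1)=0$, $v'(1)=2m\log 2+2n$ gives the tangent-line bound immediately, and the explicit $M$ falls out of $f(M)\le g(1)$ after using $\log x\le x-1$. In short: the paper bounds a function by its tangent via concavity; you attempted a direct exponential comparison and the crucial tightening step is missing.

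Your Part 2 matches the paper's argument in substance (monotonicity of $p$ and $q$, the increasing iterate $a_{n+1}=p^{-1}(q(a_n))$, boundedness, passage to the limit, and identification of $a$ with the first root of $q-p$ in $(1,\infty)$). One point to be careful about: you claim that verifying $a>17$ for $(m,n)=(3,2)$ ``reduces to verifying $g(17)>0$.'' That reduction is not automatic. You have only shown $g>0$ on $(1,a)$ and $g(a)=0$; from $g(17)>0$ alone one cannot exclude $a<17$ followed by $g$ dipping below zero and returning positive before $17$. To make the check rigorous you would need an additional ingredient (e.g.\ that $g$ has a single sign change, or a concavity/monotonicity property of $g$ beyond the first root, or a direct verification that $g>0$ on all of $(1,17]$). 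The paper itself does not supply a proof of the $a>17$ claim, so you are filling a gap there, but the single pointwise evaluation is not yet a proof.
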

\begin{proof}
Let
$$
u(x)=(mx-m+1)\log 2+nx\log x,\quad v(x)=
\log(e^{u(x)}-1)=\log(2^{mx-m+1}x^{nx}-1).
$$
Then we have
$$
\begin{array}{rcl}
v''(x) & = &
\displaystyle
(\log(e^{u(x)}-1))'' =
\left(
\frac{u'(x)\,e^{u(x)}}{e^{u(x)}-1}
\right)'
\vspace{1em}\\
& = &
\displaystyle
\frac{(u''(x)e^{u(x)}+(u'(x))^2e^{u(x)})(e^{u(x)}-1)-(u'(x)\,e^{u(x)})^2}{(e^{u(x)}-1)^2}
\vspace{1em}\\
& = &
\displaystyle
\frac{e^{u(x)}}{(e^{u(x)}-1)^2}\cdot
((u''(x)+(u'(x))^2)(e^{u(x)}-1)-(u'(x))^2e^{u(x)})
\vspace{1em}\\
& = &
\displaystyle
\frac{e^{u(x)}}{(e^{u(x)}-1)^2}\cdot
(u''(x)(e^{u(x)}-1)-(u'(x))^2).
\end{array}
$$
Thus
$$
v''(x)\leqslant 0\,\,\,\Leftrightarrow\,\,\,u''(x)(e^{u(x)}-1)\leqslant(u'(x))^2.
$$
Since
$$
e^{u(x)}=2^{mx-m+1}x^{nx},\quad u'(x)=n+m\log 2+n\log x,\quad u''(x)=\frac nx,
$$
we have
$$
v''(x)\leqslant 0\,\,\,\Leftrightarrow\,\,\,\frac nx(2^{mx-m+1}x^{nx}-1)\leqslant(n+m\log 2+n\log x)^2,
$$
therefore $v''(x)\leqslant 0$ is for $x\geqslant 1$ equivalent to
$$
2^{mx-m+1}x^{nx}-1\leqslant\frac xn(n+m\log 2+n\log x)^2.
$$
Let $f(x)=2^{mx-m+1}x^{nx}-1$ and $g(x)=\frac xn(n+m\log 2+n\log x)^2$.
Both functions $f$ and $g$
are increasing on $[1,+\infty)$ and $f(1)<g(1)$ because
$$
f(1)=1\leqslant n=\frac 1n\cdot n^2<\frac 1n(n+m\log 2)^2=g(1).
$$
By continuity of $f$ we can conclude that there is $M>1$ such that
$f(M)\leqslant g(1)$. For such $M$
$$
f(x)\leqslant f(M)\leqslant g(1)\leqslant g(x),\quad x\in[1,M].
$$
This implies that $v$ is concave on $[1,M]$ and consequently
$$
v(x)\leqslant v(1)+v'(1)(x-1),\quad x\in[1,M]
$$
i.e.
$$
\log(2^{mx-m+1}x^{nx}-1)\leqslant(2m\log 2+2n)(x-1),\quad x\in[1,M].
$$
The inequality $f(x)\leqslant g(1)$ is equivalent to
\begin{equation}
\label{log_form}
(mx-m+1)\log 2+nx\log x\leqslant\log\left(1+\frac{(n+m\log 2)^2}n\right).
\end{equation}
Because
\begin{equation}
\label{linearization}
(mx-m+1)\log 2+nx\log x\leqslant(mx-m+1)\log 2+nx(x-1)
\end{equation}
the inequality (\ref{log_form}) is the consequence of the inequality
\begin{equation}
\label{quad_form}
(mx-m+1)\log 2+nx(x-1)\leqslant\log\left(1+\frac{(n+m\log 2)^2}n\right).
\end{equation}
In (\ref{linearization}) equality holds only for $x=1$. Because
$$
1+\frac{(n+m\log 2)^2}n>1+\frac{n^2}{n}=1+n\geqslant 2
$$
the inequality (\ref{quad_form}) is a strict inequality for $x=1$.
By this reason, the greater root of the quadratic equation
$$
(mx-m+1)\log 2+nx(x-1)=\log\left(1+\frac{(n+m\log 2)^2}n\right)
$$
is greater than $1$. If we denote this root with $M$ the inequality
(\ref{log_form}) holds for $x\in[1,M]$ with equality only for $x=1$.
The first part of Lemma is proved.

Now we prove the second part of the inequality. Both of functions
$p(x)$ and $q(x)$ are continuous and increasing. Consequently
$r(x)=p^{-1}(x)$ is continuous and increasing. Because
$$
p(a_1)=q(a_0)>p(a_0)
$$
using monotonicity of $p(x)$ we can conclude that $a_1>a_0$.
Now, by induction and monotonicity of $r$ we can conclude that
the sequence $a_n$ is increasing. Now for $x\in[a_n,a_{n+1})$ we have
$$
p(x)<p(a_{n+1})=q(a_n)\leqslant q(x).
$$
So $p(x)<q(x)$ holds for $x\in\bigcup_{n=0}^\infty[a_n,a_{n+1})=[a_0,a)$
and using already proved inequality, $p(x)<q(x)$ holds for $1<x<a$.
For $x\geqslant 1$ holds $mx-m+1>1$ and $x^{nx}\geqslant 1$ and consequently
$$
p(x)=\log(2^{mx-m+1}x^{nx}-1)>\log(2\,x^{nx}-1)\geqslant nx\log x.
$$
Because $p(x)>nx\log x\geqslant(n\log x)(x-1)$ inequality $p(c)>q(c)$
holds for $c$ such that $n\log c\geqslant 2m\log 2+2n$. It is
easy to see that it is true for $c=2^{\frac{2m}{n}}e^2$.
It implies that $a$ is finite (for example $a<2^{\frac{2m}{n}}e^2$) and $a_n$ is bounded.
Letting $n\rightarrow\infty$ in $p(a_{n+1})=q(a_n)$ and using continuity
of both functions we conclude that $p(a)=q(a)\,.$
\end{proof}
\begin{lemma} \label{stabrmk}
If $a=\varphi_{1/K,n}(1/\sqrt{2})^2$ is as in Theorem
\ref{main_theorem} then for $M>1$ and $\beta\in[1,M]$
\begin{equation} \label{nbnd}
\log\left(\frac{1-a}a\right)\le
\log(\lambda_n^{2(\beta-1)}2^{\beta}-1)\le V(n)(\beta-1)
\end{equation}
with $V(n)=(2\log(2\lambda_n^2))(2\lambda_n^2)^{M-1}$ and for
$K\in[1,17]$,
\begin{equation}
\label{remark_inequality}
\log\left(\frac{1-a}a\right)\leqslant(K-1)(4+6\log 2) <9(K-1),\quad
\end{equation}
with equality only for $K=1$. For $n=2$
\begin{equation} \label{ineq2}
\log\left(\frac{1-a}a\right)=
\log\left(\frac{\varphi_{K,2}(1/\sqrt{2})^2}{\varphi_{1/K,2}(1/\sqrt{2})^2}\right)
\leqslant b(K-1)
\end{equation}
where $b= (4/\pi){\K}(1/\sqrt{2})^2\le 4.38 \,.$
\end{lemma}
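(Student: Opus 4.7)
The proof naturally splits into three parts corresponding to the three displayed inequalities, and the idea is to reduce each one to tools already in hand: Lemma \ref{cgqm747} for the first, Lemma \ref{mn_lemma} together with the estimate (\ref{7502}) for the second, and the identities (\ref{phipyth}), (\ref{eta1}), (\ref{eta1b}) for the third.

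For the first chain (\ref{nbnd}), the plan is to apply the lower bound (\ref{7472}) at $r=1/\sqrt{2}$: with $\beta=K^{1/(n-1)}$ this gives $a=\varphi_{1/K,n}(1/\sqrt{2})^{2}\ge\lambda_{n}^{2(1-\beta)}2^{-\beta}$. Inverting yields $(1-a)/a=1/a-1\le\lambda_{n}^{2(\beta-1)}2^{\beta}-1$, which is the first inequality after taking logarithms. For the second inequality, I would rewrite $\lambda_{n}^{2(\beta-1)}2^{\beta}=2(2\lambda_{n}^{2})^{\beta-1}$, so the quantity becomes $\log\bigl(1+2((2\lambda_{n}^{2})^{\beta-1}-1)\bigr)$. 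Using $\log(1+y)\le y$ and estimating
$$(2\lambda_{n}^{2})^{\beta-1}-1=\int_{1}^{\beta}(2\lambda_{n}^{2})^{t-1}\log(2\lambda_{n}^{2})\,dt\le(2\lambda_{n}^{2})^{M-1}\log(2\lambda_{n}^{2})\,(\beta-1)$$
for $\beta\in[1,M]$ produces the linear bound with constant $V(n)=2(2\lambda_{n}^{2})^{M-1}\log(2\lambda_{n}^{2})$, as claimed.

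For the explicit bound (\ref{remark_inequality}) valid on $K\in[1,17]$, the strategy is to feed the estimate from the first part into Lemma \ref{mn_lemma} with $m=3,\ n=2$. The inequality (\ref{7502}) rearranges to $\lambda_{n}^{\beta-1}\le 2^{\beta-1}K^{\beta}$, hence $\lambda_{n}^{2(\beta-1)}2^{\beta}\le 2^{3\beta-2}K^{2\beta}$; since $n\ge 2$ gives $\beta=K^{1/(n-1)}\le K$, monotonicity yields
$$\lambda_{n}^{2(\beta-1)}2^{\beta}\le 2^{3K-2}K^{2K}.$$
Thus $\log((1-a)/a)\le\log(2^{3K-2}K^{2K}-1)$, and Lemma \ref{mn_lemma} with $(m,n)=(3,2)$ (whose admissible interval $[1,a]$ is known from that lemma to contain $[1,17]$) gives $\log(2^{3K-2}K^{2K}-1)\le(6\log 2+4)(K-1)$. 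The numerical check $4+6\log 2<9$ finishes (\ref{remark_inequality}).

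For the two-dimensional bound (\ref{ineq2}), I would not use the $\lambda_{n}$-estimates at all but rather the Pythagorean identity (\ref{phipyth}) at $r=r'=1/\sqrt{2}$: it gives $\varphi_{K,2}(1/\sqrt{2})^{2}=1-a$, so
$$\frac{1-a}{a}=\frac{\varphi_{K,2}(1/\sqrt{2})^{2}}{\varphi_{1/K,2}(1/\sqrt{2})^{2}}=\frac{s^{2}}{1-s^{2}}\quad\text{with }s=\varphi_{K,2}(1/\sqrt{2}),$$
which is exactly $\eta_{K,2}(1)$ by (\ref{eta1}) with $t=1$. The upper bound (\ref{eta1b}) then delivers $\log((1-a)/a)=\log\eta_{K,2}(1)\le b(K-1)$ with $b=(4/\pi)\K(1/\sqrt{2})^{2}\le 4.38$. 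The main technical obstacle is the bookkeeping in the second step—tracking the two reductions $\log(1+y)\le y$ and the integral bound for $(2\lambda_{n}^{2})^{\beta-1}-1$ to land exactly on the prescribed constant $V(n)$; the rest is substitution into results already established.
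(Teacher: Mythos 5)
Your proof is correct and follows essentially the same route as the paper: it applies (\ref{7472}) at $r=1/\sqrt{2}$ to get the first inequality in (\ref{nbnd}), then linearizes using $\log(1+y)\le y$ and a Lagrange-type bound on $(2\lambda_n^2)^{\beta-1}-1$ (you phrase this as an integral; the paper invokes the mean value theorem, which is the same computation), feeds the (\ref{7502}) estimate together with $\beta\le K$ into Lemma \ref{mn_lemma} with $(m,n)=(3,2)$ for the explicit bound on $[1,17]$, and reduces the $n=2$ case to (\ref{eta1}) and (\ref{eta1b}) via the Pythagorean identity (\ref{phipyth}). The only differences are presentational.
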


\begin{proof}
For $\beta\in[1,M]$ we have by (\ref{7472})
$$
\log\left(\frac{1-a}a\right)\le
\log(\lambda_n^{2(\beta-1)}2^\beta-1) \,.
$$
Further, we have
$$
\frac{\log(\lambda_n^{2(\beta-1)}2^\beta-1)}{\beta-1}\leqslant
2\,\frac{(2\lambda_n^2)^{\beta-1}-1}{\beta-1}\leqslant
(2\log(2\lambda_n^2))(2\lambda_n^2)^{M-1}.
$$
The second inequality follows from the inequality $\log(t)\leqslant
t-1$ and the third one from Lagrange's theorem and monotonicity of
the function $(2\log(2\lambda_n^2))(2\lambda_n^2)^{x-1}$. This
proves (\ref{nbnd}).

From (\ref{7502}) it follows that the constant $a$ satisfies the
inequality
$$
a\ge 2^{2(1-K)} K^{-2K} (1/\sqrt{2})^{2K} \,
$$
and also
$$
1/a \le 2^{3K-2} K^{2K}\,,\quad K>1.
$$
By Lemma \ref{mn_lemma} we have
$$
\log(2^{3K-2} K^{2K}-1)\leqslant(4+6\log 2)(K-1)
$$
for $K\in[1,17]$ with equality only for $K=1$. Now, from
$$
\frac{1-a}{a}<2^{3K-2} K^{2K}-1,\quad K>1
$$
we conclude that
$$
\log\left( \frac{1-a}{a} \right) \leqslant(4+6\log 2)(K-1) <9(K-1)
\, .
$$

For the case $n=2$ we can apply the identity (\ref{eta1}) and
the inequality in (\ref{eta1b}).
\end{proof}

\begin{figure}[!ht]
\begin{center}
\includegraphics[width=35mm]{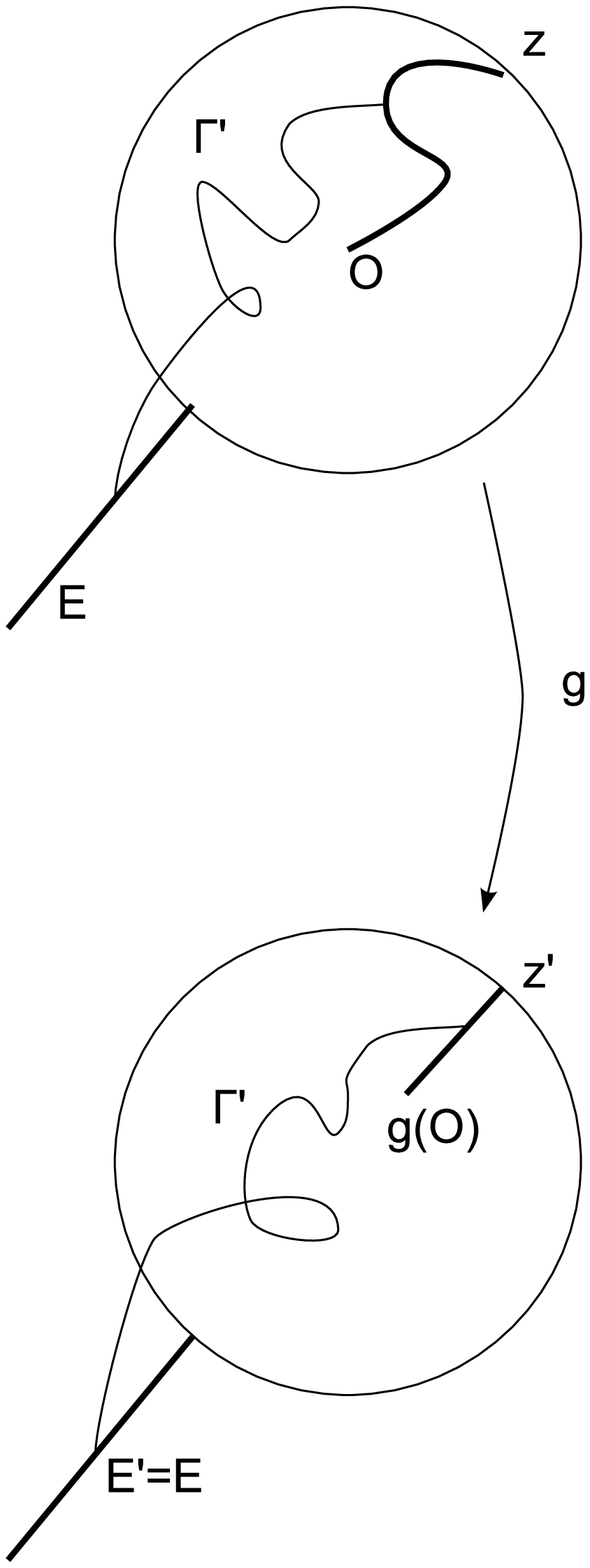}
\caption{}
\end{center}
\end{figure}

\begin{subsec}{\bf
Proof of Theorem \ref{main_theorem}.}
Fix $x\in B^n$ and let $T_x$
denote a M\" obius transformation of $\overline{\mathbb R^n}$ with
$T_x(B^n)=B^n$ and $T_x(x)=0$. Define $g:\mathbb
R^n\longrightarrow\mathbb R^n$ by setting $g(z)=T_x\circ f\circ
T^{-1}_x(z)$ for $z\in B^n$ and $g(z)=z$ for $z\in\mathbb
R^n\setminus B^n$. Then $g\in Id_K(\partial B^n)$with
$g(0)=T_x(f(x))$. By the invariance of $\rho_{B^n}$ under the group
${\mathcal{GM}}(B^n)$ of M\"obius selfautomorphisms of $B^n$ we see
that for $x\in B^n$
\begin{equation}
\label{rho_Bn}
\rho_{B^n}(f(x),x)=\rho_{B^n}(T_x(f(x)),T_x(x))=\rho_{B^n}(g(0),0).
\end{equation}
Choose $z\in\partial B^n$ such that $g(0)\in
[0,z]=\{tz\,:\,0\leqslant t\leqslant 1\}$. Let
$E'=\{-sz\,:\,s\geqslant 1\}$, $\Gamma'=\Delta([g(0),z],E';\mathbb
R^n)$ and $\Gamma=\Delta(g^{-1}[g(0),z],g^{-1}E';\mathbb R^n)$.

The spherical symmetrization with center at $0$ yields by \cite[Thm 8.44]{avv}
$$
M(\Gamma)\geqslant\tau_n(1)\quad(=2^{1-n}\gamma_n(\sqrt 2))
$$
because $g(x)=x$ for $x\in\mathbb R^n\setminus B^n$.
Next, we see by the choice of $\Gamma'$ that
$$
M(\Gamma')=\tau_n\left(\frac{1+|g(0)|}{1-|g(0)|}\right).
$$
By $K$-quasiconformality we have $M(\Gamma)\leqslant K\,M(\Gamma')$ implying
\begin{equation}
\label{main_inequality}
\exp(\rho_{B^n}(0,g(0)))=\frac{1+|g(0)|}{1-|g(0)|}\leqslant\tau_n^{-1}(\tau_n(1)/K)=\frac{1-a}a.
\end{equation}
The last equality follows from (\ref{aform}). 
Finally, (\ref{rho_Bn}) and (\ref{main_inequality}) complete the proof. $ \hfill \square$
\end{subsec}

\begin{subsec}{\bf
Proof of Theorem \ref{mycor}.} We have
$$
\begin{array}{rcl}
|f(x)-x| & \leqslant & \displaystyle
2\tanh\left(\frac{\rho_{B^n}(f(x),x)}{4}\right)
\leqslant
2\tanh\left(\frac{\log\left(\frac{1-a}{a}\right)}{4}\right)
\vspace{1em}\\
& \leqslant & \displaystyle
2\tanh\left(\frac{(K-1)(4+6\log 2)}{4}\right)
\vspace{1em}\\
& \leqslant & \displaystyle
(K-1)(2+3\log 2)\leqslant\frac 92(K-1).
\end{array}
$$
The first inequality follows from (\ref{tanh}), the second one from Theorem \ref{main_theorem},
the third one from Lemma \ref{stabrmk} and the last one from inequality
$\tanh(t)\leqslant t$ for $t\geqslant 0$.


For $n=2$ we use the same first two steps and planar case of Lemma \ref{stabrmk} to derive inequality
$$
|f(x)-x|\leqslant\frac{b}{2}(K-1). \quad \quad \hfill \square
$$
\end{subsec}

A lower bound corresponding to the upper bound in
(\ref{luckynumber}) is given in the next lemma.

\begin{lemma} For $f \in Id(\partial G)$ let
$$
\delta(f)  \equiv \sup \{|f(z)-z| : z \in G\}\,.
$$
Then for $f \in Id_K(\partial B^n), K>1,  \alpha=K^{1/(1-n)}$
\begin{equation} \label{**}
\delta(f) \ge (1-\alpha) \alpha^{\alpha/(1-\alpha)}
>\frac{1}{e}(1-\alpha).
\end{equation}
\end{lemma}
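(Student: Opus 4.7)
The plan is to establish the lower bound by exhibiting the radial stretch as an extremal example and then to argue, via a modulus comparison, that no map in $Id_K(\partial B^n)$ achieves a smaller displacement.

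First I would introduce the radial stretch $f_0\colon\overline{\mathbb R^n}\to\overline{\mathbb R^n}$ defined by $f_0(x)=x|x|^{\alpha-1}$ on $B^n\setminus\{0\}$, $f_0(0)=0$, and $f_0(x)=x$ on $\mathbb R^n\setminus B^n$. A routine computation in spherical coordinates shows that $f_0\in Id_K(\partial B^n)$ with $\alpha=K^{1/(1-n)}$, and that $|f_0(x)-x|=|x|^\alpha-|x|$ on $B^n$ (since $\alpha<1$ forces $|x|^{\alpha-1}\geq 1$). Maximizing $g(r)=r^\alpha-r$ on $(0,1)$ via $g'(r)=\alpha r^{\alpha-1}-1=0$ yields the unique critical point $r_0=\alpha^{1/(1-\alpha)}$ and the value $\delta(f_0)=g(r_0)=(1-\alpha)\alpha^{\alpha/(1-\alpha)}$, showing that the claimed bound is attained.

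For arbitrary $f\in Id_K(\partial B^n)$, set $\delta=\delta(f)$. For any $r\in(\delta,1)$, the estimate $|f^{-1}(0)|\leq\delta<r$ ensures that $C_r:=f(\overline{B^n(0,r)})$ is a compact connected set in $\overline{B^n}$ containing the origin. Since $f$ is identity on $\mathbb R^n\setminus B^n$, the image of the ring $(\overline{B^n(0,r)},\mathbb R^n\setminus B^n)$ (whose modulus is $\gamma_n(1/r)$) is the ring $(C_r,\mathbb R^n\setminus B^n)$, whose modulus is at most $K\gamma_n(1/r)$ by the $K$-quasiconformality of $f$. Theorem~\ref{extremality} applied to $C_r$ (which contains $0$) furnishes the lower bound $\gamma_n(1/R_r^{*})$, where $R_r^{*}=\max\{|y|:y\in C_r\}$. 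Combining these with the definition of $\varphi_{K,n}$ and the monotonicity of $\gamma_n$ yields $R_r^{*}\leq\varphi_{K,n}(r)$; the analogous argument applied to $f^{-1}$ produces the companion inclusion $C_r\supseteq\overline{B^n(0,\varphi_{1/K,n}(r))}$.

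The main obstacle is converting these Schwarz-type two-sided inclusions for $C_r$ into a pointwise lower bound on $|f(x)-x|$. The plan is to specialize to $r=r_0$ and exploit the sharpness of the modulus inequalities for the radial stretch: since $f_0$ saturates the capacity comparison with $\varphi_{K,n}(r_0)=r_0^\alpha$, if $f$ had $\delta(f)<r_0^\alpha-r_0$ then both the displacement inclusion $f(\overline{B^n(0,r_0)})\subseteq\overline{B^n(0,r_0+\delta)}$ and the extremality inclusions above would compress $C_{r_0}$ strictly inside the ball of radius $r_0^\alpha$, contradicting the sharp modulus identity $\gamma_n(1/r_0^\alpha)=K\gamma_n(1/r_0)$ that is forced by the $K$-quasiconformal equality case. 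Tracing the contradiction quantitatively locates a point $x^{*}$ with $|f(x^{*})-x^{*}|\geq r_0^\alpha-r_0=(1-\alpha)\alpha^{\alpha/(1-\alpha)}$, which is the desired lower bound on $\delta(f)$.

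The second inequality $(1-\alpha)\alpha^{\alpha/(1-\alpha)}>(1-\alpha)/e$ is elementary: applying $\log t<t-1$ (for $t\neq 1$) at $t=1/\alpha>1$ gives $-\log\alpha<1/\alpha-1=(1-\alpha)/\alpha$, equivalently $\alpha\log\alpha/(1-\alpha)>-1$; exponentiating yields $\alpha^{\alpha/(1-\alpha)}>e^{-1}$, and multiplying by the positive factor $1-\alpha$ finishes the estimate.
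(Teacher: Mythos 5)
Your first paragraph is exactly the paper's proof: exhibit the radial stretch $f_0(x)=|x|^{\alpha-1}x$, note it lies in $Id_K(\partial B^n)$ with $\alpha=K^{1/(1-n)}$, and maximize $r\mapsto r^\alpha-r$ to get $\delta(f_0)=(1-\alpha)\alpha^{\alpha/(1-\alpha)}$. Your closing paragraph (the elementary estimate via $\log t<t-1$) is a correct alternative to the paper's ``evaluate at $r=1/e$'' shortcut. So the parts of the proposal that overlap with the paper are fine.

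However, your second and third paragraphs attempt to prove something the lemma does not (and cannot) assert. The lemma is an \emph{existence} statement: it is the lower-bound companion to (\ref{luckynumber}), showing that the class $Id_K(\partial B^n)$ \emph{contains} a map with displacement of order $1-\alpha$. It is not a claim that \emph{every} $f\in Id_K(\partial B^n)$ has $\delta(f)\ge(1-\alpha)\alpha^{\alpha/(1-\alpha)}$; that statement is false, since the identity map is $K$-quasiconformal (maximal dilatation $1\le K$), lies in $Id_K(\partial B^n)$ for every $K\ge 1$, and has $\delta(\mathrm{id})=0$. So the argument you set out to build in paragraphs 2--3 targets a false theorem, and no modulus comparison can rescue it.

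Beyond the misreading, the argument sketched there also rests on a claim that is not true: you invoke a ``sharp modulus identity'' $\gamma_n(1/r_0^\alpha)=K\gamma_n(1/r_0)$, i.e.\ $\varphi_{K,n}(r_0)=r_0^\alpha$. This is not an identity of the special functions. The radial stretch does multiply the modulus of the spherical ring (concentric spheres) by exactly $K$, but $\gamma_n$ is the Gr\"otzsch-ring capacity, a different extremal configuration, and in general $\varphi_{K,n}(r)\ne r^\alpha$ (one only has one-sided estimates such as $\varphi_{K,n}(r)\le\lambda_n^{1-\alpha}r^{\alpha}$). So even under the universal reading, the proposed contradiction does not close. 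For the lemma as intended, your first and last paragraphs alone already constitute a complete and correct proof; paragraphs 2--3 should simply be deleted.
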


\begin{proof}
 The radial stretching $f: B^n \to B^n, n \ge 2, $ defined by
$f(z)=|z|^{\alpha-1}\,z, z \in B^n,$ ($0<\alpha<1$) is $K$-qc with
$\alpha=K^{1/(1-n)}$ \cite[p. 49]{v2} and $f \in Id_K(\partial
B^n)\,.$ Now we have
$$
|f(z)-z|=||z|^{\alpha-1}z-z|=|r^\alpha-r|,\quad |z|=r.
$$
Further, we see that
$$
\delta(f)=\sup_{0<r<1}(r^{\alpha}-r),
$$
where the supremum is attained for
$r=r_\alpha=\left(\frac{1}{\alpha}\right)^{\frac{1}{\alpha-1}}$, so
$$
\delta(f) =({1}-{\alpha}) \alpha^{\alpha/(1- \alpha)}\,.
$$
A crude, but simple, estimate is
$$
\delta(f)\ge (1/e)^{\alpha} - (1/e)
=\frac{1}{e}\left(\frac{1}{e^{\alpha
-1}}-1\right)=\frac{1}{e}\left(e^{1-\alpha}-1\right)
\geqslant\frac{1}{e}(1-\alpha)\, .
$$ 
\end{proof}

\begin{figure}[!ht]
\begin{center}
\includegraphics[width=100mm]{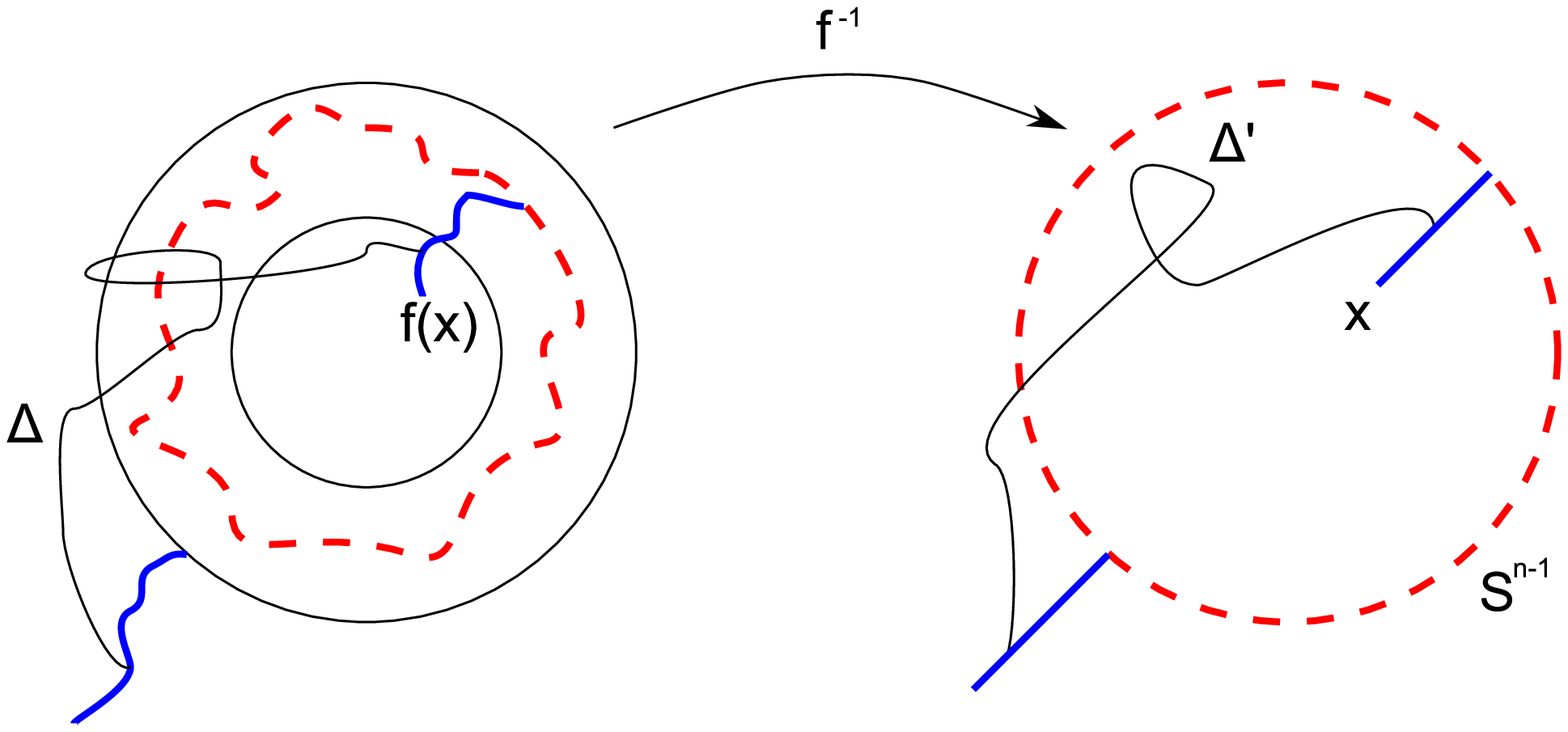}
\caption{}
\end{center}
\end{figure}

\begin{theorem} \label{etathm}
\label{bounds} Let $f:\overline{\mathbb
R^n}\longrightarrow\overline{\mathbb R^n}$ be a $K$-qc homeomorphism
with $f(\infty)=\infty$ and $B^n(m)\subset f(B^n) \subset B^n(M) $ where $0< m \le 1 \le M$. Then
$$
\eta_{1/K,n}\left(\frac{1+|x|}{1-|x|}\right)\leqslant\frac{M+|f(x)|}{m-|f(x)|}
$$
and
$$
\frac{m+|f(x)|}{M-|f(x)|}
\leqslant
\eta_{K,n}\left(\frac{1+|x|}{1-|x|}\right)
$$
for all $x\in B^n$ where $\eta_{K,n}(t)=\tau_n^{-1}(\tau_n(t)/K)$.

In particular, if $m=1=M$, then we have
$$
 \eta_{1/K,n}\left(\frac{1+|x|}{1-|x|}\right)\leqslant\frac{1+|f(x)|}{1-|f(x)|}
\leqslant
\eta_{K,n}\left(\frac{1+|x|}{1-|x|}\right)\, .$$
\end{theorem}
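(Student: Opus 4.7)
The plan is to realize each of $\tau_n\bigl(\tfrac{1+|x|}{1-|x|}\bigr)$, $\tau_n\bigl(\tfrac{m+|y|}{M-|y|}\bigr)$, and $\tau_n\bigl(\tfrac{M+|y|}{m-|y|}\bigr)$ as the modulus of a specific Teichm\"uller-type ring, and then to chain the $K$-quasiconformal modulus inequality together with the lower bound from Theorem~\ref{function_chi_n} to obtain the two estimates. Throughout, write $y=f(x)$ and $e=y/|y|$; the case $y=0$ is trivial.

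For the right-hand inequality I place the ring in the target. Define $B$ by complementary continua $C_0'=[y,Me]$ and $C_1'=\{-te:t\ge m\}\cup\{\infty\}$. The similarity $z\mapsto(z-y)/(M-|y|)$ followed by a rotation identifies $B$ with the standard Teichm\"uller ring, so $M(\Gamma_B)=\tau_n\bigl(\tfrac{m+|y|}{M-|y|}\bigr)$. Setting $A=f^{-1}(B)$, the $K$-quasiconformality of $f$ gives $M(\Gamma_A)\le K\,M(\Gamma_B)$. The key step is to bound $M(\Gamma_A)$ from below. Because $[y,Me]$ starts in $f(B^n)$ and ends at $Me\notin f(B^n)$, the continuum $f^{-1}(C_0')$ meets $\partial B^n$ at some $p$; likewise, after a harmless perturbation $m'<m$ to guarantee $-m'e\in B^n(m)\subset f(B^n)$ and a subsequent passage $m'\to m$, the continuum $f^{-1}(C_1')$ meets $\partial B^n$ at some $q$. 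Applying Theorem~\ref{function_chi_n} with $a=x$, $b=p$, $c=q$, and using $|p|=|q|=1$ together with $|b-a|\ge 1-|x|$ and $|c-a|\le 1+|x|$, the monotonicity of $\tau_n$ yields $M(\Gamma_A)\ge\tau_n\bigl(\tfrac{1+|x|}{1-|x|}\bigr)$. Chaining the two bounds gives $\tau_n\bigl(\tfrac{1+|x|}{1-|x|}\bigr)\le K\tau_n\bigl(\tfrac{m+|y|}{M-|y|}\bigr)$, which, in view of $\eta_{K,n}=\tau_n^{-1}\circ(\tau_n/K)$ and the fact that $\tau_n$ is a decreasing homeomorphism, is the second displayed inequality.

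The left-hand inequality is dual: I place the ring in the source. Set $C_0=[x,x/|x|]$ and $C_1=\{-t\,x/|x|:t\ge 1\}\cup\{\infty\}$; the same similarity trick gives $M(\Gamma_A)=\tau_n\bigl(\tfrac{1+|x|}{1-|x|}\bigr)$, and $K$-quasiconformality yields $M(\Gamma_{f(A)})\le K\,M(\Gamma_A)$. Under the assumption $|y|<m$ (which is exactly the condition making $\tfrac{M+|y|}{m-|y|}$ finite and positive), the continuum $f(C_0)$ joining $y$ to $f(x/|x|)\in\partial f(B^n)\subset\{m\le|\cdot|\le M\}$ is forced to cross $S^{n-1}(m)$ at some $b'$, and $f(C_1)$ joining $\infty$ to $f(-x/|x|)\in\partial f(B^n)$ is forced to cross $S^{n-1}(M)$ at some $c'$. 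Applying Theorem~\ref{function_chi_n} with $a'=y$ yields $M(\Gamma_{f(A)})\ge\tau_n\bigl(\tfrac{M+|y|}{m-|y|}\bigr)$ via $|b'-a'|\ge m-|y|$ and $|c'-a'|\le M+|y|$. Combining the bounds produces $\tau_n\bigl(\tfrac{M+|y|}{m-|y|}\bigr)\le K\tau_n\bigl(\tfrac{1+|x|}{1-|x|}\bigr)$, which is the first displayed inequality.

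The main obstacle is bookkeeping rather than depth: one must check carefully that the four continua $f^{-1}(C_0'),\,f^{-1}(C_1'),\,f(C_0),\,f(C_1)$ meet the appropriate spheres so that Theorem~\ref{function_chi_n} can be applied to well-chosen points, and one must handle the open/closed ambiguity in $B^n(m)\subset f(B^n)$ by a small perturbation in $m$ (and symmetrically in $M$) followed by a passage to the limit legitimized by the continuity of $\tau_n$ from Theorem~\ref{H_n_properties}. The \emph{in particular} case $m=M=1$ is then immediate since both outer expressions collapse to $\frac{1+|f(x)|}{1-|f(x)|}$.
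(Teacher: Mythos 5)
Your proof is correct and follows essentially the same approach as the paper: build a Teichm\"uller-type ring attached to $f(x)$ in the target, lower-bound the modulus of its preimage by symmetrization (which is exactly the content of Theorem~\ref{function_chi_n}), and combine with the $K$-qc modulus inequality; then do the dual construction in the source for the other bound. The only difference is a technical one: the paper anchors the ring's endpoints $z',z''$ on $\partial f(B^n)$ itself (so that $f^{-1}(z'),f^{-1}(z'')\in\partial B^n$ automatically, giving the lower bound $M(\Gamma)\ge\tau_n\big(\tfrac{1+|x|}{1-|x|}\big)$ directly, while the upper bound $M(\Gamma')\le\tau_n\big(\tfrac{m+|y|}{M-|y|}\big)$ follows from $m\le |z'|,|z''|\le M$), whereas you anchor at the fixed radii $Me$ and $-me$ (getting the target modulus exactly, at the cost of having to locate crossing points $p,q\in\partial B^n$ and of the small $m'\to m$ perturbation to guarantee $-m'e\in f(B^n)$). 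Both variants are legitimate and of the same depth; your write-up is actually somewhat more careful about the boundary cases and about the degeneracy $|f(x)|\ge m$ in the left-hand inequality, points that the paper passes over ("apply a similar argument to $f^{-1}$") without comment.
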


\begin{figure}[!ht]
\begin{center}
\includegraphics[width=100mm]{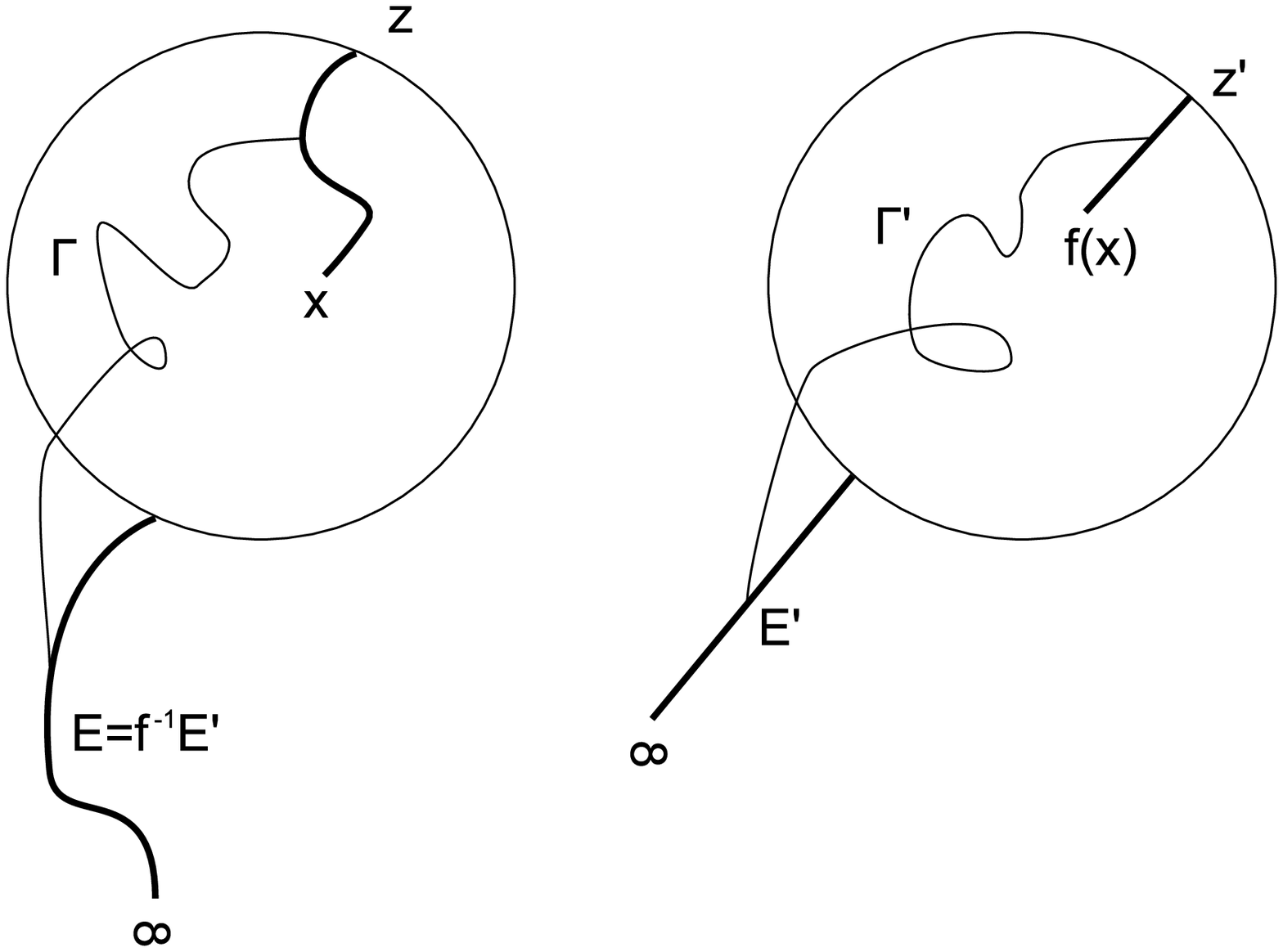}
\caption{}
\end{center}
\end{figure}

\begin{proof}
The proof is similar to the proof of Theorem \ref{main_theorem}. Fix
$x\in B^n$ and choose $z'\in\partial f(B^n)$ such that $f(x)\in[0,z']$ and $[f(x),z') \subset f(B^n)\,$
and fix $z" \in \partial f(B^n)$ such that $z',0,z"$ are on the same line, $0 \in[z', z"],$ and
$ \{-sz"\,:\,s\geqslant 1\} \subset {\mathbb R}^n \setminus f(B^n)\,.$
Let $\Gamma'=\Delta([f(x),z'],E';\mathbb{R}^n)$,
$E'=\{-sz"\,:\,s\geqslant 1\}$ and
$\Gamma=\Delta(f^{-1}[f(x),z'],f^{-1}E';\mathbb R^n)$. Then
$$
M(\Gamma')\le\tau_n\left(\frac{m+|f(x)|}{M-|f(x)|}\right)
$$
while applying a spherical symmetrization with center at the origin
gives
$$
M(\Gamma)\geqslant\tau_n\left(\frac{1+|x|}{1-|x|}\right)
$$
because $f^{-1}E'$ connects $\partial B^n$ and $\infty$.
Then the inequality $M(\Gamma)\leqslant K\,M(\Gamma')$
yields
$$
 \tau_n\left(\frac{1+|x|}{1-|x|}\right) \le K
 \tau_n\left(\frac{m+|f(x)|}{M-|f(x)|}\right),
$$
$$
\tau_n^{-1}( \frac{1}{K}\tau_n\left(\frac{1+|x|}{1-|x|}\right))
\ge\frac{m+|f(x)|}{M-|f(x)|}
$$

\begin{equation}
\label{eta}
\frac{m+|f(x)|}{M-|f(x)|}\leqslant\eta_{K,n}\left(\frac{1+|x|}{1-|x|}\right).
\end{equation}
The lower bound follows if we apply a similar argument to $f^{-1}$ and the lower bound
$$
M(\Gamma')\ge\tau_n\left(\frac{M+|f(x)|}{m-|f(x)|}\right) \,.
$$
\end{proof}

\begin{subsec}{\bf Remark.} \label{mynewobs}
Putting $x=0, m=1=M$ in (\ref{eta}) we obtain by (\ref{aform})
for  a $K$-qc homeomorphism $f:\overline{\mathbb
R^n}\longrightarrow\overline{\mathbb R^n}$
with $f(\infty)=\infty$ and $ f(B^n) = B^n $  that
$$   |f(0)| \le 1- 2a\,, a= \varphi_{1/K,n}(1/\sqrt{2})^2\,.$$
Further, if we use the lower bound (\ref{7502}) from Lemma
\ref{cgqm747} we obtain
$$   |f(0)| \le 1- 2^{1- \beta} 4^{1-K} K^{-2K}\,. $$
In the special case when $n=2$ we have
$$|f(0)| \le 1- 2^{3(1- K)}  K^{-2K}\le (2+ 3 \log 2)(K-1)\,.  $$
Note that this last inequality does not suppose that $f \in Id_K(\partial B^n)\,,$
only the hypotheses of Theorem \ref{etathm} are needed.
\end{subsec}

\begin{subsec}{Maps of cylinder}
{\rm We next consider the class $Id_K(\partial Z)$ for the case when the
domain $Z$ is an infinite cylinder.}
\end{subsec}
\begin{theorem}
Let $Z=\{(x,t)\in\mathbb R^n\,:\,|x|<1,\,t\in\mathbb R\}$, $f\in Id_K(\partial Z)$. Then
$k_Z(0,f(0))\leqslant c(K)$ where $c(K)\rightarrow 0$ when $K\rightarrow 1$.
\end{theorem}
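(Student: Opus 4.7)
The plan is to adapt the strategy of Theorem~\ref{main_theorem} to the cylinder $Z$. Writing $f(0)=w=(y_0,t_0)$ with $y_0\in B^{n-1}$ and $t_0\in\mathbb R$, I would first bound the cross-sectional displacement $|y_0|$ by a Teichm\"uller-ring argument analogous to the one in Theorem~\ref{main_theorem}, then bound the axial displacement $|t_0|$ by a second modulus argument, and finally convert Euclidean smallness of $w$ into smallness of $k_Z(0,w)$.

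For the cross-sectional bound, assume $y_0\neq 0$ and let $z=(y_0/|y_0|,t_0/|y_0|)\in\partial Z$ be the point where the ray from $0$ through $w$ first meets $\partial Z$; then $w=|y_0|\,z$ and $-z\in\partial Z$. Form the curve families
\[
\Gamma'=\Delta([w,z],[-z,\infty];\mathbb R^n),\qquad \Gamma=\Delta(f^{-1}[w,z],[-z,\infty];\mathbb R^n),
\]
where $[-z,\infty]=\{-sz:s\geq 1\}\subset\mathbb R^n\setminus Z$, so that $f$ fixes the exterior ray pointwise and $f^{-1}[w,z]$ is a continuum joining $0$ and $z$. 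Theorem~\ref{function_chi_n} with $a=0,\ b=z,\ c=-z$ gives $M(\Gamma)\geq\tau_n(|{-z}|/|z|)=\tau_n(1)$. Since $[w,z]\cup[-z,\infty]$ is collinear through the origin, M\"obius reduction to a standard Teichm\"uller ring yields $M(\Gamma')=\tau_n((1+|y_0|)/(1-|y_0|))$. Combining with $M(\Gamma)\leq KM(\Gamma')$ and identity \eqref{aform},
\[
\frac{1+|y_0|}{1-|y_0|}\leq\eta_{K,n}(1)=\frac{1-a}{a},\qquad a=\varphi_{1/K,n}(1/\sqrt{2})^2,
\]
so $|y_0|\leq 1-2a\to 0$ as $K\to 1$ by Lemma~\ref{stabrmk}.

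For the axial bound I would run a second modulus argument whose Teichm\"uller parameter is sensitive to $t_0$: pair a boundary point $z^+\in\partial Z$ with axial coordinate above $t_0$ with an exterior ray emanating from a boundary point $z^-$ with axial coordinate below $t_0$, so that the absolute cross-ratio of $\{w,z^+,z^-,\infty\}$ involves $|t_0|$. Applying Theorem~\ref{function_chi_n} to the preimage continuum and estimating the modulus of the (now non-collinear) image family via M\"obius invariance of the cross-ratio and monotonicity of $\tau_n$ should produce a bound $|t_0|\leq\varepsilon_2(K)$ with $\varepsilon_2(K)\to 0$. Once $|y_0|,|t_0|\leq\varepsilon(K)\to 0$ are in hand, integrating the quasihyperbolic density along the segment $[0,w]\subset Z$ gives
\[
k_Z(0,w)\leq\int_0^1\frac{|w|}{1-s|y_0|}\,ds=\frac{|w|}{|y_0|}\log\frac{1}{1-|y_0|}=O(|y_0|+|t_0|),
\]
which supplies the required $c(K)$.

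The main obstacle is the axial step: unlike $B^n$, the cylinder admits no transitive group of M\"obius self-maps allowing us to normalize $f(0)$ to a canonical position, and the radial-through-origin Teichm\"uller configuration from the ball case bounds only the cross-sectional component of $w$. Designing the second, genuinely non-collinear ring that sees $t_0$ and either computing or sharply estimating the modulus of its image family is where the real work lies. A softer fallback is to invoke Reshetnyak's stability \cite{r}: the only M\"obius self-map of $\overline{\mathbb R^n}$ fixing $\partial Z\cup\{\infty\}$ pointwise is the identity, so maps in $Id_K(\partial Z)$ must converge uniformly on compact subsets of $Z$ to the identity as $K\to 1$, giving the qualitative conclusion $c(K)\to 0$ without an explicit rate.
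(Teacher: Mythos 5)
Your overall architecture matches the paper's: bound the cross-sectional displacement $|y_0|$, then the axial displacement $|t_0|$, then convert Euclidean smallness to a $k_Z$ bound. Your cross-sectional step is correct and equivalent to the paper's, though the paper anchors its Teichm\"uller ring in the horizontal slice at height $t_0$ (with $E'=[w,f(0)]$, $w=(y_0/|y_0|,t_0)$, and the opposite ray from $\overline w=(-y_0/|y_0|,t_0)$) rather than along the radial ray through the origin as you do; both reduce to $\tau_n\bigl((1+|y_0|)/(1-|y_0|)\bigr)$. Your final step (integrating the quasihyperbolic density along $[0,w]$) is a legitimate alternative to the paper's triangle-inequality decomposition $k_Z(0,f(0))\le k_Z(0,(0,t_0))+k_{B^{n-1}}(0,y_0)$; both give a bound $\asymp |y_0|+|t_0|$.

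The genuine gap is the axial step, which you explicitly leave open, and the intuition that blocked you there is misplaced. You say the real difficulty is to design a \emph{genuinely non-collinear} ring that sees $t_0$, but the paper's ring is collinear: fix $z\in\partial Z\cap\{x_n=0\}$ maximizing $|f(0)-z|$ (so $z$ lies on the boundary circle at the \emph{same} height as the basepoint $0$, not at height $t_0$), then take $w$ on the line through $f(0)$ and $z$ with $|z-w|=1$ and $[z,w]\subset\mathbb R^n\setminus Z$, and set $E'=[z,w]$, $F'=\{f(0)+s(f(0)-z):s\ge0\}$. Since $z,w,f(0)$ are collinear this is an honest Teichm\"uller ring with $M(\Delta')=\tau_n(|f(0)-z|)$, and because $[z,w]$ lies outside $Z$ one has $f^{-1}E'=E'$, so a spherical symmetrization centered at $z$ gives $M(\Delta)\ge\tau_n(1)$. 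Quasiconformality then yields $1+t_0^2\le|f(0)-z|^2\le\tau_n^{-1}(\tau_n(1)/K)^2$. The missing insight is thus not a non-collinear configuration but the choice of anchor point $z$ at height $0$ (rather than height $t_0$) and the maximization that converts ``$|f(0)-z|$ is bounded'' into ``$|t_0|$ is bounded.'' Your Reshetnyak fallback is only qualitative and would not produce the explicit rate; note also that the paper itself remarks it is unclear whether the analogous ball result follows simply from Reshetnyak's stability theory, so that route is not a safe retreat.
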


\begin{figure}[!ht]
\begin{center}
\includegraphics[width=50mm]{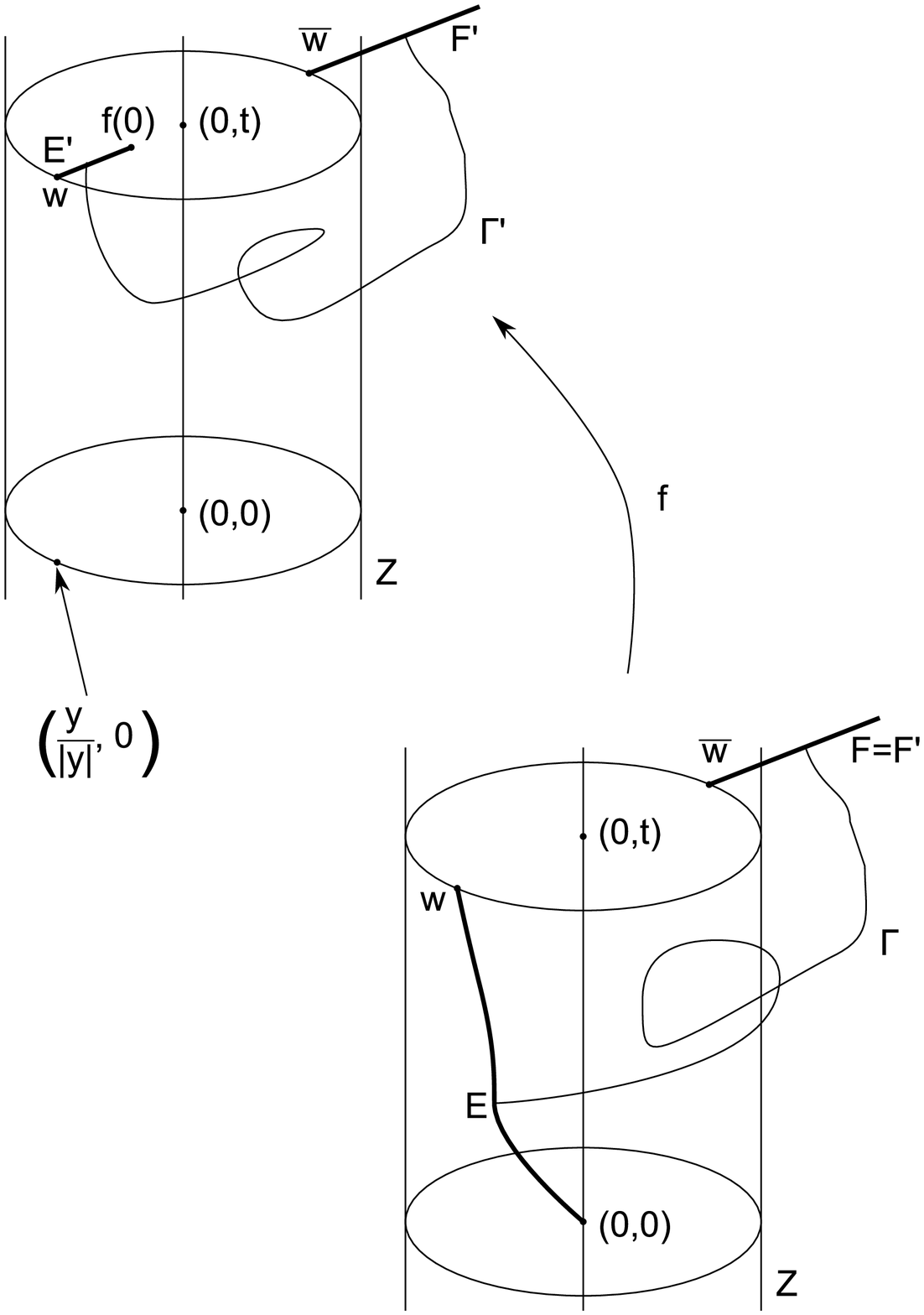}
\caption{}
\end{center}
\end{figure}

\begin{proof}
Let $f(0)=(y,t)$, $E'=[w,f(0)]$,
$F'=\{\overline w+s(y,0)\,:\,s\leqslant 0\}$ where
$w=(y/|y|,t)$, $\overline w=(-y/|y|,t)$. Then $E'$ and $F'$ are the complementary components of a
Teichm\" uller ring and therefore writing
$\Gamma'=\Delta(E',F';\mathbb R^n)$ we have
$$
M(\Gamma')\leqslant\tau_n\left(\frac{1+|y|}{1-|y|}\right).
$$
The modulus of the family $\Gamma=\Delta(E,F;\mathbb R^n)$, $E=f^{-1}E'$, $F=f^{-1}F'$ can be
estimated by use of spherical symmetrization with the center at 0. Note that $E=E'$
because $E'\subset\mathbb R^n\setminus Z$ and $f\in Id_K(\partial Z)$. By \cite[7.34]{vu2}
we have
$$
M(\Gamma)\geqslant\tau_n(1).
$$
By $K$-quasiconformality $M(\Gamma)\leqslant K\,M(\Gamma')$ implying
$$
\exp(\rho_{B^{n-1}}(0,y))=\frac{1+|y|}{1-|y|}\leqslant\tau_n^{-1}\left(\frac{\tau_n(1)}{K}\right).
$$

\begin{figure}[!ht]
\begin{center}
\includegraphics[width=70mm]{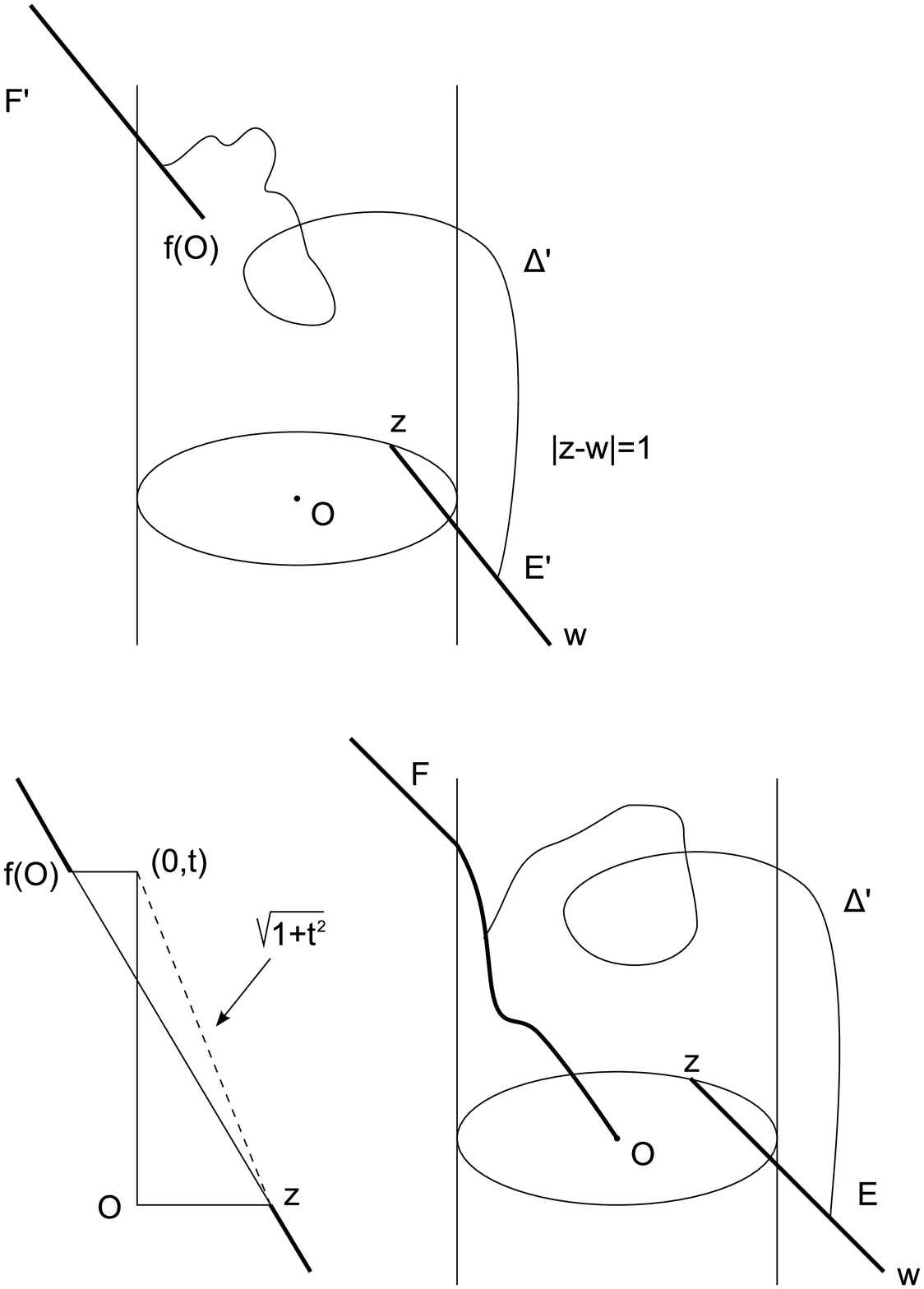}
\caption{}
\end{center}
\end{figure}

Next we shall estimate $t$. Fix first $z$ in $\{w\in\partial Z\,:\,w_n=0\}$ such that $|f(0)-z|$
is maximal. Then choose a point $w$ on the line through $f(0)$ and $z$ such that $|z-w|=1$
and $[z,w]\subset\mathbb R^n\setminus Z$. Let $E'=[z,w]$ and $F'=\{f(0)+t(f(0)-z)\,:\,t\geqslant 0\}$.
Then $E'$ and $F'$ are the complementary components of a Teichm\" uller ring and with
$\Delta'=\Delta(E',F';\mathbb R^n)$ we have
$$
M(\Delta')=\tau_n(|f(0)-z|).
$$
Observing that $E'=f^{-1}E'$, because $f\in Id_K(\partial Z)$ and
carrying out a spherical symmetrization with center at $z$ we see
that if $E=f^{-1}E'$, $F=f^{-1}F'$ then
$$
M(\Delta)\geqslant\tau_n(1),\quad\Delta=\Delta(E,F;\mathbb R^n).
$$
By $K$-quasiconformality we have
$$
1+t^2\leqslant|f(0)-z|^2\leqslant\tau_n^{-1}\left(\frac{\tau_n(1)}{K}\right)^2.
$$
The triangle inequality for $k_Z$ yields
$$
\begin{array}{rcl}
k_Z(0,f(0)) & \leqslant & k_Z(0,(0,t))+k_Z((0,t),(y,t))
\vspace{0.5em}\\
& = & t+k_{B^{n-1}}(0,y)\leqslant|t|+2\,\rho_{B^{n-1}}(0,y)
\vspace{0.5em}\\
& \leqslant &
\sqrt{\tau_n^{-1}\left(\frac{\tau_n(1)}{K}\right)^2-1}
+2\log\left(\tau_n^{-1}\left(\frac{\tau_n(1)}{K}\right)\right)
\vspace{0.5em}\\
& \leqslant &
\displaystyle
\sqrt{e^{18(K-1)}-1}+18(K-1).
\end{array}
$$
The last inequality follows from (\ref{aform}) and Lemma \ref{stabrmk}.
\end{proof}

\section{Distortion of two point normalized quasiconformal mappings}

Let $\eta \colon [0,\infty) \to [0,\infty)$ be an increasing homeomorphism and $D,D' \subset \Rn$. A homeomorphism $f \colon D \to D'$ is \emph{$\eta$-quasisymmetric} if
\begin{equation}\label{qsdefinition}
  \frac{|f(a)-f(c)|}{|f(b)-f(c)|} \le \eta \left( \frac{|a-c|}{|b-c|} \right)
\end{equation}
for all $a,b,c \in D$ and $c \ne b$. By \cite{v2} K-quasiconformal mapping of the whole $\Rn$ is $\eta_{K,n}$- quasisymmetric with a control function $\eta_{K,n}$. Let us define the optimal control function by
\[
  \eta^*_{K,n}(t) = \sup \{ |f(x)| \colon |x| \le t, f \in QC_K(\Rn), f(y) = y \textnormal{ for } y \in \{ 0,e_1,\infty \} \}.
\]

Vuorinen \cite[Theorem 1.8]{vu2} proved an upper bound for $\eta^*_{K,n}(t)$, which was later refined by Prause \cite[Theorem 2.7]{p} for $K < 4/3$ into the following form
\begin{equation}
  \eta^*_{K,n}(t) \le \left\{ \begin{array}{ll}
    \displaystyle \eta^*_{K,n}(1)\p_{K,n}(t), & 0 < t < 1,\\
    \displaystyle 1+600 \left( (K-1)\log\frac{1}{K-1} \right), & t = 1,\\
    \displaystyle \eta^*_{K,n}(1)\frac{1}{\p_{1/K,n}(1/t)}, & t > 1,
  \end{array} \right.
\end{equation}
where
\begin{equation}\label{etastar1}
    \eta^*_{K,n}(1) \le \exp((4\sqrt{2}-\log(K-1))(K^2-1)).
\end{equation}
We also introduce a simpler estimate of $\eta^*_{K,n}(1)$ from \cite[Theorem 14.8]{avv}
\begin{equation}\label{etastar1simpler}
    \eta^*_{K,n}(1) \le \exp(4K(K+1)\sqrt{K-1}).
\end{equation}

A more rough upper bound for $\eta^*_{K,n}(t)$ can \cite[Theorem 7.47]{vu1} be written as
\begin{equation}\label{roughupperbound}
  \eta^*_{K,n}(t) \le \left\{ \begin{array}{ll}
    \displaystyle \eta^*_{K,n}(1)\lambda_n^{1-\alpha}t^\alpha, & 0 < t \le 1,\\
    \displaystyle \eta^*_{K,n}(1)\lambda_n^{1-\beta}t^{\beta}, & t > 1,
  \end{array} \right.
\end{equation}
where $\alpha = K^{1/(1-n)}$ and $\beta = 1/\alpha$. Furthermore, we can \cite[Lemma 7.50]{vu1} estimate
\begin{equation}\label{upperboundsoflambda}
  \lambda_n^{1-\alpha} \le 2^{1-1/K}K \quad \textnormal{and} \quad \lambda_n^{1-\beta} \le 2^{1-K}K^{-K}.
\end{equation}

\begin{lemma} \label{mypropo}
  Let $K \in (1,2]$, $f \in QC_K(\Rn)$, $f(x)=x$ for $x \in \{ 0,e_1 \}$, $\alpha = K^{1/(1-n)}$ and $\beta = 1/\alpha$. Then
  \[
    \begin{array}{ll}
      \displaystyle \frac{1}{c_3}|x|^\beta \le |f(x)| \le c_3 |x|^\alpha, & \textnormal{if } 0<|x|\le1,\\
      \displaystyle \frac{1}{c_3}|x|^\alpha \le |f(x)| \le c_3 |x|^\beta, & \textnormal{if } |x|>1
    \end{array}
  \]
  for $c_3 = \exp (60 \sqrt{K-1})$.
\end{lemma}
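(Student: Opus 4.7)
The plan relies on lifting the two-point normalization to a three-point one. Since $f \in QC_K(\Rn)$ extends to a homeomorphism of $\overline{\R^n}$ with $f(\infty)=\infty$, the hypotheses $f(0)=0$ and $f(e_1)=e_1$ imply that $f$ fixes all three points $\{0,e_1,\infty\}$, so by the definition of the optimal control function $|f(x)| \le \eta^*_{K,n}(|x|)$ for every $x \in \Rn$. The same applies to $f^{-1}$, which lies in $QC_K(\Rn)$ with the identical normalization.

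For the upper bounds, I would invoke the rough estimate (\ref{roughupperbound}) in the two cases $0 < |x| \le 1$ and $|x| > 1$, which yields $|f(x)| \le \eta^*_{K,n}(1)\lambda_n^{1-\alpha}|x|^\alpha$ and $|f(x)| \le \eta^*_{K,n}(1)\lambda_n^{1-\beta}|x|^\beta$ respectively. I would then estimate each factor in terms of $\sqrt{K-1}$. From (\ref{etastar1simpler}), since $4K(K+1) \le 24$ on $K \in (1,2]$, one has $\log \eta^*_{K,n}(1) \le 24\sqrt{K-1}$. From (\ref{upperboundsoflambda}), together with the elementary inequalities $\log K \le K-1$ and $K-1 \le \sqrt{K-1}$ valid for $K \in (1,2]$, one obtains $\log\lambda_n^{1-\alpha} \le (1+\log 2)\sqrt{K-1}$, while $\lambda_n^{1-\beta} \le 2^{1-K}K^{-K} \le 1$. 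In both regimes the coefficient is therefore at most $\exp(26\sqrt{K-1}) < c_3$.

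For the lower bounds, I would apply the same argument to $f^{-1}$ and invert. Writing $y = f(x)$, the estimate $|x| = |f^{-1}(y)| \le \eta^*_{K,n}(1)\lambda_n^{1-\alpha}|y|^\alpha$ (when $|y| \le 1$) solves to $|y| \ge (|x|/C)^{\beta}$, and similarly $|y| \ge (|x|/C')^{\alpha}$ when $|y| > 1$, where $C,C'$ are the constants just bounded. A case analysis on the size of $|y|$ completes the proof. In the ``easy'' subcases the bound is immediate: for $0 < |x| \le 1$ with $|y| > 1$ one has $|y| > 1 \ge |x|^\beta \ge c_3^{-1}|x|^\beta$; and for $|x| > 1$ with $|y| \ge 1$ the inversion yields $|y| \ge |x|^\alpha/(C')^\alpha$ with $(C')^\alpha \le C' \le \exp(24\sqrt{K-1}) \le c_3$. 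In the remaining subcases ($0<|x|\le 1$, $|y|\le 1$; and $|x|>1$, $|y|<1$) one inverts the first-branch inequality and uses that $|x|^{\beta-\alpha}\ge 1$ when $|x|>1$ to reduce to checking that the factor $C^\beta$ introduced by inversion satisfies $\log C^\beta \le \beta \cdot 26\sqrt{K-1} \le 52\sqrt{K-1}$, since $\beta \le K \le 2$.

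The main obstacle is pinning down the numerical constant $60$. The exponent $\beta$ that appears when inverting the power relations can be as large as $K \le 2$, effectively doubling the exponent in the exponential; keeping the final constant below $60$ requires both the restriction $K \in (1,2]$ (which bounds $\beta$ and allows $K-1 \le \sqrt{K-1}$) and the cleaner bound (\ref{etastar1simpler}) rather than (\ref{etastar1}), whose $-\log(K-1)$ term does not directly fit the $\sqrt{K-1}$ scaling.
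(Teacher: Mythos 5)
Your argument for the upper bounds follows the paper's route: the normalization $f(0)=0$, $f(e_1)=e_1$, $f(\infty)=\infty$ puts $|f(x)| \le \eta^*_{K,n}(|x|)$ by definition of the optimal control function, and the rough bound (\ref{roughupperbound}) combined with (\ref{etastar1simpler}) and (\ref{upperboundsoflambda}) gives the power-law estimate with coefficient $\le c_3$. Your numerical bookkeeping here is fine.

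For the lower bound you diverge from the paper. The paper obtains it from a second application of the quasisymmetry inequality to $f$ itself, with the triple $(a,b,c)=(e_1,x,0)$ in (\ref{qsdefinition}) (the displayed ordered triple $(e_1,0,x)$ in the text appears to be a misprint): this yields $1/|f(x)| \le \eta^*_{K,n}(1/|x|)$, hence $|f(x)| \ge 1/\eta^*_{K,n}(1/|x|)$. Feeding $t=1/|x|$ into (\ref{roughupperbound}) then produces the lower power-law bound with the \emph{same} constants $c_1, c_2$ that appear in the upper bound, and $c_3 = \max\{c_1,c_2\}$ works symmetrically for all four inequalities, with no case analysis on $|f(x)|$. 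You instead apply the upper bound to $f^{-1}$ and invert the power relation. This is sound, but it costs a factor $C^\beta$ with $\beta$ as large as $K\le 2$, roughly doubling the exponent of $\sqrt{K-1}$, and forces a case analysis on whether $|f(x)|$ lies above or below $1$. You correctly flag this and verify that the slack in the constant $60$ absorbs the loss, so the proof goes through; but the paper's route is both numerically tighter and structurally simpler, since the two-sided estimate $1/\eta^*_{K,n}(1/|x|) \le |f(x)| \le \eta^*_{K,n}(|x|)$ depends on $|x|$ alone.
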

\begin{proof}
  Since $f$ is quasiconformal it is also $\eta^*_{K,n}$-quasisymmetric and by choosing $a=x$, $b=0$ and $c=e_1$ in (\ref{qsdefinition}) we have $|f(x)| \le \eta^*_{K,n}(|x|)$. Similarly, selection $(a,b,c) = (e_1,0,x)$ in (\ref{qsdefinition}) gives $|f(x)| \ge 1/\eta^*_{K,n}(1/|x|)$. Therefore
  \begin{equation}\label{etabounds}
    \frac{1}{\eta^*_{K,n}(1/|x|)} \le |f(x)| \le \eta^*_{K,n}(|x|)
  \end{equation}
  for all $x \in \overline{\R}^n \setminus \{ 0 \}$. Therefore by (\ref{roughupperbound})
  \[\begin{array}{ll}
    \displaystyle \frac{1}{c_2}|x|^\beta \le |f(x)| \le c_1 |x|^\alpha, & \textnormal{if } 0<|x|<1,\\
    \displaystyle \frac{1}{\eta^*_{K,n}(1)} \le |f(x)| \le \eta^*_{K,n}(1), & \textnormal{if } |x| = 1,\\
    \displaystyle \frac{1}{c_1}|x|^\alpha \le |f(x)| \le c_2 |x|^\beta, & \textnormal{if } |x|>1,\\
  \end{array}\]
  for $c_1 = \eta^*_{K,n}(1)\lambda_n^{1-\alpha}$ and $c_2= \eta^*_{K,n}(1)\lambda_n^{1-\beta}$. We can estimate $\max \{ c_1,c_2 \} \le c_3 = \exp (60 \sqrt{K-1})$ for $K \in (1,2]$.
\end{proof}

We will consider $K$- quasiconformal mapping $f \colon \overline{\R}^n \to \overline{\R}^n$ with $f(y) = y$ for $y \in \{ 0,e_1,\infty \}$ and our goal is to find an upper bound for $|f(x)-x|$ or similar quantities in terms of $K$ and $n$, when $|x| \le 2$ and $K > 1$ is small enough.

Fix $x \in \Rn \setminus \{ 0,e_1 \}$ and assume that $|x|-\e \le |f(x)| \le |x|+\e$ and $|x-e_1|-\e \le |f(x)-e_1| \le |x-e_1|+\e$ for $\e \in (0,\min \{ |x|,|x-e_1| \})$. Now
\begin{equation}\label{diamestimate}
  |f(x)-x| \le \frac{\diam(A)}{2},
\end{equation}
where
\[
  A = A(0,|x|+\e,|x|-\e) \cap A(e_1,|x-e_1|+\e,|x-e_1|-\e) \cap \{ z \in R^3 \colon z_3=0 \}
\]
and
\[
  A(z,R,r) = B^n(z,R) \setminus \overline{B}^n(z,r).
\]
We will now find upper bounds for $\diam (A)$.
\begin{figure}[!ht]
  \begin{center}
    \includegraphics[width=8cm]{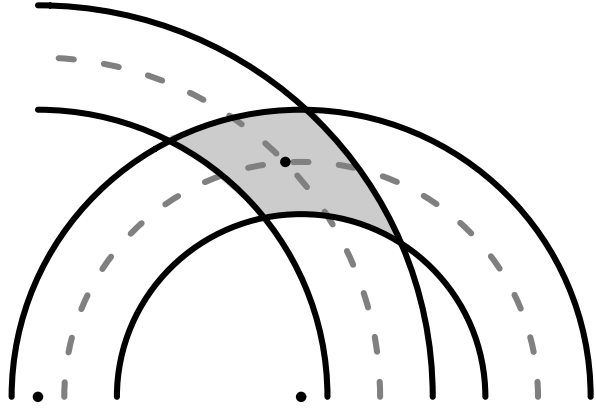}
    \caption{The set $A$.}
  \end{center}
\end{figure}

\begin{theorem}\label{bound3}
  For $\e < 1$ and $A$ and $x$ as in (\ref{diamestimate})
  \[
    \diam(A) \le \sqrt{\e}4(\min \{ |x|,|x-e_1| \} +1).
  \]
\end{theorem}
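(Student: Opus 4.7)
The plan is to regard $A$ as a subset of the plane $\{z_3=0\}$ and parameterize it by $(\rho_1,\rho_2):=(|z|,|z-e_1|)$, which range in $[r_1-\e,r_1+\e]$ and $[r_2-\e,r_2+\e]$ where $r_1:=|x|$ and $r_2:=|x-e_1|$. The identity $|z|^2-|z-e_1|^2=2z_1-1$ determines $z_1=(1+\rho_1^2-\rho_2^2)/2$ and then $z_2^2=\rho_1^2-z_1^2$. Without loss of generality I take $r_1\le r_2$, so $\min\{r_1,r_2\}=r_1$; by the triangle inequality $r_2\le r_1+1$. By reflection symmetry in $z_2\mapsto -z_2$, it suffices to estimate the diameter of the component of $A$ with $z_2\ge 0$ (the one containing $x$), where $z_2=\sqrt{\rho_1^2-z_1^2}$ is a smooth function of $(\rho_1,\rho_2)$.

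For two points $z,z'$ in this component, the bound on $|z_1-z_1'|$ is immediate: since each $\rho_i^2$ varies in an interval of length $4r_i\e$, one has
\[
|z_1-z_1'|\le \frac{|\rho_1^2-\rho_1'^2|+|\rho_2^2-\rho_2'^2|}{2}\le 2(r_1+r_2)\e\le 4(\min+1)\e,
\]
which is $O(\e)$.

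The heart of the argument is controlling $|z_2-z_2'|$. Applying the key inequality $|\sqrt a-\sqrt b|\le\sqrt{|a-b|}$ to $z_2,z_2'\ge 0$ gives $|z_2-z_2'|\le\sqrt{|z_2^2-z_2'^2|}$, and writing
\[
z_2^2-z_2'^2=(\rho_1^2-\rho_1'^2)-(z_1-z_1')(z_1+z_1'),
\]
then bounding $|z_1+z_1'|\le 2\rho_1\le 2(r_1+\e)$, $|\rho_1^2-\rho_1'^2|\le 4r_1\e$, and using the previous step, yields $|z_2^2-z_2'^2|\le C\e(\min+1)^2$ for an explicit constant $C$, and hence $|z_2-z_2'|\le\sqrt{C\e}\,(\min+1)$. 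The appearance of $\sqrt\e$ (rather than $\e$) reflects the geometry of a nearly tangential intersection of two circles, where the transverse width of the lens-shaped intersection scales like $\sqrt\e$.

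Finally, combining via Pythagoras, $|z-z'|^2\le (z_1-z_1')^2+(z_2-z_2')^2$, invoking $\e<1$ so that the $O(\e)$ contribution from Step 1 is absorbed into the dominant $O(\sqrt\e)$ term, and using $r_1+r_2\le 2(\min+1)$, one arrives at $\diam A\le 4\sqrt\e(\min+1)$. The main obstacle will be keeping the constant sharp at exactly $4$: a naive Pythagorean combination produces a constant closer to $\sqrt{28}$, so squeezing it down requires either exploiting that the $(z_1-z_1')$ contribution is of lower order than $(z_2-z_2')$, or a more direct geometric analysis of the lens-shaped region. A secondary subtlety is that when the full set $A$ has two disconnected components, the above argument controls only the component containing $x$; this is exactly what is needed in the application to (\ref{diamestimate}), since for $\e$ small the point $f(x)$ lies in the same component as $x$.
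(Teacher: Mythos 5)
Your approach is genuinely different from the paper's. The paper first reduces to the extremal position $x=-s$ on the negative real axis (asserting that $\diam A$ is maximized there for fixed $\min$), then computes $\im y$ for $y=S^1(0,|x|+\e)\cap S^1(e_1,|x-e_1|-\e)$ in one stroke via Heron's formula applied to the triangle $\triangle e_1 0 y$ with semiperimeter $p=|x|+1$, obtaining $\im y = 2\sqrt{|x|(|x|+1)(1-\e)\e}\le 2\sqrt\e(|x|+1)$ and concluding $\diam A\le 2\im y$. Your coordinate parametrization by $(\rho_1,\rho_2)$ avoids the reduction to the extremal position, which is a conceptual gain, but it pays a price in two places.

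First, the constant. You candidly observe that the naive Pythagorean combination yields $\sqrt{28}\approx 5.29$ rather than $4$, and this is a real gap, not a cosmetic one: the statement asserts the bound for all $\e<1$, and your $O(\e)$ horizontal term $(z_1-z_1')^2\le 16(\min+1)^2\e^2$ is only dominated by $16(\min+1)^2\e$ once $\e\le 1/4$. The $z_2$-term alone gives the comfortable $\sqrt{12\e}(\min+1)<4\sqrt\e(\min+1)$, but on $\e\in(1/4,1)$ the combined bound you produce exceeds $4\sqrt\e(\min+1)$. You would need either a sharper handling of the cross term $(z_1-z_1')(z_1+z_1')$ in $z_2^2-z_2'^2$ (it has a sign that can be exploited) or a direct bound on $|z-z'|^2=(z_1-z_1')^2+(z_2-z_2')^2$ that does not treat the two coordinates independently. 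As written the proof does not establish the claimed inequality on the full stated range.

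Second, the reduction to one component. You restrict to the half $z_2\ge 0$ and bound $|z_2-z_2'|$ using $|\sqrt a-\sqrt b|\le\sqrt{|a-b|}$, which requires $z_2,z_2'\ge0$. When $A$ has two reflected components, the diameter of $A$ is generally attained between a point with $z_2>0$ and one with $z_2<0$, and then $|z_2-z_2'|=z_2+|z_2'|$ is not controlled by $\sqrt{|z_2^2-z_2'^2|}$. Your closing remark that the one-component estimate suffices for the application to (\ref{diamestimate}) is plausible, but the theorem as stated bounds $\diam A$ itself, so this does not discharge the obligation. The paper avoids this by working in the extremal configuration, where $A$ is a single lens containing a segment of the real axis and the diameter is attained between $y$ and $\bar y$; if you keep the coordinate approach you should instead bound $2\max z_2$ directly (which your bound on $z_2^2$ already gives, since $\max z_2\le\sqrt{12\e}(\min+1)/2$ would need to be replaced by a one-sided estimate on $z_2^2$ rather than on a difference). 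In short, the idea is sound and more robust than the paper's symmetrization step, but the constant and the connectivity issue both need repair before it proves the theorem as stated.
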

\begin{proof}
  Let us assume $|x| \le |x-e_1|$. Now $\diam (A)$ is maximal when $|x-e_1|/|x|$ is maximal. Therefore we may assume $x = -s$, where $s > 0$. Denote ${y} = S^1(0,|x|+\e) \cap S^1(e_1,|x-e_1|-\e)$. Area of the triangle $\triangle e_10y$ is $(\im y)/2$ and by Heron's formula
  \begin{equation}\label{heron}
    \frac{\im y}{2} = \sqrt{p(p-1)(p-|x|-\e)(p+\e-|x|-1)},
  \end{equation}
  where $p = |x|+1$. By (\ref{heron}) and the assumption $\e < 1$ we have
  \[
    \im y = 2\sqrt{(|x|+1)|x|(1-\e)\e} \le 2\sqrt{\e}(|x|+1)
  \]
  and the assertion follows since $\diam (A) \le 2 \im y$.
\end{proof}

\begin{theorem}\label{bound4}
  Let $A$ be as in (\ref{diamestimate}), $|x| < 2$, $|x-e_1| \le |x|$ and $\measuredangle (1,0,x) \ge \omega > 0$. Then
  \[
    \diam(A) \le \e \left( 1+\frac{70}{\omega} \right)   \]
  for
  \[
    \e < \min \left\{1, \frac{1+|x-e_1|-|x|}{2},\frac{|x|+|x-e_1|-1}{2} \right\}.
  \]
\end{theorem}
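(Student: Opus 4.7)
Plan: I would first exploit the reflection symmetry of $A$ across the real axis to restrict attention to the upper half plane, where $A$ is a curvilinear quadrilateral bounded by four arcs coming from $S^{1}(0,|x|\pm\e)$ and $S^{1}(e_1,|x-e_1|\pm\e)$. The three upper bounds on $\e$ in the hypothesis are exactly what is needed to guarantee that these four circles intersect pairwise in the upper half plane, so that the four ``corners''
\[
y_{\sigma\tau}\in S^{1}(0,|x|+\sigma\e)\cap S^{1}(e_1,|x-e_1|+\tau\e),\quad\sigma,\tau\in\{-1,+1\},
\]
together with their mirror images exhaust the extremal points of $A$, and hence $\diam(A)$ is realized as a distance between two such corners.

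For the horizontal component, the identity
\[
\textnormal{Re}(z-x)=\frac{(|z|^{2}-|x|^{2})-(|z-e_1|^{2}-|x-e_1|^{2})}{2}
\]
applied to $z\in A$ gives $|\textnormal{Re}(z-x)|\le\e(|x|+|x-e_1|)+O(\e^{2})$, so the horizontal spread of $A$ is bounded by $4\e|x|<8\e$ thanks to $|x|<2$. For the vertical component I would use the transversality of the two unperturbed circles $S^{1}(0,|x|)$ and $S^{1}(e_1,|x-e_1|)$ at their intersection point $x$: their tangents meet under the angle $\alpha=\measuredangle(0,x,e_1)$, and the law of sines in the triangle $0xe_1$, combined with the hypothesis $\measuredangle(1,0,x)\ge\omega$ and $|x-e_1|\le|x|<2$, yields
\[
\sin\alpha=\frac{\sin\measuredangle(1,0,x)}{|x-e_1|}\ge\frac{\sin\omega}{2}.
\]
The standard transversality estimate for the intersection of two annular strips of width $2\e$ meeting at angle $\alpha$ gives $|y_{\sigma\tau}-x|\le C\e/\sin\alpha$ with a small absolute constant. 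Using $\sin\omega\ge 2\omega/\pi$ on $(0,\pi/2]$ and combining with the horizontal bound produces a diameter estimate of the form $C_{1}\e+C_{2}\e/\omega$, which fits inside $\e(1+70/\omega)$ with considerable room.

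The main obstacle is that the transversality estimate $|y_{\sigma\tau}-x|\approx\e/\sin\alpha$ is only a first-order statement, obtained by linearizing each circle by its tangent line at $x$. The curvature correction is of order $\e^{2}/|x|$ and $\e^{2}/|x-e_1|$, and since $\sin\alpha\le 1$ forces $|x-e_1|\ge\sin\omega$, the second denominator can shrink like $\omega$ and the correction can in principle compete with the leading term as $\omega\to 0$. The three hypotheses $\e<1$, $\e<(1+|x-e_1|-|x|)/2$, $\e<(|x|+|x-e_1|-1)/2$ are precisely what is needed so that these curvature contributions remain dominated by the linear ones; the technical heart of the proof will be to carry out the Heron-type calculation used in Theorem \ref{bound3} explicitly for each corner $y_{\sigma\tau}$, track constants uniformly in $|x|$ and $|x-e_1|$, and verify that after absorbing the second-order errors the final bound still collapses into the clean form $\e(1+70/\omega)$.
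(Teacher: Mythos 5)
There is a genuine gap: what you have written is a plan, not a proof, and you acknowledge this yourself at the end (``the technical heart of the proof will be to carry out the Heron-type calculation \dots explicitly''). In particular you never derive the constant $70$, and the step you flag as problematic --- the curvature correction of order $\e^2/|x-e_1|$ potentially competing with the leading term $\e/\sin\alpha$ as $\omega\to 0$ --- is left unresolved. Asserting that the three upper bounds on $\e$ are ``precisely what is needed'' to dominate those corrections is a hope, not an argument; those bounds are there only to ensure the perturbed circles still intersect (strict triangle inequality), and you have not shown that they tame the second-order terms. There are also smaller uninstantiated claims: that $\diam(A)$ is attained between corners $y_{\sigma\tau}$, and that $\sin\gamma\ge\sin\omega$ (which does hold, but only because $|x-e_1|\le|x|$ forces $\cos\gamma\ge 1/(2|x|)>1/4$, hence $\gamma<\pi/2$; this needs to be said).

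Moreover, your route is genuinely different from the paper's and, as written, harder. The paper does not linearize the circles at $x$ or invoke transversality at the angle $\alpha=\measuredangle(0,x,e_1)$. Instead it tracks the angle at the origin: with $\gamma=\measuredangle(1,0,x)$ and $\delta=\measuredangle(1,0,y)$, the law of cosines in $\triangle(0,1,x)$ and $\triangle(0,1,y)$ gives exact rational formulas for $\cos\gamma$ and $\cos\delta$, whose difference factors as
\[
\cos\delta-\cos\gamma=\frac{\e\bigl(|x|(|x|+\e)+|x-1|^2-1\bigr)}{2|x|(|x|+\e)},
\]
with no linearization error at all. The Jordan inequality turns this exact quantity into a bound on $|\gamma-\delta|$, and then $|x-y|\le\e+2|x|\sin\tfrac{|\gamma-\delta|}{2}$ gives the $51\e/\omega$ term; a second, analogous step comparing $y$ with $z\in S^1(|x|+\e)\cap S^1(e_1,|x-e_1|+\e)$ produces the $19\e/\omega$ term, and summing gives $70$. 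The hypotheses $|x|<2$, $|x-e_1|\le|x|$ (which forces $|x|>1/2$ together with $\e<(|x|+|x-e_1|-1)/2$), and $\e<1$ are used only to bound the numerator by $16$ and the denominator below by $1$. This is exactly the bookkeeping your sketch defers, and it is done in a way that never needs the curvature estimate you are worried about. To repair your proposal you would either have to actually carry out the constant-tracking you postponed, or switch to the cosine-difference computation at the origin, which makes the problem disappear.
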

\begin{proof}
  Let us denote by $y$ the intersection of $S^1(|x|+\e)$ and $S^1(e_1,|x-e_1|)$ in the first quadrant. The triangles $\triangle (0,1,x)$ and $\triangle (0,1,y)$ give by the Law of Cosines
  \[
    |x-1|^2 = |x|^2+1-2 |x| \cos \gamma
  \]
  and
  \[
    |y-1|^2 = |y|^2+1-2 |y| \cos \delta,
  \]
  where $\eta$ is the angle $\measuredangle (1,0,x)$ and $\xi$ is the angle $\measuredangle (1,0,y)$. Therefore
  \begin{equation}\label{cosgamma}
    \cos \gamma = \frac{|x|^2+1-|x-1|^2}{2 |x|}
  \end{equation}
  and
  \begin{equation}\label{cosdelta}
    \cos \delta = \frac{|y|^2+1-|y-1|^2}{2 |y|} = \frac{(|x|+\e)^2+1-|x-1|^2}{2 (|x|+\e)}.
  \end{equation}
  By the Jordan inequality
  \[
    |\cos \gamma-\cos \delta| = \cos \delta-\cos \gamma = 2\sin\frac{\delta+\gamma}{2}\sin\frac{\gamma-\delta}{2} \ge \frac{2}{\pi^2}(\gamma+\delta)(\gamma-\delta)
  \]
  and by assumption
  \begin{equation}\label{gamma-delta}
    |\gamma-\delta| \le \frac{\pi^2}{2 \omega} |\cos \gamma-\cos \delta|.
  \end{equation}
  By the triangle inequality, the Jordan inequality, (\ref{gamma-delta}), (\ref{cosgamma}) and (\ref{cosdelta})
  \begin{eqnarray*}
    |x-y| & \le & \e+2|x|\sin \frac{|\gamma-\delta|}{2}\\
    & \le & \e+\frac{2|x|}{\pi}|\gamma-\delta|\\
    & \le & \e+\frac{|x|\pi}{\omega} |\cos \gamma-\cos \delta|\\
    & = & \e+\frac{|x|\pi}{\omega} \frac{\e (1+|x-1|^2+|x|(|x|+\e))}{2|x|(|x|+\e)}\\
    & \le & \e+\frac{\pi}{\omega} \frac{\e(1+3^2+2(2+1))}{2(1/2+0)}\\
    & = & \e+\frac{51 \e}{\omega}.
  \end{eqnarray*}

  Let us denote by $z$ the intersection of $S^1(|x|+\e)$ and $S^1(e_1,|x-e_1|+\e)$ in the first quadrant. If  $\delta > \omega/2$, then we obtain
  \[
    |x-y| \le \frac{|x|\pi}{\omega} \frac{\e ||y-1|^2-|z-1|^2|}{2(|x|+\e)} \le \frac{19 \e}{\omega}.
  \]
  Now
  \[
    \diam(A) \le |x-z| \le |x-y|+|y-z| \le \e+\frac{70\e}{\omega}
  \]
  and the assertion follows.
\end{proof}

\begin{lemma}\label{c3estimate}
  Let $n \ge2$, $K>1$, $\alpha = K^{1/(1-n)}$, $\beta=1/\alpha$ and $c_3 = \exp (60 \sqrt{K-1})$. For $t \in (0,1)$
  \begin{equation}\label{c3estimate1}
    c_3 t^\alpha -t \ge t-\frac{t^\beta}{c_3}
  \end{equation}
  and for $t > 1$
  \begin{equation}\label{c3estimate2}
    c_3 t^\beta -t \ge t-\frac{t^\alpha}{c_3}.
  \end{equation}
\end{lemma}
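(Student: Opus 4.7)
The plan is to reduce both parts of the lemma to a single clean inequality in $c_3$. For (\ref{c3estimate1}), I would rewrite the claim as $c_3 t^\alpha + t^\beta/c_3 \ge 2t$ and, dividing by $t > 0$, equivalently as
$$c_3\,t^{\alpha-1} + \frac{t^{\beta-1}}{c_3} \ge 2, \qquad t \in (0,1).$$
I then split the left-hand side as $(c_3 + 1/c_3 - 2) + c_3(t^{\alpha-1}-1) + (t^{\beta-1}-1)/c_3$. The first summand is non-negative by AM-GM on $\{c_3, 1/c_3\}$, so after absorbing it, the problem reduces to showing
$$c_3^2(t^{\alpha-1} - 1) \ge 1 - t^{\beta-1}.$$

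Substituting $t = e^{-s}$ with $s \ge 0$ and using the one-sided bounds $e^x-1 \ge x$ and $1 - e^{-y} \le y$ (valid for $x,y \ge 0$), the previous inequality follows from the linear estimate $c_3^2(1-\alpha)s \ge (\beta-1)s$. The identity $\alpha\beta = 1$ now gives $1 - \alpha = (\beta-1)/\beta$, whence $(\beta-1)/(1-\alpha) = \beta$, so the whole claim reduces to the clean condition
$$c_3^2 \ge \beta.$$
Since $\beta = K^{1/(n-1)} \le K$, it suffices to verify $\exp(120\sqrt{K-1}) \ge K$, that is, $120\sqrt{K-1} \ge \log K$. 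I would appeal to the elementary calculus inequality $\log(1+x) \le 2\sqrt{x}$ for $x \ge 0$ (provable by comparing derivatives at the common base point $x=0$), which yields $\log K \le 2\sqrt{K-1} \le 120\sqrt{K-1}$ and closes this case.

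The case (\ref{c3estimate2}) is handled by the same program with the roles of $\alpha$ and $\beta$ swapped and the substitution $t = e^s$, $s \ge 0$; the analogous reduction gives $c_3^2(\beta-1) \ge 1-\alpha$, i.e.\ $c_3^2 \ge 1/\beta$, which is immediate from $c_3 \ge 1$ and $\beta \ge 1$. The main obstacle I anticipate is identifying the correct reduction in the first place: a direct AM-GM gives only $c_3 t^\alpha + t^\beta/c_3 \ge 2 t^{(\alpha+\beta)/2}$, and since $\alpha + \beta \ge 2$ this geometric mean is bounded by $t$ on $(0,1)$, producing a bound that is too weak. Peeling off the $(c_3 + 1/c_3 - 2)$ contribution first and then applying the \emph{linear} one-sided exponential bounds is what makes the estimate sensitive to the actual size of $c_3$ and collapses the whole lemma to the single algebraic condition $c_3^2 \ge \beta$.
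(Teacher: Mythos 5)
Your proof is correct, and it follows a genuinely different route from the paper's. The paper handles (\ref{c3estimate1}) by writing $f(t)=c_3t^{\alpha}+c_3^{-1}t^{1/\alpha}-2t$, noting $f(0+)=0$ and $f'(1-)\ge 0$ by AM--GM, and then establishing $f''\le 0$ on $(0,1)$ so that $f'$ is decreasing, hence nonnegative; the concavity condition $f''\le 0$ reduces (for $t<1$) to the algebraic requirement $\alpha^3 c_3^2\ge 1$, verified via a Taylor expansion of $e^{40(n-1)u}$. Part (\ref{c3estimate2}) is handled similarly with a single differentiation. Your argument replaces the calculus entirely: you peel off the AM--GM surplus $c_3+1/c_3-2\ge 0$, and then apply the one-sided linear bounds $e^x-1\ge x$ and $1-e^{-y}\le y$ after the substitution $t=e^{\mp s}$, collapsing both parts to the same template and to the single condition $c_3^2\ge\beta$. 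This is noteworthy: $\beta=1/\alpha$, so your sufficient condition $c_3^2\ge\beta$ is strictly weaker than the paper's $c_3^2\ge\beta^3$; your approach not only avoids second derivatives but also exposes slack in the constant $60$ that the paper's concavity argument obscures. Your auxiliary inequality $\log(1+x)\le 2\sqrt{x}$ (checked by comparing derivatives at $x=0$ and using $1+x\ge 2\sqrt{x}$) is correct, and the swap $\alpha\leftrightarrow\beta$ with $t=e^{s}$ for (\ref{c3estimate2}) reduces cleanly to $c_3^2\ge 1/\beta$, which is trivial. One small presentational point: "after absorbing it" should be read as "since the AM--GM term is nonnegative, it suffices to bound the remaining two terms," which is exactly what you then do.
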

\begin{proof}
  To prove (\ref{c3estimate1}) it is sufficient to prove that $f(t)\ge 0$, where $f(t)=c_3t^{\alpha}+\frac{1}{c_3}t^{1/\alpha}-2t$ and $0<t<1$. Because
  $$
    \lim_{t\rightarrow 0+}f(t)=0,
  $$
  it is sufficient to prove $f'(t)\ge 0$ for $0<t<1$, i.e.
  \begin{equation}\label{first_derive}
    \alpha c_3t^{\alpha-1}+\frac{1}{\alpha c_3}t^{(1/\alpha)-1}-2\ge 0.
  \end{equation}
  Using inequality between arithmetic and geometric means, we can conclude that
  $$
    \alpha c_3+\frac{1}{\alpha c_3}\ge 2
  $$
  holds. In other words,
  $$
    \lim_{t\rightarrow 1-}f'(t)=\alpha c_3+\frac{1}{\alpha c_3}-2\ge 0.
  $$
  By this reason, to prove inequality (\ref{first_derive}) it is sufficient to prove that $f''(t)\le 0$ for $0<t<1$ i.e.
  $$
    \alpha(\alpha-1)c_3t^{\alpha-2}+\frac{\frac{1}{\alpha}-1}{\alpha c_3}t^{(1/\alpha)-2}\le 0,
  $$
  or equivalently
  $$
    t^{\frac{1}{\alpha}-\alpha}\le \alpha^3c_3^2.
  $$
  The last inequality follows from
  $$
    t^{\frac{1}{\alpha}-\alpha}<1\le \alpha^3c_3^2.
  $$
  The first inequality holds because $0<t<1$ and $\frac 1\alpha-\alpha>0$ (because $0<\alpha<1$). Now we prove $\alpha^3c_3^2\ge 1$ to complete proof. This is same inequality as
  $$
    K^{3/(1-n)}e^{120\sqrt{K-1}}\ge 1,
  $$
  or equivalently
  $$
    e^{40(n-1)u}\ge u^2+1
  $$
  for $u = \sqrt{K-1}$. Because $u\ge 0$, using Taylor series for $e^x$ we can conclude that
  $$
    e^{40(n-1)u}\ge 1+40(n-1)u+\frac{(40(n-1))^2u^2}{2}\ge 1+u^2.
  $$

  The inequality (\ref{c3estimate2}) is equivalent to
  \begin{equation}
  \label{ac3}
    c_3t^{(1/\alpha)-1}+\frac{t^{\alpha-1}}{c_3}\ge 2.
  \end{equation}
  Inequality (\ref{ac3}) holds for $t=1$. To prove inequality (\ref{ac3}) for $t>1$ it is sufficient to prove that derivation of the left side of inequality is nonnegative. We have following sequence of equivalent formulas:
  $$
    c_3\left(\frac{1}{\alpha}-1\right)t^{\frac{1}{\alpha}-2}+\frac{\alpha-1}{c_3}\,t^{\alpha-2}\ge 0
  $$
  $$
    \frac{(1-\alpha)c_3}{\alpha}\,t^{\frac{1}{\alpha}-2}\ge \frac{1-\alpha}{c_3}\,t^{\alpha-2}
  $$
  $$
    t^{\frac{1}{\alpha}-\alpha}\ge\frac{\alpha}{c_3^2}.
  $$
  The last inequality is true because
  $$
    t^{\frac{1}{\alpha}-\alpha}\ge 1\ge\frac{\alpha}{c_3^2}
  $$
  and the assertion follows.
\end{proof}

\begin{lemma}\label{epsilonestimate}
  Let $\e > 0$. Then
  \[
    |x|-\e \le |f(x)| \le |x|+\e
  \]
  for
  \[
    1 < K \le \max \left\{ \left( \frac{\log (\e+1)}{60} \right)^2+1 ,2 \right\}.
  \]
\end{lemma}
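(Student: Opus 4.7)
The plan is to reduce the stated hypothesis on $K$ to the concise inequality $c_3 \le 1+\e$, where $c_3 = \exp(60\sqrt{K-1})$ is the constant from Lemma \ref{mypropo}, and then to apply Lemmas \ref{mypropo} and \ref{c3estimate} directly. The hypothesis $K-1 \le (\log(\e+1)/60)^2$ is equivalent to $60\sqrt{K-1} \le \log(1+\e)$, hence to $c_3 \le 1+\e$, i.e.\ $c_3 - 1 \le \e$. The clause $\max\{\cdot,2\}$ simply accounts for the range $K \in (1,2]$ on which Lemma \ref{mypropo} applies.

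Consider first $0 < |x| \le 1$. Lemma \ref{mypropo} yields
\[
  \frac{|x|^\beta}{c_3}\;\le\;|f(x)|\;\le\;c_3\,|x|^\alpha,
\]
with $\alpha = K^{1/(1-n)} \in (0,1)$ and $\beta = 1/\alpha$. Lemma \ref{c3estimate} says that on $(0,1]$ the upper excess $c_3 t^\alpha - t$ dominates the lower deficit $t - t^\beta/c_3$, so proving the two-sided inequality $|\,|f(x)|-|x|\,|\le\e$ reduces to showing $c_3|x|^\alpha - |x| \le \e$.

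Set $g(t) = c_3 t^\alpha - t$. Since $\alpha < 1$, the derivative $g'(t) = c_3\alpha t^{\alpha-1}-1$ is a decreasing function of $t$, so its minimum on $(0,1]$ occurs at $t=1$. One checks that $c_3\alpha \ge 1$ on $K\in(1,2]$, $n\ge 2$: this is equivalent to $60(n-1)\sqrt{K-1}\ge \log K$, which follows from the elementary chain $\log K \le K-1 \le 60(n-1)\sqrt{K-1}$ valid on the given range. Hence $g$ is non-decreasing on $(0,1]$, so
\[
  c_3|x|^\alpha - |x| \;\le\; g(1) \;=\; c_3 - 1 \;\le\; \e,
\]
as required.

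The main obstacle I foresee is the case $|x| > 1$. There the analogous upper bound $|f(x)|\le c_3|x|^\beta$ gives $c_3 t^\beta - t$, which is increasing in $t>1$ (because $c_3\beta > 1$) and unbounded as $t\to\infty$; hence the conclusion cannot hold literally for all $x$. I read the lemma as implicitly applying on the bounded range of $|x|$ used in the subsequent Theorems \ref{bound3} and \ref{bound4} (where $|x|,|x-e_1|\le 2$); on such a fixed bounded range $(0,M]$ the same monotonicity argument goes through, with $g(M)=c_3 M^\beta - M$ replacing $c_3-1$ and absorbed into the condition on $K$ by making $c_3$ closer to $1$. If the lemma is to hold as written with no restriction on $|x|$, one must restrict attention to $|x|\le 1$, which is sufficient for the planned applications via the symmetric treatment of $|x-e_1|$.
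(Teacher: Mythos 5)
Your treatment of $0 < |x| \le 1$ is correct and is essentially the paper's: both reduce via Lemma \ref{c3estimate} to the one-sided bound $c_3|x|^\alpha - |x| \le c_3 - 1 \le \e$, the paper by the chain $c_3|x|^\alpha - |x| \le c_3|x|^{1/K} - |x| \le c_3 - 1$, you by the slightly cleaner observation that $g(t)=c_3t^\alpha - t$ is non-decreasing on $(0,1]$ once $c_3\alpha \ge 1$. Reducing the hypothesis on $K$ to $c_3 - 1 \le \e$ is also exactly how the paper proceeds.

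Two issues, however. First, the $\max\{\cdot,2\}$ in the statement is a misprint for $\min$: the paper's own proof concludes with $K \le \min\bigl\{(\log(\e+1)/60)^2+1,\,2,\,(\log(\e/2+1)/60)^{2/3}+1\bigr\}$, and the role you assign to the ``$2$'' (keeping $K$ inside the range of Lemma \ref{mypropo}) is precisely what a $\min$, not a $\max$, accomplishes. You implicitly argue with the correct interpretation but do not flag the misprint, and your written rationalization of the $\max$ is not tenable.

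Second, and more substantively, your proof only covers $|x|\le 1$, whereas the paper also treats the range $1<|x|<2$. You are right that a bound on $|x|$ must be in force --- the paper's standing hypothesis, stated just before (\ref{diamestimate}), is $|x|\le 2$, and without some such bound $c_3 t^\beta - t$ is unbounded --- but the lemma is then applied in Theorems \ref{bound3} and \ref{bound4} with $|x|$ and $|x-e_1|$ both potentially in $(1,2)$, so restricting to $|x|\le 1$ proves strictly less than what is needed. The paper handles $1<|x|<2$ separately, estimating $c_3|x|^\beta - |x| \le 2\bigl(\exp(60\sqrt{K-1}+(K-1)\log|x|)-1\bigr)$ and extracting the additional constraint (\ref{Kupperbound3}). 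To match the paper you would need to carry out that case, for instance by your own monotonicity device applied to $g(t)=c_3t^\beta - t$ on $[1,2]$, with $g(2)=c_3 2^\beta - 2$ absorbed into the $K$-threshold as you suggest at the end.
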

\begin{proof}
  Let us denote $l(x) = c_3^{-1} \max \{ |x|^\alpha,|x|^\beta \}$ and $u(x) = c_3 \max \{ |x|^\alpha,|x|^\beta \}$.

  We will first consider the case $0 < |x| < 1$. By Lemma \ref{c3estimate}
  \begin{eqnarray*}
    \max \{ u(x)-|x|,|x|-l(x) \} & = & \max \left\{ c_3 |x|^\alpha-|x|,|x|-\frac{1}{c_3}|x|^\beta \right\}\\
     & = & c_3 |x|^\alpha-|x|\\
    & \le & \exp(60\sqrt{K-1}) |x|^\alpha - |x|\\
    & \le & \exp(60\sqrt{K-1}) |x|^{1/K} - |x|\\
    & \le & \exp(60\sqrt{K-1})-1.
  \end{eqnarray*}
  Now $\exp(60\sqrt{K-1})-1 \le \e$ is equivalent to
  \begin{equation}\label{Kupperbound1}
    K \le \left( \frac{\log (\e+1)}{60} \right)^2+1.
  \end{equation}

  If $|x| = 1$, then
  \[
    \max \{ u(x)-|x|,|x|-l(x) \} = c_3-1
  \]
  and therefore we want $\exp(60\sqrt{K-1})-1 \le \e$ for $K \in (1,2]$, which is equivalent to
  \begin{equation}\label{Kupperbound2}
    K \le \left( \frac{\log (\e+1)}{60} \right)^2+1.
  \end{equation}

  Let us first consider the case $1 < |x| < 2$. By Lemma \ref{c3estimate}
  \begin{eqnarray*}
    \max \{ u(x)-|x|,|x|-l(x) \} & = & \max \left\{ c_3 |x|^\beta-|x|,|x|-\frac{1}{c_3}|x|^\alpha \right\}\\
    & = & c_3 |x|^\beta-|x|\\
    & \le & \exp(60\sqrt{K-1}) |x|^\beta - |x|\\
    & \le & \exp(60\sqrt{K-1}) |x|^K - |x|\\
    & \le & |x|(\exp(60\sqrt{K-1})|x|^{K-1}-1)\\
    & \le & 2(\exp(60\sqrt{K-1}+(K-1)\log |x|)-1)\\
    & \le & 2(\exp(60(K-1)^{3/2}-1).
  \end{eqnarray*}
  Now $2(\exp(60(K-1)^{3/2}-1) \le \e$ is equivalent to
  \begin{equation}\label{Kupperbound3}
    K \le \left( \frac{\log (\e/2+1)}{60} \right)^{2/3}+1.
  \end{equation}
  By combining (\ref{Kupperbound1}), (\ref{Kupperbound2}) and (\ref{Kupperbound3}) we have
  \[
  |x|-\e \le |f(x)| \le |x|+\e
  \]
  for
  \[
    K \le \min \left\{ \left( \frac{\log (\e+1)}{60} \right)^2+1,2,\left( \frac{\log (\e/2+1)}{60} \right)^{2/3}+1 \right\} = \left( \frac{\log (\e+1)}{60} \right)^2+1
  \]
  and the assertion follows.
\end{proof}

\comment{
\begin{theorem}
  Let $\e > 0$, $f \in QC_K(\R^3)$, $f(z) = z$ for $z \in \{ 0,e_1 \}$ and
  \[
    1 < K \le \max \left\{ \left( \frac{\log (\e+1)}{60} \right)^2+1 ,2 \right\}.
  \]
  Then
  \[
    |p(f(x))-p(x)| \le 12 \sqrt{\e}
  \]
  for $x \in B^3(2)$ and $p(x) = p((x_1,x_2,x_3)) = (x_1,\sqrt{x_2^2+x_3^2},0)$.
\end{theorem}
\begin{proof}
  The assertion follows by Lemma \ref{epsilonestimate} and Theorem \ref{bound3}.
\end{proof}
}

\begin{lemma}\label{logestimate}
  For $0 < \alpha < 1$, $c > 1$ and $t > 0$
  \[
    \log(1+c \max \{ t^\alpha,t^{1/\alpha} \}) \le \left\{ \begin{array}{ll} \frac{c}{\alpha} \log^\alpha (1+t), & 0<t<1,\\ \frac{c}{\alpha} \log (1+t), & t \ge 1. \end{array} \right.
  \]
\end{lemma}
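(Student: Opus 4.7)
My plan is to split the argument by the cases $0 < t \le 1$ and $t \ge 1$ according to which of $t^\alpha$ and $t^{1/\alpha}$ realises the maximum: since $0 < \alpha < 1 < 1/\alpha$, one has $\max\{t^\alpha,\, t^{1/\alpha}\} = t^\alpha$ on $(0,1]$ and $= t^{1/\alpha}$ on $[1,\infty)$. The first step in each case is to pull the constant $c > 1$ out of the logarithm via the standard inequality $\log(1+cy) \le c\log(1+y)$, valid for $c \ge 1$ and $y \ge 0$; this follows by noting that both sides vanish at $y=0$ and that the derivative of the right minus the left equals $c(c-1)y/[(1+y)(1+cy)] \ge 0$. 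After this reduction it suffices to prove the two $c$-free inequalities
\[
(\mathrm{A})\quad \log(1+t^\alpha) \le \frac{1}{\alpha}\log^\alpha(1+t) \quad (0 < t \le 1), \qquad (\mathrm{B})\quad \log(1+t^{1/\alpha}) \le \frac{1}{\alpha}\log(1+t) \quad (t \ge 1).
\]

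Inequality (B) is the easier one: on exponentiating and substituting $s = t^{1/\alpha} \ge 1$, it becomes $(1+s)^\alpha \le 1 + s^\alpha$, which is the classical subadditivity of $s \mapsto s^\alpha$ on $[0,\infty)$ for $\alpha \in (0,1]$ (both sides agree at $s=0$; the derivative of the difference, $\alpha[s^{\alpha-1} - (1+s)^{\alpha-1}]$, is nonnegative since $\alpha-1 < 0$ means the smaller base produces the larger value). For (A), my plan is to substitute $u = \log(1+t) \in (0, \log 2]$ and rewrite the inequality as $g(u) \ge 0$, where $g(u) := u^\alpha/\alpha - \log(1+(e^u-1)^\alpha)$. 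Since $g(0) = 0$, it suffices to show $g'(u) > 0$ on $(0, \log 2]$. A short computation rewrites this condition as
\[
\left(\frac{u}{e^u-1}\right)^{\alpha-1}[1+(e^u-1)^\alpha] > \alpha e^u,
\]
and because $u/(e^u-1) \le 1$ together with $\alpha - 1 < 0$ makes the prefactor $\ge 1$, it remains to prove $1+s^\alpha > \alpha(1+s)$ for $s := e^u - 1 \in [0,1]$. This last inequality is immediate from $s^\alpha \ge s$ on $[0,1]$ (the concave function $x \mapsto x^\alpha$ lies above its chord from $(0,0)$ to $(1,1)$) combined with $\alpha < 1$.

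The main obstacle is inequality (A); indeed, for unrestricted $t > 0$ it is eventually false, since the left side grows like $\alpha \log t$ while the right grows only like $(\log t)^\alpha$. Hence the proof must genuinely exploit the hypothesis $t \le 1$. The substitution $u = \log(1+t)$ isolates that hypothesis cleanly as $u \le \log 2$, which in turn forces $s = e^u - 1 \le 1$---precisely the range where $s^\alpha \ge s$, which is exactly what is needed to close the estimate.
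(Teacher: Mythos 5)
Your proof is correct but organizes the argument differently from the paper's. You begin by removing the constant $c$ altogether via $\log(1+cy)\le c\log(1+y)$ (valid for $c\ge 1$), leaving two $c$-free inequalities; the paper carries $c$ through. For $t\ge 1$ the paper chains $(1+t)^{c/\alpha}\ge(1+ct)^{1/\alpha}\ge 1+c^{1/\alpha}t^{1/\alpha}\ge 1+ct^{1/\alpha}$, which is Bernoulli's inequality followed by superadditivity of $x\mapsto x^{1/\alpha}$; your $c$-extraction step is Bernoulli in disguise and your subadditivity $(1+s)^\alpha\le 1+s^\alpha$ is the same power-function estimate, so on this range the two arguments are essentially equivalent modulo reordering. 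The genuine divergence is on $0<t<1$: the paper differentiates $f(t)=\log(1+ct^\alpha)-\tfrac{c}{\alpha}\log^\alpha(1+t)$ directly in $t$, invokes $\log(1+t)\le t$ and $ct^\alpha\ge t$ (the latter being where $c>1$ is used), and concludes $f'(t)\le 0$ together with $f(0^+)=0$; you instead first discard $c$, substitute $u=\log(1+t)$, and differentiate the $c$-free $g(u)=u^\alpha/\alpha-\log(1+(e^u-1)^\alpha)$, which makes the restriction $t\le 1$ appear transparently as $s=e^u-1\le 1$, exactly the range where your closing chord estimate $s^\alpha\ge s$ holds. Both monotonicity arguments are sound; yours is arguably cleaner because $c$ has already been eliminated before any calculus enters. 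One incidental remark: the paper writes ``therefore $f(t)$ is increasing'' right after establishing $f'(t)\le 0$; this should read ``decreasing,'' though the conclusion $f(t)\le f(0)=0$ that it then draws is the correct one.
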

\begin{proof}
  Let us first assume $t \ge 1$. Then $\max \{ t^\alpha,t^{1/\alpha} \} = t^{1/\alpha}$ and by the generalized Bernoulli inequality
  \[
    (1+t)^{c/\alpha} \ge (1+ct)^{1/\alpha} \ge 1+ c^{1/\alpha} t^{1/\alpha} \ge 1+ c t^{1/\alpha}
  \]
  implying $\log(1+c t^{1/\alpha}) \le c/\alpha \log (1+t)$.

  Let us then assume $0 < t < 1$. Now $\max \{ t^\alpha,t^{1/\alpha} \} = t^\alpha$, we will show that function
  \[
    f(t) = \log (1+c t^\alpha)-\frac{c}{\alpha}\log^\alpha (1+t)
  \]
  is nonpositive. We easily obtain
  \begin{equation}\label{derivativeoff}
    f'(t) = \frac{\alpha c t^{\alpha-1}}{1+ct^\alpha} -\frac{c \log^{\alpha-1}(1+t)}{1+t}.
  \end{equation}
  Since $\alpha-1 < 0$ and $\log(1+t) \le t$ we have
  \begin{equation}\label{tandlogt}
    t^{\alpha-1} \le \log^{\alpha-1}(1+t).
  \end{equation}
  By assumptions $c > 1 \ge t^{1-\alpha}$ and therefore
  \begin{equation}\label{ctandt}
    c t^\alpha \ge t.
  \end{equation}
  By (\ref{derivativeoff}), (\ref{tandlogt}) and (\ref{ctandt}) $f'(t) \le 0$ is equivalent to $(1-\alpha)(1+t) \ge 0$ and therefore $f(t)$ is increasing. Now we have $f(t) \le f(0) = 0$ and the assertion follows.
\end{proof}

\begin{theorem}
  Let $G = \Rn \setminus \{ 0 \}$, $f \in QC_K$ and $f(0) = 0$. There exists $c(K)$ such that
  \[
    j_G(f(x),f(y)) \le c(K) \max \{ j_G(x,y)^\alpha , j_G(x,y) \},
  \]
  where $\alpha = K^{1/(1-n)}$, and $c(K) \to 1$ as $K \to 1$.
\end{theorem}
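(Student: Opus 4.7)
The strategy is to exploit the quasisymmetry of $f$ at the fixed point $0$, convert the resulting Euclidean estimate into a $j_G$-estimate, and then use Lemma \ref{logestimate} to pull out the correct exponent.

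First I would carry out the quasisymmetry step: since $f\colon\mathbb R^n\to\mathbb R^n$ is $K$-quasiconformal with $f(0)=0$, it is $\eta_{K,n}$-quasisymmetric. Assuming without loss of generality that $|y|\le|x|$, I would apply (\ref{qsdefinition}) with $(a,b,c)=(x,0,y)$ when $|f(y)|\le|f(x)|$ and with $(a,b,c)=(y,0,x)$ otherwise; in the latter case the elementary inequality $|x-y|/|x|\le|x-y|/|y|$ combined with monotonicity of $\eta_{K,n}$ gives the right direction. In both cases the conclusion is
\[
\frac{|f(x)-f(y)|}{\min\{|f(x)|,|f(y)|\}}\le\eta_{K,n}\!\left(\frac{|x-y|}{\min\{|x|,|y|\}}\right).
\]

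Next, since $\partial G=\{0\}$, we have $d(z,\partial G)=|z|$ for all $z\in G$, so writing $r=|x-y|/\min\{|x|,|y|\}=e^{j_G(x,y)}-1$ the previous inequality immediately becomes
\[
j_G(f(x),f(y))\le\log\!\bigl(1+\eta_{K,n}(r)\bigr).
\]
To finish, I would use a distortion estimate for $\eta_{K,n}$ of the form
\[
\eta_{K,n}(t)\le C(K,n)\,\max\{t^\alpha,t^{1/\alpha}\},\qquad t>0,
\]
with $C(K,n)>1$ and $C(K,n)\to 1$ as $K\to 1$. Such a bound follows from (\ref{roughupperbound}) together with the fact that $\eta_{K,n}(1)\to 1$ and $\lambda_n^{1-\alpha}\to 1$ as $K\to 1$ (cf.\ Lemma \ref{cgqm747}). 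Feeding this into the previous display and applying Lemma \ref{logestimate} with $c=C(K,n)$ yields
\[
j_G(f(x),f(y))\le\frac{C(K,n)}{\alpha}\max\{\log^\alpha(1+r),\log(1+r)\}=\frac{C(K,n)}{\alpha}\max\{j_G(x,y)^\alpha,j_G(x,y)\},
\]
so the statement holds with $c(K)=C(K,n)/\alpha$; since both $\alpha\to 1$ and $C(K,n)\to 1$ as $K\to 1$, so does $c(K)$.

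The main obstacle is the last step: Steps 1 and 2 are essentially formal, but to conclude $c(K)\to 1$ (rather than merely $c(K)$ bounded) one must track the constants in the standard bounds for $\eta_{K,n}$ sharply enough that $C(K,n)\to 1$. If a quantitative estimate of this form is inconvenient to extract directly, an alternative is to pre-compose with a scaling so as to reduce to a map fixing both $0$ and $e_1$ and then invoke Lemma \ref{mypropo}, where the explicit constant $c_3=\exp(60\sqrt{K-1})$ manifestly tends to $1$ as $K\to 1$.
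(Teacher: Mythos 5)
Your proposal is correct and takes essentially the same approach as the paper: quasisymmetry at the fixed point $0$ gives $\frac{|f(x)-f(y)|}{\min\{|f(x)|,|f(y)|\}}\le\eta^*_{K,n}\big(\frac{|x-y|}{\min\{|x|,|y|\}}\big)$, one identifies $r=e^{j_G(x,y)}-1$, bounds $\eta^*_{K,n}$ via \eqref{roughupperbound} (with the constant $c_3=\exp(60\sqrt{K-1})$ from Lemma \ref{mypropo}), and closes with Lemma \ref{logestimate}. The paper instead normalizes $x=e_1$, $|y|\ge 1$ (and implicitly rescales so $|f(e_1)|=1$) before applying quasisymmetry, which is a cosmetic variant of your more direct two-case argument on $\min\{|f(x)|,|f(y)|\}$; both give the identical intermediate inequality and the same $c(K)=c_3/\alpha$.
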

\begin{proof}
  By symmetry we may assume $x = e_1$ and $|y| \ge 1$. Now
  \[
    \frac{|f(y)-f(e_1)|}{|e_1|} = \frac{|f(y)-f(x)|}{|f(0)-f(e_1)|} \le \eta \left( \frac{|x-y|}{|0-e_1|} \right) = \eta (|x-y|)
  \]
  and
  \[
    \frac{|f(y)-f(e_1)|}{|f(y)|} = \frac{|f(y)-f(x)|}{|f(y)-f(0)|} \le \eta \left( \frac{|x-y|}{|y-0|} \right) = \eta \left( \frac{|x-y|}{|y|} \right).
  \]
  Therefore by Lemma \ref{logestimate}
  \begin{eqnarray*}
    j(f(x),f(y)) & = & \log \left( 1+\frac{|f(x)-f(y)|}{\min \{ |f(x)|,|f(y)| \}} \right)\\
    & = & \log \left( 1+\max \left\{ \eta(|y-e_1|),\eta \left( \frac{|x-y|}{|y|} \right) \right\} \right)\\
    & = & \log (1+\eta(|y-e_1|))\\
    & \le & \log (1+c_3 \max \{ |y-e_1|^\alpha,|y-e_1|^{1/\alpha} \})\\
    & \le & \left\{ \begin{array}{ll} \frac{c_3}{\alpha} \log^\alpha (1+|y-e_1|), & 0<|y-e_1|<1,\\ \frac{c_3}{\alpha} \log (1+|y-e_1|), & |y-e_1| \ge 1. \end{array} \right.
  \end{eqnarray*}
  By choosing $c(K) = c_3/\alpha$ we have $c(K) \to 1$ as $K \to 1$ by
  Lemma \ref{mypropo} and the assertion follows.
\end{proof}

\begin{problem}\rm
Is this Theorem true for $k_G$ instead of $j_G$?
\end{problem}

\chapter{Harmonic Quasiregular Mappings}

It is well known that if $f$ is a complex-valued harmonic function defined in a region $G$ of the
complex plane $\mathbb C,$ then $|f|^p$ is subharmonic for $p \ge 1,$ and that in the general
case is not subharmonic for $p < 1.$ However, if $f$ is holomorphic, then $|f|^p$ is
subharmonic for every $p > 0.$ Here we consider $k$-quasiregular harmonic
functions $(0 < k < 1).$ We recall that a harmonic function is quasiregular if
$$  |\bar\partial f (z)|\le k|\partial f (z)|, \qquad z\in G,$$
where
$$ \bar\partial f(z)= \frac 12\left(\frac{\partial f}{\pa x}+i\frac{\partial f}{\pa y}\right)\quad \text{and} \quad \partial f(z)= \frac 12\left(\frac{\partial f}{\pa x}-i\frac{\partial f}{\pa y}\right), \qquad z=x+iy.$$
We prove that $|f|^p$ is subharmonic for $p \ge  4k/(1 + k)^2 =: q$ as well as that the
exponent $q$ $(< 1)$ is the best possible (see Theorem \ref{th-1}). The fact that $q < 1$ enables
us to prove that if $f$ is quasiregular in the unit disk $\mathbb D$ and continuous on $\overline{D\mathstrut}$, then
$\tilde\omega(f, \delta) \le{\rm const.}\omega(f, \delta),$ where $\tilde\omega(f, \delta)$
(respectively $\omega(f, \delta)$) denotes the modulus of
continuity of $f$ on $\mathbb D$ (respectively $\pa\mathbb D$); see Theorem \ref{th-2}.

\section{ Subharmonicity of $|f|^p$}
\begin{theorem}\cite{kp}\label{th-1} If $f$ is a complex-valued $k$-quasiregular harmonic
function defined on a region $G \subset \mathbb C,$
and $q = 4k/(k + 1)^2,$ then $|f|^q$ is subharmonic. The exponent $q$ is optimal.
\end{theorem}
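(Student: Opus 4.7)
\textbf{Proof proposal for Theorem \ref{th-1}.}
First I would reduce the geometry to a tractable form by writing the harmonic map as $f=g+\bar h$, where $g,h$ are holomorphic on $G$. Then $\partial f=g'$ and $\bar\partial f=\overline{h'}$, so the $k$-quasiregularity hypothesis becomes the pointwise inequality $|h'|\le k|g'|$. This reformulation turns the problem into one about two holomorphic functions with a comparability constraint.

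Next, on the open set $\Omega=\{z\in G:f(z)\ne 0\}$ I would compute $\Delta|f|^p$ by hand. Writing $|f|^p=(f\bar f)^{p/2}$ and using $\Delta=4\,\partial\bar\partial$ together with $\partial\bar\partial f=0$ (harmonicity), a direct calculation yields
\begin{equation*}
\Delta|f|^p \;=\; p\,|f|^{p-4}\Bigl[\,p\,|f|^2\bigl(|g'|^2+|h'|^2\bigr)\;+\;2(p-2)\,\mathrm{Re}\bigl(g'\,\overline{h'}\,\bar f^{\,2}\bigr)\Bigr].
\end{equation*}
For $p<2$ the prefactor $2(p-2)$ is negative, so the worst case for subharmonicity occurs when $\mathrm{Re}(g'\overline{h'}\bar f^{\,2})=|g'||h'||f|^2$. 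Substituting that extremal value and grouping squares, the bracketed quantity becomes
\begin{equation*}
p\bigl(|g'|+|h'|\bigr)^{2}-4\,|g'|\,|h'|.
\end{equation*}
Thus $|f|^p$ is subharmonic on $\Omega$ provided
\begin{equation*}
p\;\ge\;\frac{4\,|g'|\,|h'|}{(|g'|+|h'|)^{2}}\quad\text{pointwise on }\Omega.
\end{equation*}
Setting $t=|h'|/|g'|\in[0,k]$ (the case $g'=0$ forces $h'=0$ too and is trivial), the right-hand side equals $\psi(t)=4t/(1+t)^2$. Since $\psi$ is increasing on $[0,1]$ and $k<1$, its maximum over $[0,k]$ is $\psi(k)=4k/(1+k)^2=q$. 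Hence $p\ge q$ suffices, which is exactly the claim on $\Omega$.

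To extend subharmonicity across the zero set of $f$, I would replace $|f|$ by $(|f|^2+\varepsilon)^{1/2}$, repeat the calculation (which now has no singularities and an extra harmless positive term in $\varepsilon$), verify $\Delta(|f|^2+\varepsilon)^{p/2}\ge 0$ globally, and let $\varepsilon\downarrow 0$; the limit is $|f|^p$ and the sub-mean-value property passes to the limit by monotone/dominated convergence, giving subharmonicity on all of $G$. For optimality, I would exhibit the affine test map $f(z)=z+k\bar z$, which is $k$-quasiregular with $g'\equiv 1$, $h'\equiv k$. On the real axis $f$ is real, so $\mathrm{Re}(g'\overline{h'}\bar f^{\,2})=k|f|^2=|g'||h'||f|^2$ and the bracket above equals $|f|^2[p(1+k)^2-4k]$; for any $p<q$ this is strictly negative on $\mathbb R\setminus\{0\}$, ruling out subharmonicity of $|f|^p$.

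The main obstacle is the cross term $\mathrm{Re}(g'\overline{h'}\bar f^{\,2})$: one must realize that its worst case is attained (it is not a loose estimate), because the same cross term is what produces the saturating example $z+k\bar z$ on $\mathbb R$. Getting the algebraic identity $p(|g'|+|h'|)^2-4|g'||h'|$ to line up exactly with the extremal ratio $4k/(1+k)^2$ is what makes both the sufficient direction and the sharpness direction meet at the same constant $q$.
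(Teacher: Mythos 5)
Your proposal is correct and follows essentially the same route as the paper: the decomposition $f=g+\bar h$, the identity for $\Delta|f|^p$ in terms of $g',h'$ and the cross term $\mathrm{Re}(\overline{g'}h'f^2)$, reduction to the worst-case alignment of that cross term, and the extremal example $f(z)=z+k\bar z$. Two small stylistic improvements over the paper's presentation: you regroup the bracket as $p(|g'|+|h'|)^2-4|g'||h'|$, which makes the threshold appear directly as the maximum of $\psi(t)=4t/(1+t)^2$ over $[0,k]$, whereas the paper argues via the monotonicity of the quadratic $m\mapsto p(1+m^2)+2(p-2)m$; and you handle the zero set by the $\varepsilon$-regularization $(|f|^2+\varepsilon)^{p/2}$ (which indeed works, since $\Delta|f|^2=4(|g'|^2+|h'|^2)\ge 0$ makes the added $\varepsilon$-term nonnegative), while the paper notes the sub-mean-value inequality is trivial at zeros and invokes $C^2$ regularity away from the isolated zero set.
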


Recall that a  continuous function $u $ defined on a region $G \subset \mathbb C$
 is subharmonic
if for all $z_0\in G$ there exists $\varepsilon > 0$ such that
\begin{equation}\label{eq-1}
u(z_0)\le
\frac 1{2\pi}
\int_0^{2\pi} u(z_0+r e^{it})\, dt, \qquad 0<r<\varepsilon,
\end{equation}
If $u(z_0)$ = $|f(z_0)|^2 = 0,$ then \eqref{eq-1} holds. If $u(z_0) > 0,$
 then there exists a neighborhood
$U$ of $z_0$ such that $u$ is of class $C^2(U)$ (because the zeroes of $u$ are isolated),
and then we may prove that $\Delta u \ge 0$ on $U$. Thus the proof reduces to proving that
$\Delta u(z)\ge 0$ whenever $u(z)>0.$ In order to do this we will calculate $\Delta u.$

It is easy to prove that If $u > 0$ is a $C^2$ function defined on a region in $\mathbb C,$
 and $\alpha\in \mathbb R,$ then next two statements holds
\begin{equation}
\label{eq2}
\Delta(u^\alpha) = \alpha u^{\alpha-1}\Delta u+\alpha(\alpha-1)u^{\alpha-2}|\nabla u|^2,
\end{equation}
\begin{equation}
\label{eq-3}
|\nabla u|^2=4|\pa u|^2\quad\text{and}\quad \Delta u =4\pa\bar\pa u.
\end{equation}

\begin{lemma}\label{lem3}
If $f=g+\bar h,$ where $g$ and $h$ are holomorphic functions, then
\begin{equation}\label{eq-4}
    \Delta(|f|^2)=4(|g'|^2+|h'|^2).
\end{equation}
\end{lemma}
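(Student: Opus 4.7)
The plan is to expand $|f|^2$, isolate a harmonic piece whose Laplacian vanishes, and compute the remaining Laplacian using formula \eqref{eq-3}, namely $\Delta u = 4\partial\bar\partial u$.

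First I would write $|f|^2 = f\bar f = (g+\bar h)(\bar g + h)$ and multiply out to get
$$|f|^2 = |g|^2 + |h|^2 + gh + \overline{gh}.$$
Since $g$ and $h$ are holomorphic, their product $gh$ is also holomorphic, so $gh + \overline{gh} = 2\operatorname{Re}(gh)$ is harmonic and contributes nothing to $\Delta(|f|^2)$. Thus it suffices to show
$$\Delta(|g|^2) = 4|g'|^2 \qquad \text{and} \qquad \Delta(|h|^2) = 4|h'|^2.$$

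For the first identity I would apply $\Delta = 4\partial\bar\partial$ from \eqref{eq-3} to $|g|^2 = g\bar g$. Using $\bar\partial g = 0$ (holomorphicity of $g$) and $\bar\partial\bar g = \overline{\partial g} = \overline{g'}$, I get
$$\bar\partial(g\bar g) = g\,\bar\partial\bar g = g\,\overline{g'}.$$
Then using $\partial g = g'$ and $\partial\overline{g'} = 0$ (since $\overline{g'}$ is antiholomorphic), I get
$$\partial\bar\partial(g\bar g) = (\partial g)\overline{g'} = g'\overline{g'} = |g'|^2,$$
so $\Delta(|g|^2) = 4|g'|^2$. The same computation applied to $h$ yields $\Delta(|h|^2) = 4|h'|^2$. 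Summing and adding the harmonic term produces
$$\Delta(|f|^2) = 4\bigl(|g'|^2 + |h'|^2\bigr),$$
which is the claim.

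There is no real obstacle here; the only thing to be careful about is the Wirtinger bookkeeping, in particular remembering that $\bar\partial\bar g = \overline{\partial g}$ and that $gh$ (not $g\bar h$) is the holomorphic cross term to discard. Once the harmonic piece is peeled off, the statement reduces to the well-known fact that $\Delta(|F|^2) = 4|F'|^2$ for holomorphic $F$, applied twice.
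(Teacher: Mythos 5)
Your proof is correct. It differs from the paper's only in bookkeeping: the paper applies $\Delta = 4\partial\bar\partial$ directly to the product $(g+\bar h)(\bar g+h)$ via the product rule, getting $\bar\partial\bigl(|f|^2\bigr) = \overline{h'}(\bar g+h) + (g+\bar h)\overline{g'}$ and then $\partial$ of that yields $\overline{h'}h' + g'\overline{g'}$, without ever expanding the product. You instead expand $|f|^2 = |g|^2 + |h|^2 + 2\operatorname{Re}(gh)$ first, discard the cross term as the real part of a holomorphic function, and invoke the classical identity $\Delta(|F|^2)=4|F'|^2$ twice. The computations are the same Wirtinger calculation under the hood, but your version isolates the conceptual reason the cross term vanishes (it is pluriharmonic) and makes the reduction to the one-function case explicit, which is arguably easier to check and reuse; the paper's version is marginally shorter since it never separates the terms.
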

\begin{proof}
Since $|f|^2=(g+\bar h)(\bar g +h),$ we have
$$ \begin{aligned} \Delta (|f|^2)&=
4\pa(\overline{h'\mathstrut}(\bar g+ h)+(g+\bar h)\overline{ g'\mathstrut} )\\&
=4(\overline{ h'\mathstrut}h+ g\overline{ g'\mathstrut})\\&
=4(|g'|^2+|h'|^2).
\end{aligned}$$
\end{proof}

\begin{lemma}\label{lem4}
If $f=g+\bar h,$ where $g$ and $h$ are holomorphic functions, then
\begin{equation}\label{eq-5}
    |\nabla(|f|^2)|^2= 4(|\overline{ g'\mathstrut}|^2+|\overline{ h'\mathstrut}|^2)|f|^2
    + 8 \,{\rm Re}(\overline{ g'\mathstrut}h'f^2).
    \end{equation}
\end{lemma}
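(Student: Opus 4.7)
My plan is to obtain the identity by a direct calculation starting from the second identity in \eqref{eq-3}, namely $|\nabla u|^2 = 4|\partial u|^2$ applied with $u=|f|^2$. So the whole task reduces to computing $\partial(|f|^2)$ cleanly and then squaring.

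First, write $|f|^2 = f\bar f = (g+\bar h)(\bar g+h)$. Since $g$ and $h$ are holomorphic, $\partial\bar g=\partial\bar h=0$, so applying the product rule gives
\begin{equation*}
\partial(|f|^2) = g'(\bar g+h) + h'(g+\bar h) = g'\bar f + h' f.
\end{equation*}
This clean factorization, which is the one nontrivial observation in the proof, is the main thing to notice: the two Wirtinger derivatives of $|f|^2$ regroup into $g'\bar f + h' f$ rather than the more unwieldy four-term expansion.

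Next I would simply square:
\begin{equation*}
|\partial(|f|^2)|^2 = (g'\bar f + h' f)\,\overline{(g'\bar f + h' f)} = (g'\bar f + h' f)(\overline{g'} f + \overline{h'}\bar f).
\end{equation*}
Expanding yields two "diagonal" terms $|g'|^2|f|^2 + |h'|^2|f|^2$ and two "cross" terms $g'\overline{h'}\bar f^{\,2} + \overline{g'}h' f^2$, the latter being complex conjugates of each other and therefore summing to $2\operatorname{Re}(\overline{g'}h'f^2)$. Hence
\begin{equation*}
|\partial(|f|^2)|^2 = (|g'|^2+|h'|^2)|f|^2 + 2\operatorname{Re}(\overline{g'}h'f^2).
\end{equation*}

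Finally, multiplying by $4$ via \eqref{eq-3} and noting $|g'|^2=|\overline{g'}|^2$, $|h'|^2=|\overline{h'}|^2$ gives exactly \eqref{eq-5}. There is no real obstacle here; the only point where one might slip is in the bookkeeping of which factors are holomorphic versus antiholomorphic when applying $\partial$, but using the grouping $g'\bar f + h'f$ makes the computation almost automatic.
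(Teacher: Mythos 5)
Your proof is correct and follows essentially the same route as the paper: apply $|\nabla u|^2=4|\partial u|^2$ to $u=|f|^2$, compute $\partial(|f|^2)=g'\bar f+h'f$, and expand the square. You simply spell out the intermediate expansion steps that the paper leaves implicit.
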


\begin{proof}
We have
$$
\begin{aligned}
|\nabla(|f|^2)|^2 &= 4 |\pa(|f|^2)|^2 \\&
= 4|\pa((g+\bar h)(\bar g+h))  |^2\\&
=4|g'\bar f + fh'|^2\\&
=4 (|g'|^2+|h'|^2)|f|^2 +  8 \,{\rm Re}(\overline{ g'\mathstrut}h'f^2).
\end{aligned}
$$
\end{proof}

\begin{lemma}\label{lem5}
If $f=g+\bar h,$ where $g$ and $h$ are holomorphic functions, then
\begin{equation}\label{eq-6}
    \Delta(|f|^p)=p^2(|g'|^2+|h'|^2)|f|^{p-2}+ 2p(p-2)|f|^{p-4}\,{\rm Re}(\overline{ g'\mathstrut}h'f^2)
\end{equation}
whenever $f\neq 0.$
\end{lemma}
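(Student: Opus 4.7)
The plan is to reduce the computation to the identity (\ref{eq2}) applied to $u=|f|^2$ with exponent $\alpha=p/2$, and then substitute the expressions for $\Delta(|f|^2)$ and $|\nabla(|f|^2)|^2$ already supplied by Lemmas \ref{lem3} and \ref{lem4}. Since $f\ne 0$ on the neighborhood under consideration, the function $u=|f|^2$ is strictly positive and of class $C^2$ (its zeros are isolated because $g+\bar h\not\equiv 0$), so (\ref{eq2}) applies legitimately.

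Writing $|f|^p = u^{p/2}$ and applying (\ref{eq2}) with $\alpha = p/2$ gives
$$\Delta(|f|^p) \;=\; \tfrac{p}{2}\,|f|^{p-2}\,\Delta(|f|^2) \;+\; \tfrac{p}{2}\!\left(\tfrac{p}{2}-1\right)|f|^{p-4}\,|\nabla(|f|^2)|^2.$$
Substitute $\Delta(|f|^2)=4(|g'|^2+|h'|^2)$ from (\ref{eq-4}) into the first term, producing $2p(|g'|^2+|h'|^2)|f|^{p-2}$. Substitute $|\nabla(|f|^2)|^2 = 4(|g'|^2+|h'|^2)|f|^2 + 8\,\mathrm{Re}(\overline{g'}h'f^2)$ from (\ref{eq-5}) into the second term, and note the coefficient $\tfrac{p}{2}(\tfrac{p}{2}-1) = p(p-2)/4$.

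Expanding and collecting, the contribution of the $(|g'|^2+|h'|^2)|f|^{p-2}$ terms totals $[2p + p(p-2)] = p^2$, which furnishes the first summand $p^2(|g'|^2+|h'|^2)|f|^{p-2}$. The remaining piece contributes exactly $\tfrac{p(p-2)}{4}\cdot 8\,\mathrm{Re}(\overline{g'}h'f^2)|f|^{p-4} = 2p(p-2)|f|^{p-4}\,\mathrm{Re}(\overline{g'}h'f^2)$, which is the second summand.

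There is essentially no obstacle here: the proof is a bookkeeping exercise in combining three previous identities, and the only mild subtlety is justifying the use of (\ref{eq2})\,---\,namely, that $u=|f|^2$ is smooth and positive where $f\ne 0$, which is immediate. The identity extends to any $p\in\mathbb{R}$ with $f\ne 0$, and the coefficient structure ($p^2$ in front of the positive quadratic term, $2p(p-2)$ in front of the cross term) is what will later drive the subharmonicity analysis of $|f|^p$ in Theorem \ref{th-1}.
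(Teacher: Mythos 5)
Your proof is correct and follows exactly the paper's approach: apply (\ref{eq2}) with $u=|f|^2$, $\alpha=p/2$, then substitute (\ref{eq-4}) and (\ref{eq-5}) and collect terms. The paper states this in a single sentence; you have simply written out the arithmetic.
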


\begin{proof}
We take $\alpha=p/2,$ $u=|f|^2,$ and then use \eqref{eq2},
\eqref{eq-4} and \eqref{eq-5} to get the result.
\end{proof}

\begin{subsec}{Proof of Theorem \ref{th-1}.} {\rm We have to prove that $\Delta
(|f|^p)\ge 0,$ where $p=4k/(1+k)^2.$ Since $p-2<0,$ we get from
\eqref{eq-6} that
$$
\begin{aligned}
\Delta(|f|^p) &\ge
p^2(|g'|^2+|h'|^2)|f|^{p-2}+2p(p-2)|f|^{p-4}|g'|\cdot |h'|\cdot
|f|^2\\& = p^2|g'|^2(m^2+1)|f|^{p-2}+2p(p-2)|g'|^2|f|^{p-2}m\\&
=p|g'|^2|f|^{p-2}[p(1+m^2)+2(p-2)m],
\end{aligned}
$$
where $m=|h'|/|g'|\le k.$ The function $m\mapsto p(1+m^2)+2(p-2)m$ has a negative
derivative (because $p<1$ and $m<1$), which implies that
$$ (1+m^2)p+2(p-2)m\ge (1+k^2)p+2(p-2)k.$$
On the other hand $(1+k^2)p+2(p-2)k\ge 0$ if and only if $p\ge 4k/(1+k)^2,$ which proves that $|f|^q$ is subharmonic. To prove that the exponent $q$ is optimal we take $f(z)=z+k\bar z.$ By \eqref{eq-6},
$$ \Delta (|f|^p)(1)= p^2(1+k^2)(1+k)^{p-2} +2p(p-2)(1+k)^{p-2}k.$$
Hence $\Delta (|f|^p)(1)\ge 0$ if and only if
$$ p(1+k^2)+2(p-2)k\ge 0,$$
which, as noted above, is equivalent to $p\ge q.$ This completes the
proof of Theorem ~ \ref{th-1}. } $\square$
\end{subsec}
\section{Moduli of continuity in Euclidean metric}

For a continuous  function $f:\overline{\mathbb D\mathstrut}\mapsto \mathbb C$ harmonic in
$\mathbb D$
 we define
two moduli of continuity:
$$  \omega(f,\delta)=\sup\{|f(e^{i\theta})-f(e^{it})|: |e^{i\theta}- e^{it}|\le \delta, \ t,\theta
\in\mathbb R\}, \quad \delta\ge 0,$$
and
$$ \tilde\omega(f,\delta)=\sup\{|f(z)-f(w)|: |z-w|\le \delta,\ z,w\in \overline{\mathbb
D\mathstrut}\}, \quad \delta\ge 0.$$
Clearly $\omega(f,\delta)\le \tilde \omega(f,\delta),$ but the reverse inequality need not hold.
To see this consider the function
$$
f(re^{i\theta})=\sum_{n=1}^\infty \frac {(-1)^n r^n\cos n\theta}{n^2}, \qquad re^{i\theta}\in \overline{\mathbb D\mathstrut}.
$$
This function is harmonic in $\mathbb D$ and continuous on $ \overline{\mathbb D\mathstrut}.$ The function  $v(\theta)=f(e^{i\theta})$, $ |\theta|<\pi,$ is differentiable, and
$$\begin{aligned}
\frac{dv}{d\theta}&=\sum_{n=1}^\infty \frac{(-1)^{n-1}\sin n\theta}{n}\\&
= \frac{\theta}2,
\qquad |\theta|<\pi.
\end{aligned}
$$
This formula is well known, and can be verified by calculating the Fourier coefficients
of the function $\theta\mapsto \theta/2,\ |\theta|<\pi.$ It follows that
$$ |f(e^{i\theta})-f(e^{it})|\le (\pi/2)|\theta-t|,\qquad -\pi<\theta,\,t<\pi,$$
and hence
$ \omega(f,\delta)\le M \delta, \ \delta>0,$
where $M$ is an absolute constant.
On the other hand, the inequality $\tilde \omega(f,\delta)\le CM\delta$, $C={\rm const},$ does not hold
because it  implies that $|\partial f/\partial r|\le CM,$ which is not true because
$$ \frac {\pa}{\pa r}f(re^{i\theta})=\sum_{n=1}^\infty \frac{r^{n-1}}n, \quad\text{for $\theta=\pi,\ 0<r<1$.}$$

However, as was proved by Rubel, Shields and Taylor \cite{rst}, and Tamrazov \cite{ta}, if $f$ is a holomorphic function,
then $ \tilde \omega (f,\delta)\le C  \omega (f,\delta)$, where $C$ is independent of $f$
and $\delta.$ Here we extend that result to quasiregular harmonic functions.

\begin{theorem}\cite{kp}\label{th-2}
Let $f$ be a  $k$-quasiregular harmonic complex-valued function which has a continuous extension on
$\overline{\mathbb D\mathstrut}$, then there is a constant $C$ depending only on $k$ such that
$ \tilde \omega (f,\delta)\le C  \omega (f,\delta)$.
\end{theorem}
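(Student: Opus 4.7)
The plan is to exploit the subharmonicity supplied by Theorem \ref{th-1} together with a double Poisson integral representation. First observe that for any constant $c \in \mathbb{C}$, the function $f - c$ is again $k$-quasiregular harmonic, so Theorem \ref{th-1} yields that $|f - c|^q$ is subharmonic in $\mathbb{D}$, where $q = 4k/(1+k)^2$. A crucial feature is the \emph{strict} inequality $q < 1$. Since $f$ extends continuously to $\overline{\mathbb{D}\mathstrut}$, so does $|f-c|^q$, and the subharmonic maximum principle gives the Poisson majorization
\[
|f(z) - c|^q \le \int_0^{2\pi} P(z, e^{i\theta}) \,|f(e^{i\theta}) - c|^q \, \frac{d\theta}{2\pi}
\]
for all $z \in \mathbb{D}$, where $P(z,\zeta) = (1-|z|^2)/|z-\zeta|^2$ is the Poisson kernel.

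Given $z, w \in \overline{\mathbb{D}\mathstrut}$ with $|z - w| \le \delta$, I would apply this inequality \emph{twice}. First take $c = f(w)$, viewing everything as a function of $z$. Then, for each fixed boundary point $\zeta = e^{i\theta}$, the function $w \mapsto |f(\zeta) - f(w)|^q$ is again subharmonic (by Theorem \ref{th-1} applied to $f - f(\zeta)$), so a second Poisson majorization in $w$ together with Fubini produces
\[
|f(z) - f(w)|^q \le \iint_{[0,2\pi]^2} P(z, e^{i\theta}) P(w, e^{i\psi}) \,|f(e^{i\theta}) - f(e^{i\psi})|^q \,\frac{d\theta\, d\psi}{(2\pi)^2}.
\]
Now only boundary values of $f$ appear, so the boundary modulus of continuity applies. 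Using subadditivity $\omega(f, t) \le (1 + t/\delta)\omega(f,\delta)$ together with the elementary inequality $(1+s)^q \le 1 + s^q$ (valid because $q \le 1$), the theorem reduces to the single quantitative estimate
\[
I(z, w) := \iint P(z, e^{i\theta}) P(w, e^{i\psi}) \, |e^{i\theta} - e^{i\psi}|^q \,\frac{d\theta\, d\psi}{(2\pi)^2} \le C(q)\, |z - w|^q.
\]

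For this key bound I would appeal to the Hardy--Littlewood theorem on H\"older continuity of Poisson integrals: since $\zeta \mapsto |\zeta - \xi|^q$ is $q$-H\"older on $\partial \mathbb{D}$ with constant independent of $\xi$, its Poisson extension is $q$-H\"older on $\overline{\mathbb{D}\mathstrut}$ with the same exponent, and since this extension vanishes at $\xi$ we get $\int P(z, \zeta)|\zeta - \xi|^q\, d\theta/(2\pi) \le C(q)|z - \xi|^q$. Iterating the same estimate in the $\xi$ variable yields $I(z, w) \le C(q)^2 |z - w|^q \le C(q)^2 \delta^q$. Combining gives $|f(z) - f(w)|^q \le (1 + C(q)^2)\, \omega(f, \delta)^q$, and taking $q$-th roots produces the bound with $C = (1 + C(q)^2)^{1/q}$ depending only on $k$.

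The main obstacle, and the reason the argument works at all, is the strict inequality $q < 1$: the inequality $(1+s)^q \le 1 + s^q$ requires $q \le 1$, but more importantly the Hardy--Littlewood step becomes borderline at $q = 1$, where the Poisson extension of a Lipschitz boundary function lies only in the Zygmund class and a direct computation of the relevant integral acquires a logarithmic divergence that would ruin the estimate. The fact that the Pavlovi\'c--Koji\'c subharmonicity exponent $q = 4k/(1+k)^2$ is strictly below $1$ for every $k \in (0, 1)$ is precisely what Theorem \ref{th-1} supplies, and it is exactly what makes the proof succeed.
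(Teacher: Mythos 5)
Your double Poisson majorization is sound and does prove the estimate whenever at least one of $z,w$ lies on $\partial\mathbb{D}$, but the key bound $I(z,w)\le C(q)\,|z-w|^q$ is simply false when both points are interior. Take $z=w=0$: then $|z-w|^q=0$, whereas
\[
I(0,0)=\frac{1}{(2\pi)^2}\int_0^{2\pi}\!\!\int_0^{2\pi}|e^{i\theta}-e^{i\psi}|^q\,d\theta\,d\psi>0.
\]
The "iterate the same estimate" step is where it breaks. The inner integral $\int P(z,e^{i\theta})|e^{i\theta}-e^{i\psi}|^q\,d\theta/(2\pi)$ is bounded by $C(q)|z-e^{i\psi}|^q$ via Hardy--Littlewood precisely because the boundary data $\theta\mapsto|e^{i\theta}-e^{i\psi}|^q$ vanishes at $\theta=\psi$, a \emph{boundary} point. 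After this step you must Poisson-integrate $\psi\mapsto|z-e^{i\psi}|^q$, which is still $q$-H\"older in $e^{i\psi}$ with a uniform constant, but when $z$ is interior it vanishes \emph{nowhere} on $\partial\mathbb{D}$. Hardy--Littlewood then only gives $\int P(w,e^{i\psi})|z-e^{i\psi}|^q\,d\psi/(2\pi)\le \bigl|z-w/|w|\bigr|^q + C(q)(1-|w|)^q$, and the term $(1-|w|)^q$ cannot be absorbed into $|z-w|^q$ when both $z$ and $w$ sit well inside the disk.

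The paper circumvents exactly this. It first proves $|f(z)-f(w)|\le C\,\omega(f,|z-w|)$ for $w\in\partial\mathbb{D}$ and $z\in\overline{\mathbb{D}}$ by a \emph{single} Poisson majorization followed by a direct integral estimate whose convergence rests on $q<1$ through \eqref{eq-2}; this is the content your Hardy--Littlewood step captures. For $0<|w|<1$ it then introduces $h(\xi)=f(\xi w/|w|)-f(\xi z/|w|)$, harmonic with $h(0)=0$, and applies the harmonic Schwarz lemma: $|f(w)-f(z)|=|h(|w|)|\le(4/\pi)|w|\,\|h\|_\infty$. The sup norm is controlled by the already-proved boundary case since $\xi w/|w|\in\partial\mathbb{D}$ for $|\xi|=1$, and the factor $|w|$ is absorbed by the subadditivity $\omega(f,\lambda\delta)\le 2C\lambda\,\omega(f,\delta)$. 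Your proposal is missing this reduction to the boundary; without it the double Poisson integral cannot control the difference at two interior points, and the proof is incomplete.
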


In order to deduce this fact from Theorem \ref{th-1}, we need some simple properties of
the modulus $\omega(f,\delta).$ Let
 $$\omega_0(f,\delta)=\sup\{|f(e^{i\theta})-f(e^{it})|: |\theta- t|\le \delta,
\ t,\theta
\in\mathbb R\}.$$
It is easy to check that \begin{equation}\label{eq-0}
C^{-1}\omega_0(f,\delta)\le \omega(f,\delta)\le C\omega_0(f,\delta),
\end{equation}
where $C$ is an absolute constant, and that
\begin{equation*}
    \omega_0(f,\delta_1+\delta_2)\le \omega_0(f,\delta_1)+\omega_0(f,\delta_2),
    \quad \delta_1,\,\delta_2\ge 0.
\end{equation*}
Hence, $\omega_0(f,2^n\delta)\le 2^n\omega_0(f,\delta),$ and hence
$
\omega_0(\lambda\delta)\le 2\lambda\omega_0(\delta),$ for  $\lambda\ge 1,\delta\ge 0.
$
From these inequalities  and \eqref{eq-0} it follows that
\begin{equation}\label{eq-x}
    \omega (f,\lambda\delta)\le 2C\lambda\omega(f,\delta),\quad \lambda\ge 1, \delta\ge 0,
\end{equation}
and
\begin{equation}\label{eq-sub}
    \omega(f,\delta_1+\delta_2)\le C\omega(f,\delta_1)+C\omega(f,\delta_2),
    \quad \delta_1,\,\delta_2\ge 0,
\end{equation}
where $C$ is an absolute constant.
As a consequence of \eqref{eq-x} we have, for $0<p<1,$
\begin{equation}\label{eq-2}
    \int_x^\infty \frac{\omega(f,t)^p}{t^2}\,dt \le C \frac{\omega(f,x)^p}x, \quad x>0,
\end{equation}
where $C$ depends only on $p.$ Finally we need the following
consequence of the harmonic Schwarz lemma (see \cite{abr}).

\begin{lemma}
If $h$ is a function harmonic and bounded in the unit disk, with $h(0)=0,$
the $|h(\xi)|\le (4/\pi)\|h\|_\infty|\xi|,$ for $\xi\in \mathbb D.$
\end{lemma}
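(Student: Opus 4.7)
The plan is to reduce the complex-valued statement to its real-valued version by a rotation argument, then apply the Poisson integral representation and exploit the mean-zero constraint on the boundary values that follows from $h(0)=0$. Write $M=\|h\|_\infty$ and fix $\xi\in\mathbb D$. If $h(\xi)=0$ the bound is trivial; otherwise pick $\alpha\in\mathbb R$ with $e^{-i\alpha}h(\xi)=|h(\xi)|$ and set $u(z)=\mathrm{Re}\bigl(e^{-i\alpha}h(z)\bigr)$. Then $u$ is real harmonic on $\mathbb D$, $\|u\|_\infty\le M$, $u(0)=0$, and $u(\xi)=|h(\xi)|$. It therefore suffices to prove: if $u$ is real harmonic on $\mathbb D$ with $u(0)=0$ and $\|u\|_\infty\le 1$, then $|u(\xi)|\le(4/\pi)|\xi|$.

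\textbf{Poisson representation.} For $\xi=re^{i\theta}$, the bounded harmonic $u$ has boundary values $u^*\in L^\infty(\partial\mathbb D)$ with $|u^*|\le 1$, and
\begin{equation*}
u(\xi)=\frac{1}{2\pi}\int_0^{2\pi}P_r(\theta-t)\,u^*(e^{it})\,dt,\qquad P_r(s)=\frac{1-r^2}{1-2r\cos s+r^2}.
\end{equation*}
The hypothesis $u(0)=0$ reads $\int_0^{2\pi}u^*\,dt=0$, which is the critical extra constraint beyond the obvious $|u^*|\le 1$.

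\textbf{Extremal computation.} By rotational invariance take $\theta=0$. Among all admissible $u^*$ (satisfying $|u^*|\le 1$ and $\int u^*=0$), the maximum of the linear functional $u^*\mapsto\int P_r\,u^*\,dt$ is attained when $u^*=+1$ on a superlevel set $\{P_r\ge c\}$ and $u^*=-1$ on its complement, with $c$ chosen so that $|\{P_r\ge c\}|=\pi$. Since $P_r$ is even and strictly decreasing on $[0,\pi]$, this set is the arc $(-\pi/2,\pi/2)$. Using the antiderivative
\begin{equation*}
\int_0^s P_r(\tau)\,d\tau=2\arctan\!\Bigl(\tfrac{1+r}{1-r}\tan\tfrac{s}{2}\Bigr)
\end{equation*}
at $s=\pi/2$, together with the addition identity $\arctan\tfrac{1+r}{1-r}=\tfrac{\pi}{4}+\arctan r$, a short calculation gives $\max u(r)=(4/\pi)\arctan r$.

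\textbf{Conclusion and main obstacle.} Since $\arctan r\le r$ on $[0,1]$, we conclude $|h(\xi)|=u(\xi)\le(4/\pi)\arctan|\xi|\cdot M\le(4/\pi)|\xi|\,M$. The nontrivial step is the extremal identification of the optimal $u^*$ paired with the $\arctan$ integral evaluation, which is what makes the constant $4/\pi$ appear; the linear form of the stated bound is then the trivial consequence $\arctan r\le r$. An alternative route bypasses Poisson altogether by representing $u$ as the real part of a holomorphic $F\colon\mathbb D\to\{|\mathrm{Re}\,w|<1\}$ vanishing at $0$ and postcomposing with the conformal map of that strip onto $\mathbb D$, then applying the classical Schwarz lemma—effectively packaging the Poisson computation into one use of the holomorphic Schwarz lemma.
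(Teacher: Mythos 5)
Your proof is correct, and it is worth noting that the paper does not prove this lemma at all: it is cited directly from Axler, Bourdon and Ramey \cite{abr}, so your argument fills in what the paper leaves to the reference. The reduction to the real-valued case via $u=\mathrm{Re}(e^{-i\alpha}h)$, the bang-bang identification of the extremal boundary function among those with $|u^\ast|\le 1$ and $\int u^\ast=0$, and the antiderivative evaluation are all sound; in fact you prove the sharp inequality $|h(\xi)|\le(4/\pi)\|h\|_\infty\arctan|\xi|$, from which the stated linear bound follows by $\arctan r\le r$. This is essentially the standard proof (and coincides in substance with the one in \cite{abr}), and the closing remark correctly identifies the conformal-map-to-a-strip route as an equivalent packaging of the same computation via the holomorphic Schwarz lemma.
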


\begin{subsec}{Proof of Theorem \ref{th-2}.} {\rm It is enough to prove
that $|f(z)-f(w)|\le C\omega(f,|z-w|)$ for all $z,\,w\in
\overline{\mathbb D\mathstrut},$ where $C$ depends only on $k.$
Assume first that $z=r\in (0,1)$ and  $|w|=1.$ Then, by Theorem
\ref{th-1}, the function $\varphi(\xi)=|f(w)-f(\xi)|^q ,$ where
$q=4k/(1+k)^2<1,$ is subharmonic in $\mathbb D$ and continuous on
$\overline{\mathbb D\mathstrut},$ whence
$$ \begin{aligned}
\varphi(r)\le \frac 1{2\pi}\int_{\partial\mathbb D} \frac {(1-r^2)
\varphi(\zeta)}{|\zeta-r|^2}\,|d\zeta|.
\end{aligned}
$$
Since, by \eqref{eq-sub},
$$
\begin{aligned}
\varphi(\zeta)&\le (\omega(f,|w-r|+|r-\zeta|))^q
\\&\le C^q\omega(f,|w-r|)^q+C^q\omega(f,|r-\zeta|)^q,
\end{aligned}
$$
we have
$$\begin{aligned}
 \varphi(z)&\le C^q\omega(f,|w-r|)^q+
\frac {C^q}{2\pi}\int_{\partial\mathbb D} \frac {(1-r^2)\omega(f,|r-\zeta|)^q}{|\zeta-r|^2}\,|d\zeta|\\&
=C^q\omega(f,|w-r|)^q+ \frac{C^q}{2\pi}\int_{-\pi}^\pi \frac {(1-r^2)
\,\omega(|r-e^{it}|)^q}{|e^{it}-r|^2}\,dt.
\end{aligned}$$
But simple calculation shows that
$$
|r-e^{it}|=\sqrt{(1-r)^2+4r\sin^2(t/2)\mathstrut}\asymp 1-r+|t|\quad (0<r<1,\ |t|\le\pi).
$$
From this, \eqref{eq-1}, and \eqref{eq-2} it follows that
$$
\begin{aligned}
\int_{-\pi}^\pi \frac {(1-r^2)\,\omega(f,|r-e^{it}|)^q}{|e^{it}-r|^2}\,dt&\le
C_1\int_0^\pi \frac{(1-r)\,\omega(f,1-r+t)^q}{(1-r+t)^2}\,dt
\\& =C_1\Big(\int_0^{1-r}+ \int_{1-r}^\pi\Big)\frac{(1-r)\,\omega(f,1-r+t)^q}{(1-r+t)^2}\,dt
\\& \le C_2\,(\omega(1-r))^q+ C_2\,(1-r)\int_{1-r}^\infty  \frac{\omega(f,t)^q}{t^2}\,dt\\&
\le C_3\,(\omega(f,1-r))^q\\&
\le C_4\,(\omega(f,|w-z|))^q.
\end{aligned}
$$
Thus $|f(w)-f(z)|\le C_5\omega(f,|w-z|)$ provided $w\in\partial\mathbb D$ and $z\in (0,1).$
By rotation and the continuity of $f$, we can extend this inequality to the case where $w\in\partial\mathbb D$ and
$z\in \overline{\mathbb D\mathstrut}.$

 If $0<|w|<1,$
we consider the function $h(\xi)=f(\xi w/|w|)-f(\xi z/|w|)$, $|\xi|\le 1.$
This function is harmonic in $\mathbb D,$ continuous on $\overline{\mathbb D\mathstrut},$  and $h(0)=0.$
Hence, by the harmonic Schwarz lemma, inequality \eqref{eq-1}, and the preceding case,
$$\begin{aligned}
|f(w)-f(z)|& = |h(|w|)|\\&\le (4/\pi)|w|\,\|h\|_\infty\\&
\le C_6|w|\,\omega(f,|w/|w|- z/|w|\,|)\\&
\le C_7 \,\omega(f,|w|\,|\,|w/|w|- z/|w|\,|)\\&
=C_7\,\omega(f,|w- z|),
\end{aligned}
$$
which completes the proof. $\square$ }
\end{subsec}

\section{Lipschitz continuity up to the boundary on $B^n$}

It is known, even for $n=2$, that Lipschitz continuity of $\phi : T \rightarrow C$,
where $T = \{ z \in C : |z| = 1 \}$, does not imply Lipschitz continuity of $u = P[ \phi ]$.

Here, for any $n \geq 2$,

$$P[ \phi ] (x) = \int_{S^{n-1}} P(x, \xi) \phi (\xi) d\sigma (\xi), \;\; x \in B^n$$
where $P(x, \xi) = \frac{1 - |x|^2}{|x - \xi |^n}$ is the Poisson kernel for the unit ball
$B^n = \{ x \in R^n : |x| < 1 \}$, $d \sigma$ is the normalized surface measure on the unit sphere
$S^{n-1}$ and $\phi : S^{n-1} \rightarrow R^n$ is a continuous mapping.

Our aim is to show that Lipschitz continuity is preserved by harmonic extension, if
the extension is quasiregular. The analogous statement is true for H\"older continuity
without assumption of quasiregularity.


\begin{theorem}
\cite{akm}
Assume $\phi : S^{n-1} \rightarrow R^n$ satisfies a Lipschitz condition:
$$ | \phi(\xi) - \phi(\eta) | \leq L | \xi - \eta |, \;\; \xi, \eta \in S^{n-1}$$
and assume $ u = P[ \phi ] : B^n \rightarrow R^n$ is $K$-quasiregular. Then
$$ | u(x) - u(y) | \leq C^\prime |x-y|, \;\; x, y \in B^n$$
where $C^\prime$ depends on $L$, $K$ and $n$ only.

\end{theorem}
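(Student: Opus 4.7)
My plan is to reduce the statement to showing that the Euclidean gradient $|Du|$ is uniformly bounded on $B^n$ by a constant $C=C(L,K,n)$. Once this is established, the Lipschitz inequality $|u(x)-u(y)|\le C|x-y|$ follows by integrating $\nabla u$ along the line segment joining $x$ to $y$, and the case where one of the points lies on $S^{n-1}$ is handled by continuity (which in turn comes from $\phi\in C(S^{n-1})$ plus the Poisson formula).

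First I would derive an interior gradient bound directly from the Poisson representation $u=P[\phi]$. Differentiating under the integral and exploiting the cancellation $\int_{S^{n-1}}\nabla_x P(x,\xi)\,d\sigma(\xi)=0$ (which follows from $P[1]\equiv 1$), I can write
$$\nabla u(x)=\int_{S^{n-1}}\nabla_x P(x,\xi)\,\bigl(\phi(\xi)-\phi(x/|x|)\bigr)\,d\sigma(\xi).$$
Standard pointwise estimates on $\nabla_x P(x,\xi)$ together with the Lipschitz hypothesis $|\phi(\xi)-\phi(\eta)|\le L|\xi-\eta|$ then yield the preliminary bound
$$|Du(x)|\le C_1(n)\,L\,\bigl(1+\log\tfrac{1}{1-|x|}\bigr),\qquad x\in B^n.$$
This shows that $u$ is Lipschitz on every compact subset of $B^n$, but a logarithmic blow-up at $S^{n-1}$ persists and must be eliminated using the extra quasiregularity hypothesis.

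The main step is to invoke $K$-quasiregularity to remove the log factor. My approach is to establish an $n$-dimensional counterpart of Theorem 2.1: for a harmonic $K$-quasiregular map $u\colon B^n\to\R^n$, the function $|Du|^q$ is subharmonic on $B^n$ for some exponent $q=q(K,n)\in(0,1)$. Assuming this, one applies the mean-value inequality for the subharmonic function $|Du|^q$ on spheres $\partial B(x,r)$ with $r$ approaching $\mathrm{dist}(x,\partial B^n)$, combined with a boundary-trace bound on $|Du|^q$. The Lipschitz hypothesis on $\phi$ controls the tangential component of $Du|_{S^{n-1}}$ pointwise by $L$; the normal component is then controlled by the quasiregularity inequality $|Du|^n\le KJ_u$, which ties the sizes of the singular values of $Du$ together and prevents any one direction of differentiation from dominating. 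This sphere-integration procedure yields the desired uniform bound $|Du(x)|\le C_2(L,K,n)$.

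The principal obstacle lies in the subharmonicity lemma for $|Du|^q$ when $n\ge 3$. In the planar case (Theorem 2.1) the proof rests on the decomposition $f=g+\bar h$ into holomorphic and antiholomorphic parts, an avenue unavailable in higher dimensions. In its place, one has to combine the Bochner identity $\Delta(|Du|^2)=2|\nabla Du|^2$ for harmonic $u$ with the quasiregularity bound $|Du|^n\le KJ_u$ and a refined Kato-type inequality $\bigl|\nabla|Du|\bigr|^2\le c(K,n)|\nabla Du|^2$ to produce the correct sign of $\Delta(|Du|^q)$ for small enough $q$. A secondary obstacle is rigorously establishing the boundary trace bound for $|Du|^q$: the Lipschitz hypothesis handles the tangential part cleanly, but the normal trace of $Du$ on $S^{n-1}$ requires either a maximal-function/Hardy-space argument or a reflection across $S^{n-1}$ to be controlled.
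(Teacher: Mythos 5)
Your plan reduces correctly to bounding $\|Du\|$ uniformly, but the execution diverges sharply from the paper's argument and, as written, contains a genuine gap that you yourself flag: the subharmonicity of $|Du|^q$ for harmonic $K$-quasiregular maps in dimension $n\ge3$ is not established anywhere in the paper (its Theorem~2.1 concerns $|f|^q$, not $|Df|^q$, and only in the plane, where it relies on the $f=g+\bar h$ decomposition). Your proposed route through a Bochner identity plus a refined Kato inequality is speculative, and the secondary step --- a boundary trace bound for $|Du|^q$ and a sphere-integration argument feeding the mean-value inequality --- is also left unproved. So the proposal as it stands does not constitute a proof.

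More importantly, the detour is unnecessary, and the missed idea is precisely where the paper's proof lives. Your preliminary estimate $|Du(x)|\lesssim L\log\frac{1}{1-|x|}$ is correct for the full gradient, but the log loss comes only from the radial derivative. For tangential derivatives there is no log loss at all: placing $x_0=re_n$ and taking $j<n$, the term $\frac{-2x_j}{|x-\xi|^n}$ in $\partial_j P(x_0,\xi)$ vanishes, leaving
$$\frac{\partial P}{\partial x_j}(x_0,\xi)=n(1-|x_0|^2)\frac{\xi_j}{|x_0-\xi|^{n+2}},$$
which is odd in $\xi_j$. One can therefore subtract $\phi(\xi_0)$ under the integral, apply the Lipschitz hypothesis and the inequality $|\xi_j|\le|\xi-\xi_0|$, and the extra power of $|\xi-\xi_0|$ together with the prefactor $(1-|x_0|^2)$ makes the integral $O(1)$ after the usual $E$/$F$ split. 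This gives $\|D(u|_T)(x_0)\|\le C(n)L$ \emph{at every interior point}, not just in a boundary-trace sense, with no quasiregularity used. Quasiregularity then enters only once, linearly-algebraically: since $n-1$ of the singular values of $Du(x_0)$ are controlled by $C(n)L$, the bound $\sigma_1^{n-1}\le K\sigma_2\cdots\sigma_n$ forces $\|Du(x_0)\|\le C(n)LK$, and the mean value theorem finishes. You should redirect your argument along these lines rather than pursuing the $n$-dimensional subharmonicity lemma.
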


D. Kalaj obtained a related result, but under additonal assumption of $C^{1, \alpha}$
regularity of $\phi$, (see \cite{ka}).
\begin{proof}
The main part of the proof is the estimate of the tangential derivatives of $u$, and in that
 part quasiregularity plays no role. We choose $x_0 = r \xi_0 \in B^n$,
$r = |x|$, $\xi_0 \in S^{n-1}$. Let $T = T_{x_0} rS^{n-1}$ be the
$n-1$ dimensional tangent plane at $x_0$ to the sphere $rS^{n-1}$. We want to prove that

\begin{equation}
\label{tander}
\Vert D(u|_T)(x_0) \Vert \leq C(n) L.
\end{equation}
Without loss of generality we can assume  $\xi_0 = e_n$ and $x_0 = re_n$. By a simple
calculation

$$
\frac{\partial}{\partial x_j} P(x, \xi) = \frac{-2x_j}{|x-\xi |^n} - n(1 - |x|^2)
\frac{x_j - \xi_j}{|x-\xi |^{n+2}}.
$$

Hence, for $1 \leq j < n$ we have

$$\frac{\partial}{\partial x_j} P(x_0, \xi) = n(1 - |x_0|^2) \frac{\xi_j}{|x_0 - \xi |^{n+2}}.$$

It is important to note that this kernel is odd in $\xi$ (with respect to reflection $(\xi_1, \ldots \xi_j, \ldots ,\xi_n)
\mapsto (\xi_1, \ldots , -\xi_j, \ldots, \xi_n)$), a typical fact for kernels obtained by differentiation.
This observation and differentiation under integral sign gives, for any $1 \leq j < n$,

\begin{eqnarray*}
\frac{\partial u}{\partial x_j} (x_0) & = & n (1 - r^2)
\int_{S^{n-1}} \frac{\xi_j}{|x_0 - \xi |^{n+2}} \phi( \xi) d\sigma(\xi) \cr
& = & n (1 - r^2)
\int_{S^{n-1}} \frac{\xi_j}{|x_0 - \xi |^{n+2}} ( \phi( \xi) - \phi(\xi_0)) d\sigma(\xi). \cr
\end{eqnarray*}

Using the elementary inequality $|\xi_j| \leq | \xi - \xi_0 |$, ($ 1 \leq j < n$, $\xi \in S^{n-1}$)
and Lipschitz continuity of $\phi$ we get

\begin{eqnarray*}
\left| {\partial u \over \partial x_j}(x_0) \right| & \leq  &
Ln(1- r^2) \int_{S^{n-1}} {  |\xi_j| |\xi - \xi_0 | \over |x_0 - \xi|^{n+2} } d\sigma(\xi) \cr
& \leq & Ln(1- r^2)
\int_{S^{n-1}} { |\xi - \xi_0 |^2 \over |x_0 - \xi|^{n+2} } d\sigma(\xi). \cr
\end{eqnarray*}

In order to estimate the last integral, we split $S^{n-1}$ into two subsets
$E = \{ \xi \in S^{n-1} : |\xi - \xi_0 | \leq 1-r \}$ and
$F = \{ \xi \in S^{n-1} : |\xi - \xi_0 | > 1-r \}$. Since
$| \xi - x_0 | \geq 1 - |x_0|$ for all $\xi \in S^{n-1}$ we have

\begin{eqnarray*}
\int_E { | \xi - \xi_0 |^2 \over |x_0 - \xi |^{n+2} } d \sigma(\xi) & \leq & (1-r^2)^{-n-2}
\int_E |\xi - \xi_0|^2 d \sigma (\xi) \cr
& \leq & (1-r^2)^{-n-2} \int_0^{1-r} \rho^2 \rho^{n-2} d \rho \cr
& \leq & {2 \over n+1} (1- r)^{-1}.\cr
\end{eqnarray*}

On the other hand, $ | \xi - \xi_0 | \leq C_n |\xi - x_0 |$ for every $\xi \in F$, so

\begin{eqnarray*}
\int_F { | \xi - \xi_0 |^2 \over |x_0 - \xi |^{n+2} } d \sigma(\xi) & \leq & C_n^{n+2}
\int_F |\xi - \xi_0|^{- n} d\sigma(\xi) \cr
& \leq & C_n^\prime \int_{1-r}^2 \rho^{-n} \rho^{n-2} d \rho \cr
& \leq & C_n^\prime (1-r)^{-1}. \cr
\end{eqnarray*}

Combining these two estimates we get
$$ \left| {\partial u \over \partial x_j } (x_0) \right| \leq L C(n)$$
for $1 \leq j < n$. Due to rotational symmetry, the same estimate holds for every derivative
in any tangential direction. This establishes estimate (\ref{tander}).  Finally,
$K$-quasiregularity gives

$$ \Vert Du(x) \Vert \leq LKC(n).$$

Now the mean value theorem gives Lipschitz continuity of $u$.
\end{proof}

\begin{problem}\rm
\begin{enumerate}
\item
Can one prove similar result for other type of moduli of continuity, as was done
in Section 2 in the planar case?
\item
The same questions can be posed in other smoothly bounded domains.
\end{enumerate}
\end{problem}

\section{Bilipschitz maps}

Bilipschitz property of harmonic quasiconformal mappings on the unit
disc was investigated in \cite{mat}. A different approach to the
following theorem is given in \cite{mat1}.

\begin{theorem}
\label{bilipschitz}
Suppose $D$ and $D'$ are proper domains in $\mathbb  R^2$. If $f:D\longrightarrow D'$ is $K$-qc and harmonic,
then it is bilipschitz with respect to quasihyperbolic metrics on $D$ and $D'$.
\end{theorem}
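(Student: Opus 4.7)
My plan is to reduce the theorem to the pointwise two-sided derivative estimate
$$
\frac{1}{C}\cdot\frac{d(f(z),\partial D')}{d(z,\partial D)}\;\le\;\ell(Df(z))\;\le\;|Df(z)|\;\le\;C\cdot\frac{d(f(z),\partial D')}{d(z,\partial D)},\qquad z\in D,
$$
where $\ell(Df(z))=\min_{|v|=1}|Df(z)v|$ and $C=C(K)\ge 1$. Once this is granted, the bilipschitz property falls out by integration: along any rectifiable arc $\gamma\colon[0,1]\to D$ from $x$ to $y$, the right half of the estimate yields
$$
\int_0^1\frac{|(f\circ\gamma)'(t)|\,dt}{d(f(\gamma(t)),\partial D')}\;\le\;C\int_0^1\frac{|\gamma'(t)|\,dt}{d(\gamma(t),\partial D)},
$$
so that $k_{D'}(f(x),f(y))\le C\,k_D(x,y)$ upon taking the infimum over $\gamma$; the reverse inequality is obtained identically by using the pointwise lower bound $|(f\circ\gamma)'|\ge\ell(Df(\gamma))|\gamma'|$ together with the left half of the estimate.

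To prove the pointwise estimate I would use the decomposition $f=g+\bar h$ with $g,h$ holomorphic on $D$ and $|\omega|:=|h'/g'|\le k:=(K-1)/(K+1)<1$ throughout $D$, available because $f$ is a planar harmonic quasiconformal map. Since $|Df|=|g'|+|h'|$ and $\ell(Df)=|g'|-|h'|$, one has $\ell(Df)\ge |Df|/K$, and the problem reduces to a two-sided comparison of $|g'(z)|\,d(z,\partial D)$ with $d(f(z),\partial D')$ with constants depending only on $K$.

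The upper half is the easier part. Applying the standard gradient estimate for harmonic functions to the two real components of $f$ on $B(z_0,r)$ with $r=d(z_0,\partial D)$ gives $|Df(z_0)|\le C\,r^{-1}\operatorname{osc}_{B(z_0,r/2)}f$. Every $w\in B(z_0,r/2)$ satisfies $k_D(z_0,w)\le\log 2$, so the Gehring--Osgood H\"older-type quasi-invariance of the quasihyperbolic metric under $K$-quasiconformal maps gives $k_{D'}(f(z_0),f(w))\le c(K)$, and (\ref{quasiball}) translates this to $|f(w)-f(z_0)|\le(e^{c(K)}-1)\,d(f(z_0),\partial D')$. Combining the two yields the desired upper bound.

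The lower half is the main obstacle, since it genuinely requires harmonicity: for purely quasiconformal maps the quasihyperbolic metric is only H\"older, not Lipschitz, quasi-invariant, so the argument above does not symmetrize through $f^{-1}$ (which is quasiconformal but not harmonic). Following the approach of Mateljevi\'c \cite{mat,mat1}, I would invoke a Koebe-type theorem for the holomorphic part $g$. Concretely, on a concentric subdisk $B(z_0,c(K)\,r)$ the dilatation $\omega$ has small oscillation (by Schwarz's lemma applied to $\omega\colon D\to\overline{B(0,k)}$), so $g$ is univalent there by a Becker-type criterion, and Harnack's inequality for $\log|g'|$ makes $|g'|$ comparable to $|g'(z_0)|$ on this subdisk; Koebe's one-quarter theorem applied to $g$ together with the negligibility of the $\bar h$-perturbation then controls both inscribed and circumscribed disks of $f(B(z_0,c(K)\,r))$ about $f(z_0)$ by a quantity of order $|g'(z_0)|\,r$. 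The subharmonicity of $|f-f(z_0)|^q$ from Theorem~\ref{th-1} propagates the circumscribed bound to all of $B(z_0,r)$, while the fact that $f$ is a homeomorphism forces $|f(w)-f(z_0)|$ to approach $d(f(z_0),\partial D')$ along a suitable sequence $w\to\partial D$ inside $B(z_0,r)$; matching the two inequalities then gives $|g'(z_0)|\,r\ge c'(K)\,d(f(z_0),\partial D')$, which is the missing lower half of the pointwise estimate.
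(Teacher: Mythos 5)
Your proposal shares the high-level skeleton with the paper (reduce to the pointwise estimate $\|Df(z)\|\asymp d(f(z),\partial D')/d(z,\partial D)$ with constants depending only on $K$, then integrate), and your upper-bound argument via the harmonic gradient estimate plus the Gehring--Osgood quasi-invariance and the inclusion (\ref{quasiball}) is sound. But the route you take to the lower bound is different from the paper's and has a genuine gap. The paper does not attempt a direct Koebe argument; it introduces the Astala--Gehring quantity $\alpha_f(z)=\exp\bigl(\tfrac12(\log J_f)_{B_z}\bigr)$, observes that $\log(1/J_f)=-2\log|g'|-\log(1-|\omega|^2)$ is subharmonic while $\log|g'|$ is harmonic, and from these gets $\sqrt{J_f(z)}\ge\alpha_f(z)\ge\sqrt{1-k^2}\,|g'(z)|$; the Astala--Gehring theorem (the quasiconformal Koebe theorem) then supplies $\alpha_f(z)\asymp d(f(z),\partial D')/d(z,\partial D)$ with a constant depending only on $K$, and both sides of the pointwise estimate drop out since $\sqrt{J_f}\asymp|g'|\asymp\|f'\|$ by quasiregularity. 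In effect you are trying to re-derive the Astala--Gehring theorem from scratch, and that is where the difficulty lies.

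The concrete gap is in the step where you say that the subharmonicity of $|f-f(z_0)|^q$ ``propagates the circumscribed bound to all of $B(z_0,r)$.'' Subharmonicity of $u=|f-f(z_0)|^q$ gives the sub-mean-value property and hence the maximum principle: a bound for $u$ on the outer circle $\partial B(z_0,r)$ controls $u$ on the inner disk $B(z_0,cr)$, not the other way around. A bound on the small disk $B(z_0,cr)$ gives no control whatsoever over $u$ near $\partial B(z_0,r)$ (already $\log|z-p|$, subharmonic and bounded on the inner disk, blows up at a boundary point $p$). Yet your argument needs exactly the outward direction: you bound $|f-f(z_0)|$ by $C|g'(z_0)|r$ on the small Koebe disk, and then you want to confront this with the fact that $|f(w_n)-f(z_0)|\to d(f(z_0),\partial D')$ along a sequence $w_n\to\partial D$ inside $B(z_0,r)$, i.e.\ near the outer circle. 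Without a valid outward propagation the two statements never meet, and the inequality $|g'(z_0)|\,d(z_0,\partial D)\gtrsim d(f(z_0),\partial D')$ is not established. To close the lower bound you would need a genuinely different mechanism — for instance the averaging over the \emph{full} maximal ball $B_z$ as in Astala--Gehring, where the sub/superharmonicity is used only to compare a mean of $\log J_f$ with its value at the center, a comparison that goes the right way.
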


\begin{proof}
Since $f$ is harmonic we have locally, representation
$$
f(z)=g(z)+\overline{h(z)},
$$
where $g$ and $h$ are analytic functions. Then Jacobian $J_f(z)=|g'(z)|^2-|h'(z)|^2>0$
(note that $g'(z)\neq 0$).

Futher,
$$
J_f(z)=|g'(z)|^2\left(1-\frac{|h'(z)|^2}{|g'(z)|^2}\right)=|g'(z)|^2\left(1-|\omega(z)|^2\right),
$$
where $\omega(z)=\frac{h'(z)}{g'(z)}$ is analytic and $|\omega|<1$. Now we have
$$
\log\frac{1}{J_f(z)}=-2\log|g'(z)|-\log(1-|\omega(z)|^2).
$$
The first term is harmonic function (it is well known that logarithm of moduli of analytic function
is harmonic everywhere except where that analytic function vanishes, but $g'(z)\neq 0$ everywhere).

The second term can be expanded in series
$$
\sum_{k=1}^{\infty}\frac{|\omega(z)|^{2k}}{k},
$$
and each term is subharmonic (note that $\omega$ is analytic).

So, $-\log(1-|\omega(z)|^2)$ is a continuous function represented as a locally uniform
sum of subharmonic functions. Thus it is also subharmonic.

Hence
\begin{equation}
\label{first}
\log\frac{1}{J_f(z)} \mbox{ is a subharmonic function}.
\end{equation}

Note that representation $f(z)=g(z)+\overline{h(z)}$ is local, but that suffices for our conclusion
(\ref{first}).

By the definition from [AG, Definition 1.5]
$$
\alpha_f(z)=\exp\left(\frac{1}{n}(\log J_f)_{B_z}\right),
$$
where
$$
(\log J_f)_{B_z}=\frac{1}{m(B_z)}\int_{B_z}\log J_f\,dm, \quad B_z=B(z,d(z,\partial D)).
$$
In the case $n=2$ we have
\begin{equation}
\label{second}
\frac{1}{\alpha_f(z)}=\exp\left(\frac 12\frac 1{m(B_z)}\int_{B_z}\log\frac{1}{J_f(w)}\,dm(w)\right).
\end{equation}
From (\ref{first}) we have
$$
\frac 1{m(B_z)}\int_{B_z}\log\frac{1}{J_f(w)}\,dm(w)\geq\log\frac{1}{J_f(z)}.
$$
Combining this with (\ref{second}) we have
$$
\frac{1}{\alpha_f(z)}\geq\exp\left(\frac 12\log\frac{1}{J_f(z)}\right)=\frac{1}{\sqrt{J_f(z)}}
$$
and therefore
$$
\sqrt{J_f(z)}\geqslant\alpha_f(z).
$$
On the other hand, we have Theorem [AG, Theorem 1.8]:

Suppose that $D$ and $D'$ are domains in $\mathbb R^n$ if $f:D\longrightarrow D'$ is $K$-qc,
then
$$
\frac 1c\frac{d(f(x),\partial D')}{d(x,\partial D)}\leq
\alpha_f(z)\leq
c\,\frac{d(f(x),\partial D')}{d(x,\partial D)}
$$
for $x\in D$, where $c$ is a constant wich depends only on $K$ and $n$.

From first inequality of this theorem we have
\begin{equation}
\label{jacobian_root}
\sqrt{J_f(z)}\geq\frac 1c\frac{d(f(x),\partial D')}{d(x,\partial D)}.
\end{equation}

Note that
$$
J_f(z)=|g'(z)|^2-|h'(z)|^2\leq|g'(z)|^2
$$
and by $K$-qclity of $f$, $|h'|\leq k|g'|$, $0\leq k<1$, where $K=\frac{1+k}{1-k}$.

This gives $J_f\geq (1-k^2)|g'|^2$. Hence,
$$
\sqrt{J_f}\asymp |g'|\asymp |g'|+|h'|=||f'(z)||.
$$
Finally (\ref{jacobian_root}) and the above asymptotic relation give
$$
||f'(z)||\geq\frac 1c\frac{d(f(x),\partial D')}{d(x,\partial D)},\quad c=c(k).
$$
For the reversed inequality we again use $J_f(z)\geq(1-k^2)|g'(z)|^2$, i.e.
\begin{equation}
\label{four}
\sqrt{J_f(z)}\geq\sqrt{1-k^2}|g'(z)|
\end{equation}
Further, we know that for $n=2$
$$
\alpha_f(z)=\exp\left(\frac{1}{m(B_z)}\int_{B_z}\log\sqrt{J_f(x)}\,dm(w)\right).
$$
Using (\ref{four})
$$
\begin{array}{rcl}
\displaystyle
\frac{1}{m(B_z)}\int_{B_z}\log\sqrt{J_f(x)}\,dm(w) & \geq &
\displaystyle \frac{1}{m(B_z)}\int_{B_z}\log\sqrt{1-k^2}+\log|g'(w)|\,dm(w)
\vspace{0.5em}\\
& = &
\displaystyle
\log\sqrt{1-k^2}+\frac{1}{m(B_z)}\int_{B_z}\log|g'(w)|\,dm(w)
\vspace{0.5em}\\
& = & \log\sqrt{1-k^2}+\log|g'(z)|.
\end{array}
$$
Now we have by harmonicity of $\log|g'|$
$$
\begin{array}{rcl}
\displaystyle
\alpha_f(z) & = & \displaystyle \exp\left(\frac{1}{m(B_z)}\int_{B_z}\log\sqrt{J_f(x)}\,dm(w)\right)
\vspace{0.5em}\\
& \geq &
\displaystyle
\exp(\log\sqrt{1-k^2}+\log|g'(z)|)
\vspace{0.5em}\\
& = &
\displaystyle
\sqrt{1-k^2}|g'(z)|
\vspace{0.5em}\\
& \geq &
\displaystyle
\frac 12\sqrt{1-k^2}(|g'|+|h'|)
\vspace{0.5em}\\
& = &
\displaystyle
\frac{\sqrt{1-k^2}}{2}||f'||.
\end{array}
$$
Again using the second inequality in [AG, Theorem 1.8]
$$
||f'||\leqslant c\sqrt{J_f(z)}\leqslant c\,\alpha_f(z)\leqslant c\,\frac{d(f(z),\partial D')}{d(z,\partial D)},
\quad c=c(k).
$$
Summarizing
$$
||f'(z)||\asymp\frac{d(f(z),\partial D')}{d(z,\partial D)}.
$$
This pointwise result, via integration along curves, easily gives
$$
k_{D'}(f(z_1),f(z_2))\asymp k_D(z_1,z_2).
$$
\end{proof}
\rule{0mm}{5mm}

\begin{problem}\rm
Is Theorem \ref{bilipschitz} true in dimensions $n\ge 3$?
\end{problem}
%
%

\end{document}